\theoremstyle{plain}
\newtheorem{Lem}{Lemma}
\numberwithin{Lem}{section}
\newtheorem{Prop}{Proposition}
\numberwithin{Prop}{section}
\newtheorem{Thm}{Theorem}
\numberwithin{Thm}{section}
\numberwithin{Cor}{section}
\numberwithin{Con}{section}
\theoremstyle{definition}
\newtheorem{Def}{Definition}
\numberwithin{Def}{section}
\numberwithin{hyp}{section}
\numberwithin{conj}{section}
\newtheorem{ex}{Example}
\numberwithin{ex}{section}
\theoremstyle{remark}
\newtheorem{rem}{\bf{Remark}}
\numberwithin{rem}{section}
\numberwithin{equation}{section}
\newcommand{\dv}{\partial}
\newcommand{\Om}{\Omega}
\newcommand{\dbar}{\partial_{\overline{z}}}
\newcommand{\R}{{\mathbb R}}
\newcommand{\C}{{\mathbb C}}
\newcommand{\Cs}{\mathscr{C}}
\newcommand{\Cc}{\mathcal{C}}
\newcommand{\EE}{\mathcal{E}}
\newcommand{\LL}{\mathcal{L}}
\newcommand{\A}{\mathcal{A}}
\newcommand{\BB}{\mathcal{S}}
\newcommand{\gp}{\text{\tiny{$\triangle$}}}
\newcommand{\li}{\,\,\llcorner\,\,}
\newcommand{\ri}{\,\,\lrcorner\,\,}
\newcommand{\Dp}{\mathcal{D}^{\tiny+}}
\newcommand{\Dm}{\mathcal{D}^{\tiny-}}
\newcommand{\gm}{\text{\scriptsize{$\triangledown$}}}
\begin{document}

\vspace{.4cm}

\title{\sffamily Second Order Elliptic Equations and Hodge-Dirac Operators}
\author{Erik Duse}
\date{}
\maketitle

\begin{abstract}
In this paper we show how a second order scalar uniformly elliptic equation on divergence form with measurable coefficients and Dirichlet boundary conditions can be transformed into a first order elliptic system with half-Dirichlet boundary condition. This first order system involves Hodge-Dirac operators and can be seen as a natural generalization of the Beltrami equation in the plane and we develop a theory for this equation, extending results from the plane to higher dimension. The reduction to a first order system applies both to linear as well as quasilinear second order equations and we believe this to be of independent interest. Using the first order system, we give a new representation formula of the solution of the Dirichlet problem both on simply and finitely connected domains. This representation formula involves only singular integral operators of convolution type and Neumann series there of, for which classical Calderón-Zygmund theory is applicable. Moreover, no use is made of any fundamental solution or Green's function beside fundamental solutions of constant coefficient operators. Remarkably, this representation formula applies also for solutions of the fully non-linear first order system. We hope that the representation formula could be used for numerically solving the equations. Using these tools we give a new short proof of Meyers' higher integrability theorem. Furthermore, we show that the solutions of the first order system are Hölder continuous with the same Hölder coefficient as the solutions of the second order equations. Finally, factorization identities and representation formulas for the higher dimensional Beurling-Ahlfors operator are proven using Clifford algebras, and certain integral estimates for the Cauchy transform is extended to higher dimensions. 
\end{abstract}

\tableofcontents

\newpage

\section{\sffamily Introduction}

\subsection{\sffamily Introduction}

The aim of this paper is to show that contrary to common belief, there exists a natural extension of complex analytic and quasiconformal methods to dimensions greater than 2 in the study linear elliptic partial differential equations. Using these methods we will derive global representation formulas for solutions of the Dirichlet problem for scalar linear uniformly elliptic partial differential equations with measurable coefficients on arbitrary simply connected Lipschitz domains. In particular, this will prove existence and uniqueness for weak solutions and also derive local interior estimates of higher integrability, Hölder continuity and differentiability almost everywhere.

Now what do we mean by complex analytic and quasiconformal methods?  To set the stage we are interested in the Dirichlet problem
\begin{align}\label{eq:DirPro}
\left\{
    \begin{array}{ll}
     \text{div} A(x)\nabla u(x)=0, & x\in \Om,\\
      u(x)=\phi(x), & x\in \dv \Om.  
      \end{array} \right.
\end{align}
where $\Om$ is a simply connected Lipschitz domain, $A(x)\in L^{\infty}(\Om,\LL(\R^n))$ and $\phi(x)\in W^{1/2,2}(\dv\Om)$. We say that $u$ is a weak solution of \eqref{eq:DirPro} if for all $\varphi\in C^\infty_0(\Om)$ we have 
\begin{align}\label{def:weaksol}
\int_{\Om}\langle A(x)\nabla u(x),\nabla \varphi(x)\rangle dx=0. 
\end{align}

In the plane we can introduce a and $A$-harmonic conjugate $v$ of $u$ by first defining 
\begin{align*}
B(z)=A(z)\nabla u(z)
\end{align*}
where $\star$ is the \emph{Hodge star map} given by the counterclockwise rotation 
\begin{align*}
\star=\begin{bmatrix}
   0 & -1\\
   1&0
\end{bmatrix}: \R^2\to \R^2
\end{align*}
and $z=x+iy$.

If we set $E(z)=\nabla u(z)$  where $u$ is a solution to \eqref{eq:DirPro} in the plane, then $E$ is a curl free vector field whereas $B$ is divergence free. Since the Hodge star map interchanges curl and divergence free vector fields, it follows that $\star B(z)$ is curl free and since $\Om$ is simply connected by assumption, $\star B$ potential $v$ such that $\nabla v(z)=\star A(z)\nabla u(z)$ unique up to a constant.  Moreover, the conjugate $v$ solves the second order linear elliptic equation 
\begin{align*}
\text{div}\, A^\star(z)\nabla v(z)=0
\end{align*}
in $\Om$, where 
\begin{align*}
 A^\star(z)=\star^tA^{-1}(z)\star.
\end{align*}

Using $v$ we define the complex valued function $f(z):=u(z)+iv(z)$. Now we define the pair of fields
\begin{align*}
\mathcal{F}^+(z)&=E(z)+B(z)=(I+A(z))E(z),\\
\mathcal{F}^-(z)&=E(z)-B(z)=(I-A(z))E(z).
\end{align*}
This gives the algebraic equation 
\begin{align}\label{eq:AlgComplex}
\mathcal{F}^-(z)=\mathcal{M}(z)\mathcal{F}^+(z)
\end{align}
relating the fields $\mathcal{F}^+$ and $\mathcal{F}^-$, where $\mathcal{M}(z)=(I-A(z))(I+A(z))^{-1}$ is the Cayley transform of $A(z)$. On the other hand a direct computation gives
\begin{align*}
2f_{\overline{z}}&=u_x-v_y+i(v_x-u_y),\\
2\overline{f_{z}(z)}&=u_x+v_y+i(u_y-v_x)
\end{align*}
and 
\begin{align*}
2\mathcal{F}^+(z)&=(u_x-v_y,v_x-u_y),\\
2\mathcal{F}^-(z)&=(u_x+v_y,u_y-v_x),
\end{align*}
so we have $f_{\overline{z}}=\mathcal{F}^-(z)$ and $\overline{f_{z}(z)}=\mathcal{F}^-(z)$. Combining this with the algebraic equation \eqref{eq:AlgComplex}, we get the Beltrami equation 
\begin{align}\label{eq:Beltrami}
f_{\overline{z}}=\mathcal{M}(z)\overline{f_{z}(z)}. 
\end{align}
Since any linear map $L\in \LL(\C)$ can be written on the form 
\begin{align*}
Lz=\alpha z+\beta \overline{z}
\end{align*}
for two complex numbers $\alpha,\beta\in \C$, simple algebra shows that \eqref{eq:Beltrami} can be written as an $\R$-linear Beltrami equation on the standard form 
\begin{align}\label{eq:Beltrami2}
f_{\overline{z}}=\mu(z)f_z(z)+\nu(z)\overline{f_{z}(z)}. 
\end{align}
The advantage of working with the first order system \eqref{eq:Beltrami2} over the second order equation \eqref{eq:DirPro} is that one can use the full machinery of complex potential operators and quasiconformal methods to study \eqref{eq:Beltrami2}. It is therefore natural to ask if something similar can be done in higher dimension. We will show in this paper that this is indeed the case. However, it will also be shown that the higher dimensional case is in many respects more subtle and difficult. 

A first key step towards a generalization of equation \eqref{eq:AlgComplex} to higher dimension with the purpose of studying scalar elliptic linear as well as non-linear partial differential equations was taken by Iwaniec and Sbordone in their paper \cite{IS}, where they introduced the notion of \emph{quasiharmonic fields}. To explain this concept we first observe that the vector field $B(x)=A(x)\nabla u(x)=0$ is divergence free in the sense of distributions whenever $u$ is a weak solution to $\text{div}A(x)\nabla u(x)=0$. Of course the field $E(x)=\nabla u(x)$ is curl free in the sense of distributions, and therefore the pair $(B,E)$ form a so called div-curl couple in the terminology of compensated compactness, e.g. \cite{Tat}.
If a symmetric endomorphism field $A(x)$ (i.e. $A^\ast(x)=A(x)$) satisfies the symmetric ellipticity bounds,
\begin{align*}
K^{-1}\vert v\vert^2\leq \langle A(x)v,v\rangle \leq K\vert v\vert^2\,\,\, \text{ for a.e. $x\in \Om$}
\end{align*}
then one can show that the double sided condition is equivalent to the one-sided inequality 
\begin{align*}
\vert v\vert^2+\vert A(x)v\vert^2\leq (K+K^{-1})\langle A(x)v,v\rangle \,\,\, \text{ for a.e. $x\in \Om$}. 
\end{align*}
Letting $\mathcal{K}=K+K^{-1}$ this inequality implies that any weak solution $u$ give rise to the inequality 
\begin{align}\label{def:QausiHarm}
\vert E(x)\vert^2+\vert B(x)\vert^2\leq \mathcal{K}\langle E(x),B(x)\rangle
\end{align}
for the div-curl couple $(B,E)$. Generalising this in \cite{IS}, one say that any div-curl couple $(B,E)\in L^2(\Om, \R^n\times \R^n)$ is a $\mathcal{K}$-quasiharmonic field if $\text{div }B(x)=0$ and $\text{curl }E(x)=0$
in the sense of distribution and $(B,E)$ satisfies \eqref{def:QausiHarm}. Just as in the plane one introduces the fields $\mathcal{F}^\pm=E\pm B$, which again gives the algebraic relation 
\begin{align*}
\mathcal{F}^-(x)=(I-A(x))(I+A(x))^{-1}\mathcal{F}^+(x). 
\end{align*}
 
 The novelty of the present paper is that using Hodge decompositions one can impose a gauge condition so that the there exists a multivector field $F$ with the property that 
 \begin{align*}
\mathcal{F}^-(x)=\Dm F(x),\,\,\,\, \mathcal{F}^+(x)=\Dp F(x),
 \end{align*}
 where $(\Dm,\Dp)$ is a pair of Hodge-Dirac operators, one formally self-adjoint and the other formally skew-adjoint, taking the role of the Wirtinger derivatives $(\dv_{\overline{z}},\overline{\dv_z})$ in the plane. More precisely, the Hodge-Dirac operators are related to the exterior derivative on multivectors (i.e. differential forms) through 
 \begin{align*}
\mathcal{D}^\pm=d\pm \delta,
 \end{align*}
 where $\delta=-d^\ast$ is the interior derivative. The benefit of introducing the potential field $F$ is that this allows one to use powerful integral operator techniques form Clifford and harmonic analysis to study the original Dirichlet problem. As a by product, we will see that this also allows one to generalize much of the quasiconformal methods in the plane used to to study second order elliptic equations to higher dimensions.

It is interesting to note that Clifford analysis techniques have also been used successfully in \cite{Wu1,Wu2} to prove global well-posedness of the water wave problem in 3 dimensions. 

\subsection{\sffamily Main Results}

\begin{Thm}\label{thm:Main1}
Let $\Om \subset \R^n$ with $n>2$ be a bounded simply connected $C^2$-domain. Furthermore let $A\in L^\infty(\Om, \LL(\R^n)$ be a normal matrix that satisfies the ellipticity bounds 
\begin{align}\label{eq:NormalBounds}
\lambda \vert v\vert^2\leq \langle A(x)v,v\rangle, \quad \Lambda \vert v\vert^2\leq \langle A^{-1}(x)v,v\rangle
\end{align}
for all most every $x\in \Om$ and every $v\in \R^n$. Let $u\in W^{1,2}(\Om)$ be any weak solution of 
\begin{align}\label{eq:DirPro1}
\left\{
    \begin{array}{ll}
     \text{div} \,A(x)\nabla u(x)=0, & x\in \Om,\\
      u(x)=\phi(x)\in W^{1/2,2}(\dv \Om), & x\in \dv \Om.  
      \end{array} \right.
\end{align}
Define the bivector field $v\in W^{1,2}(\Om,\Lambda^2 \R^n)$ to be the unique solution to the gauge condition equation
\begin{align}\label{eq:GaugeCond1}
\left\{
    \begin{array}{ll}
    \delta v(x)=A(x)\nabla u(x), & x\in \Om,\\
        dv(x)=0, & x\in \Om,\\
      v_T(x)=0, & x\in \dv \Om, 
      \end{array} \right.
\end{align}
where $v_T$ denotes the tangential trace. Define the multivector field $F=u+v\in W^{1,2}(\Om,\Lambda^{(0,2)} \R^n)$ and let 
\begin{align*}
\mathcal{M}(x)= (I-A(x))\circ (I+A(x))^{-1}
\end{align*}
denote the Cayley transform of $A$ and let $\widehat{\mathcal{M}}(x)$ denote the exterior extension of $\mathcal{M}(x)$ defined in Definition \ref{def:ExteriorExt}. Then $F$ solves the half-Dirichlet  problem for the Dirac-Beltrami equation 
\begin{align}\label{eq:DB1}
\left\{
    \begin{array}{ll}
    \Dm F(x)=\widehat{\mathcal{M}}(x)\Dp F(x) & x\in \Om,\\
      F_T(x)=\phi(x)\in W^{1/2,2}(\dv \Om), & x\in \dv \Om.  
      \end{array} \right.
\end{align}
Conversely, let $F\in W^{1,2}(\Om,\Lambda^{(0,2)} \R^n)$ be a solution of \eqref{eq:DB1} for some $\widehat{\mathcal{M}}(x)$ being the exterior extension of a linear map $\mathcal{M}\in L^{\infty}(\Om, \LL(\R^n))$ satisfying the uniform ellipticity condition 
\begin{align*}
\Vert \Vert \mathcal{M}(x)\Vert\Vert_{\infty}=M<1.
\end{align*} 
Here $ \Vert \mathcal{M}(x)\Vert$ denotes the operator norm of the linear map $\mathcal{M}(x)$. Then the function $u=\langle F\rangle_0\in W^{1,2}(\Om,\R)$ solves \eqref{eq:DirPro1} with 
\begin{align*}
A(x)=(I-\mathcal{M}(x))\circ (I+\mathcal{M}(x))^{-1}. 
\end{align*}
\end{Thm}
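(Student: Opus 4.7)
The plan is to prove both directions by reducing everything to (a) a Hodge-theoretic argument that produces the bivector potential $v$, and (b) a one-line algebraic identity for the Cayley transform.

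\textbf{Forward direction.} First I would verify existence and uniqueness of $v\in W^{1,2}(\Om,\Lambda^2\R^n)$ solving the gauge condition. The necessary compatibility condition for the overdetermined system $\delta v=A\nabla u$, $dv=0$, $v_T=0$ is $\delta(A\nabla u)=0$, which is exactly the weak form of $\text{div}\,A\nabla u=0$ and thus automatic. On a simply connected $C^2$-domain the relative Hodge cohomology in degree two vanishes, so this system admits a unique $W^{1,2}$-solution by the standard Hodge/Gaffney machinery (presumably set up earlier in the paper). Then, since $u\in\Lambda^0$ gives $\delta u=0$, and the gauge imposes $dv=0$ and $\delta v=A\nabla u$, one computes directly
\begin{align*}
\Dp F &= du + \delta v = \nabla u + A\nabla u = (I+A)\nabla u,\\
\Dm F &= du - \delta v = \nabla u - A\nabla u = (I-A)\nabla u,
\end{align*}
under the musical identification $du\leftrightarrow\nabla u$ on $\Lambda^1$. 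Both expressions are purely $1$-form valued, with no $\Lambda^3$ contribution. Since the exterior extension restricts to $\mathcal{M}$ itself on $\Lambda^1$, the tautology $(I-A)=\mathcal{M}(I+A)$ is exactly $\Dm F=\widehat{\mathcal{M}}\Dp F$. The boundary data splits by grade: $u|_{\dv\Om}=\phi$ and $v_T=0$ together give $F_T=\phi$.

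\textbf{Converse direction.} Write $F=u+v$ with $u\in\Lambda^0$ and $v\in\Lambda^2$, so that $\Dp F,\Dm F\in\Lambda^1\oplus\Lambda^3$. Project the equation $\Dm F=\widehat{\mathcal{M}}\Dp F$ onto $\Lambda^3$: since $\delta v$ lives only in $\Lambda^1$, both sides reduce to $dv$ and $\widehat{\mathcal{M}}|_{\Lambda^3}dv$, giving $(I-\widehat{\mathcal{M}}|_{\Lambda^3})dv=0$. Because the exterior extension contracts operator norms $k$-fold on $k$-vectors, $\|\widehat{\mathcal{M}}|_{\Lambda^3}\|\leq M^3<1$, so the operator is invertible and $dv=0$. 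Projecting onto $\Lambda^1$ gives $(I-\mathcal{M})du=(I+\mathcal{M})\delta v$, whence $\delta v=(I+\mathcal{M})^{-1}(I-\mathcal{M})\nabla u=A\nabla u$. Applying $\delta$ and using $\delta^2=0$ produces $\delta(A\nabla u)=0$, which is the weak formulation of $\text{div}\,A\nabla u=0$. The scalar part of the boundary identity $F_T=\phi$ delivers $u|_{\dv\Om}=\phi$, and the bivector part gives $v_T=0$, recovering exactly the gauge from the forward direction.

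\textbf{Main obstacle.} The substantive work is the Hodge-theoretic step in the forward direction: producing $v$ with prescribed coderivative, vanishing exterior derivative, and relative boundary condition $v_T=0$, together with full $W^{1,2}$-regularity and uniqueness. This rests on vanishing of the relative second cohomology of $\Om$ (simple connectedness combined with $C^2$-regularity of $\dv\Om$) and a Gaffney-type estimate for the Hodge Laplacian with relative boundary conditions. Once that framework is in place, the rest of the theorem is essentially grade-by-grade algebra with $\Dp$, $\Dm$ and the Cayley identity, and the norm contraction $\|\widehat{\mathcal{M}}|_{\Lambda^k}\|\leq M^k$ which closes off the $\Lambda^3$ component in the converse.
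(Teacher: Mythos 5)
Your proof is correct and follows essentially the same route as the paper: Hodge theory with relative boundary conditions (Schwarz's theorem) to produce the gauged potential $v$, grade-by-grade algebra combined with the Cayley identity in both directions, and the norm contraction of the exterior extension to force $dv=0$ in the converse. The only minor imprecision is that \emph{solvability} of the gauge system hinges on the vanishing of $\mathcal{H}_T^1(\Om)$ (so that $A\nabla u$ is orthogonal to all tangential harmonic $1$-fields), while \emph{uniqueness} uses the vanishing of $\mathcal{H}_T^2(\Om)$ — both hold for simply connected $\Om$, so your conclusion stands.
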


\begin{rem}
The same results also applies to a Lipschitz domain $\Om$, the only difference is that the field $F$ need not belong to $W^{1,2}(\Om,\Lambda^{(0,2)})$. In this case $F$ belongs to the partial Sobolev space
\begin{align*}
F\in W^{1,2}_{d,\delta}(\Om,\Lambda^{(0,2)})=\{F\in \mathscr{D}'(\Om,\Lambda^{(0,2)}): F,dF,\delta F\in L^2(\Om,\Lambda)\}. 
\end{align*}
Alternatively we may always change domain using a Lipschitz homeomorphism from a Lipschitz domain to a $C^2$-domain. For a similar result for class of uniformly non-linear elliptic second order equations see subsection \ref{subsec:NonLinDir}. 
\end{rem}

For the corresponding statements in the case when $\Om$ is finitely connected or we have an inhomogeneous equation see Theorem \ref{thm:MultiDB} and Theorem \ref{thm:DBIn} respectively. 
We now give solvability results for the general Dirac--Beltrami equation for a general field $F\in W^{1,2}_{d,\delta}(\Om,\Lambda)$ not necessarily associated to a scalar second order equation where the solvability cannot be deduced from the solvability of the scalar equation. 
\begin{Thm}[Solvability of the Dirac--Beltrami equation]
\label{thm:Solvability}
Let $\Om \subset \R^n$ be a bounded simply connected Lipschitz domain and let $\mathcal{M}\in L^{\infty}(\Om, \LL(\Lambda \R^n))$ satisfying the uniform ellipticity condition 
\begin{align*}
\Vert \Vert \mathcal{M}(x)\Vert\Vert_{\infty}=M<1.
\end{align*} 
Then the equation 
\begin{align*}
\left\{
    \begin{array}{ll}
    \Dm F(x)=\mathcal{M}(x)\Dp F(x) & x\in \Om,\\
      F_T(x)=\phi(x)\in W^{1,1/2}(\dv \Om), & x\in \dv \Om.  
      \end{array} \right.
\end{align*}
has a solution $F\in W^{1,2}_{d,\delta}(\Om,\Lambda^{(1,2)})$ for every $\phi\in W^{1,1/2}(\Om)$, unique up to elements in $\mathcal{H}_T(\Om)$. 
\end{Thm}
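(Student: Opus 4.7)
The plan is to convert the half-Dirichlet problem into a linear fixed-point equation on $L^2(\Om,\Lambda)$ solvable by a Neumann series, mirroring the classical proof of the planar Beltrami equation. As a first step, construct a lift $F_0 \in W^{1,2}_{d,\delta}(\Om, \Lambda^{(1,2)})$ of $\phi$ (for instance by a Hodge-harmonic extension). Setting $F = F_0 + G$ with $G_T = 0$ on $\dv\Om$ reduces the problem to finding $G$ in the space $X_0 := \{G \in W^{1,2}_{d,\delta}(\Om, \Lambda^{(1,2)}) : G_T = 0\}$ satisfying
\begin{equation*}
\Dm G - \mathcal{M}\,\Dp G \;=\; h, \qquad h := \mathcal{M}\,\Dp F_0 - \Dm F_0 \in L^2(\Om,\Lambda).
\end{equation*}

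The crucial ingredient is a higher-dimensional Beurling-Ahlfors isometry. For any $G \in X_0$, integration by parts (using $G_T = 0$ to kill the boundary term, combined with $\delta^2 = 0$) gives $\langle dG, \delta G\rangle_{L^2(\Om)} = 0$, hence
\begin{equation*}
\|\Dp G\|_{L^2}^2 \;=\; \|dG\|_{L^2}^2 + \|\delta G\|_{L^2}^2 \;=\; \|\Dm G\|_{L^2}^2.
\end{equation*}
By a grade-separation argument, $\Dp X_0$ and $\Dm X_0$ coincide as closed subspaces $V \subset L^2(\Om,\Lambda)$, so the assignment $\mathcal{S}: \Dp G \mapsto \Dm G$ defines an involutive $L^2$-isometry on $V$, extended to all of $L^2(\Om,\Lambda)$ by zero on $V^\perp$. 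The detailed construction of this Beurling-Ahlfors transform together with a Cauchy-type right inverse of $\Dp$ is the content of the Hodge-Dirac framework developed in the earlier sections of the paper.

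Writing $U := \Dp G$ and using $\Dm G = \mathcal{S} U$, the reduced equation takes the form $(I - \mathcal{S}\mathcal{M})U = \mathcal{S} h$ on $V$ (after composing with the involution $\mathcal{S}$). Since $\|\mathcal{S}\|_{L^2 \to L^2} = 1$ and pointwise multiplication by $\mathcal{M}(x)$ has $L^2$-operator norm bounded by $M < 1$, the operator $\mathcal{S}\mathcal{M}$ is a strict contraction, and the Neumann series
\begin{equation*}
U \;=\; \sum_{k=0}^{\infty} (\mathcal{S}\mathcal{M})^k\,\mathcal{S} h
\end{equation*}
converges in $L^2$ to the unique solution in $V$. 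Applying the Cauchy-type operator to $U$ yields $G \in X_0$, unique modulo $\mathcal{H}_T(\Om)$ (because $\Dp G = 0$ with $G_T = 0$ forces $dG = 0$ and $\delta G = 0$ by grade separation, placing $G$ in the space of tangential harmonic fields), and $F := F_0 + G$ is the desired solution.

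The substantive obstacle is not the contraction estimate but the prerequisite Hodge-theoretic machinery on a Lipschitz domain: the closed-range property of $\Dp$ and $\Dm$ under the tangential boundary condition, the $L^2$-boundedness and isometric structure of the generalized Beurling transform $\mathcal{S}$, and the bounded solvability of the auxiliary Cauchy problem $\Dp G = U$ with $G_T = 0$ modulo $\mathcal{H}_T(\Om)$. These ingredients are considerably more delicate on Lipschitz than on $C^2$ domains and rely on the Hodge decomposition results established earlier in the paper; once they are in place, the Neumann-series step of the proof is essentially quantitative and routine.
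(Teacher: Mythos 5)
Your argument is essentially the paper's proof: the paper likewise lifts $\phi$ to a field $H$ with $\Dm H=0$, reduces to the inhomogeneous problem with zero tangential trace, proves via the same integration-by-parts identity that the tangential Beurling--Ahlfors operator $\BB_T:\Dm G\mapsto \Dp G$ (the inverse of your $\mathcal{S}$) is an $L^2(\Om,\Lambda)$-isometry, inverts $I-\BB_T\circ\mathcal{M}$ by a Neumann series, and recovers $G$ from the tangential Cauchy transform, with uniqueness modulo $\mathcal{H}_T(\Om)$ coming from the auxiliary Hodge--Dirac boundary value problem. The only point to tidy is your unproven claim that $\mathcal{S}$ is an involution: it is not needed, since composing the equation with the inverse isometry $\mathcal{S}^{-1}=\BB_T$ (as the paper does) yields the same contraction equation $(I-\BB_T\circ\mathcal{M})\Dp G=\BB_T\mathcal{M}\Dp H$.
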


\begin{Thm}\label{thm:RepSol}
The unique solution to the half-Dirichlet problem \eqref{eq:DB1} is given by 
\begin{align}\label{eq:RepForm1}
F(x)=\mathcal{C}_T^+\circ \Pi_{\mathcal{M}}\Dp H(x)+H(x)
\end{align}
where 
\begin{align*}
\Pi_{\mathcal{M}}&=(I-\BB_T\circ \mathcal{M})^{-1}\circ \BB_T\mathcal{M}
\end{align*}
and 
\begin{align*}
H(x)=\mathcal{E}^-\circ (I+\mathcal{H}_{TN}^-)\phi. 
\end{align*}
and $\BB_T$ is the tangential Beurling-Ahlfors transform from Definition \ref{def:BTan}, $\Cc^+_T$ is the tangential Cauchy transform from Definition \ref{def:CTan}, $\mathcal{H}^-_{TN}$ is the tangential to normal Hilbert transform from Definition \ref{def:HTan} and $\mathcal{E}^-$ is the Cauchy integral from Definition \ref{def:Cint}. In addition, we get a representation formula for the unique solution to the Dirichlet problem \ref{eq:DirPro1} through
\begin{align}
u(x)=\langle F(x)\rangle_0. 
\end{align}
\end{Thm}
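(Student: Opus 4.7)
The plan is to verify by direct computation that $F = \mathcal{C}_T^+ \Pi_{\mathcal{M}} \mathcal{D}^+ H + H$ satisfies both the Dirac--Beltrami equation and the tangential-trace condition $F_T = \phi$, and then to conclude uniqueness from Theorem \ref{thm:Solvability}. The representation $u = \langle F \rangle_0$ for the scalar Dirichlet problem will follow at once from the scalar-to-multivector correspondence established in Theorem \ref{thm:Main1}.

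For the boundary trace, the Plemelj-type jump relations of the Cauchy integral $\mathcal{E}^-$ for the Hodge--Dirac operator $\mathcal{D}^-$, combined with the defining property of the tangential-to-normal Hilbert transform, imply that $H = \mathcal{E}^- \circ (I + \mathcal{H}_{TN}^-)\phi$ is a $\mathcal{D}^-$-monogenic multivector field in $\Omega$ whose tangential boundary trace equals $\phi$. The role of $\mathcal{H}_{TN}^-$ is precisely to supply the normal component of the full boundary trace needed so that the Cauchy integral reproduces $\phi$ as its tangential trace. Since $\mathcal{C}_T^+$ maps into the subspace of fields with vanishing tangential trace by construction, it follows immediately that $F_T = \phi$.

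For the PDE, set $\omega := \mathcal{D}^-(F - H)$. The two structural identities behind $\mathcal{C}_T^+$ and $\mathcal{B}_T$ --- that $\mathcal{C}_T^+$ is a right inverse of one Hodge--Dirac operator on fields with vanishing tangential trace, while $\mathcal{B}_T$ is the intertwiner carrying this inverse to the other Hodge--Dirac operator --- together with $\mathcal{D}^- H = 0$ convert the Dirac--Beltrami equation $\mathcal{D}^- F = \widehat{\mathcal{M}} \mathcal{D}^+ F$ into a closed algebraic operator equation for $\omega$ in which $\mathcal{D}^+ H$ appears as the inhomogeneous term. Solving this by Neumann series and rearranging factors via the resolvent identity $(I - AB)^{-1} A = A (I - BA)^{-1}$ and the involution property of $\mathcal{B}_T$ on the relevant subspace produces exactly $\omega = \Pi_{\mathcal{M}} \mathcal{D}^+ H$, whence $F - H = \mathcal{C}_T^+ \omega = \mathcal{C}_T^+ \Pi_{\mathcal{M}} \mathcal{D}^+ H$. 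Convergence of the Neumann series is guaranteed by $\|\widehat{\mathcal{M}}\|_\infty \leq M < 1$ together with the $L^2$-boundedness of $\mathcal{B}_T$ on the appropriate exterior-algebra-valued $L^2$ space.

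The principal technical obstacle I anticipate is the rigorous verification of the intertwining and mapping properties of $\mathcal{C}_T^+, \mathcal{E}^-, \mathcal{H}_{TN}^-, \mathcal{B}_T$ on the partial Sobolev space $W^{1,2}_{d,\delta}(\Omega, \Lambda)$ with prescribed tangential trace; in particular, justifying the precise algebraic rearrangement that produces the form $\Pi_{\mathcal{M}} = (I - \mathcal{B}_T \widehat{\mathcal{M}})^{-1} \mathcal{B}_T \widehat{\mathcal{M}}$ rather than some formally equivalent expression relies on the identities and mapping properties of these tangential Clifford-analytic operators established earlier in the paper.
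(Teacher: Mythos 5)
Your proposal follows essentially the same route as the paper: subtract the $\Dm$-monogenic extension $H=\mathcal{E}^-(I+\mathcal{H}_{TN}^-)\phi$, use $\BB_T$ to turn the resulting inhomogeneous Dirac--Beltrami equation with zero tangential trace into a fixed-point equation solvable by Neumann series (convergence from $M\Vert\BB_T\Vert<1$), and then invert with $\Cc_T^+$. One bookkeeping correction: with your choice $\omega:=\Dm(F-H)$ the closed equation reads $(I-\mathcal{M}\BB_T)\omega=\mathcal{M}\Dp H$, so $\omega=(I-\mathcal{M}\BB_T)^{-1}\mathcal{M}\Dp H$, whereas $\Pi_{\mathcal{M}}\Dp H=\BB_T\omega=\Dp(F-H)$; since $\Cc_T^+$ inverts $\Dp$ (not $\Dm$) on fields with vanishing tangential trace, it is $\Dp(F-H)$, not $\omega$, that you must feed into $\Cc_T^+$ --- the resolvent identity $(I-AB)^{-1}A=A(I-BA)^{-1}$ you cite is exactly what reconciles the two expressions. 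Also, no involution property of $\BB_T$ is needed or established in the paper (only that it is an $L^2$-isometry); the defining intertwining relation $\BB_T(\Dm G)=\Dp G$ for $G_T=0$ suffices.
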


\begin{rem}
For the solvability and representation formula for the boundary value problem of the fully nonlinear Dirac--Beltrami equation see Theorem \ref{thm:NonLinDBRep}. 
\end{rem}

\begin{Thm}
Let $\Om \subset \R^n$ be a bounded domain and assume that $n\geq 3$ is odd. Let $\mathcal{M}\in L^{\infty}(\LL(\Lambda \R^n))$ and set $M=\Vert \Vert \mathcal{M}(x)\Vert\Vert_{L^\infty}(\Om)$. If 
\begin{align*}
M\Vert \BB\Vert_{2n}<1,
\end{align*}
where $\Vert \BB\Vert_{2n}$ denotes the norm of the Beurling-Ahlfors transform on $L^{2n}(\Om,\Lambda \R^n)$, then any solution $F\in W^{1,2}_{d,\delta}(\Om,\Lambda \R^n)$ of the Dirac--Beltrami equation
\begin{align*} 
\Dm F(x)=\mathcal{M}(x)\Dp F(x),
\end{align*}
is locally Hölder continuous with Hölder coefficient $\alpha=1/2$. 
\end{Thm}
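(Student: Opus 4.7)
The target H\"older exponent $\alpha=1/2$ coincides with the Morrey exponent $1-n/(2n)$ of the embedding $W^{1,2n}_{\mathrm{loc}}\hookrightarrow C^{0,1/2}_{\mathrm{loc}}$. My plan is to upgrade the a priori regularity $F\in W^{1,2}_{d,\delta,\mathrm{loc}}(\Om,\Lambda\R^n)$ all the way to $W^{1,2n}_{\mathrm{loc}}$ by inverting $I-\BB\mathcal{M}$ on $L^{p}$ for every $p\in[2,2n]$, and then invoke Morrey's embedding. The odd-dimension hypothesis enters through the Clifford-algebraic factorization identity $\BB\,\Dm G=\Dp G$ for compactly supported multivector fields on $\R^n$, which is exactly the representation of the higher-dimensional Beurling--Ahlfors operator developed earlier in the paper.

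Fix a ball $B_{2r}\Subset\Om$ and a cutoff $\eta\in C^\infty_c(B_{2r})$ with $\eta\equiv 1$ on $B_r$, and set $G:=\eta F$, extended by zero to $\R^n$. The graded Leibniz rules for $d$ and $\delta$ give
\begin{align*}
\Dp G = \eta\,\Dp F + L^+_\eta F, \qquad \Dm G = \eta\,\Dm F + L^-_\eta F,
\end{align*}
where $L^\pm_\eta$ is a zeroth-order operator built from $d\eta$ via interior and exterior multiplication, supported in the annulus $A:=B_{2r}\setminus B_r$. Substituting the Dirac--Beltrami equation yields
\begin{align*}
\Dm G - \mathcal{M}\,\Dp G = g_\eta, \qquad g_\eta := L^-_\eta F - \mathcal{M}\,L^+_\eta F,
\end{align*}
and $g_\eta$ is supported in $A$ and as integrable as $F$ there.

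Applying $\BB$ and using $\BB\,\Dm G=\Dp G$ converts the problem into a single singular integral equation:
\begin{align*}
(I-\BB\mathcal{M})\,\Dp G = \BB g_\eta.
\end{align*}
Since $\|\BB\|_2=1$ (Beurling is an $L^2$-isometry) and $M\|\BB\|_{2n}<1$ by hypothesis, the log-convexity of $p\mapsto\log\|\BB\|_p$ supplied by Riesz--Thorin interpolation yields $M\|\BB\|_p<1$ for every $p\in[2,2n]$. Hence $I-\BB\mathcal{M}$ is invertible on $L^p(\R^n,\Lambda\R^n)$ via Neumann series, and
\begin{align*}
\|\Dp G\|_{L^p(\R^n)}+\|\Dm G\|_{L^p(\R^n)}\;\lesssim_\eta\;\|F\|_{L^p(A)}.
\end{align*}
Since $G$ is compactly supported, the Riesz inequality gives $\|\nabla G\|_{L^p(\R^n)}\lesssim \|\Dp G\|_{L^p}+\|\Dm G\|_{L^p}$, so $F\in W^{1,p}(B_r)$ whenever $F\in L^p(A)$.

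I then bootstrap in $p$. From $F\in W^{1,2}_{\mathrm{loc}}$ and Sobolev embedding one has $F\in L^{2^\ast}_{\mathrm{loc}}$ with $2^\ast=2n/(n-2)$; the estimate above then lifts $F$ to $W^{1,\min(2^\ast,2n)}_{\mathrm{loc}}$. Iterating the Sobolev step $p\mapsto np/(n-p)$ raises the exponent geometrically past $n$ in finitely many steps, after which one further application reaches $p=2n$, giving $F\in W^{1,2n}_{\mathrm{loc}}(\Om)$. Morrey's embedding then delivers the claimed local H\"older continuity with exponent $1/2$. The principal obstacle is the rigorous verification of the factorization $\BB\,\Dm=\Dp$ on compactly supported Clifford-valued fields with uniform $L^p$-continuity of $\BB$ across the full range $p\in[2,2n]$; this is precisely where the odd-dimension hypothesis is used, via the Clifford-algebraic representation formula for the higher-dimensional Beurling--Ahlfors transform. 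Once this is in place, the remainder is a textbook Calder\'on--Zygmund / Sobolev iteration.
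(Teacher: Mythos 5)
Your proposal is correct in substance and follows essentially the same strategy as the paper: localize with a cutoff, use the factorization $\BB=\Dp\circ\Cc^-$ on compactly supported fields to convert the localized Dirac--Beltrami equation into a singular integral equation (the paper works with the equivalent form $(I-\mathcal{M}_0\BB)\Dm(\eta F)=\sigma_{\mathcal{M}}(\nabla\eta,x)F$ from Lemma \ref{lem:LocaId}), invert by Neumann series on $L^p$ for $2\le p\le 2n$, and climb the Sobolev ladder $2\to 2n/(n-2)\to 2n/(n-4)\to\cdots\to 2n$. Two points of comparison. First, the finishing step differs: the paper never passes through $W^{1,2n}_{\mathrm{loc}}$ and Morrey; it applies the H\"older estimate for the Cauchy transform ($\Cc^\pm:L^{p}\to C^{0,1-n/p}$ for $p>n$, so $L^{2n}\to C^{0,1/2}$) directly to the recursion identity $\eta F=\Cc^-(I-\mathcal{M}_0\BB)^{-1}\sigma_{\mathcal{M}}(\nabla\eta,x)F$, and likewise uses $\Cc^-:L^p\to L^{p^\ast}$ rather than the gradient for the intermediate steps. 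Your route also works, since the pointwise identity $\dv_jF=\tfrac12(e_j\gp\Dp F-e_j\gm\Dm F)$ controls the full gradient by $\vert\Dp F\vert+\vert\Dm F\vert$ at every $p$, but it is an extra detour; note that the $L^p$-Gaffney inequality is quoted in the paper only for $1<p<n$, so at $p=2n$ you should invoke the pointwise identity instead. Second, and more importantly, you misattribute the role of the parity hypothesis. The factorization $\BB\,\Dm G=\Dp G$ for compactly supported $G$ holds in \emph{every} dimension (Theorem \ref{thm:DiracBA} carries no parity restriction). Oddness of $n$ enters only through the arithmetic of the bootstrap: the iterated exponents are $2n/(n-2k)$, all of which remain strictly below $n$ until the last one, $2n/3$, whose Sobolev conjugate is exactly $2n$; for even $n$ the ladder terminates at the critical exponent $n$ and the argument stalls there. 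Your description of the iteration as raising the exponent past $n$ and then taking one further step to $2n$ is therefore not accurate, though the conclusion for odd $n$ stands. On the positive side, your Riesz--Thorin argument that $M\Vert\BB\Vert_p<1$ for all $p\in[2,2n]$ makes explicit a step the paper only asserts.
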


\begin{rem}
Note that in this generality the Hölder continuity of $F$ does not follow from De Giorgi-Nash-Moser theory. 
\end{rem}

\subsection{\sffamily Relation to Similar Results in the Literature}

The resolution of Kato's square root conjecture in the seminal paper \cite{AHLMT} have given rise to a new developments in the solvability and well-posedness for elliptic systems. In particular, this allowed for the bounded $\mathcal{H}^\infty$-calculus of unbounded operators as developed by A. McIntosh and collaborators to be used in \cite{AAM,AR}, to establish solvability and well-posedness for very general boundary values of general second order elliptic \emph{systems} of the form 
\begin{align}\label{eq:System}
\LL^\alpha u(t,x)=\sum_{i,j=0}^n\sum_{\beta=1}^m\dv_i\bigg(A_{i,j}^{\alpha,\beta}(t,x)\dv_j u^\beta(t,x)\bigg)=0, \quad \alpha=1,2,..,m,
\end{align}
in the upper-half space $\R^{n+1}_+=\{(t,x)\in \R\times \R^{n}: t>0\}$, where $\dv_0=\dv_t$, and $\dv_j=\frac{\dv}{\dv x_j}$, $j=1,2,...,n$. It also allowed for the study regularity up to the boundary for very general boundary conditions. Here $A\in L^{\infty}(\R^{n+1}_+,\LL(\C^{(1+n)m}))$ and satisfying the strict accretivity condition $(3)$ in \cite{AR}. Here complex valued vector fields are allowed. At this level of generality solutions need not be continuous and De Giorgi-Nash theory does not apply. Moreover, in \cite{AAM,AR} representation formulas for the Dirichlet problem for \eqref{eq:System} are derived using functional calculus for unbounded \emph{non-selfadjoint} bisectorial operators. The method in \cite{AAM,AR} is as in this work to reduce \eqref{eq:System} to a first order system. This equation is however very different from the Dirac--Beltrami equation \eqref{eq:DB1} in the present paper, and the techniques used in this paper are very different. In particular we do not need to use any advanced functional calculus for the representation formula \eqref{eq:RepForm1} beyond Neumann series. Moreover, our methods also apply to finitely connected domains and nonlinear equations which are beyond the techniques in \cite{AR}. On the other hand, the methods in \cite{AAM,AR} answer different questions, in particular solvability and boundary regularity in much more general spaces than $W^{1,2}$, and applies to second order systems and boundary value problems to which the methods in this paper cannot presently be used.

\subsection*{Acknowledgements}
Erik Duse was supported by the Knut and Alice Wallenberg Foundation KAW grant 2016.0416. The author thanks Björn Gustafsson for providing the proof of Lemma 7.3.


\section{\sffamily Notational Conventions}

We will let $\mathscr{H}^k$ denote the $k$-dimensional Hausdorff measure normalized so that $\mathscr{H}^n=\mathscr{L}^n$ where $\mathscr{L}^n$ is the Lebesgue measure of $\R^n$. Furthermore, for any Lipschitz hypersurface $\Sigma\subset \R^n$ we will always let $\sigma=\mathscr{H}^{n-1}\lfloor \Sigma$ denote the surface measure of $\Sigma$ without explicitly referring to $\Sigma$, i.e., we will omit any subscript $\sigma_\Sigma$.

In what follows we will let $\sigma_{n-1}$ denote the $n-1$-dimensional volume of the unit sphere $\mathbb{S}^{n-1}=\{x\in \R^n: \vert x\vert=1\}$ in $\R^n$ and $\omega_{n}$ the volume of the unit ball $\mathbb{B}^{n}=\{x\in \R^n: \vert x\vert<1\}$ in $\R^n$. 

If $V$ is a finite dimensional euclidean vector space we let $\LL(V)$ denote the space of linear operators on $V$. If $A\in \LL(V)$ we let $\Vert A\Vert$ denote the operator norm of $A$ and $\vert A\vert$ the Hilbert-Schmidt norm of $A$, i.e., 
\begin{align*}
\Vert A\Vert&=\sup_{\vert x\vert=1}\vert Ax\Vert,\\
\vert A\vert&=\sqrt{\langle A,A\rangle}=\sqrt{\text{tr}(A^\ast A)}. 
\end{align*}

Define the Hilbert space
\begin{align*}
L^2(\dv \Om, \Lambda):=L^2(\dv \Om, \sigma;\Lambda \R^n)
\end{align*}
with the $L^2$-inner product 
\begin{align*}
\langle f,g\rangle=\int_{\dv \Om}\langle f(x),g(x) \rangle d\sigma(x),
\end{align*}
Similarly, we let
\begin{align*}
L^2(\dv \Om, \Lambda^k):=L^2(\dv \Om, \sigma;\Lambda^k\R^n ), \quad L^2(\dv \Om, \Lambda^{(i_1,i_2,...,i_p)}):=L^2(\dv \Om, \sigma;\bigoplus_{l=1}^p,\Lambda^{i_l}\R^n ).
\end{align*}
We also write
\begin{align*}
W^{1,k}(\dv \Om, \Lambda):=W^{1,k}(\dv \Om, \Lambda\R^n),\quad W^{1,k}(\dv \Om, \Lambda^p):=W^{1,k}(\dv \Om, \Lambda^p\R^n). 
\end{align*}
for the classical Sobolev spaces. Since the letter $H$ will be used to denote de Rham and Hardy spaces in this paper, in order to avoid confusion we will not use the notation $H^1(\Om,\Lambda)=W^{1,2}(\Om,\Lambda)$ and $H^{1/2}(\dv \Om,\Lambda)=W^{1/2,2}(\dv \Om,\Lambda)$ for the trace space. 


\section{\sffamily Algebraic preliminaries}

\subsection{\sffamily Exterior and Clifford Algebra}

In this section we give a brief introduction to the relevant algebraic structures that will be used later on. For a more complete account we refer the reader to \cite{R}.

Let $V$ be an $n$-dimensional real inner product vector space with inner product denoted by $\langle \cdot,\cdot \rangle$. We denote by $\Lambda V$ the exterior algebra of $V$ and by abuse of notation we still denote by $\langle \cdot,\cdot \rangle$ the \emph{induced} euclidean inner product on $\Lambda V$ given by the Gram determinant 
\begin{align*}
\langle u_1\wedge...\wedge u_k,v_1\wedge...\wedge v_k\rangle=\det[\langle u_j,v_j\rangle]_{i,j=1}^k
\end{align*}
for $u_1,...,u_k,v_1,...,v_k\in V$. 

Let $\overline{n}:=\{1,2,...,n\}$. For an ordered subset $s=\{s_1<s_2<...<s_k\}\subset \overline{n}$ we denote by $\vert s\vert=k$ its cardinality. If $\{e_j\}_{j=1}^n$ is an ON-basis for $V$, we denote by $\{e_s\}_{s\subset {\bar{n}}}$ the induced $ON$-basis for $\Lambda V$, with 
\begin{align*}
e_s:=e_{s_1}\wedge....\wedge e_{s_k}. 
\end{align*}

Following \cite{R}, we define the left and right \emph{adjoint product} of $\wedge$, called \emph{left interior} and \emph{right interior multiplication}, denoted $\ri$ and $\li$ respectively, so that for all $\Theta, w_1,w_2\in \Lambda V$  we have 
\begin{align*}
\langle \Theta \ri w_1,w_2\rangle &=\langle  w_1,\Theta \wedge w_2\rangle, \\
\langle w_1\li \Theta ,w_2\rangle &=\langle  w_1,w_2 \wedge \Theta \rangle 
\end{align*}

Let $q$ be the associated quadratic form to the inner product $\langle\cdot,\cdot\rangle$. We define the Clifford algebra $\Delta V^q$  to be the associative algebra generated by the relations 
\begin{align*}
e_i\gp e_j+e_j\gp e_i&=2\langle e_i,e_j\rangle\\
\end{align*}

As vector spaces there exists a \emph{canonical isomorphism} (independent of basis) such that $\Lambda V\cong \Delta V$, see Proposition 1.2 in \cite[p. 10]{LM}. In what follows we will therefore just write $\Lambda V$ for the underlying vector space which we call the \emph{exterior space}, but equipped with different products. Moreover, for any orthogonal multivectors $e_s,e_t\in \Lambda \R^n$, $e_s\wedge e_t=e_s\gp e_t$ and for any vector $v\in V$ and multivector $w\in \Lambda V$ we have the Riesz identities 
\begin{align}\label{eq:RId1}
v\gp w&=v\ri w+v\wedge w,\\
\label{eq:RId2}
w\gp v&=w\li v+w\wedge v,
\end{align}
connecting the Clifford product to the wedge products and the left and right interior multiplication. Since $\Lambda V=\bigoplus_{k=0}^n \Lambda^k V$ is a graded vector space, we denote by $\langle w\rangle_k$ the orthogonal projection of $\Lambda V$ onto $\Lambda^k V$ so that any multivector $w\in \Lambda V$ can be written $\langle w\rangle=\sum_{k=0}^n\langle w\rangle_k$. Furthermore, we set
\begin{align*}
\langle w\rangle_{ev}&=\sum_{\substack{j=0\\ 2j\leq n}}\langle w\rangle_{2j},\\
\langle w\rangle_{odd}&=\sum_{\substack{j=0\\ 2j\leq n}}\langle w\rangle_{2j+1}.
\end{align*}

We define the main involution and anti-involution by
\begin{align*}
\widehat{w}&=\sum_{j=0}^n(-1)^j\langle w\rangle_j\\
\overline{w}&=\sum_{j=0}^n(-1)^{j(j-1)/2}\langle w\rangle_j,
\end{align*}
The main involution is both an exterior algebra as well as a Clifford algebra automorphism, i.e. for all $w_1,w_2\in \Lambda V$, $\widehat{w_1\wedge w_2}=\widehat{w_1}\wedge \widehat{w_2}$ and $\widehat{w_1\gp w_2}=\widehat{w_1}\gp \widehat{w_2}$. The main anti-involution is both an exterior algebra as well as a Clifford algebra anti-automorphism, i.e. for all $w_1,w_2\in \Lambda V$, $\widehat{w_1\wedge w_2}=\widehat{w_2}\wedge \widehat{w_1}$ and $\widehat{w_1\gp w_2}=\widehat{w_2}\gp \widehat{w_1}$. Using the involution we have the following useful commutation relations
\begin{align*}
v\wedge w=\widehat{w}\wedge v, \,\, v\ri w=-\widehat{w}\li v, \text{ for all $v\in V$ and $w\in \Lambda W$}.
\end{align*}
 Moreover, the anti-involution has the following useful properties with respect to the inner product 
 \begin{align*}
 \langle w_1\gp w_2,w_3\rangle=\langle w_2,\overline{w_1}\gp w_3\rangle=\langle w_1,w_3\gp \overline{w_2}\rangle.
 \end{align*} 
Thus, the anti-involution expresses the adjoint operation of Clifford multiplication. Furthermore, we let $\Lambda^{ev}V$ and $\Lambda^{od}V$ correspond to the orthogonal decomposition of $\Lambda V$ into the eigenspaces corresponding to the eigenvalues $+1$ and $-1$ under the main involution. 
\begin{Def}[Hodge Duality]
Let $V$ be an $n$-dimensional Euclidean vector space and let $e_V$ denote the volume form of $V$, which in an ON-basis $\{e_j\}_j$ of $V$ is given by 
\begin{align*}
e_V=e_1\wedge ...\wedge e_n.
\end{align*}
Then, for any $w\in \Lambda V$ we define the \emph{left and right Hodge star map} by
\begin{align*}
\star w&:=e_V \li w=e_V\gp \overline{w},\\
w\star&:=w\ri e_V=\overline{w}\gp e_V. 
\end{align*}
\end{Def}

The Hodge star maps have the useful properties that $\star(w\star)=(\star w)\star=w$, $\star \star w=\widehat{w}=w\star \star$ and $(w_1\wedge (\star w_2))\star=\langle w_1,w_2\rangle$ for any $w,w_1,w_2\in \Lambda V$.

\begin{Lem}\label{lem:Norm}
If $v\in V$ and $w\in \Lambda V$, then $\vert v\gp w\vert=\vert w\gp v\vert=\vert v\vert \vert w\vert$.
\end{Lem}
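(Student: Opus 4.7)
The plan is to exploit the adjoint properties of the Clifford product listed just above the lemma, together with the fundamental identity $v \gp v = |v|^2$ for $v \in V$.

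First I would record the two elementary facts that drive everything. The Clifford relation $e_i \gp e_j + e_j \gp e_i = 2\langle e_i, e_j\rangle$ specialized to a general $v = \sum_i v^i e_i \in V$ yields $v\gp v = \sum_i (v^i)^2 = |v|^2$, since the cross terms cancel in pairs. Second, under the anti-involution $\overline{\cdot}$, a vector $v\in V$ satisfies $\overline{v} = v$, because the sign $(-1)^{j(j-1)/2}$ at $j=1$ equals $1$.

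Next I would apply the adjoint formula $\langle w_1\gp w_2, w_3\rangle = \langle w_2, \overline{w_1}\gp w_3\rangle$ stated in the paper with $w_1 = v$, $w_2 = w$, $w_3 = v\gp w$. This gives
\begin{align*}
|v\gp w|^2 = \langle v\gp w, v\gp w\rangle = \langle w, \overline{v}\gp v \gp w\rangle = \langle w, (v\gp v)\gp w\rangle = |v|^2 \langle w,w\rangle = |v|^2 |w|^2.
\end{align*}
Symmetrically, using the second adjoint identity $\langle w_1\gp w_2, w_3\rangle = \langle w_1, w_3\gp \overline{w_2}\rangle$ with $w_1 = w$, $w_2 = v$, $w_3 = w\gp v$,
\begin{align*}
|w\gp v|^2 = \langle w\gp v, w\gp v\rangle = \langle w, w\gp v\gp \overline{v}\rangle = \langle w, w\gp (v\gp v)\rangle = |v|^2 |w|^2.
\end{align*}
Taking square roots yields both equalities.

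There is no real obstacle beyond checking the two preliminary facts; the essential content is that the adjoint of left (resp.\ right) Clifford multiplication by $v$ is left (resp.\ right) multiplication by $\overline{v} = v$, so $v\gp(\cdot)$ and $(\cdot)\gp v$ act on $\Lambda V$ as operators whose squares equal $|v|^2\, \mathrm{Id}$, hence as $|v|$ times an isometry. The same argument could alternatively be carried out by splitting $v\gp w = v\ri w + v\wedge w$ into orthogonal components and invoking the Pythagorean identity $|v\ri w|^2 + |v\wedge w|^2 = |v|^2|w|^2$, but the adjoint route above is shorter and avoids checking the Pythagorean identity separately.
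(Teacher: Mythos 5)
Your argument is correct and matches the paper's proof exactly: the paper also computes $\vert v\gp w\vert^2=\langle w, v\gp v\gp w\rangle=\vert v\vert^2\vert w\vert^2$ via the adjoint property of Clifford multiplication and $\overline{v}=v$. Your treatment of $\vert w\gp v\vert$ with the right-multiplication adjoint identity is the natural symmetric counterpart, which the paper leaves implicit.
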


\begin{proof}
\begin{align*}
\vert v\gp w\vert^2=\langle v\gp w,v\gp w\rangle=\langle w,v\gp v\gp w\rangle=\langle w,\vert v\vert^2 w\rangle=\vert v\vert^2\vert w\vert^2. 
\end{align*}
\end{proof}

Note that in general it is {\bf not} true that for any multivectors $w_1,w_2$ we have $\vert w_1\gp w_2\vert \leq \vert w_1\vert \vert w_2\vert$. What is true is that we always have $\vert w_1\gp w_2\vert \leq 2^n\vert w_1\vert \vert w_2\vert$ where $n$ is the dimension of $V$.

We now assume that the inner product $\langle \cdot,\cdot \rangle_+$ on $V$ is euclidean. We associate the anti-euclidean inner product $\langle \cdot,\cdot \rangle_-=-\langle \cdot,\cdot \rangle_+$ on $V$. We denote the two Clifford algebras with respect to the euclidean and anti-euclidean inner product by $\Delta V^+$ and $\Delta V^-$ and denote the associated Clifford products by $\gp$ and $\gm$ respectively and call them positive and negative Clifford multiplication. Note that for any vectors $v_1,...,v_k,u_1,...,u_k\in V$ we 
\begin{align*}
\langle v_1\wedge...\wedge v_k,u_1\rangle_-&=\det[\langle v_j,u_k \rangle_-]_{j,k}=\det[-\langle v_j,u_k \rangle_+]_{j,k}\\&=\det[\langle \widehat{v_j},u_k \rangle_+]_{j,k}=\langle \widehat{v_1\wedge...\wedge v_k},u_1\rangle_+. 
\end{align*}
Therefore, by multilinearity we have for all $w_1,w_2\in \Lambda V$
\begin{align*}
\langle w_1,w_2\rangle _-=\langle \widehat{w_1},w_2\rangle _+=\langle w_1,\widehat{w_2}\rangle _+. 
\end{align*}

We now prove the following useful anti-commutation relations for positive and negative Clifford multiplication with respect to vectors. 

\begin{Prop}
For all $u,v\in V$ and $w\in \Lambda V$ we have 
\begin{align}\label{eq:AntiCom1}
u\gp (v\gm w)=-v\gm (u\gp w).
\end{align}
and 
\begin{align}\label{eq:AntiCom2}
v\gp w=\widehat{w}\gm v. 
\end{align}
\end{Prop}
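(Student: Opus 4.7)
The plan is to prove \eqref{eq:AntiCom2} first, then derive \eqref{eq:AntiCom1} from it together with the fact that the main involution is an automorphism of both Clifford algebras.

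The key bridge between the exterior algebra and the negative Clifford product is a ``negative Riesz identity.'' The wedge product is defined intrinsically and is insensitive to the choice of inner product, while the interior products depend linearly on it. Since $\langle\cdot,\cdot\rangle_- = -\langle\cdot,\cdot\rangle_+$, the left and right interior products with respect to $\langle\cdot,\cdot\rangle_-$ are simply the negatives of those with respect to $\langle\cdot,\cdot\rangle_+$. Combined with \eqref{eq:RId1}--\eqref{eq:RId2}, this yields
\begin{align*}
v\gm w = -v\ri w + v\wedge w, \qquad w\gm v = -w\li v + w\wedge v,
\end{align*}
for every $v\in V$ and $w\in \Lambda V$.

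To prove \eqref{eq:AntiCom2}, I would start from \eqref{eq:RId1} and apply the two commutation identities $v\wedge w=\widehat{w}\wedge v$ and $v\ri w=-\widehat{w}\li v$ stated earlier in the excerpt:
\begin{align*}
v\gp w = v\ri w + v\wedge w = -\widehat{w}\li v + \widehat{w}\wedge v.
\end{align*}
By the negative Riesz identity above (with $v$ replaced by $\widehat{w}$ and $w$ by $v$), the right-hand side is exactly $\widehat{w}\gm v$, yielding \eqref{eq:AntiCom2}.

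For \eqref{eq:AntiCom1}, the point is that $\widehat{\cdot}$ is an algebra automorphism of $\Delta V^-$ as well: on generators the defining Clifford relations $e_i\gm e_j+e_j\gm e_i = -2\langle e_i,e_j\rangle_+$ are preserved under $e_i\mapsto -e_i$, so the map extends to an automorphism. Applying \eqref{eq:AntiCom2} twice, together with the fact that $\widehat{v}=-v$ for $v\in V$ and the associativity of $\gm$, one computes
\begin{align*}
u\gp (v\gm w) = \widehat{v\gm w}\gm u = (\widehat{v}\gm \widehat{w})\gm u = -v\gm(\widehat{w}\gm u) = -v\gm(u\gp w),
\end{align*}
which is \eqref{eq:AntiCom1}.

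The only real obstacle is keeping track of signs between the two inner products and the two Clifford products; once the negative Riesz identity is established, both claims follow by short algebraic manipulations using the commutation identities that are already in the paper.
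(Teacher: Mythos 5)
Your proof is correct. For \eqref{eq:AntiCom2} you follow essentially the same route as the paper: write out the Riesz identity for $\gp$, note that the interior products flip sign when the inner product is negated (so $v\gm w=-v\ri w+v\wedge w$ and $w\gm v=-w\li v+w\wedge v$), and apply the commutation identities $v\wedge w=\widehat{w}\wedge v$, $v\ri w=-\widehat{w}\li v$. For \eqref{eq:AntiCom1}, however, you take a genuinely different and shorter path. The paper expands $u\gp(v\gm w)$ entirely into wedge and interior products via the Riesz identities and then contracts back, which requires the additional anticommutation relation $u\ri(v\wedge w)+v\wedge(u\ri w)=\langle u,v\rangle w$ as external input. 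You instead deduce \eqref{eq:AntiCom1} formally from \eqref{eq:AntiCom2} applied twice, together with the facts that the main involution is an algebra automorphism of $\Delta V^-$ (correctly justified: $(-v)\gm(-v)=v\gm v=\langle v,v\rangle_-$, so the sign flip on $V$ extends by the universal property, and it restricts to $\widehat{v}=-v$ on vectors) and that $\gm$ is associative. The chain $u\gp(v\gm w)=\widehat{v\gm w}\gm u=(\widehat{v}\gm\widehat{w})\gm u=-v\gm(\widehat{w}\gm u)=-v\gm(u\gp w)$ is valid at every step. Your argument buys brevity and makes the algebraic mechanism (conjugation by the grade involution intertwining the two Clifford products) more transparent; the paper's computation is more elementary in that it stays entirely within exterior-algebra identities and does not need the automorphism property of the involution for the negative Clifford structure.
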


\begin{proof}
We first prove \eqref{eq:AntiCom2}. We use the Riesz identities for positive and negative Clifford multiplication 
\begin{align*}
v\gp w&=v\ri w+v\wedge w, \quad v\gm w=-v\ri w+v\wedge w&
w\gp v&=w\li v+w\wedge v, \quad w\gm v=-w\li v+ w\wedge v. 
\end{align*}
By Proposition 2.6.3 in \cite{R}, 
\begin{align*}
v\wedge w=\widehat{w}\wedge v, \quad v\ri w=-\widehat{w}\li v 
\end{align*}
Thus, 
\begin{align*}
v\gp w&=v\ri w+v\wedge w=-\widehat{w}\li v +\widehat{w}\wedge v=\widehat{w}\gm v.
\end{align*}
We now prove \eqref{eq:AntiCom1}. We use the anti-commutation relations 
from 
\begin{align*}
u\ri(v\wedge w)+v\wedge(u\ri w)=\langle u,v\rangle w
\end{align*}
from Theorem 2.8.1 in \cite{R} and expand using the Riesz identities. This gives using Proposition 2.6.3 in \cite{R}, 
\begin{align*}
u\gp (v\gm w)&=-u\ri (v\ri w)+u\ri(v\wedge w)-u\wedge(v\ri w)+u\wedge(v\wedge w)\\
&=-(v\wedge u)\ri w-v\wedge (u\ri w)+\langle u,v\rangle w+v\ri (u\wedge w)-\langle u,v\rangle w-v\wedge(u\wedge w)\\
&=(u\wedge v)\ri w-v\wedge (u\ri w)+v\ri (u\wedge w)-v\wedge(u\wedge w)\\
&=v\ri (u\ri w)-v\wedge (u\ri w)+v\ri (u\wedge w)-v\wedge(u\wedge w)\\
&=v\ri (u\gp w)-v\wedge (u\gp w)\\
&=-v\gm(u\gp w).
\end{align*}
\end{proof}

\subsection{\sffamily Relation between Bivectors and Complex Structures}

For the space of bivectors $\Lambda^2V$ there is a direct connection to geometry. In particular we have ;
\begin{Thm}[Chap. 6.5 \cite{LS}]
Let $V$ be a euclidean vector space. Then there exists a Lie algebra homomorphism 
\begin{align*}
\Lambda^2V\cong \mathfrak{so}(V),
\end{align*}
where the Lie bracket in $\Lambda^2 V$ is given by the Clifford commutator, i.e., for any $b_1,b_2\in \Lambda^2V$
\begin{align*}
[b_1,b_2]_\gp =b_1\gp b_2-b_2\gp b_1. 
\end{align*}
\end{Thm}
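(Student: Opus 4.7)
The plan is to write down an explicit candidate map $\rho : \Lambda^2 V \to \mathfrak{so}(V)$ and verify in turn that it lands in skew-symmetric endomorphisms of $V$, is bijective, and respects the two Lie brackets. I would define $\rho(b)(v) := [b, v]_\gp = b \gp v - v \gp b$ for $b \in \Lambda^2 V$ and $v \in V$. To see $\rho(b)(v) \in V$, I apply the Riesz identities \eqref{eq:RId1}--\eqref{eq:RId2}: $b \gp v = b \li v + b \wedge v$ and $v \gp b = v \ri b + v \wedge b$. Since $b$ is a bivector, $\widehat{b} = b$, so the commutation relations listed just before Lemma~\ref{lem:Norm} yield $v \ri b = -b \li v$ and $v \wedge b = b \wedge v$. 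Subtracting gives $\rho(b)(v) = 2\, b \li v \in \Lambda^1 V = V$.

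Next, I would check skew-symmetry using the adjoint property defining $\li$: for $u,v\in V$, $\langle \rho(b)v, u\rangle = 2\langle b \li v, u\rangle = 2\langle b, u \wedge v\rangle$, which flips sign under swapping $u$ and $v$, so $\rho(b) \in \mathfrak{so}(V)$. Injectivity is then immediate: if $\rho(b) = 0$, then $b$ is orthogonal to every decomposable bivector $u \wedge v$, and these span $\Lambda^2 V$, forcing $b = 0$. Combined with the dimension equality $\dim \Lambda^2 V = \binom{n}{2} = \dim \mathfrak{so}(V)$, this already makes $\rho$ a linear isomorphism.

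The remaining and most subtle step is to verify that $\rho$ intertwines the two brackets. Before that can even be formulated, one must show $[b_1, b_2]_\gp \in \Lambda^2 V$ whenever $b_1, b_2 \in \Lambda^2 V$. I would do this by decomposing $b_1 \gp b_2 = \langle b_1 \gp b_2 \rangle_0 + \langle b_1 \gp b_2 \rangle_2 + \langle b_1 \gp b_2 \rangle_4$ and applying the anti-involution. Since $\overline{b_i} = -b_i$ (because $(-1)^{2(2-1)/2} = -1$) and $\overline{\,\cdot\,}$ reverses Clifford products, $\overline{b_1 \gp b_2} = \overline{b_2} \gp \overline{b_1} = b_2 \gp b_1$. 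On the other hand, $\overline{\,\cdot\,}$ acts as $+1$ on $\Lambda^0 \oplus \Lambda^4$ and as $-1$ on $\Lambda^2$, giving $b_2 \gp b_1 = \langle b_1 \gp b_2 \rangle_0 - \langle b_1 \gp b_2 \rangle_2 + \langle b_1 \gp b_2 \rangle_4$, so $[b_1, b_2]_\gp = 2 \langle b_1 \gp b_2 \rangle_2 \in \Lambda^2 V$. With this in hand, the bracket identity $\rho([b_1, b_2]_\gp) = [\rho(b_1), \rho(b_2)]$ is just the Jacobi identity in the associative algebra $(\Lambda V, \gp)$: the adjoint map $b \mapsto \mathrm{ad}_b := [b, \,\cdot\,]_\gp$ always satisfies $[\mathrm{ad}_{b_1}, \mathrm{ad}_{b_2}] = \mathrm{ad}_{[b_1, b_2]_\gp}$, and restricting both sides to $V$ yields the claim.

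The hard part will be the grade-of-commutator computation in the last paragraph: until it is in place, the Lie bracket on the source side is not even well-defined, and the Jacobi-based argument for bracket preservation has no target to aim at. Every other step is either a one-line Riesz-identity calculation, a basic adjointness argument, or a dimension count.
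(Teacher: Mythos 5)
The paper itself gives no proof of this statement --- it is quoted from Chapter 6.5 of \cite{LS} --- so there is nothing to compare against; what matters is whether your argument stands on its own, and it does. The map $\rho(b)=\mathrm{ad}_b|_V$ is the right choice, and each step checks out: the Riesz identities together with $\widehat{b}=b$ give $\rho(b)v=2\,b\li v\in V$; adjointness of $\li$ gives skew-symmetry; injectivity plus $\dim\Lambda^2V=\binom{n}{2}=\dim\mathfrak{so}(V)$ gives bijectivity; the anti-involution computation correctly isolates $[b_1,b_2]_\gp=2\langle b_1\gp b_2\rangle_2\in\Lambda^2V$ (using that a product of two bivectors has only grades $0,2,4$, which you should state explicitly since the anti-involution alone would not rule out, say, a grade-$3$ component); and the bracket identity is indeed automatic from $\mathrm{ad}_{[b_1,b_2]}=[\mathrm{ad}_{b_1},\mathrm{ad}_{b_2}]$ in the associative algebra, restricted to the invariant subspace $V$. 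One point worth appreciating: your normalization $\rho(b)=2\,b\li(\cdot)$, rather than the map $b\mapsto b\li(\cdot)$ that the paper uses immediately after the theorem to realize skew maps, is the one compatible with the \emph{full} commutator $b_1\gp b_2-b_2\gp b_1$ as the bracket; with the un-doubled map one would need the bracket $\tfrac12[\cdot,\cdot]_\gp$ instead, so the factor of $2$ is not cosmetic.
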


In particular, for any skew symmetric linear map $A\in \LL(V)$ there exists a unique bivector $b$ such that for any $v\in V$
\begin{align*}
Av=b\li v. 
\end{align*}

If we now assume that $\text{dim}V=2n$ and that we are given an ON-eigenbasis $\{e_1^+,...,e_n^+,e_1^-,...,e_n^-\}$ of $V$ we can form the bivectors $J_k:=e_k^+\wedge e_k^-$  and the bivector 
\begin{align*}
J:=\sum_{k=1}^ne_k^+\wedge e_k^-=\sum_{k=1}^nJ_k. 
\end{align*}

It is easy to any such $J$ becomes a complex structure on $V$. In particular, we have $\overline{J}=-J$. Furthermore, in dimension two, $J=e_1\wedge e_2$ coincides with the volume form and under the algebra isomorphism 
$\Delta^{ev}V\cong \C$ we have the identification $J\cong i$.

\subsection{\sffamily Exterior Extensions of Linear Maps and Grassmann Duality}

\begin{Def}\label{def:ExteriorExt}
For a linear operator $T\in \LL(V)=\text{Hom}(V,V)$, we define the \emph{exterior extension} of $T$, denoted $\widehat{T}$, to be unique linear operator $\widehat{T}\in \LL(\Lambda V)$ which is an exterior algebra homomorphism such that 
\begin{itemize}
\item[(i)] $\widehat{T}(1)=1$.
\item[(ii)] $\widehat{T}\vert_V=T$.
\item[(iii)] $\widehat{T}(\Lambda^k V)\subseteq \Lambda^kV$ for all $k=0,1,...,n$. 
\item[(iv)] For any $w_1,w_2\in \Lambda V$, $\widehat{T}(w_1\wedge w_2)=\widehat{T}(w_1)\wedge \widehat{T}(w_2)$.
\end{itemize}
The restriction of $\widehat{T}$ to $\Lambda^kV$ will be denoted by $\widehat{T}^k$. 
\end{Def}

Note that $\text{tr}(\widehat{T}^n)=\det(T)$, and that the characteristic polynomial of $T$ can be written according to 
\begin{align*}
\text{ch}_T(z)=\sum_{k=0}^n(-1)^k\text{tr}(\widehat{T}^k)z^{n-k}. 
\end{align*}

\begin{Lem}\label{lem:Refl}
Let $V$ be a euclidean vectors space and let $\mathbf{R}(v)$ denote the orthogonal reflection in the hyperplane orthogonal to the vector $v$. Then for any $w\in \Lambda V$
\begin{align}\label{ReflFormula}
\widehat{\mathbf{R}(v)}w=v\gp \widehat{w}\gp v^{-1}
\end{align}
Moreover, for any $T\in O(V)$, there exists a \emph{rotor} $q\in \text{Pin}(V)\subset \Delta V$ such that for any $w\in \Lambda V$,
\begin{align}\label{Rot}
\widehat{\mathbf{T}}w=q\gp \widehat{w}\gp q^{-1},
\end{align}
\end{Lem}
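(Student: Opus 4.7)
The plan is to prove both identities by exploiting the universal property of the Clifford algebra: any linear map on $\Delta V$ that is a Clifford homomorphism and agrees with a prescribed map on the generating space $V$ is uniquely determined. The first identity then becomes a one-line computation on a single vector, and the second reduces to it via Cartan--Dieudonn\'e.

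For \eqref{ReflFormula}, define $\Phi : \Lambda V \to \Lambda V$ by $\Phi(w) := v \gp \widehat{w} \gp v^{-1}$. The main involution $\widehat{\cdot}$ is a Clifford automorphism (recorded earlier in the preliminaries), and conjugation by the invertible vector $v$ is a Clifford inner automorphism, so $\Phi$ is an automorphism of $\Delta V$. To evaluate it on $u \in V$, use $\widehat{u} = -u$ together with the Riesz-type identity $v \gp u + u \gp v = 2\langle u, v \rangle$ to obtain
\begin{align*}
\Phi(u) = -v \gp u \gp v^{-1} = (u \gp v - 2\langle u, v \rangle)\gp v^{-1} = u - \frac{2\langle u, v\rangle}{|v|^{2}} v = \mathbf{R}(v)\,u.
\end{align*}
On the other hand, since $\mathbf{R}(v) \in O(V)$ preserves the inner product, its exterior extension $\widehat{\mathbf{R}(v)}$ preserves the defining relations $e_{i}\gp e_{j} + e_{j}\gp e_{i} = 2\langle e_{i}, e_{j}\rangle$, so it is a Clifford automorphism as well. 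Two Clifford automorphisms that coincide on the generating subspace $V$ must coincide on all of $\Delta V$, which gives \eqref{ReflFormula}.

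For \eqref{Rot}, I would apply Cartan--Dieudonn\'e to decompose $T = \mathbf{R}(v_{1}) \circ \cdots \circ \mathbf{R}(v_{k})$ as a product of at most $n = \dim V$ hyperplane reflections in unit vectors $v_{j}$, and set $q := v_{1} \gp v_{2} \gp \cdots \gp v_{k} \in \mathrm{Pin}(V)$. Then I would induct on $k$, iterating \eqref{ReflFormula}. A single application yields $\widehat{\mathbf{R}(v_{k})}\,w = v_{k}\gp \widehat{w} \gp v_{k}^{-1}$; applying $\widehat{\mathbf{R}(v_{k-1})}$ to this and using that $\widehat{\cdot}$ is a Clifford automorphism with $\widehat{v_{j}} = -v_{j}$, the two factors of $-1$ produced by pushing the involution through $v_{k}$ and $v_{k}^{-1}$ cancel, so the outcome is a pure conjugation by $v_{k-1}\gp v_{k}$. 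Iterating collapses the nested conjugations into a single conjugation by $q$, with the main involution on $w$ persisting in the form dictated by the parity of $k$, which is exactly the statement \eqref{Rot} (once $q$ is read through the twisted adjoint representation of $\mathrm{Pin}(V)$).

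The main obstacle I anticipate is the sign bookkeeping in the induction: each time the involution is commuted past a Clifford conjugation by a vector it produces a factor of $-1$, and these factors must be tracked across all $k$ reflections and seen to organise themselves into exactly one copy of $\widehat{\cdot}$ on $w$ in the final expression. Once the convention identifying the $\mathrm{Pin}(V)$-action with the twisted adjoint representation is fixed, the induction is mechanical; the delicate part is simply writing the inductive step so that the cumulative signs cleanly produce the single right-hand side $q\gp \widehat{w}\gp q^{-1}$ and not a stray additional involution or sign.
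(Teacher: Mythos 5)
Your argument is correct and, unlike the paper, which simply cites Proposition 4.1.10 of \cite{R}, it is self-contained. The first half is clean: the one observation that makes it work is that for $T\in O(V)$ the \emph{exterior} extension $\widehat{T}$ of Definition \ref{def:ExteriorExt} coincides with the Clifford-algebra extension, because $T$ maps an ON basis to an ON basis and the wedge product of orthogonal vectors equals their Clifford product; only then does "two Clifford automorphisms agreeing on the generating space $V$ agree on $\Delta V$" apply to $\widehat{\mathbf{R}(v)}$. You state this, and the computation $\Phi(u)=u-2\langle u,v\rangle v/\vert v\vert^2$ is right, so \eqref{ReflFormula} is fully proved.

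For \eqref{Rot} your Cartan--Dieudonn\'e induction is the right mechanism, but you should carry it to its explicit conclusion rather than deferring to "the twisted adjoint representation": iterating \eqref{ReflFormula} through $T=\mathbf{R}(v_1)\circ\cdots\circ\mathbf{R}(v_k)$ with $q=v_1\gp\cdots\gp v_k$ gives $\widehat{\mathbf{T}}w=q\gp\widehat{w}\gp q^{-1}$ when $k$ is odd but $\widehat{\mathbf{T}}w=q\gp w\gp q^{-1}$ (no involution) when $k$ is even, since each pass of $\widehat{\cdot}$ through a conjugating vector contributes two cancelling signs, leaving one residual involution per reflection. So the display \eqref{Rot} as literally written fails for $T\in SO(V)$ (take $T=I$, $q=1$, $w\in V$); the correct uniform statement is $\widehat{\mathbf{T}}w=q\gp\widehat{w}^{\,k}\gp q^{-1}$ with the involution applied $k\equiv\det T$ times, equivalently with $\widehat{w}$ present exactly when $T\notin SO(V)$. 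This is an imprecision in the lemma's statement rather than a flaw in your proof, but your write-up should end with the parity-split formula rather than a gesture toward a convention; once that is done the argument is complete.
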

where $\text{Pin}(V)$ is the pin group, i.e., the double cover of $O(V)$. If $T\in SO(V)$, then $q\in \text{Spin}(V)$, where $\text{Spin}(V)$ is the spin group, i.e., the double cover of $SO(V)$.
\begin{proof}
See Proposition 4.1.10 in \cite{R}.
\end{proof}

Explicitly the Pin and the Spin groups can be represented within the Clifford algebra in the following way. Again assume that $V$ is a euclidean vector space and let $\Delta V$ denote the euclidean Clifford algebra with respect to $V$. Define the \emph{Clifford cone} $\widehat{\Delta}V$ to consist of all multivectors $q\in \Delta V$ such that there exists finitely many vectors $v_1,v_2,...,v_k$ such that 
\begin{align*}
q=v_1\gp v_2\gp ...\gp v_k.
\end{align*}
In particular, by Proposition 4.1.5 in \cite{R}, $q\in \widehat{\Delta}V$ if and only if $\widehat{q}\gp v\gp q^{-1}\in V$ for all $v\in V$. Then 
\begin{align*}
\text{Pin}(V)&=\{q\in \widehat{\Delta}V: \langle q,q\rangle=\pm 1\},\\
\text{Spin}(V)&=\{q\in \text{Pin}(V): \Delta^{ev}V\},\\
\end{align*}
where $\Delta^{ev}V$ denotes the even subalgebra of $\Delta V$. 

\begin{Def}\label{def:Project}
Let $\nu \in V$ be a unit vector.  Define the linear operators $\mathbf{P}_T^\nu$ and $\mathbf{P}_N^\nu$ by 
\begin{align*}
\mathbf{P}_T^\nu w&:=\nu\ri (\nu \wedge w),\\
\mathbf{P}_N^\nu w&:=\nu\wedge (\nu \ri w),
\end{align*}
for all $w\in \Lambda V$. 
\end{Def}

In particular we note that $\mathbf{P}_T^\nu+\mathbf{P}_N^\nu=I$, and that for any $u\in V$, $\mathbf{P}_T^\nu u$ is the orthogonal projection onto the linear space spanned by $\nu$. 

\begin{Def}[Grassmann dual map]
Let $T\in \LL(\Lambda V)$. The \emph{Grassman dual linear map} $T^c\in \LL(\Lambda V)$ is defined by
\begin{align}\label{def:DualL}
T^c(w):=T(\star w)\star
\end{align}
for all $w\in \LL(\Lambda V)$.
\end{Def}
From the definition the following properties hold:
\begin{itemize}
\item[(i)] $(T^c)^c=T$.
\item[(ii)] $(T^c)(e_V)=e_V$.
\item[(iii)] $(S\circ T)^c=S^c\circ T^c$
\item[(iv)] $I^c=I$.
\item[(v)] If $T(\Lambda^pV)\subset \Lambda^qV$ then $T^c(\Lambda^{\text{dim}V-p}V)\subset \Lambda^{\text{dim}V-q}V$
\item[(vi)] If $T(\Lambda^pV)\subset \Lambda^pV$ for all $p=0,1,...,\text{dim}V$, then $(T^\ast)^c=(T^c)^\ast$. 
\end{itemize}
Here $I$ denotes the identity map. Using the exterior extension and the Grassmann dual the cofactor formula can be expressed as
\begin{align*}
(\widehat{T})^c\circ \widehat{T^\ast}=\det(T)I
\end{align*}
for $T\in \LL(V)$, see Proposition 3.7 \cite{LS}.

\begin{Def}[Self-dual linear maps]
A linear map $T\in \LL(V)$ is said to be \emph{self-dual} if 
\begin{align*}
(\widehat{T})^c=\widehat{T}.
\end{align*}
\end{Def}

\begin{Lem}\label{lem:Conformal}
If $T\in \LL(V)$ is self-dual then $T$ is conformal. 
\end{Lem}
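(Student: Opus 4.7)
The plan is to combine the self-duality hypothesis with the cofactor formula
\begin{align*}
(\widehat{T})^c \circ \widehat{T^\ast} = \det(T)\, I
\end{align*}
recorded just after Definition \ref{def:DualL}. Since by assumption $(\widehat{T})^c = \widehat{T}$, the cofactor formula collapses at once to $\widehat{T} \circ \widehat{T^\ast} = \det(T)\, I$ on all of $\Lambda V$.

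Next I would invoke the functoriality of the exterior extension, namely $\widehat{S} \circ \widehat{T} = \widehat{S \circ T}$ for all $S, T \in \LL(V)$. This is not recorded explicitly but is immediate from Definition \ref{def:ExteriorExt}: the composition $\widehat{S} \circ \widehat{T}$ is a grading-preserving exterior algebra homomorphism of $\Lambda V$ sending $1$ to $1$ and restricting to $S \circ T$ on $V$, and by the stated uniqueness of the extension it must coincide with $\widehat{S \circ T}$. Applied with $S = T^\ast$ one obtains $\widehat{T T^\ast} = \det(T)\, I$, and then restricting both sides to degree one (using property (iii) of the definition) yields the central identity
\begin{align*}
T T^\ast = \det(T)\, I_V.
\end{align*}

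To conclude, I would observe that $T T^\ast$ is positive semi-definite, which forces $\det(T) \geq 0$. If $\det(T) > 0$, then $T/\sqrt{\det(T)}$ is orthogonal, so $T$ is a nonzero scalar multiple of an isometry and hence conformal with conformal factor $\sqrt{\det(T)}$. (If $\det(T) = 0$ then $TT^\ast = 0$ forces $T = 0$, which is the degenerate case.) As a side remark, taking determinants of $TT^\ast = \det(T)\, I_V$ gives $\det(T)^2 = \det(T)^n$, so for $n > 2$ the only nondegenerate possibility is $\det(T) = 1$, meaning $T$ is actually orientation-preserving orthogonal — a strictly stronger conclusion than the statement of the lemma.

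The argument is purely algebraic, and there is no serious obstacle. The only point that deserves a brief justification is the functoriality $\widehat{S} \circ \widehat{T} = \widehat{S \circ T}$, which must be extracted from the uniqueness clause of Definition \ref{def:ExteriorExt} since it is not explicitly stated; everything else is a one-line manipulation.
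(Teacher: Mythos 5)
Your proposal is correct and follows essentially the same route as the paper: self-duality plus the cofactor formula gives $\widehat{T}\circ\widehat{T^\ast}=\widehat{T\circ T^\ast}=\det(T)I$, and restriction to $V$ yields $TT^\ast=\det(T)I_V$. The extra details you supply — the functoriality $\widehat{S}\circ\widehat{T}=\widehat{S\circ T}$ via the uniqueness clause, and the final step from $TT^\ast=cI_V$ to conformality — are exactly what the paper leaves implicit, and your side remark that $\det(T)=1$ in the nondegenerate case is also correct.
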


\begin{proof}
The cofactor formula and self-duality implies 
\begin{align*}
(\widehat{T})^c\circ  \widehat{T^\ast}=\widehat{T}\circ \widehat{T^\ast}=\widehat{T\circ T^\ast}=\det(T)I\Longrightarrow T\circ T^\ast=\det(T)I\vert_{V}. 
\end{align*}
\end{proof}

For more background on the Grassmann dual we advice the reader to consult \cite[Chapter 3]{LS} and \cite{BBR}.

\section{\sffamily Analytical preliminaries}

\subsection{\sffamily Tangential and Normal Trace Maps.}

\begin{Def}
A bounded domain $\Om\subset \R^n$ will be called a \emph{Lipschitz domain} or \emph{strongly Lipschitz domain} if for each point $p\in \dv \Om$ there exists a Lipschitz function $\phi_p: \R^{n-1}\to \R$ and a neighbourhood $V_p$ of $p$ and a neighbourhood $U_p$ of $0$ in $\R^{n-1}$ and a rotation $R_p\in O(\R^n)$ such that 
\begin{align*}
\dv \Om\cap U_p=R_p(\{(y,\phi_p(y)): y\in U_p\}). 
\end{align*}
\end{Def}
These are the only Lipschitz domains that will be considered in this paper.

\begin{Def}
Let $\Om \subset \R^n$ be a Lipschitz domain. The \emph{tangential and normal trace maps} $\gamma_T:W^{1,2}(\Om,\Lambda)\to L^2(\dv \Om,\Lambda)$ and $\gamma_N: W^{1,2}(\Om,\Lambda)\to L^2(\dv \Om,\Lambda)$ respectively are defined by 
\begin{align*}
\gamma_TF(x)&:=\mathbf{P}_T^{\nu(x)}\circ \gamma F(x),\\
\gamma_NF(x)&:=\mathbf{P}_N^{\nu(x)}\circ \gamma F(x),
\end{align*}
for $\mathscr{H}^{n-1}$-a.e. $x\in \dv \Om$, where $\nu$ is the outward pointing unit normal on $\dv \Om$ and $\gamma$ is the full Sobolev trace map. 
\end{Def}

\begin{rem}
Sometimes we will write $F_T$ and $F_N$ respectively instead of $\gamma_TF$ and $\gamma_NF$. 
\end{rem}

There are other equivalent ways of defining the tangential and normal trace maps. For a comparison between the different definitions we refer to the discussion in \cite[Chap. 3]{Dac} 

\begin{Def}
Let $\Om\subset \R^n$ be a bounded (strongly) Lipschitz domain. The \emph{nontangential maximal function} $u^\ast: \dv \Om \to \R$ of a function $u$, possibly vector valued, is defined by 
\begin{align*}
u^\ast(x):=\sup \{\vert u(y)\vert: y\in \Om,\,\text{ such that }\, \vert x-y\vert\leq 2\,\text{dist}(y,\dv \Om)\}
\end{align*}
\end{Def}

\subsection{\sffamily Hodge-Dirac Operators on Euclidean Domains}

\begin{Def}[Hodge-Dirac operators]
The Hodge-Dirac operators are defined by 
\begin{align}
\Dp=d+\delta,\,\,\,, \Dm=d-\delta,
\end{align}
where $d$ is the exterior derivative and $\delta$ is the interior derivative, i.e., $\delta=-d^\ast$, where $d^\ast$ is the formal adjoint of $d$. 
\end{Def}

The symbols $\sigma_{\Dp}(x,\xi),\sigma_{\Dm}(x,\xi)\in \LL(V,\LL(\Lambda V))$ of $\Dp$ and $\Dm$ respectively is given by the linear maps 
\begin{align*}
\sigma_{\Dp}(x,\xi)w=\xi \gp w, \,\,\, \sigma_{\Dm}(x,\xi)w=\xi \gm w
\end{align*}
for all $w\in \Lambda V$ and $\xi \in V$. In particular, $\sigma_{\Dp}(x,\xi)$ is self-adjoint and is skew-adjoint $\sigma_{\Dp}(x,\xi)$. 

We consider the Hodge-Dirac operators on $\Om$ on a smooth bounded domain, initially defined on $C_0^{\infty}(\Om,\Lambda \R^n)$ and viewed as unbounded operators $\mathcal{D}^\pm:L^2(\Om,\Lambda \R^n)\to L^2(\Om,\Lambda \R^n)$. We recall that the maximal extension $\mathcal{D}_{max}^\pm=((\mathcal{D}^{\pm})^\ast)^\ast$, where $\mathcal{D}^{\pm}$ acts on $C_0^{\infty}(\Om,\Lambda \R^n)$. In particular the domain of $\mathcal{D}_{max}^\pm$ is given by
\begin{align*}
\text{dom}(\mathcal{D}_{max}^\pm)=\{F\in \mathscr{D}'(\Om,\Lambda \R^n): F,\mathcal{D}_{max}^\pm F\in L^2(\Om,\Lambda \R^n) \}. 
\end{align*}

It follows trivially that $W^{1,2}(\Om,\Lambda \R^n)\subset \text{dom}(\mathcal{D}_{max}^\pm)$. On the other hand it is shown in Theorem \cite[Theorem 3.2]{BB} that the trace map $\gamma$ satisfies 
\begin{align*}
\gamma(\text{dom}(\mathcal{D}_{max}^\pm))\cap W^{-1/2,2}(\dv \Om,\Lambda)\neq \varnothing.
\end{align*}
Hence, the inclusion is strict since $\gamma(W^{1,2}(\R^n,\Lambda \R^n))=W^{1/2,1}((\dv \Om,\Lambda)$ by the trace theorem for Sobolev spaces, see \cite{Ding}. 

Finally a direct computation shows that the fundamental solution of $\Dp$ and $\Dm$ are given respectively by $\pm E(x)$, where 
\begin{align*}
E(x)=\frac{1}{\sigma_{n-1}}\frac{x}{\vert x\vert^{n}}.
\end{align*}

When working with Dirac operators as well as the exterior and interior derivative, it is many times useful for computations to introduce the formal nabla symbol
\begin{align*}
\nabla:=\sum_{j=1}^ne_j\dv_j. 
\end{align*}

Using the formal nabla symbol we have for any $F\in C^{\infty}(\Om,\Lambda)$
\begin{align*}
dF(x)=\nabla \wedge F(x),\,\,\, \delta F(x)=\nabla \ri F(x), \,\,\, \Dp F(x)=\nabla \gp F(x), \,\,\, \Dm F(x)=\nabla \gm F(x). 
\end{align*}

In the special case when $F=(f_1,f_2,...,f_n)$ is a vector field, the equation $\Dp F(x)=0$ is equivalent to the system 
\begin{align*}
\left\{
    \begin{array}{ll}
    \sum_{j=1}^n\frac{\dv f_j}{\dv x_j}=0, & \\ \\
        \frac{\dv f_i}{\dv x_j}=\frac{\dv f_j}{\dv x_i},& i\neq j, 
      \end{array} \right.
\end{align*}
used in \cite{SW} to study boundary values of harmonic functions. 

For any domain $\Om \subset \R^n$ one can define an $\Lambda \R^n$ valued dualities $(\cdot,\cdot)_{\gp}$ and $(\cdot,\cdot)_{\gm}$, defined by 
\begin{align*}
(F,G)_\gp:=\int_{\Om}\overline{F(x)}\gp G(x)dx,\,\,\,(F,G)_\gm:=\int_{\Om}\overline{F(x)}\gm G(x)dx.
\end{align*}

In \cite{DB}, it is shown the the Riesz representation theorem extends to such generalized dualities. Moreover, the usual inner product is contained in the generalized duality using the algebraic identity 
\begin{align*}
\langle w_1,w_2\rangle =\langle \overline{w_1}\gp w_2\rangle_0
\end{align*}
for multivectors, which yield $\langle F,G\rangle=\langle(F,G)_\gp\rangle_0$. Using the generalized duality we also have the vector valued integration by parts formula
\begin{align}
\label{partIntDirac}
(G,\Dp F)_\gp=-(\Dp G,F)_\gp+\int_{\dv \Om} \overline{G(y)}\gp \nu(y)\gp F(y)d\sigma(y). 
\end{align}
on a smooth domain $C^1$-domain with outwards point unit normal $\nu$ and any $F,G\in C^1(\overline{\Om},\Lambda)$. In addition $\mathcal{D}^{\pm}$ satisfies the following versions of Stokes' theorem 
\begin{align}\label{eq:StokesD}
\int_{\Om}\Dp F(x)dx=\int_{\dv \Om}\nu(y)\gp F(y)d\sigma(y),\quad \int_{\Om}\Dm F(x)dx=\int_{\dv \Om}\nu(y)\gm F(y)d\sigma(y),
\end{align}
valid for piecewise smooth bounded domains $\Om$ and $F\in W^{1,2}(\Om,\Lambda)$. Finally, we may express the partial derivatives $\dv_jF$ according 
\begin{align*}
\dv_j F(x)=\frac{1}{2}(e_j \gp \Dp F(x)-e_j\gm \Dm F(x)). 
\end{align*}

We now study how the Hodge-Dirac operators transform under the action of the Hodge star maps. 
\begin{Lem}\label{lem:StarD}
If $F\in W^{1,2}(\Om,\Lambda)$ then 
\begin{align*}
\Dp F(x)\star&=\Dm (\widehat{F(x)}\star),\\
\Dm F(x)\star&=\Dp (\widehat{F(x)}\star). 
\end{align*}
\end{Lem}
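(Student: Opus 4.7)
My plan is to reduce both claimed identities to pointwise algebraic verifications at the symbol level. Observe that $\Dp$ and $\Dm$ are first-order constant-coefficient operators, $\Dp = \sum_j e_j \gp \partial_j$ and $\Dm = \sum_j e_j \gm \partial_j$, and that the right Hodge star $w\mapsto w\star$ and the main involution $\widehat{\cdot}$ are pointwise linear maps on $\Lambda\R^n$ that commute with each $\partial_j$. It therefore suffices to check, for every $\xi\in V$ and every $w\in\Lambda V$, the two symbol-level identities
\begin{align*}
(\xi\gp w)\star &= \xi\gm(\widehat{w}\star),\\
(\xi\gm w)\star &= \xi\gp(\widehat{w}\star);
\end{align*}
the lemma then follows by setting $\xi = e_j$, $w = \partial_j F$ and summing in $j$.

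The key tools for the symbol calculation are three facts already established in the algebraic preliminaries: (i) the defining formula $w\star = \overline{w}\gp e_V$ for the right Hodge star; (ii) the anti-automorphism rule $\overline{a\gp b} = \overline{b}\gp\overline{a}$, together with the analogous identity for the negative Clifford product; and (iii) the bridging identities $v\gp a = \widehat{a}\gm v$ and $v\gm a = \widehat{a}\gp v$ from the proposition preceding the lemma, which interchange the two Clifford products at the cost of a main involution. Applied to the second identity, the left-hand side expands as $\overline{\xi\gm w}\gp e_V = (\overline{w}\gm\xi)\gp e_V$, while the right-hand side, using $\widehat{w}\star = \overline{\widehat{w}}\gp e_V$, associativity of $\gp$, and the first bridging identity, becomes $(\xi\gp\overline{\widehat{w}})\gp e_V = (\widehat{\overline{\widehat{w}}}\gm\xi)\gp e_V = (\overline{w}\gm\xi)\gp e_V$ (here $\widehat{\cdot}$ and $\overline{\cdot}$ commute with each other since both act grade-wise, and $\widehat{\widehat{w}}=w$). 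Hence the two sides agree. The first identity is proved by the structurally symmetric argument obtained by interchanging the roles of $\gp$ and $\gm$ and using the second bridging identity; the extra input needed is $\widehat{e_V} = (-1)^n e_V$, which governs how $\xi$ is pushed past $e_V$ in the Clifford algebra.

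The main obstacle is the careful sign bookkeeping: the signs $(-1)^n$ and $(-1)^{n(n-1)/2}$ arising from $\widehat{e_V}$ and $\overline{e_V}$, together with the commutation sign between a vector and the top form (namely $\xi\gp e_V = (-1)^{n-1} e_V\gp\xi$, which follows immediately from $\xi\gp e_V = \xi\ri e_V$ and the characterization of $\li$ and $\ri$ by duality with $\wedge$), must combine to reproduce exactly the claimed equalities. Once this is verified, the proof is purely algebraic and takes only a few lines; no analytic input beyond $F\in W^{1,2}(\Om,\Lambda)$ (which is only used to make $\partial_j F$, and hence all the operators in the statement, well-defined as $L^2$ sections) is needed.
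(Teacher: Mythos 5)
Your strategy—reduce to the constant-coefficient symbol level and verify two pointwise Clifford identities—is sound, and it is a genuinely different route from the paper's: the paper never passes to the Clifford products, but instead splits $\mathcal{D}^{\pm}=\nabla\wedge{}\pm\nabla\ri{}$ and applies the star identities $(\Theta_1\wedge\Theta_2)\star=\Theta_2\ri(\Theta_1\star)$ and $(\Theta_1\li\Theta_2)\star=\Theta_2\wedge(\Theta_1\star)$ together with $v\wedge w=\widehat{w}\wedge v$ and $v\ri w=-\widehat{w}\li v$ term by term. Your detailed verification of the second symbol identity, $(\xi\gm w)\star=\xi\gp(\widehat{w}\star)$, is correct, and hence so is your proof of $\Dm F\star=\Dp(\widehat{F}\star)$.

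The step you defer, however—"the signs must combine to reproduce exactly the claimed equalities" for the first identity—does not close. Running your own scheme: the left side is $(\xi\gp w)\star=\overline{\xi\gp w}\gp e_V=\overline{w}\gp\xi\gp e_V$, while the right side is $\xi\gm(\overline{\widehat{w}}\gp e_V)=\widehat{\overline{\widehat{w}}}\gp\widehat{e_V}\gp\xi=(-1)^n\,\overline{w}\gp e_V\gp\xi$; since $\xi\gp e_V=\xi\ri e_V=-\widehat{e_V}\li\xi=(-1)^{n+1}e_V\gp\xi$, the two prefactors multiply to $(-1)^{2n+1}=-1$, so in fact $(\xi\gp w)\star=-\,\xi\gm(\widehat{w}\star)$. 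A concrete check in $\R^2$ with $F=f$ scalar confirms this: $(\Dp f)\star=\partial_1 f\,e_2-\partial_2 f\,e_1$, whereas $\Dm(f e_1\wedge e_2)=-\delta(f e_1\wedge e_2)=-(\partial_1 f\,e_2-\partial_2 f\,e_1)$. So the first identity holds only as $\Dp F\star=-\Dm(\widehat{F}\star)$. Note that the paper's own proof, read carefully, lands on $\nabla\ri(\widehat{F}\star)-\nabla\wedge(\widehat{F}\star)=\delta(\widehat{F}\star)-d(\widehat{F}\star)=-\Dm(\widehat{F}\star)$ in its final display before asserting the opposite sign, so the discrepancy sits in the statement itself; your "structurally symmetric argument" has inherited this sign error rather than detected it. To make your write-up complete, carry out the bookkeeping explicitly and either prove the corrected identity $(\xi\gp w)\star=-\xi\gm(\widehat{w}\star)$ or flag that the lemma's first equation needs a minus sign.
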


\begin{proof}
Using the algebraic identities from \cite[Proposition 2.6.8]{R}
\begin{align*}
(\Theta_1\wedge \Theta_2)\star&=\Theta_2\ri (\Theta_1\star),\\
(\Theta_1\li \Theta_2)\star&=\Theta_2\wedge (\Theta_1\star).
\end{align*}
This gives using identities valid for $F\in W^{1,2}(\Om,\Lambda)$ we get
\begin{align*}
(\nabla \wedge F(x))\star&=( \widehat{F(x)}\wedge \nabla)\star=\nabla \ri(\widehat{F(x)}\star),\\
(\nabla \ri F(x))\star&=-( \widehat{F(x)}\li \nabla)\star=-\nabla \wedge (\widehat{F(x)}\star).
\end{align*}
Hence,
\begin{align*}
(\Dm F(x))\star &=(\nabla \wedge F(x))\star-(\nabla \ri F(x))\star=\nabla \ri(\widehat{F(x)}\star)+\nabla \wedge (\widehat{F(x)}\star)=\Dp (\widehat{F(x)}\star)\\
(\Dp F(x))\star &=(\nabla \wedge F(x))\star+(\nabla \ri F(x))\star=\nabla \ri(\widehat{F(x)}\star)-\nabla \wedge (\widehat{F(x)}\star)=\Dm (\widehat{F(x)}\star). 
\end{align*}

\end{proof}

On the other hand in this paper we will mostly be concerned with multivector fields $F$ of the form $F=u+v$, where $u\in W^{1,2}(\Om,\R)$ and $v\in W^{1,2}(\Om,\Lambda^2\R^n)$ where in addition $dv=0$. In this case 
$\overline{F}=u-v$, and 
\begin{align*}
\Dp \overline{F}=du -\delta v=\Dm F,\,\,\, \Dm \overline{F}=du+\delta v=\Dp F,
\end{align*}
which are analogous to the identities $\dbar \overline{f}=\overline{\dv f}$ and $\dv \overline{f}=\overline{\dbar f}$ for complex valued fields in the plane. 

\begin{rem}
When acting on smooth fields $F\in C^\infty(\Om,\Lambda \R^n)$, one easily checks that the Hodge-Dirac operators $\mathcal{D}^\pm$ satisfies the (anti-)commutation relations 
\begin{align*}
\Dp \Dm+\Dm \Dp=0,\quad (\Dp)^2=-(\Dm)^2=\Delta,\quad\mathcal{D}^\pm \Delta=\Delta \mathcal{D}^\pm
\end{align*}
where $\Delta=d\delta+\delta d$ is the Hodge--Laplacian. A variation of the Hodge--Dirac operators was used in E. Witten's paper \cite{Witten} on his supersymmetric proof of the Morse inequalities. He used the ``supersymmetry'' operators 
\begin{align*}
Q_1=\Dm, \quad Q_2=i\Dp
\end{align*}
\end{rem}

\subsection{\sffamily Cauchy-Pompeiu Formula for the Hodge-Dirac Operators}\label{sec:Cauchy}

\begin{Thm}
If $F\in C^1(\Om,\Lambda \R^n)\cap C^0(\overline{\Om},\Lambda \R^n)$ then the Cauchy-Pompieu formula reads
\begin{align}\label{eq:GenCF}
F(x)&=\frac{1}{\sigma_{n-1}}\int_{ \Om}\frac{y-x}{\vert y-x\vert^n}\gp \Dp F(y)dy+\frac{1}{\sigma_{n-1}}\int_{\dv \Om}\frac{y-x}{\vert y-x\vert^n}\gp \nu(y)\gp F(y)d\sigma(y),\\
&=\frac{1}{\sigma_{n-1}}\int_{ \Om}\frac{x-y}{\vert x-y\vert^n}\gm \Dm F(y)dy+\frac{1}{\sigma_{n-1}}\int_{\dv \Om}\frac{x-y}{\vert x-y\vert^n}\gm \nu(y)\gm F(y)d\sigma(y).
\end{align}.  
Moreover, if $\Dp F=0$ in $\Om$ then the Cauchy integral formula 
\begin{align}
F(x)=\int_{\dv \Om}E(y-x)\gp \nu(y)\gp \gamma(F)(y)dy
\end{align}
hold for \emph{every} $x\in \Om$. 
\end{Thm}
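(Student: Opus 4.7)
The plan is to excise the singularity of the fundamental solution. Fix $x\in\Om$ and choose $\epsilon>0$ small enough that $\overline{B_\epsilon(x)}\subset\Om$; set $\Om_\epsilon:=\Om\setminus\overline{B_\epsilon(x)}$ and take $G(y):=E(y-x)$. Since $E$ is vector-valued, $\overline{G}=G$, and the identity $e_j\gp e_k+e_k\gp e_j=2\delta_{jk}$ combined with differentiation of $x_k/|x|^n$ gives $\Dp E(y)=0$ for $y\neq 0$; hence $\Dp G\equiv 0$ on $\Om_\epsilon$. I would then apply the integration-by-parts identity~\eqref{partIntDirac} on the Lipschitz domain $\Om_\epsilon$ with the pair $(G,F)$; since the interior term $-(\Dp G,F)_\gp$ vanishes identically, one is left with the equality
\begin{equation*}
\int_{\Om_\epsilon} E(y-x)\gp \Dp F(y)\,dy \;=\; \int_{\dv \Om_\epsilon} E(y-x)\gp \nu_\epsilon(y)\gp F(y)\,d\sigma(y),
\end{equation*}
where $\nu_\epsilon$ is the outward unit normal of $\Om_\epsilon$, equal to $\nu$ on $\dv \Om$ and to $-(y-x)/|y-x|$ on $\dv B_\epsilon(x)$.

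The crux of the argument is the contribution from the inner sphere. Using $v\gp v=|v|^2$ for $v\in\R^n$, on $\dv B_\epsilon(x)$ one finds
\begin{equation*}
E(y-x)\gp \nu_\epsilon(y) \;=\; -\frac{(y-x)\gp(y-x)}{\sigma_{n-1}\,|y-x|^{n+1}} \;=\; -\frac{1}{\sigma_{n-1}\,\epsilon^{n-1}},
\end{equation*}
so the inner-boundary integral equals the negative of the spherical mean of $F$ over $\dv B_\epsilon(x)$; by continuity of $F$ on $\overline{\Om}$ this tends to $-F(x)$ as $\epsilon\to 0$. Simultaneously, $|E(y-x)|\lesssim|y-x|^{1-n}$ is locally integrable on $\R^n$, so dominated convergence yields $\int_{\Om_\epsilon}\to\int_\Om$ on the left-hand side. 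Rearranging gives~\eqref{eq:GenCF} for $\Dp$; the companion formula for $\Dm$ follows by the entirely parallel argument with $\gp$ replaced by $\gm$ throughout, using the kernel $(x-y)/(\sigma_{n-1}|x-y|^n)$, which is the fundamental solution of $\Dm$ (verified by the same direct computation together with $v\gm v=-|v|^2$).

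For the Cauchy integral formula itself: if $\Dp F=0$ in $\Om$ the bulk integral in~\eqref{eq:GenCF} disappears, leaving only the boundary integral; because the integrand depends smoothly on $x$ on any compact subset of $\Om$, the identity holds for \emph{every} $x\in\Om$ rather than only almost every such $x$. The main technical obstacle throughout is the bookkeeping of signs and factor orderings forced by the non-commutativity of Clifford multiplication: a naive application of Stokes' theorem~\eqref{eq:StokesD} to $E(y-x)\gp F(y)$ would produce the boundary factor in the order $\nu\gp E\gp F$ rather than the required $E\gp\nu\gp F$, which is precisely why~\eqref{partIntDirac}---whose derivation hinges on the correct Leibniz-type rule for the Dirac operator---is the appropriate tool.
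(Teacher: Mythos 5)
Your argument is correct in substance and is the standard excision proof: the paper itself does not prove this theorem but simply cites Theorem 8.1.8 of \cite{R}, and what you have written is essentially that proof, so you have supplied a self-contained derivation where the paper defers to a reference. The key steps all check out: $\Dp E(\cdot-x)=0$ away from $x$ (divergence- and curl-free), the two-sided identity \eqref{partIntDirac} is the right tool precisely for the ordering reason you give in your last paragraph, and the inner-sphere computation $E(y-x)\gp\nu_\epsilon(y)=-\sigma_{n-1}^{-1}\epsilon^{1-n}$ correctly produces minus the spherical mean of $F$.

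One point deserves attention. Carrying your computation to its conclusion gives
\begin{align*}
F(x)=\int_{\dv \Om}E(y-x)\gp \nu(y)\gp F(y)\,d\sigma(y)-\int_{\Om}E(y-x)\gp \Dp F(y)\,dy,
\end{align*}
i.e.\ the volume term carries a \emph{minus} sign (equivalently, kernel $(x-y)/\vert x-y\vert^n$), whereas the first line of \eqref{eq:GenCF} as printed has a plus sign with kernel $(y-x)/\vert y-x\vert^n$ in both integrals. A test with $F(y)=y_1$, for which $\Dp F=e_1$, confirms that the minus sign is the correct one, so the printed display contains a sign slip and your ``rearranging gives \eqref{eq:GenCF}'' silently absorbs it; you should state the formula you actually obtain. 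Two smaller remarks: \eqref{partIntDirac} is stated in the paper for $F,G\in C^1(\overline{\Om},\Lambda)$, while here $F$ is only $C^1$ inside and continuous up to $\dv\Om$, so strictly you should first work on a slightly shrunk domain and pass to the limit using continuity of $F$ on $\overline{\Om}$ (and note that $\Dp F\in L^1(\Om)$ is implicitly needed for the volume integral to converge); and the passage $\int_{\Om_\epsilon}\to\int_{\Om}$ is justified by local integrability of $\vert y-x\vert^{1-n}$ together with boundedness of $\Dp F$ near $x$, which is what your dominated-convergence remark amounts to.
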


For a proof see Theorem 8.1.8 in \cite{R}.

\begin{Def}\label{def:Cint}
The Cauchy integrals $\EE^\pm(f)$ of $f\in L^2(\dv \Om,\Lambda)$ is defined as
\begin{align*}
\EE^+(f)(x)&:=2\text{p.v.}\int_{\dv \Om}E(y-x)\gp \nu(y)\gp f(y)d\sigma(y),\\
\EE^-(f)(x)&:=2\text{p.v.}\int_{\dv \Om}E(x-y)\gm \nu(y)\gm f(y)d\sigma(y)
\end{align*}
\end{Def}

The boundedness of the operators $\EE^\pm$ on $L^2(\dv \Om,\Lambda)$ on Lipschitz hypersurfaces $\dv \Om$ follows from the seminal work in \cite{CMM}. For more details see \cite{GM}.

\begin{Thm}[Sokhotski-Plemelj jump formulas]
Let $f\in L^2(\dv \Om,\Lambda)$. Let $\Om_+=\Om$ and $\Om_-=\R^n\setminus \overline{\Om}$. Then 
\begin{align*}
\lim_{\substack{y\to x\in \dv \Om\\ y\in \Om^\pm}}\int_{\dv \Om}E(y-x)\gp \nu(y)\gp f(y)d\sigma(y)=\frac{1}{2}(I\pm\EE^{+})(f)(x). 
\end{align*}
\end{Thm}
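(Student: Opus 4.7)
The plan is to reduce the claim to the classical Sokhotski--Plemelj jump argument adapted to the Clifford-algebraic Cauchy kernel. Write $\Cc_0 f(z):=\int_{\dv \Om} E(y-z)\gp \nu(y)\gp f(y)\,d\sigma(y)$ for $z\in \R^n\setminus \dv \Om$, so that the claim is that $\Cc_0 f(z)\to \tfrac{1}{2}(I\pm\EE^+)f(x)$ as $z\to x$ nontangentially from $\Om^\pm$, for $\sigma$-a.e.\ $x\in \dv\Om$. The strategy has two stages: establish the identity on a dense subclass of boundary data, and then transfer it to $L^2$ using the boundedness granted by \cite{CMM}.

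For $f\in C(\dv \Om,\Lambda)$, I would split
\[
\Cc_0 f(z) \;=\; \int_{\dv \Om} E(y-z)\gp \nu(y)\gp [f(y)-f(x)]\,d\sigma(y) \;+\; f(x)\int_{\dv\Om}E(y-z)\gp\nu(y)\,d\sigma(y).
\]
The second integral is evaluated by applying the Cauchy reproducing formula (the first identity in \eqref{eq:GenCF}) to the constant field $F\equiv 1$, which satisfies $\Dp F=0$; this yields $1$ for $z\in\Om^+$ and $0$ for $z\in\Om^-$. For the first integral, continuity of $f$ at $x$ combined with the existence of a tangent hyperplane at $\sigma$-a.e.\ $x\in\dv\Om$ (Rademacher applied to the Lipschitz graph) makes the singularity of $E(y-z)\gp\nu(y)$ integrable against the oscillation $f(y)-f(x)$. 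A truncation-plus-dominated-convergence argument along any nontangential cone then gives the limit as the principal-value integral $\mathrm{p.v.}\int_{\dv\Om} E(y-x)\gp\nu(y)\gp[f(y)-f(x)]\,d\sigma(y)$. Re-adding the $f(x)$ term and using the standard half-space model computation $\mathrm{p.v.}\int_{\dv\Om} E(y-x)\gp\nu(y)\,d\sigma(y)=\tfrac{1}{2}$ (valid at every boundary point with a tangent plane, by the odd parity of $E$), one assembles
\[
\lim_{z\to x,\,z\in \Om^\pm}\Cc_0 f(z) \;=\; \tfrac{1}{2}\EE^+f(x)-\tfrac{1}{2}f(x)+f(x)\cdot\tfrac{1\pm 1}{2} \;=\; \tfrac{1}{2}(I\pm\EE^+)f(x),
\]
which is the claim for continuous data.

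For general $f\in L^2(\dv \Om,\Lambda)$, I would choose $f_n\in C(\dv \Om,\Lambda)$ with $f_n\to f$ in $L^2$. Both the principal-value operator $\EE^+$ and the nontangential maximal operator $f\mapsto (\Cc_0 f)^\ast$ are bounded on $L^2(\dv\Om,\Lambda)$ by the Clifford-algebraic extension of the Coifman--McIntosh--Meyer theorem \cite{CMM} (see also \cite{GM}). A standard weak-type argument on $(\Cc_0(f-f_n))^\ast$ then transfers the pointwise jump identity from each $f_n$ to $f$, proving the formula for $\sigma$-a.e.\ $x\in\dv\Om$ in the nontangential sense.

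The principal obstacle is the $L^2$-boundedness of the singular Cauchy integral on a Lipschitz hypersurface, i.e.\ the theorem of \cite{CMM}, which is invoked here as a black box. A secondary subtlety is the a.e.\ evaluation $\mathrm{p.v.}\int E(y-x)\gp\nu(y)\,d\sigma(y)=\tfrac{1}{2}$, which genuinely uses the existence of tangent planes $\sigma$-a.e.\ and the odd symmetry of $E$; the Clifford structure itself plays no special role beyond ensuring that right multiplication by $\nu$ fits cleanly against the kernel and is handled uniformly from either side of $\dv \Om$.
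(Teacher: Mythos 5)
Your argument is correct in outline, but it takes a different route from the paper, which simply defers the entire proof to the jump formulas of \cite{To} for singular integrals and layer potentials on rectifiable sets, "modified to the vector valued setting." Your two-stage scheme (exact identity on a dense class of boundary data via the reproducing formula for $F\equiv 1$, then transference to $L^2$ through the nontangential maximal function and the $L^2$-boundedness from \cite{CMM}) is the classical Calder\'on--Zygmund proof specialized to Lipschitz graphs. What your route buys is a self-contained argument whose only black box is \cite{CMM}; what the paper's citation buys is greater generality (the result in \cite{To} holds on uniformly rectifiable sets, not just Lipschitz boundaries) at the cost of importing a tangent-measure machinery that is overkill here.

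Three points in your first stage need tightening. First, the dense class cannot be all of $C(\dv\Om,\Lambda)$: the kernel is of size $|y-x|^{-(n-1)}$ on an $(n-1)$-dimensional surface, so the oscillation integral $\int_{\dv\Om}|E(y-z)|\,|f(y)-f(x)|\,d\sigma(y)$ requires a Dini condition on $f$ at $x$ to pass to the limit $z\to x$; mere continuity does not suffice. Take instead H\"older or Lipschitz data, which are still dense in $L^2(\dv\Om,\Lambda)$, and the estimate $\int_{|y-x|\le\delta}|y-x|^{\alpha-(n-1)}\,d\sigma(y)\lesssim\delta^{\alpha}$ closes the argument. Second, since Clifford multiplication is noncommutative, the constant must be extracted on the \emph{right}: the second term is $\bigl(\int_{\dv\Om}E(y-z)\gp\nu(y)\,d\sigma(y)\bigr)\gp f(x)$, not $f(x)$ times the integral; this is harmless only because the kernel integral evaluates to the scalars $1$, $0$, and $\tfrac12$, which are central. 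Third, odd parity of $E$ on the approximate tangent plane gives \emph{existence} of $\mathrm{p.v.}\int_{\dv\Om}E(y-x)\gp\nu(y)\,d\sigma(y)$ at $\sigma$-a.e.\ $x$, but not its value; the value $\tfrac12$ comes from the solid-angle computation (excise $B_\eps(x)$, apply the Cauchy theorem on $\Om\setminus B_\eps(x)$, and note that $\dv B_\eps(x)\cap\Om$ contributes asymptotically half of $\int_{\dv B_\eps}E(y-x)\gp\nu(y)\,d\sigma(y)=1$ at points with a tangent plane). With these repairs the dense-class identity and the maximal-function transference go through as you describe.
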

A proof of this fact follows from an easy modification to the vector valued setting of the argument given in \cite{To}.

We have the Hardy space splitting of $L^2(\dv \Om,\Lambda)$ according to 
\begin{align*}
L^2(\dv \Om,\Lambda)=H^2_+(\dv \Om)\oplus H^2_-(\dv \Om). 
\end{align*}

In particular $f\in H^2_+(\dv \Om)$ if and only if 
\begin{align*}
\frac{1}{2}(I+\EE^{+})(f)(x)=f(x)\text{ for a.e. $x\in \dv \Om$},
\end{align*}
that is if and only if $f$ solves the singular integral equation 
\begin{align*}
\frac{1}{\sigma_{n-1}}\text{p.v.}\int_{\dv \Om}\frac{y-x}{\vert y-x\vert^n}\gp \nu(y)\gp f(y)d\sigma(y)=\frac{1}{2}f(x).
\end{align*}


\section{\sffamily Topological preliminaries}

\subsection{\sffamily Harmonic Fields and De Rham Cohomology}

Consider the Hilbert space $L^2(\dv \Om, \Lambda)$ and the linear subspaces $L^2(\dv \Om, \Lambda^k)$ consisting of homogeneous multivector fields of degree $k$. At each point $x\in \dv \Om$, the tangent bundle $T_x\Om$ has an orthogonal decomposition as $T_x\Om=T_x\dv \Om\oplus N_x\dv \Om$. This induces an orthogonal splitting of the exterior tangent bundle at $x$ according to 
\begin{align*}
\Lambda T_x \Om=\Lambda (T_x\dv  \Om\oplus N_x\dv  \Om)\cong \Lambda T_x\dv\Om \widehat{\otimes}\Lambda N_x\dv \Om. 
\end{align*}
Since $ \Lambda N_x\dv \Om=\text{span}\{1,\nu(x)\}$, we see that 
\begin{align*}
\Lambda T_x \Om\cong \Lambda T_x\dv\Om \widehat{\otimes}1\oplus\Lambda T_x\dv\Om \widehat{\otimes}\nu(x). 
\end{align*}
Therefore, any $f\in L^2(\dv \Om, \Lambda)$ can be written uniquely as 
\begin{align*}
f(x)=f_T(x)+f_N(x)=f_T(x)+\nu(x)\wedge g_T(x)
\end{align*}
with $f_T,g_T\in L^2(\dv \Om, \Lambda T\dv \Om)$ and $f_N\in \nu\wedge  L^2(\dv \Om, \Lambda T\dv \Om)$ and where  
\begin{align*}
f_T(x)=\nu(x)\ri (\nu(x)\wedge f(x)), \,\, f_N(x)=\nu(x)\wedge (\nu(x)\ri f(x)). 
\end{align*}

Moreover $f_T$ and $f_N$ satisfies the following identities for $\sigma$-a.e. $x\in \dv \Om$
\begin{align*}
\nu(x)\wedge f(x)=\nu(x)\wedge f_T(x), \,\,\, \nu(x)\ri f(x)=\nu(x)\ri f_N(x). 
\end{align*}

We define 
\begin{align*}
L^2_T(\dv \Om,\Lambda):=L^2(\dv \Om, \sigma,\Lambda T\dv \Om),\,\, L^2_N(\dv \Om,\Lambda):=\nu(x)\wedge L^2(\dv \Om, \sigma,\Lambda T\dv \Om).
\end{align*}
In particular, $f\in L^2_T(\dv \Om,\Lambda)$ if and only if for $\sigma$-a.e. $x$ we have $\nu(x)\ri f(x)=0$ and  $f\in L^2_N(\dv \Om,\Lambda)$ if and only if for $\sigma$-a.e. $x$ we have $\nu(x)\wedge f(x)=0$

\begin{Def}
A $k$-multivector field $\omega \in C^{\infty}(\Om, \Lambda^k\R^n)$ is called an \emph{harmonic field} if $d\omega=\delta \omega=0$ in $\Om$. The space of harmonic $k$-multivector fields in $\Om$ is denoted by $\mathcal{H}^k(\Om)$. A $k$-multivector field $\omega \in C^{\infty}(\overline{\Om}, \Lambda^k\R^n)$ such that $\omega_T=0$ on $\dv \Om$ is called a \emph{tangential harmonic $k$-multivector} and the space of tangential harmonic $k$-multivector fields in $\Om$ is denoted by $\mathcal{H}_T^k(\Om)$. Similarly, A $k$-multivector field $\omega \in C^{\infty}(\overline{\Om}, \Lambda^k\R^n)$ such that $\omega_N=0$ on $\dv \Om$ is called a \emph{normal harmonic $k$-multivector} and the space of tangential harmonic $k$-multivector fields in $\Om$ is denoted by $\mathcal{H}_N^k(\Om)$.
\end{Def}

The Hodge-star map induces an isomorphism $\star \mathcal{H}_T^k(\Om)=\mathcal{H}_N^{n-k}(\Om)$. Moreover we have the de Rham--Hodge isomorphisms 
\begin{align*}
\mathbf{H}^k(\Om)&\cong \mathbf{H}_{dR,abs}^k(\Om)\cong \mathcal{H}_N^{k}(\Om)\\
\mathbf{H}^k_{rel}(\Om)&\cong \mathbf{H}_{dR,rel}^k(\Om)\cong \mathcal{H}_T^{k}(\Om)
\end{align*}

Here, $\mathbf{H}^k(\Om)$ denotes the absolute cohomology groups and $\mathbf{H}^k_{rel}(\Om)$ denotes the relative cohomology groups. These are also denoted by $\mathbf{H}^k_{rel}(\Om)=\mathbf{H}^k(\Om,\dv \Om)$ in the literature. Therefore, normal and tangential boundary conditions are sometimes called absolute and relative boundary conditions in the literature, e.g. \cite{Sw,CDGM}. 

We will also set 
\begin{align*}
\mathcal{H}_T(\Om)&=\bigoplus_{k=0}^n\mathcal{H}_T^k(\Om)=\{\omega\in C^{\infty}(\overline{\Om},\Lambda \R^n): d\omega=\delta \omega=0, \,\, \omega_T=0\},\\
\mathcal{H}_N(\Om)&=\bigoplus_{k=0}^n\mathcal{H}_N^k(\Om)=\{\omega\in C^{\infty}(\overline{\Om},\Lambda \R^n): d\omega=\delta \omega=0, \,\, \omega_N=0\}. 
\end{align*}

\begin{Lem}
\begin{align*}
\mathcal{H}_T(\Om)&=\{F\in C^{\infty}(\overline{\Om},\Lambda \R^n): , \,\, \Dp F=0, F_T=0\}=\{F\in C^{\infty}(\overline{\Om},\Lambda \R^n): , \,\, \Dm F=0, F_T=0\},\\
\mathcal{H}_N(\Om)&=\{F\in C^{\infty}(\overline{\Om},\Lambda \R^n): , \,\, \Dp F=0, F_N=0\}=\{F\in C^{\infty}(\overline{\Om},\Lambda \R^n): , \,\, \Dm F=0, F_N=0\},\\
\end{align*}
\end{Lem}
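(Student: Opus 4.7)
The inclusions $\supseteq$ in all four equalities are immediate from $\Dp = d + \delta$ and $\Dm = d - \delta$: any form with $d\omega = 0 = \delta\omega$ is annihilated by both operators. The substance is the forward direction: given $\omega \in C^\infty(\overline\Om,\Lambda\R^n)$ satisfying $\Dp\omega = 0$ (or $\Dm\omega = 0$) together with $\omega_T = 0$ (or $\omega_N = 0$), one must conclude that $d\omega$ and $\delta\omega$ vanish separately.

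My approach is a one-line Bochner identity. Since $\Dp\omega = d\omega + \delta\omega = 0$ in $\Om$, squaring and integrating gives
\begin{align*}
0 = \int_\Om |\Dp\omega|^2\,dx = \int_\Om |d\omega|^2\,dx + \int_\Om |\delta\omega|^2\,dx + 2\int_\Om \langle d\omega,\delta\omega\rangle\,dx,
\end{align*}
so it suffices to show the cross term vanishes. Two applications of the integration--by--parts formula \eqref{partIntDirac}, or equivalently of the classical Stokes identities for $d$ and $\delta$, express this cross term as a pure boundary integral in two equivalent ways. Moving $d$ from the first factor across (where it becomes $\delta$) and using $\delta^2 = 0$ yields, up to sign, $\int_{\dv\Om}\langle \nu\wedge\omega,\delta\omega\rangle\,d\sigma$; moving $\delta$ from the second factor across (where it becomes $d$) and using $d^2 = 0$ yields, up to sign, $\int_{\dv\Om}\langle d\omega,\nu\ri\omega\rangle\,d\sigma$.

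The proof then closes by matching the trace condition to the appropriate form. From the decomposition $\omega = \nu\ri(\nu\wedge\omega) + \nu\wedge(\nu\ri\omega)$ on $\dv\Om$ (i.e.\ $\mathbf{P}_T^\nu + \mathbf{P}_N^\nu = I$), one has $\omega_T = 0 \Leftrightarrow \nu\wedge\omega = 0$ and $\omega_N = 0 \Leftrightarrow \nu\ri\omega = 0$ pointwise on $\dv\Om$. Hence for $\mathcal{H}_T$ the first boundary integral vanishes, and for $\mathcal{H}_N$ the second vanishes. In either case the cross term is zero, so $\|d\omega\|_{L^2}^2 + \|\delta\omega\|_{L^2}^2 = 0$, forcing $d\omega = \delta\omega = 0$. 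The case $\Dm\omega = 0$ is identical, with $+2$ replaced by $-2$ in the expansion of $\|\Dm\omega\|_{L^2}^2$; the cross term still vanishes by the same IBP. No serious obstacle arises --- the only care needed is to pair the correct IBP with the given trace condition, and smoothness up to $\overline\Om$ makes all traces and integrations by parts unambiguous.
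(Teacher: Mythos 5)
Your proposal is correct and follows essentially the same route as the paper: expand $\|\mathcal{D}^{\pm}F\|_{L^2}^2=\|dF\|_{L^2}^2+\|\delta F\|_{L^2}^2\pm 2\int_\Om\langle dF,\delta F\rangle\,dx$ and kill the cross term by integration by parts, using that it equals either $\int_{\dv\Om}\langle\nu\wedge F,\delta F\rangle\,d\sigma$ or $\int_{\dv\Om}\langle dF,\nu\ri F\rangle\,d\sigma$, whichever vanishes under the given trace condition. If anything, your write-up is slightly more careful than the paper's, which only displays one of the four cases and one form of the boundary term.
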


\begin{proof}
If $\Dp F(x)=0$ in $\Om$, then by integration by parts formula \eqref{eq:IntPart1}
\begin{align*}
0&=\int_{\Om}\langle \Dp F(x),\Dp F(x)\rangle dx=\int_{\Om}\langle (d+\delta) F(x),(d+\delta) F(x)\rangle dx\\
&=\int_{\Om} (\vert d F(x)\vert^2+\vert \delta F(x)\vert^2)dx+2\int_{\Om} \langle d F(x),\delta F(x)\rangle dx\\
&=\int_{\Om} (\vert d F(x)\vert^2+\vert \delta F(x)\vert^2)dx-2\int_{\Om} \langle d^2 F(x), F(x)\rangle dx+2\int_{\dv \Om} \langle  \nu(x)\ri F(x), dF(x)\rangle dx\\
&=\int_{\Om} (\vert d F(x)\vert^2+\vert \delta F(x)\vert^2)dx.
\end{align*}
Consequently, $dF(x)=\delta F(x)=0$. 
\end{proof}

In the case that $\Om\subset \R^n$ is a simply connected domain, so in particular $\Om$ is diffeomorphic to the open unit ball $\mathbb{B}_1(0)=\{x\in \R^n: \vert x\vert <1\}$, then by Poincaré lemma
\begin{align*}
\mathbf{H}^k(\Om)=\left\{
    \begin{array}{ll}
   0 & \text{if $1\leq k\leq n$},\\
        \R & \text{if $k=0$}  
      \end{array} \right.,
\,\,
\mathbf{H}^k_{rel}(\Om)=\left\{
    \begin{array}{ll}
   0 & \text{if $0\leq k\leq n-1$},\\
        \R & \text{if $k=n$}  
      \end{array} \right..
\end{align*}
Moreover one can show that $\mathcal{H}_N(\Om)=\{c:c\in \R\}$ and $\mathcal{H}_T(\Om)=\{ce_{\mathbf{n}}:c\in \R\}$, where $e_{\mathbf{n}}=e_1\wedge e_2\wedge...\wedge e_n$ is the positively oriented volume $n$-field for $\R^n$. 
Thus, the tangential boundary value problem \eqref{eq:HDT1} always has solution whenever $\langle F_T\rangle_n=0$. However, since $\langle F\rangle_n= \langle F_N\rangle_n$, this is always true. Therefore, for a simply connected domain, the tangential boundary value problem for the Hodge-Dirac operator is always solvable. Moreover, since $\langle F\rangle_0=\langle F_T\rangle_0$ it follows that $F_N\perp \mathcal{H}_T(\dv \Om)$ automatically, and hence the solution is unique as well. 

Moreover, by the Poincaré duality we have the isomorphisms between the homology and cohomology groups 
\begin{align*}
\mathbf{H}_k(\Om)\cong \mathbf{H}^{n-k}(\Om,\dv \Om),\,\,\, \mathbf{H}_k(\Om,\dv \Om)\cong \mathbf{H}^{n-k}(\Om). 
\end{align*}

In dimension 3 we have in particular (see \cite[p. 423]{CDG}) 
\begin{align*}
\text{dim}\,\mathbf{H}_0(\Om)&= \text{number of components of $\Om$}\\
\text{dim}\,\mathbf{H}_1(\Om)&=\text{total genus of $\dv \Om$}\\
\text{dim}\,\mathbf{H}_2(\Om)&=\text{number of components of $\dv \Om$}-\text{number of components of $\Om$}\\
\text{dim}\,\mathbf{H}_3(\Om)&=0. 
\end{align*}


\section{\sffamily  Reduction of Second Order Equation to Dirac-Beltrami Equation}


\subsection{\sffamily The Dirichlet Problem}

We now consider the Dirichlet problem \eqref{eq:DirPro} and assume that the endomorphism field $A\in L^\infty(\Om, \LL(\R^n))$ is normal, i.e., $A^\ast(x)A(x)=A(x)A^\ast(x)$ for a.e. $x\in \Om$ and satisfies the uniform ellipticity bounds
\begin{align}\label{eq:EBound1}
\lambda\vert v\vert^2\leq \langle A(x)v,v\rangle,\quad \Vert A(x)\Vert\leq \Lambda\,\, \text{ for a.e. $x$},
\end{align}
or in the case if $A$ is symmetric, i.e., $A^\ast(x)=A(x)$, 
\begin{align*}
\lambda\vert v\vert^2\leq \langle A(x)v,v\rangle \leq \Lambda\vert v\vert^2
\end{align*}
for all $v\in \R^n$ and a.e. $x\in \Om$. Consider weak solution $u\in W^{1,2}(\Om)$ of the equation 
\begin{align}\label{eq:AEq}
\text{div}A(x)\nabla u(x)=0
\end{align}
 in $\Om$. Equation \eqref{eq:AEq} is the Euler-Lagrange equation for the energy functional 
\begin{align*}
\mathcal{E}_A[u]=\int_{\Om}\langle A(x)\nabla u(x),\nabla u(x)\rangle dx, 
\end{align*}
and call
\begin{align*}
\mathscr{E}_A[u]=\langle A(x)\nabla u(x),\nabla u(x)\rangle
\end{align*}
the energy density. The weak Euler-Lagrange equations for $\mathcal{E}_A[\cdot]$ shows that $A(x)\nabla u(x)$ is a weakly divergence free vector field for any weak solution $u$. The integration by parts formula \eqref{eq:IntPart1} shows that the energy density $\mathscr{E}_A[u]=\langle A(x)\nabla u(x),\nabla u(x)\rangle$ is a \emph{null-lagrangian}. Indeed,
\begin{align*}
\mathcal{E}_A[u]&=\int_{\Om}\langle A(x)\nabla u(x),\nabla u(x)\rangle dx=\int_{\dv \Om}\langle u(y)\nu(y),A(y)\nabla u(y)\rangle d\sigma(y). 
\end{align*}
We also notice that \emph{conormal derivative} $\dv_{\nu_A}u(x):=\langle \nu(x), A(y)\nabla u(x)\rangle$ is nothing but the Neumann data for $\text{div}A\nabla u$, and so the energy of the solution can be expressed a the pairing of the Dirichlet and Neumann data. If we let $\Lambda_{DN}$ denote the \emph{Dirichlet to Neumann map}, the energy in terms of the boundary data becomes 
\begin{align*}
\mathcal{E}_A[\phi]=\int_{\dv \Om}\phi(x)\Lambda_{DN}(\phi)(x)d\sigma(x). 
\end{align*}

\subsection{\sffamily Hodge Theory and Gauge Conditions}\label{sec:GaugeCond}

By assumption $\nabla u\in L^2(\Om,\R^n)$. Since $u$ is a weak solution get the div-curl couple $(B,E)$ defined by
\begin{align*}
E(x)=\nabla u(x),\,\,\, B(x)=A(x)\nabla u(x)
\end{align*}
where $dE(x)=0$ and $\delta B(x)=0$ in the sense of distributions. 
We have the $L^2$-estimate 
\begin{align*}
\Vert B\Vert_{L^2}=\int_{\Om}\vert B(x)\vert^2dx=\int_{\Om}\vert A(x)\nabla u(x)\vert^2dx\leq \Vert A\Vert_\infty\Vert \nabla u\Vert_{L^2}\leq \Lambda_E\Vert E\Vert_{L^2}. 
\end{align*}

Since $\delta B(x)=0$ we can always find a $v\in W^{1,2}(U,\Lambda^2\R^n)$ locally in a simply connected open set $U\Subset \Om$ such that $\delta v(x)=B(x)$. Of course such $v$ is not unique and it is natural to try and impose the exact gauge condition $d v(x)=0$ so that $\Dp v=(d+\delta )v=B$ locally. If such condition can be imposed, then this determines a unique $v$ up to harmonic bivector fields, i.e. $h\in W^{1,2}(U,\Lambda^2\R^n)$ such that $dh(x)=\delta h(x)=0$. However, without boundary conditions this space is is still infinite dimensional. A natural choice is to impose either tangential or normal boundary conditions on $v$. If $U$ is simply connected this determines a unique $v$ as Theorem \ref{thm:Hodge} shows. If on the other hand we want to find a \emph{global} bivector field $v$ there are topological restrictions for the existence of such $v$. 

We have the following theorem adapted to the Euclidean setting:
\begin{Thm}[Theorem 3.2.5 in \cite{Sw}]
\label{thm:Hodge}
Let $\Om\subset \R^n$ be a smooth ($C^2$) compact domain. Given $\chi\in W^{s,p}(\Om,\Lambda^{k+1}\R^n)$, $\rho\in W^{s,p}(\Om,\Lambda^{k-1}\R^n)$ and $\psi\in W^{s+1-1/p,p}(\Om,\Lambda^{k}\R^n)\vert_{\dv \Om}$, with $s\geq 0$ and $p>1$ the boundary value problem 
\begin{align*}
\left\{
    \begin{array}{ll}
    d \omega(x)=\chi(x), & x\in \Om,\\
        \delta \omega(x)=\rho(x), & x\in \Om,\\
      \omega_T(x)=\psi_T, & x\in \dv \Om.  
      \end{array} \right.
\end{align*}
is solvable if and only if $\delta \rho=0$ in the sense of distributions and 
\begin{align*}
(\rho,\kappa)=\int_{\Om}\langle\rho(x),\kappa(x) \rangle dx=0, \,\, \forall \, \kappa\in \mathcal{H}_T^{k-1}(\Om),
\end{align*}
and $d\chi=0$ in the sense of distributions, $\chi_T=d\psi_T$, and 
\begin{align*}
(\chi,\lambda)=\int_{\dv \Om}\langle\psi_T(x),\lambda_N(x) \rangle dx, \,\, \forall \, \lambda\in \mathcal{H}_T^{k+1}(\Om),
\end{align*}
The solution is unique up to an arbitrary harmonic field $h\in \mathcal{H}_T^k(\Om)$. Moreover, we have the Sobolev norm estimate
\begin{align}\label{eq:Sobolev}
\Vert \omega\Vert_{W^{s+1,p}(\Om)}\leq C(\Vert \chi\Vert_{W^{s,p}(\Om)}+\Vert \rho\Vert_{W^{s,p}(\Om)}+\Vert \psi\Vert_{W^{s+1-1/p,p}(\dv \Om)}). 
\end{align}
\end{Thm}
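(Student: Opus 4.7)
I would organize the argument into three parts: necessity of the compatibility conditions, existence by reduction to a Hodge--Laplacian boundary value problem, and the Sobolev estimate via elliptic regularity on the $C^2$ domain $\Om$.

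\emph{Necessity.} The identities $d\chi=d^2\omega=0$ and $\delta \rho=\delta^2 \omega=0$ hold distributionally. Commutation of $d$ with the tangential trace (equivalently, $d$ pulls back to the intrinsic exterior derivative on $\dv \Om$) yields $\chi_T=(d\omega)_T=d\psi_T$. The integral conditions follow from Green's identities: for $\kappa\in \mathcal{H}_T^{k-1}(\Om)$,
\begin{align*}
(\rho,\kappa)=(\delta\omega,\kappa)=(\omega,d\kappa)-\int_{\dv \Om}\langle \omega, \nu \wedge \kappa\rangle\,d\sigma=0,
\end{align*}
using $d\kappa=0$ and that $\kappa_T=0$ forces $\nu\wedge \kappa=0$ on $\dv \Om$. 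Analogously, for $\lambda\in \mathcal{H}_T^{k+1}(\Om)$, Green's identity for $d$ combined with $\delta \lambda=0$ gives $(\chi,\lambda)=\int_{\dv \Om}\langle \psi_T,\lambda_N\rangle\,d\sigma$.

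\emph{Existence.} First reduce to the case $\psi_T=0$: by the trace extension theorem there exists $\eta\in W^{s+1,p}(\Om,\Lambda^k)$ with $\eta_T=\psi_T$ and $\Vert \eta\Vert_{W^{s+1,p}(\Om)}\leq C\Vert \psi\Vert_{W^{s+1-1/p,p}(\dv \Om)}$. Writing $\omega=\eta+\tilde\omega$ reduces the problem to solving for $\tilde\omega$ with $\tilde\omega_T=0$, $d\tilde\omega=\tilde\chi$, $\delta\tilde\omega=\tilde\rho$, where $\tilde\chi=\chi-d\eta$, $\tilde\rho=\rho-\delta\eta$; all compatibility conditions persist, and the boundary integral on the right-hand side of the orthogonality condition for $\tilde\chi$ now vanishes. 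For this reduced system, solve the Hodge--Laplacian boundary value problem
\begin{align*}
\Delta \tilde\omega=\delta\tilde\chi+d\tilde\rho,\quad \tilde\omega_T=0,\quad (\delta \tilde\omega)_T=\tilde\rho_T \,\,\text{ on }\,\, \dv \Om,
\end{align*}
which is Lopatinskii--Shapiro elliptic on the $C^2$ domain $\Om$ (the classical inhomogeneous relative/tangential boundary value problem for $\Delta$). The orthogonality conditions furnish Fredholm compatibility, yielding $\tilde\omega\in W^{s+1,p}(\Om,\Lambda^k)$. To verify $d\tilde\omega=\tilde\chi$ and $\delta \tilde\omega=\tilde\rho$, set $A=d\tilde\omega-\tilde\chi$ and $B=\delta \tilde\omega-\tilde\rho$; using $d\tilde\chi=\delta \tilde\rho=0$ one checks that $\Delta A=0$ and $\Delta B=0$, while the boundary conditions give $A_T=d(\tilde\omega_T)-\tilde\chi_T=0$ and $B_T=(\delta \tilde\omega)_T-\tilde\rho_T=0$. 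Thus $A\in \mathcal{H}_T^{k+1}(\Om)$ and $B\in \mathcal{H}_T^{k-1}(\Om)$. Green's identity together with the remaining orthogonality hypotheses gives $(A,A)=(d\tilde\omega-\tilde\chi,A)=(\tilde\omega,\delta A)-(\tilde\chi,A)=0-0=0$, and analogously $(B,B)=0$, so $A=B=0$. Uniqueness up to $\mathcal{H}_T^k(\Om)$ is immediate from the description of the kernel.

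\emph{Sobolev estimate and principal obstacle.} The bound \eqref{eq:Sobolev} follows from $L^p$-elliptic regularity for the Hodge Laplacian with relative boundary conditions on the $C^2$ domain $\Om$, combined with the trace lift estimate for $\eta$. The principal difficulty is the $L^p$-theory for $p\neq 2$: the case $p=2$ reduces to Gaffney's inequality and the spectral theorem for the self-adjoint Hodge Laplacian with relative boundary conditions, but general $p$ requires the Agmon--Douglis--Nirenberg $L^p$-theory for elliptic systems, or equivalently Calder\'on--Zygmund estimates for the parametrix of $\Delta$, both of which are available on $C^2$ domains.
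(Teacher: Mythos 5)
The paper does not prove this statement: it is quoted verbatim (as Theorem 3.2.5) from Schwarz's monograph \cite{Sw} and used as a black box, so there is no internal proof to compare against. Your sketch is essentially the standard argument that underlies the cited result: reduce to homogeneous tangential data by a trace lift, solve the Poisson problem for the Hodge--Laplacian with relative (tangential) boundary conditions, and then show that the ``error fields'' $A=d\tilde\omega-\tilde\chi$ and $B=\delta\tilde\omega-\tilde\rho$ vanish. Schwarz organizes this through the Hodge--Morrey--Friedrichs decomposition and Dirichlet/Neumann potentials rather than invoking Agmon--Douglis--Nirenberg directly, but it is the same circle of ideas, and your necessity argument via the integration-by-parts formulas is exactly the standard one.

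One step in your existence argument needs more care. From $\Delta A=0$ and $A_T=0$ alone you cannot conclude $A\in\mathcal{H}_T^{k+1}(\Om)$: membership in $\mathcal{H}_T^{k+1}$ requires $dA=\delta A=0$, and the Green identity
\begin{align*}
0=(\Delta A,A)=\Vert dA\Vert^2+\Vert \delta A\Vert^2+\int_{\dv\Om}\langle \delta A,\nu\ri A\rangle\,d\sigma-\int_{\dv\Om}\langle dA,\nu\wedge A\rangle\,d\sigma
\end{align*}
kills only the second boundary term when $A_T=0$; you also need $(\delta A)_T=0$ to kill the first. This does follow, but only by using the second boundary condition $(\delta\tilde\omega)_T=\tilde\rho_T$ together with the equation: $(\delta dA)_T=(\delta\Delta\tilde\omega-\delta d\tilde\chi)_T$ type manipulations give $(\delta A)_T=(\delta d\tilde\omega)_T-(\delta\tilde\chi)_T=0$. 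Moreover, the identity $(\tilde\chi,A)=0$ at the end is only available once $A$ is known to lie in $\mathcal{H}_T^{k+1}(\Om)$, so the order of the argument must be: first establish $dA=\delta A=0$ from the Green identity above, then invoke the orthogonality hypothesis. With these points spelled out, and granting solvability and $W^{s+1,p}$ regularity of the relative boundary value problem for $\Delta$ on a $C^2$ domain (which you correctly identify as the real analytic content, supplied by the $L^p$ elliptic theory), your outline is sound.
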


In the case of interest for us $k=2$, $s=0$ and $p=2$. We now \emph{make the choice} to set $\chi=0$. Then of course $\chi=0\in L^2(\Om, \Lambda^3\R^n)$, $\rho=B\in  L^2(\Om, \Lambda^1\R^n)$ and $\psi=0\in W^{1/2,2}(\Om, \Lambda^2\R^n)\vert_{\dv\Om}$ and $d\psi=0\in  W^{1,-1/2}(\Om, \Lambda^2\R^n)\vert_{\dv\Om}$. This gives us the boundary value problem for $v$
\begin{align}\label{eq:GaugeCond}
\left\{
    \begin{array}{ll}
    \delta v(x)=B(x), & x\in \Om,\\
        dv(x)=0, & x\in \Om,\\
      v_T(x)=0, & x\in \dv \Om.  
      \end{array} \right.
\end{align}
According to Theorem 3.2.5 this problem has a solution if and only if 

\begin{align*}
0=(\chi,\lambda)=\int_{\dv \Om}\langle\psi_T(x),\lambda_N(x) \rangle dx\,\, \forall \, \lambda\in \mathcal{H}_T^{2}(\Om),
\end{align*}
which is automatically true since $\psi_T=0$ by choice and 
\begin{align*}
(\rho,\kappa)=(B, \kappa)=0,\,\, \forall \, \kappa\in \mathcal{H}_T^{1}(\Om).
\end{align*}

This is not necessarily true unless  $\mathbf{H}_{rel}^1(\Om)\cong\mathcal{H}_T^{1}(\Om)=\{0\}$. We therefore assume that $\mathbf{H}_{rel}^1(\Om)=\{0\}$. This is equivalent to assuming that $\Om$ is simply connected. From now on we therefore assume this. In particular \eqref{eq:GaugeCond} is equivalent to the inhomogenous problem for the Hodge--Dirac operator 
\begin{align}\label{eq:GaugeCond2}
\left\{
    \begin{array}{ll}
    \Dp v(x)=B(x), & x\in \Om,\\
        v_T(x)=0, & x\in \dv \Om.  
      \end{array} \right.
\end{align}
This equation in turn can be reduced to the corresponding homogenous equation by the Cauchy transform (see section \ref{sec:Cauchy}). Applying Theorem 1.7 in \cite{MMMT} for the homogenous Hodge--Dirac equation on Lipschitz domains one can extend the Theorem 3.2.5 in \cite{Sw} to Lipschitz domains. Alternatively one can directly prove Theorem 3.2.5 for Lipschitz domains using Hodge decompositions for multivector fields with zero tangential trace as in Chapter 10.3 Theorem  10.3.1 in \cite{R}. We omit the details.  Note however that in the case of Lipschitz domain the estimate \eqref{eq:Sobolev} need not hold, see section \ref{sec:Elliptic}. Instead, we have the estimate 
\begin{align*}
\Vert v\Vert_{W^{1,2}_{d,\delta}(\Om)}\leq C\Vert B\Vert_{L^2(\Om)}. 
\end{align*}

Hence we have a unique $v$ up to an arbitrary harmonic field in $h\in \mathcal{H}_T^2(\Om)$. However, since $\mathcal{H}_T^2(\Om)=\{0\}$, this $v$ is unique. 
We call $v$ the \emph{tangential $A$-harmonic conjugate} to $u$. The additional differential condition $dv(x)=0$ in \eqref{eq:GaugeCond} can be viewed as imposing a gauge condition on the auxiliary field $v$ much in the same way as one can impose gauge conditions on the electromagnetic potential in electrodynamics. Moreover, by Theorem \ref{thm:Hodge} $v\in W^{1,2}(\Om)$ in the case when $\Om$ is a $C^2$-domain. We now define the multivector field 
\begin{align*}
F(x)=u(x)+v(x)
\end{align*}
which in view of Theorem \ref{thm:Hodge} lies in $W^{1,2}_{d,\delta}(\Om,\Lambda^0\R^n\oplus \Lambda^2\R^n)$. We note that 
\begin{align*}
\overline{F(x)}=u(x)-v(x),
\end{align*}
so that $u(x)=\frac{1}{2}(F(x)+\overline{F(x)})$ and $v(x)=\frac{1}{2}(F(x)-\overline{F(x)})$. Since the anti-involution can be thought of as generalization of complex conjugation we see that we can think of $u$ and $v$ as the``'real'' and ``imaginary'' part of $F$ in analogy with the complex case. Furthermore, $F_T(x)=u(x)=\phi(x)$, $x\in \dv \Om$. 
We now relate the div-curl couple $(B,E)$ to $F$. We have 
\begin{align*}
E(x)+B(x)&=du(x)+\delta v(x)=(d+\delta)u(x)+(d+\delta)v(x)=\Dp F(x),\\
E(x)-B(x)&=du(x)-\delta v(x)=(d-\delta)u(x)+(d-\delta)v(x)=\Dm F(x). 
\end{align*}

And the other hand, the algebraic relation between $E$ and $B$ gives
\begin{align*}
\Dp F(x)=E(x)+B(x)&=(I+A(x))E(x)=(I+A(x)^{-1})B(x),\\
\Dm F(x)=E(x)-B(x)&=(I-A(x))E(x)=-(I-A(x)^{-1})B(x).
\end{align*}
Hence, 
\begin{align*}
\Dm F(x)&=(I-A(x))\circ (I+A(x))^{-1}\Dp F(x)\\
\end{align*}
We set $\mathcal{M}(x):=(I-A(x))\circ (I+A(x))^{-1}$, the \emph{Cayley transform} of $A$. Then we get the \emph{Beltrami equation}
\begin{align*}
\Dm F(x)=\mathcal{M}(x)\Dp F(x). 
\end{align*}

Define 
\begin{align}
\label{def:DistK}
\mathscr{M}(x):=\Vert \mathcal{M}(x)\Vert. 
\end{align}
Then ellipticity bound \eqref{eq:NormalBounds} together with Lemma \ref{lem:EllipticityBound} implies that 
\begin{align}\label{def:M}
M:=\Vert \mathscr{M}(x)\Vert_{L^\infty(\Om)}<1. 
\end{align}

If in addition $\Lambda=1$ and we set $L:=\lambda^{-1}$, then $M=(L-1)/(L+1)$. If we furthermore set $\mathcal{K}:=2(1+M^2)/(1-M^2)$, then $\mathcal{K}=(L^2+1)/(4L)$ and any solution $F$ satisfies the distortion inequality 
\begin{align*}
\vert dF(x)\vert^2+\vert \delta F\vert^2\leq \mathcal{K}\langle dF(x),\delta F(x)\rangle. 
\end{align*}

Thus $F\in W^{1,2}_{d,\delta}(\Om,\Lambda^0\R^n\oplus \Lambda^2\R^n)$ solves the half-Dirichlet boundary value problem 
\begin{align*}
\left\{
    \begin{array}{ll}
    \Dm F(x)=\mathcal{M}(x)\Dp F(x)  & x\in \Om,\\
        F_T(x)=\phi(x), & x\in \dv \Om.  
      \end{array} \right.
\end{align*} 

Here we see the analogy with classical linear Beltrami equation in the plane $\overline{\dv}f(z)=\mathcal{M}(z)\overline{\dv f(z)}$, and we can think of the pair of Hodge-Dirac operators $(\Dm,\Dp)$ as a generalization of the pair of complex derivatives $(\overline{\dv},\dv)$.

\subsection{\sffamily Converse Direction}\label{subsec:Conv}

Assume now that we have a multivector field $F\in W^{1,2}_{d,\delta}(\Om, \Lambda^0\oplus \Lambda^2)$ with tangential boundary value $F_T=\phi$. It then follows that $\Dp F,\Dm F\in L^2(\Om, \Lambda^1\oplus \Lambda^3)$. Thus in particular $\langle \mathcal{D}^\pm \rangle_3$ need not be zero. However, in the Beltrami equation $\Dm F(x)=\mathcal{M}(x)\Dp F(x)$, the measurable field $\mathcal{M}\in L^{\infty}(\Om, \LL(\R^n))$, so $\mathcal{M}(x)$ is a linear operator on vectors, and so the action on $\mathcal{M}$ on tri-vectors are not a priori defined and the equation $\Dm F(x)=\mathcal{M}(x)\Dp F(x)$ does not make sense unless $\mathcal{M}$ is extended as a measurable field 
$\widetilde{\mathcal{M}}\in L^{\infty}(\Om, \LL(\Lambda \R^n))$. The most natural extension is to take $\widetilde{\mathcal{M}}=\widehat{\mathcal{M}}$, the exterior extension of $\mathcal{M}$. In particular, the exterior extension is grade preserving, so $\widehat{\mathcal{M}}(\Lambda^p\R^n)\subset \Lambda^p\R^n$, and if $\mathcal{M}(x)$ is invertible, so is $\widehat{\mathcal{M}}(x)$ with inverse $(\widehat{\mathcal{M}}(x))^{-1}=\widehat{\mathcal{M}^{-1}}(x)$. 
Moreover, by Proposition \ref{prop:CayleyExt} in the Appendix, if $\Vert \mathcal{M}\Vert<1$ then $\Vert \widehat{\mathcal{M}}\Vert\leq \Vert \mathcal{M}\Vert<1$.  

In order not to overburden the notation, we will keep writing $\mathcal{M}(x)$ instead of $\widehat{\mathcal{M}}(x)$. With this in mind the Beltrami equation $\Dm F(x)=\mathcal{M}(x)\Dp F(x)$ is well defined. Splitting $F=\langle F(x)\rangle_0+\langle F(x)\rangle_2=u(x)+v(x)$, we find that 
\begin{align*}
\Dp F(x)=d u(x)+\delta v(x)+d v(x),\,\, \Dm F(x)=d u(x)-\delta v(x)+d v(x).
\end{align*}
Since $\mathcal{M}$ is grade preserving by construction, the Beltrami equation $\Dm F(x)=\mathcal{M}(x)\Dp F(x)$ reduces to the system
\begin{align}
\label{eq:Conv1}
d u(x)-\delta v(x)&=\mathcal{M}(x)(d u(x)+\delta v(x)),\\
\label{eq:Conv2}
dv(x)&=\mathcal{M}(x)dv(x). 
\end{align}
Assuming that $M<1$ so that the Dirac-Beltrami operator is elliptic, this implies 
\begin{align*}
\vert dv(x)\vert \leq M\vert dv(x)\vert<\vert dv(x)\vert
\end{align*}
which implies that $dv (x)=0$. Hence $\mathcal{D}^\pm F(x)$ are vector fields $v$ is a closed bivector field. Solving for $\delta v$ in equation \eqref{eq:Conv1} gives
\begin{align*}
\delta v(x)=(I+\mathcal{M}(x))^{-1}\circ(I-\mathcal{M}(x))(du(x))=A(x)du(x).
\end{align*}
Since $\delta^2 v(x)=0$, it follows that $u$ solves the scalar second order (elliptic) equation 
\begin{align*}
\delta A(x)du(x)=0. 
\end{align*}
Thus, the scalar part of $F$ with $F$ as above solves a linear $A$-harmonic equation. On the other hand we may from \eqref{eq:Conv1} solve for $d u(x)$ instead giving 
\begin{align*}
d u(x)&=(I-\mathcal{M}(x))^{-1}\circ (I+\mathcal{M}(x))(\delta v(x))\\
&=A(x)^{-1}\delta v(x). 
\end{align*}
Since $d^2 u(x)=0$ in the sense of distributions it follows that $v$ solves the equation 
\begin{align}\label{eq:Adual}
d A(x)^{-1}\delta v(x)=0.
\end{align}
We call \eqref{eq:Adual} the \emph{A-harmonic conjugate equation} of $\text{div} A(x)\nabla u(x)=0$. 

To conclude, we have proven the following theorem:

\begin{Thm}\label{thm:conv}
Let $\Om\subset \R^n$ be a simply connected Lipschitz domain. Let $F\in W^{1,2}_{d,\delta}(\Om,\Lambda^{(0,2)})$ be the unique solution to the half-Dirichlet problem
\begin{align*}
\left\{
    \begin{array}{ll}
    \Dm F(x)=\widehat{\mathcal{M}}(x)\Dp F(x)  & x\in \Om,\\
        F_T(x)=\phi(x), & x\in \dv \Om.  
      \end{array} \right.
\end{align*} 
where $\widehat{\mathcal{M}}(x)$ is the exterior extension of a some $\mathcal{M}\in L^\infty(\Om, \LL(\R^n))$ satisfying $\Vert \Vert \mathcal{M}(x)\Vert\Vert_{L^\infty}=m<1$. 
Then $u=\langle F\rangle_0\in W^{1,2}(\Om)$ is the unique weak solution of the Dirichlet problem 
\begin{align*}
\left\{
    \begin{array}{ll}
    \text{div}\,A(x)\nabla u(x)=0  & x\in \Om,\\
        u(x)=\phi(x), & x\in \dv \Om.  
      \end{array} \right.,
\end{align*} 
where $A(x)=(I-\mathcal{M}(x))\circ (I+\mathcal{M}(x))^{-1}$. 
\end{Thm}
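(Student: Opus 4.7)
The plan is to carry out rigorously the computation already sketched in Subsection~\ref{subsec:Conv} and then to tie off the three remaining issues: matching the boundary data, verifying that $A$ is uniformly elliptic, and deducing uniqueness of $u$ from the elliptic theory. First I would decompose $F = u + v$ with $u = \langle F \rangle_0 \in W^{1,2}(\Om,\R)$ and $v = \langle F \rangle_2 \in W^{1,2}_{d,\delta}(\Om, \Lambda^2 \R^n)$ and record the grade expansions
\[
\Dp F = du + \delta v + dv, \qquad \Dm F = du - \delta v + dv.
\]
The crucial observation is that $\widehat{\mathcal{M}}$ is grade preserving, so the Dirac--Beltrami equation $\Dm F = \widehat{\mathcal{M}} \Dp F$ splits into a grade-$1$ identity and a grade-$3$ identity. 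The grade-$3$ part reads $dv = \widehat{\mathcal{M}} dv$; combined with the bound $\Vert \widehat{\mathcal{M}}\Vert \leq M < 1$ from Proposition~\ref{prop:CayleyExt}, this forces $dv = 0$. The grade-$1$ part reads $(I - \widehat{\mathcal{M}}) du = (I + \widehat{\mathcal{M}}) \delta v$, and on $\Lambda^1 \R^n \cong \R^n$ the operator $\widehat{\mathcal{M}}$ coincides with $\mathcal{M}$, so $(I + \mathcal{M})$ is invertible and commutes with $(I - \mathcal{M})$, yielding
\[
\delta v(x) = A(x)\, du(x), \qquad A(x) = (I - \mathcal{M}(x))(I + \mathcal{M}(x))^{-1}.
\]

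Applying $\delta$ to this identity and using $\delta^2 = 0$ produces $\delta (A\, du) = 0$ in the sense of distributions, which is precisely the statement that $u \in W^{1,2}(\Om)$ is a weak solution of $\text{div}\, A(x)\nabla u(x) = 0$ in $\Om$. For the Dirichlet boundary datum, the scalar part of $F_T$ equals the trace of $u$ since $\mathbf{P}_T^\nu u = \nu \ri (\nu \wedge u) = u$ for any scalar $u$, so the hypothesis $F_T = \phi$ (with $\phi$ scalar valued, as in the original Dirichlet problem) immediately gives $u\big|_{\dv \Om} = \phi$.

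Uniform ellipticity of $A$ follows from $\Vert \mathcal{M}(x)\Vert \leq m < 1$ by a direct operator-norm estimate: the Neumann series bounds $\Vert(I \pm \mathcal{M})^{-1}\Vert \leq (1-m)^{-1}$, producing both $\Vert A\Vert \leq (1+m)/(1-m)$ and the coercivity inequality $\langle A(x) v, v\rangle \geq \tfrac{1-m}{1+m}\vert v\vert^2$ for a.e.~$x$ and all $v \in \R^n$. Given these bounds, Lax--Milgram (or equivalently the standard energy argument on $W^{1,2}_0(\Om)$) gives uniqueness of the weak solution $u$ of the Dirichlet problem with datum $\phi$.

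The main obstacle I anticipate is not conceptual but bookkeeping: one must carefully track which identities live on $\Lambda^1$ versus $\Lambda^3$, verify that the restriction of $\widehat{\mathcal{M}}$ to $\Lambda^1$ agrees with $\mathcal{M}$ so that the final formula matches the statement, and handle the fact that on a merely Lipschitz $\Om$ the bivector $v$ lies only in $W^{1,2}_{d,\delta}$ rather than $W^{1,2}$. The latter means that $\delta v = A\, du$ must first be established distributionally from the component-wise identity and then upgraded to an $L^2$ identity using that both sides are $L^2$ by the hypothesis $F \in W^{1,2}_{d,\delta}(\Om, \Lambda^{(0,2)})$.
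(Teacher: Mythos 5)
Your proposal is correct and follows essentially the same route as the paper's own argument in Subsection~\ref{subsec:Conv}: split $F=u+v$ by grade, use grade preservation of $\widehat{\mathcal{M}}$ to obtain the grade-$3$ identity $dv=\widehat{\mathcal{M}}dv$ (hence $dv=0$ by $M<1$) and the grade-$1$ identity $\delta v=A\,du$, then apply $\delta^2=0$. The additional points you flag — the commuting of $(I\pm\mathcal{M})^{\mp 1}$, the ellipticity bounds for $A$ via the Cayley transform, and the uniqueness — are exactly what the paper handles via Lemma~\ref{lem:AccOp} and Lemma~\ref{lem:EllipticityBound} in the appendix.
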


\begin{rem}
In the special case when $\mathcal{M}(x)=\mu(x)I_{\R^n}$, for some scalar function $\mu$, we may well take the extension $\widetilde{\mathcal{M}}(x)=\mu(x)I_{\Lambda \R^n}$ instead of $\widehat{\mathcal{M}}(x)$. 
\end{rem}

Finally, we recall Theorem 1 in \cite{IS}. Define the scalar function 
\begin{align*}
\lambda(x):=\left\{
    \begin{array}{ll}
    \displaystyle \frac{\vert \Dm F(x)\vert}{\vert \Dp F(x)\vert}  & \text{if $\Dp F(x)\neq 0$}\\
        0 & \text{if $\Dp F(x)= 0$}
      \end{array} \right.,
\end{align*} 
and the multivector field (being either a unit vector or 1)
\begin{align*}
a(x):=\left\{
    \begin{array}{ll}
     \displaystyle \frac{\Dm F(x)-\lambda(x)\Dp F(x)}{\vert \Dm F(x)-\lambda(x)\Dp F(x) \vert}  & \text{if $\Dm F(x)-\lambda(x)\Dp F(x)\neq 0$}\\
        1 & \text{if $\Dm F(x)-\lambda(x)\Dp F(x)=0$}
      \end{array} \right. .
\end{align*} 
Then as $\Dp F(x)$ and $\Dm F(x)$ are vectors and by Theorem 1 in \cite{IS} and Lemma \ref{lem:Refl} this can be written as 
\begin{align*}
\Dm F(x)=-\lambda(x)a(x)\gp \Dp F(x) \gp a(x)^{-1}.
\end{align*}
In particular, in view of Theorem 1 in \cite{IS} we may always assume that $\mathcal{M}(x)$ is a conformal linear map. Moreover, if we define the conformal map $\mathcal{C}(x)\in \LL(\R^n)$ through 
\begin{align*}
\mathcal{C}(x)v=-\lambda(x)a(x)\gp v \gp a(x)^{-1}
\end{align*}
for all $v\in \R^n$, then 
\begin{align*}
\Vert \mathcal{C}(x)\Vert=\lambda(x)=\frac{\vert \Dm F(x)\vert}{\vert \Dp F(x)\vert}\leq \frac{\Vert  \mathcal{M}(x)\Vert\vert \Dp F(x)\vert}{\vert \Dp F(x)\vert}=\Vert \mathcal{M}(x)\Vert.
\end{align*}
and we see that the ellipticity bounds are preserved.

\subsection{\sffamily Comparison to the two dimensional case}

In dimension two we recall the algebraic isomorphism $\Delta^{ev}\R^2\cong \Lambda^0\R^2\oplus \Lambda^2\R^2\cong \C$ of the even subalgebra of the euclidean Clifford algebra $\Delta \R^2$ with the complex numbers. In particular we can identify the imaginary unit $i$ with the euclidean volume form $e_{12}$. Moreover, the gauge condition $dv(x)=0$ for the $A$-harmonic conjugate of $u$ is \emph{automatically} satisfied and therefore vacuous in dimension two, contrary to higher dimensions. Moreover, the Hodge star map can be expressed using Clifford algebra by 
\begin{align*}
\star w=e_{12}\gp w
\end{align*}
for any multivector $w\in \Lambda V$. Hence, using $v=\star(v\star)$  and setting $w=v\star$, we have 
\begin{align*}
F=u+v=u+\star(v\star)=u+e_{12}w\cong u+iw,
\end{align*}
where $w$ is a scalar valued field. Hence, the usual stream function $v$ in complex notation can be identified with $v\star$ in the framework of this paper. 

\subsection{\sffamily Multiply connected domains and inhomogeneous equations}\label{sec:MultiD}

Consider the Dirichlet problem for the Laplace equation 
\begin{align}\label{eq:DirProLaplace}
\left\{
    \begin{array}{ll}
     \Delta u(z)=0 & z\in \Om\\
      u(z)=\phi(z) & z\in \dv \Om 
      \end{array} \right.
\end{align}
on a bounded smooth domain $\Om \subset \C$ in the plane. If $\Om$ is simply connected it is well-known that $u$ is the real of a holomorphic function $f$, i.e., $f=u+iv$, where $v$ is the harmonic conjugate of $v$. In this case the Dirichlet problem for Laplace equation becomes equivalent to the boundary value problem 
\begin{align}\label{eq:Holo1}
\left\{
    \begin{array}{ll}
     \overline{\dv} f(z)=0 & z\in \Om\\
      \mathfrak{R}e[f(z)]=\phi(z) & z\in \dv \Om 
      \end{array} \right.
\end{align}
for holomorphic functions. If on the other hand $\Om$ is not simply connected there need not exist a global harmonic conjugate $v$ of $u$, the two boundary value problems are not equivalent. If one would like to avoid the use of multivalued holomorphic functions in multiply connected domains there exist a nice substitute called the logarithmic conjugation theorem.
\begin{Thm}\label{thm:ComplexLog}
Let $\Om$ be a finitely connected domain with $K_1,K_2,...,K_n$ denoting the bounded components of the complement of $\Om$. For each $j$, let $a_j$ be in $K_j$. If $u$ is a real valued harmonic function on $\Om$, then there exists a holomorphic function $f$ on $\Om$ and real numbers $c_1,c_2,...,c_n$ such that 
\begin{align*}
u(z)=\mathfrak{R}e[f(z)]+c_1\log\vert z-a_1\vert+c_2\log\vert z-a_2\vert+...+c_n\log\vert z-a_n\vert
\end{align*} 
for every $z\in \Om$. 
\end{Thm}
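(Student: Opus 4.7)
The plan is to exploit de Rham cohomology: the obstruction to a harmonic function on $\Om$ admitting a single-valued harmonic conjugate lies in $\mathbf{H}^1_{dR}(\Om) \cong \R^n$, and the functions $\log|z-a_j|$ furnish harmonic functions whose conjugate differentials diagonalize the period pairing against a natural basis of $\mathbf{H}_1(\Om;\Z)$. The strategy is therefore to compute the periods of the conjugate differential of $u$ and to subtract off a unique $\R$-linear combination of $\log|z-a_j|$'s that annihilates them, leaving a function whose conjugate differential is exact.

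First I would introduce the \emph{conjugate differential} $\omega := -u_y\,dx + u_x\,dy$. Since $u$ is harmonic, $d\omega = (u_{xx}+u_{yy})\,dx\wedge dy = 0$, so $\omega$ is a closed smooth $1$-form on $\Om$. Choose pairwise disjoint small counterclockwise loops $\gamma_j \subset \Om$ such that $\gamma_j$ winds once around $a_j$ and zero times around each $a_k$ for $k \neq j$. The classical fact that a finitely connected planar domain with $n$ bounded complementary components is homotopy equivalent to a wedge of $n$ circles gives $\mathbf{H}_1(\Om;\Z) = \Z\langle[\gamma_1],\ldots,[\gamma_n]\rangle$, so a closed $1$-form on $\Om$ is exact if and only if all of its periods $\int_{\gamma_k}$ vanish. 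Set $\alpha_j := \int_{\gamma_j}\omega \in \R$.

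Next I would compute the periods of the conjugate differential of $\log|z-a_j|$. A direct calculation, using $\log|z-a_j| = \mathfrak{R}e[\log(z-a_j)]$ on any simply connected subset of $\Om$ on which a branch of $\log(z-a_j)$ can be chosen, identifies this conjugate differential with $d\arg(z-a_j)$ on that subset, and its global periods are $\int_{\gamma_k} d\arg(z-a_j) = 2\pi\,\delta_{jk}$. Setting $c_j := \alpha_j/(2\pi)$ and defining $\tilde u(z) := u(z) - \sum_{j=1}^n c_j \log|z-a_j|$ produces a harmonic function on $\Om$ whose conjugate differential has zero period around every $\gamma_k$, and is therefore exact; writing it as $dv$ for some single-valued $v \in C^\infty(\Om,\R)$, the defining relations $v_x = -\tilde u_y$ and $v_y = \tilde u_x$ are exactly the Cauchy--Riemann equations for $f := \tilde u + iv$, so $f$ is holomorphic on $\Om$ and $\mathfrak{R}e[f] = \tilde u$, which yields the claimed decomposition.

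The main (and really only) nontrivial point is the topological identification of $\{[\gamma_j]\}_{j=1}^n$ as a $\Z$-basis of $\mathbf{H}_1(\Om;\Z)$ for a finitely connected planar domain; once this is in hand the rest is a direct computation of periods and an application of the Poincar\'e lemma on a domain with trivial first cohomology (after the logarithmic corrections). The homological fact can be proved by induction on $n$, filling in one bounded hole at a time via Mayer--Vietoris, or by exhibiting an explicit deformation retract of $\Om$ onto a bouquet of $n$ circles threaded through the bounded complementary components $K_j$; both arguments are standard in planar topology.
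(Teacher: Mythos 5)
Your argument is correct. Note that the paper does not actually prove this theorem; it only cites Axler's article \cite{Ax}, so there is no in-paper proof to match. Your route is the classical period/cohomology argument: the conjugate differential $-u_y\,dx+u_x\,dy$ is closed, its periods over a homology basis $\{[\gamma_j]\}$ of $\mathbf{H}_1(\Om;\Z)$ are the only obstruction to exactness, and the conjugate differentials of $\log\vert z-a_j\vert$ have period matrix $2\pi\delta_{jk}$, so subtracting $c_j=\alpha_j/2\pi$ times each logarithm kills all periods and leaves a function with a single-valued harmonic conjugate. This is sound, and the one genuinely nontrivial ingredient --- that loops winding once around each bounded complementary component $K_j$ give a $\Z$-basis of $\mathbf{H}_1(\Om;\Z)$ --- is correctly isolated and admits the standard Mayer--Vietoris or deformation-retract proof you indicate. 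Axler's cited proof is different in flavor: he works with the holomorphic function $g=2\partial_z u=u_x-iu_y$, uses the decomposition of a holomorphic function on a finitely connected domain into pieces holomorphic across all but one hole, peels off the terms $b_j/(z-a_j)$ obstructing the existence of a primitive, and checks that the relevant residues are real because $u$ itself is single-valued. The two arguments encode the same period computation; yours has the advantage of being phrased in the de Rham language used throughout this paper (and is the one that motivates the higher-dimensional substitute in Section 6.5, where, as the paper remarks, Axler's complex-analytic proof does not generalize), while Axler's stays entirely within one-variable complex analysis.
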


For a proof see \cite{Ax}. Using this theorem we see that the Dirichlet problem for Laplace's equation in a multiply connected domain is equivalent to boundary value problem 

\begin{align}\label{eq:Holo2}
\left\{
    \begin{array}{ll}
     \overline{\dv} f(z)=0 & z\in \Om\\
      \mathfrak{R}e[f(z)]=\phi(z)-\sum_{j=1}^nc_j\log\vert z-a_j\vert, & z\in \dv \Om. 
      \end{array} \right.
\end{align}
Here the constants $c_j$ are determined by the period conditions. The usefulness of Theorem \ref{thm:ComplexLog} is that it reduces the boundary value problem for Laplace equation on a multiply connected domain to a boundary value problem for the Cauchy-Riemann operator. Unfortunately, the proof of Theorem \ref{thm:ComplexLog} does not generalize to higher dimension. Instead we will proceed by a different route, again using Hodge theory. 

Thus let $\Om \subset \R^n$ be a finitely connected Lipschitz domain and we consider the Dirichlet problem \eqref{eq:DirPro} on $\Om$. Then $\mathcal{H}_T^{1}(\Om)$ is finite dimensional. Moreover we have the unique orthogonal splitting
\begin{align*}
A\nabla u=B=B_T^\perp+\omega_T,
\end{align*}
where $B_T^\perp\in \mathcal{H}_T^{1}(\Om)^\perp$ and $\omega_T\in \mathcal{H}_T^{1}(\Om)$. We can now apply Theorem \ref{thm:Hodge}, to get a bivector field $v$ such that
\begin{align*}
\left\{
    \begin{array}{ll}
    d v(x)=0, & x\in \Om,\\
        \delta v(x)=B_T^\perp(x), & x\in \Om,\\
      v_T(x)=0 & x\in \dv \Om.  
      \end{array} \right.
\end{align*}

This $v$ is unique up to bivectors in $ \mathcal{H}_T^{2}(\Om)$. We define 
\begin{align*}
F(x)=u(x)+v(x).
\end{align*}
Then 
\begin{align*}
\Dp F(x)&=du(x)+\delta v(x), \quad \Dm F(x)=du(x)-\delta v(x). 
\end{align*}
On the other hand we consider the div-curl pair $(B,E)=(A(x)\nabla u(x),\nabla u(x))$. We have 
\begin{align*}
E(x)\pm B(x)=\nabla u(x)\pm A(x)\nabla u(x)=du(x)\pm \delta v(x)\pm \omega_T(x)=\mathcal{D}^\pm F(x)\pm \omega_T(x). 
\end{align*}

Thus, 
\begin{align*}
\mathcal{D}^+ F(x)+ \omega_T(x)=(I+A(x))E(x), \quad \mathcal{D}^-F(x)-\omega_T(x)=(I-A(x))E(x).
\end{align*}

This finally gives 
\begin{align*}
\mathcal{D}^-F(x)-\omega_T(x)&=(I-A(x))\circ(I+A(x))^{-1}(\mathcal{D}^+ F(x)+ \omega_T(x))\\
&\Longleftrightarrow\\
\mathcal{D}^-F(x)-M(x)\mathcal{D}^+ F(x)&=(I+M(x))\omega_T(x).  
\end{align*}

This leads to the boundary value problem

\begin{align}\label{eq:MultiDom}
\left\{
    \begin{array}{cl}
    \mathcal{D}^-F(x)-M(x)\mathcal{D}^+ F(x)=(I+M(x))\omega_T(x), & x\in \Om,\\
       F_T=\phi(x), & x\in \dv \Om.  
      \end{array} \right.
\end{align} 

Now $\omega_T$ can be determined form the orthogonality criterion, namely that $\delta v\perp \mathcal{H}_T^1(\Om)$. Since $\delta v=\delta F$ this gives us the finite linear equation system 
\begin{align*}
\int_{\Om}\langle \delta F(x),\kappa(x) \rangle dx=0, \,\, \forall \, \kappa\in \mathcal{H}_T^{1}(\Om)
\end{align*}
which uniquely determines $\omega_T$. Solutions of \eqref{eq:MultiDom} are unique up to elements in $ \mathcal{H}_T^2(\Om)$. By imposing the orthogonality condition that $F\perp \mathcal{H}_T^2(\Om)$ gives us a unique solution. To conclude we have proven 
\begin{Thm}\label{thm:MultiDB}
Let $\Om\subset \R^n$ be a bounded finitely connected domain. To every solution of \eqref{eq:DirPro} on $\Om$ there exists a unique solution $F\in W^{1,2}_{d,\delta}(\Om,\Lambda^{(0,2)})$ of the boundary value problem 
\begin{align}\label{eq:MultiDB}
\left\{
    \begin{array}{ll}
    \Dm F(x)=\mathcal{M}(x)\Dp F(x)+(I+\mathcal{M}(x))\omega_T&  x\in \Om,\\
    F_T=\phi& x\in \dv \Om\\
    F\perp \mathcal{H}_T^2(\Om)\\ 
    \omega_T\in \mathcal{H}_T^1(\Om)\\ 
    \int_{\Om}\langle \delta F(x),\kappa(x) \rangle dx=0, \,\, \forall \, \kappa\in \mathcal{H}_T^{1}(\Om)\\
    \end{array} \right. .
\end{align}
\end{Thm}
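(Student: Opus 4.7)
The plan is to mimic the construction of the simply connected case while accounting for the non-trivial cohomology $\mathcal{H}_T^1(\Om)$ via an orthogonal Hodge decomposition. Start from a weak solution $u\in W^{1,2}(\Om)$ of the Dirichlet problem, set $E=\nabla u$ and $B=A\nabla u$. Since $u$ is a weak solution, $\delta B=0$ in $\mathscr{D}'(\Om)$. Using that $\mathcal{H}_T^1(\Om)$ is finite dimensional (as recorded in the topological preliminaries), decompose orthogonally
\begin{align*}
B=B_T^{\perp}+\omega_T,\qquad B_T^{\perp}\perp \mathcal{H}_T^1(\Om),\quad \omega_T\in\mathcal{H}_T^1(\Om).
\end{align*}
Since every $\kappa\in\mathcal{H}_T^1(\Om)$ is closed and co-closed, $\delta\omega_T=0$, so $\delta B_T^{\perp}=\delta B-\delta\omega_T=0$ as well.

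Next apply Theorem \ref{thm:Hodge} (in its Lipschitz variant) with $\rho=B_T^{\perp}$, $\chi=0$, $\psi=0$, $k=2$. The solvability conditions hold: $\delta\rho=0$ just shown; $\rho\perp\mathcal{H}_T^1(\Om)$ by construction; $d\chi=0$, $\chi_T=d\psi_T=0$, and the pairing with $\mathcal{H}_T^3(\Om)$ is automatic since $\psi_T=0$. This produces a bivector $v\in W^{1,2}_{d,\delta}(\Om,\Lambda^2\R^n)$ satisfying
\begin{align*}
 dv=0,\quad \delta v=B_T^{\perp},\quad v_T=0 \text{ on }\dv\Om,
\end{align*}
unique up to $\mathcal{H}_T^2(\Om)$; I kill this ambiguity by imposing $v\perp\mathcal{H}_T^2(\Om)$. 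Then $F:=u+v\in W^{1,2}_{d,\delta}(\Om,\Lambda^{(0,2)})$ automatically satisfies $F\perp \mathcal{H}_T^2(\Om)$ (the scalar part being orthogonal to bivectors), $F_T=u\big|_{\dv\Om}=\phi$, and, because $dv=0$, $\mathcal{D}^{\pm}F=du\pm\delta v$.

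Now derive the Beltrami relation algebraically. From $E\pm B=(I\pm A)E$ and $B=\delta v+\omega_T$,
\begin{align*}
 \mathcal{D}^{+}F+\omega_T=E+B=(I+A)E,\qquad \mathcal{D}^{-}F-\omega_T=E-B=(I-A)E.
\end{align*}
Eliminating $E$ with the Cayley transform $\mathcal{M}=(I-A)\circ(I+A)^{-1}$ (and its exterior extension, which is grade preserving) yields
\begin{align*}
 \mathcal{D}^{-}F-\omega_T=\mathcal{M}\bigl(\mathcal{D}^{+}F+\omega_T\bigr),
\end{align*}
which rearranges to the equation in \eqref{eq:MultiDB}. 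Moreover $\delta F=\delta v=B_T^{\perp}\perp\mathcal{H}_T^1(\Om)$, giving the required integral orthogonality, and in turn this integral identity uniquely determines $\omega_T$ from $F$ (it is a finite linear system on the finite dimensional space $\mathcal{H}_T^1(\Om)$).

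For uniqueness, suppose $F_1,F_2$ both satisfy \eqref{eq:MultiDB} (with their associated $\omega_T^{(1)},\omega_T^{(2)}$). The orthogonality condition on $\delta F_i$ forces $\omega_T^{(1)}=\omega_T^{(2)}$, so the difference $G=F_1-F_2$ solves the homogeneous half-Dirichlet Dirac--Beltrami problem $\mathcal{D}^{-}G=\mathcal{M}\mathcal{D}^{+}G$ with $G_T=0$ and $G\perp\mathcal{H}_T^2(\Om)$. By Theorem \ref{thm:Solvability} the solution space of this homogeneous problem is exactly $\mathcal{H}_T(\Om)=\bigoplus_k\mathcal{H}_T^k(\Om)$; intersected with $\Lambda^{(0,2)}$ and with the orthogonality constraint, this forces $G=0$. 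The main technical obstacle is ensuring the Hodge theorem applies cleanly in the Lipschitz setting with the chosen data and that the finite dimensional closing equation for $\omega_T$ is non-degenerate; both reduce to the orthogonality of the Hodge decomposition together with the invocation of Theorem \ref{thm:Solvability}, which are already in place.
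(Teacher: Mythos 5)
Your construction coincides with the paper's: the same orthogonal splitting $B=B_T^{\perp}+\omega_T$, the same application of Theorem \ref{thm:Hodge} with $\chi=0$, $\rho=B_T^{\perp}$, $\psi=0$, the same Cayley-transform algebra producing $\Dm F-\mathcal{M}\Dp F=(I+\mathcal{M})\omega_T$, and the same normalizations $F\perp\mathcal{H}_T^2(\Om)$ and $\delta F\perp\mathcal{H}_T^1(\Om)$. The only step to tighten is uniqueness: the orthogonality of $\delta F_i$ identifies $\omega_T^{(i)}$ as the projection of $A\,d\langle F_i\rangle_0$ onto $\mathcal{H}_T^1(\Om)$, so before concluding $\omega_T^{(1)}=\omega_T^{(2)}$ you should first note that each $\langle F_i\rangle_0$ solves the scalar Dirichlet problem with the same boundary data (hence $\langle F_1\rangle_0=\langle F_2\rangle_0$), after which your argument closes as written.
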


We now consider the inhomogeneous Dirichlet problem 

\begin{align}\label{eq:DirProIn}
\left\{
    \begin{array}{ll}
     \text{div} A(x)\nabla u(x)=\text{div}\,G(x), & x\in \Om,\\
      u(x)=\phi(x), & x\in \dv \Om.  
      \end{array} \right.
\end{align}
on a simply connected Lipschitz domain $\Om$ and vector field $G\in L^2(\Om,\R^n)$. 

We now use Theorem \ref{thm:Hodge} to define the bivector field $v$ as the unique solution of 

\begin{align*}
\left\{
    \begin{array}{ll}
     dv(x)=0,&  x\in \Om,\\
     \delta v(x)=\mathcal{A}(x)\nabla u(x)-G(x)&, x\in \Om\\
      v_T(x)=0, & x\in \dv \Om
      \end{array} \right. .
\end{align*}

We now define
\begin{align*}
F(x)=u(x)+v(x)
\end{align*}
Then 
\begin{align*}
\Dp F(x)&=du(x)+\delta v(x)=du(x)+\mathcal{A}(x)du(x)-G(x),\\
\Dm F(x)&=du(x)-\delta v(x)=du(x)-\mathcal{A}(x)du(x)+G(x).
\end{align*}

Hence,
\begin{align*}
du(x)=(I+\mathcal{A}(x))(\Dp F(x)+G(x))
\end{align*}
which gives 
\begin{align*}
\Dm F(x)&=(I-\mathcal{A}(x))du(x)+G(x)=(I-\mathcal{A}(x))(I-\mathcal{A}(x))(\Dp F(x)+G(x))+G(x)\\
&=\mathcal{M}(x)\Dp F(x)+(I+\mathcal{M}(x))G(x)
\end{align*}

In addition $F_T=\phi$. Thus we have proven the following theorem.

\begin{Thm}\label{thm:DBIn}
Let $\Om \subset \R^n$ be a bounded simply connected Lipschitz domain. To each solution of \eqref{eq:DirProIn} there exists a unique solution 
$F\in W^{1,2}_{d,\delta}(\Om,\Lambda^{(0,2)})$ of the boundary value problem 
\begin{align}\label{eq:DBIn}
\left\{
    \begin{array}{ll}
    \Dm F(x)=\mathcal{M}(x)\Dp F(x)+(I+\mathcal{M}(x))G(x) &  x\in \Om,\\
     F_T(x)=\phi, & x\in \dv \Om
      \end{array} \right. .
\end{align}
for every $G\in L^{2}(\Om,\R^n)$. 
\end{Thm}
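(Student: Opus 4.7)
The plan is to retrace the construction of the $A$-harmonic conjugate from Section \ref{sec:GaugeCond}, now accounting for the source term $G$. First I would apply Theorem \ref{thm:Hodge} with $k=2$, $p=2$, $s=0$, choosing $\chi=0\in L^2(\Om,\Lambda^3\R^n)$, $\rho = A\nabla u - G\in L^2(\Om,\Lambda^1\R^n)$, and $\psi = 0$, to produce a bivector field $v\in W^{1,2}_{d,\delta}(\Om,\Lambda^2\R^n)$ solving
\begin{align*}
\left\{
\begin{array}{ll}
dv(x)=0, & x\in\Om,\\
\delta v(x) = A(x)\nabla u(x) - G(x), & x\in\Om,\\
v_T(x)=0, & x\in\dv\Om.
\end{array}\right.
\end{align*}
The compatibility condition $\delta\rho = 0$ in $\mathscr{D}'(\Om)$ follows immediately from the weak formulation of \eqref{eq:DirProIn}, namely $\delta(A\nabla u-G)=-\mathrm{div}(A\nabla u-G)=0$. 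The remaining compatibility conditions in Theorem \ref{thm:Hodge} are either automatic ($d\chi=0$, $\chi_T=d\psi_T=0$) or vacuous on a simply connected Lipschitz domain, since $\mathcal{H}_T^1(\Om)=\{0\}$ and $\mathcal{H}_T^3(\Om)=\{0\}$ by the Poincar\'e lemma discussion preceding. This also yields uniqueness of $v$ up to elements of $\mathcal{H}_T^2(\Om)$, which again vanishes.

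Next I would set $F = u + v \in W^{1,2}_{d,\delta}(\Om,\Lambda^{(0,2)})$ and compute
\begin{align*}
\Dp F(x) &= du(x) + \delta v(x) = du(x) + A(x)\nabla u(x) - G(x),\\
\Dm F(x) &= du(x) - \delta v(x) = du(x) - A(x)\nabla u(x) + G(x),
\end{align*}
using $dv=0$. Solving the first equation for $du(x) = (I+A(x))^{-1}(\Dp F(x)+G(x))$ and substituting into the second gives
\begin{align*}
\Dm F(x) &= (I-A(x))(I+A(x))^{-1}(\Dp F(x)+G(x)) + G(x)\\
&= \mathcal{M}(x)\Dp F(x) + (I+\mathcal{M}(x))G(x),
\end{align*}
which is the required equation in $\Om$. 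The boundary condition follows since $F_T = u_T + v_T = \phi + 0 = \phi$ on $\dv\Om$, using the Sobolev trace of $u$ together with $v_T=0$.

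For uniqueness, if $F_1,F_2\in W^{1,2}_{d,\delta}(\Om,\Lambda^{(0,2)})$ are both solutions of \eqref{eq:DBIn} with the same data, then $W=F_1-F_2$ satisfies the homogeneous half-Dirichlet Dirac--Beltrami problem
\begin{align*}
\Dm W(x) = \mathcal{M}(x)\Dp W(x) \text{ in }\Om,\quad W_T = 0 \text{ on }\dv\Om,
\end{align*}
with $\mathcal{M}$ of operator norm strictly less than $1$. The solvability/uniqueness result in Theorem \ref{thm:Solvability} (applied to the simply connected case) then forces $W\in \mathcal{H}_T(\Om)=\{0\}$, whence $F_1=F_2$. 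The main technical point in the argument is checking the compatibility conditions for Theorem \ref{thm:Hodge}; the algebra and boundary identifications are direct, and all the hard analytic work has already been packaged into Theorems \ref{thm:Hodge} and \ref{thm:Solvability}.
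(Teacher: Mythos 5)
Your proposal is correct and follows essentially the same route as the paper: apply Theorem \ref{thm:Hodge} with $\rho=A\nabla u-G$ to obtain the closed, tangentially vanishing bivector potential $v$, set $F=u+v$, and eliminate $du$ algebraically to arrive at $\Dm F=\mathcal{M}\Dp F+(I+\mathcal{M})G$. Your explicit check of the compatibility conditions and the uniqueness discussion are welcome additions the paper leaves implicit (note only that on a simply connected domain $\mathcal{H}_T(\Om)$ consists of multiples of the volume form rather than being literally zero; it is its intersection with the $\Lambda^{(0,2)}$-valued fields that vanishes for $n>2$).
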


\subsection{\sffamily Nonlinear uniformly elliptic equations}\label{subsec:NonLinDir}

\begin{Def}\label{def:Afield}
Let $\A: \Om\times \R^n \to \R^n$ be a mapping that satisfies the conditions:
\begin{itemize}
\item[(i)] $\A: x\mapsto \A(x,\xi)$ is measurable for every $\xi \in \R^n$.
\item[(ii)] $\A: \xi \mapsto \A(x,\xi)$ is continuous for almost every $x\in \Om$.
\item[(iii)] $\A(x,0)=0$ for all $x\in \Om$.
\item[(iv)] $\A$ satisfies the uniform ellipticity bound 
\begin{align*}
\vert \xi-\zeta\vert^2+\vert \A(x,\xi)-\A(x,\zeta)\vert^2\leq K\langle  \A(x,\xi)- \A(x,\zeta),\xi-\zeta\rangle \text{ for a.e. $x\in \Om$ and all $\xi,\zeta\in \R^n$}
\end{align*}
for some $K\geq 2$. 
\end{itemize}
\end{Def}
We want to solve Dirichlet problem for the nonlinear uniformly elliptic equation 
\begin{align}\label{eq:NonlinD}
\left\{
    \begin{array}{ll}
\text{div}\,\A(x,\nabla u(x))=0 &  x\in \Om,\\
          u(x)=\phi(x) & x\in \dv \Om
      \end{array} \right. .
\end{align}
with structure field $\A$ as above for $u\in W^{1,2}(\Om)$ and $\phi\in W^{1/2,2}(\dv \Om)$.

\begin{Def}\label{def:NonlinC}
Define the Cayley transform 
\begin{align*}
\mathcal{M}(x,\xi):=(I-\A(x,\xi))\circ (I+\A(x,\xi))^{-1}
\end{align*}
for each fixed $x$. 
\end{Def}
By Lemma \ref{lem:NLC1} $\mathcal{M}(x,\cdot): \R^n \to \R^n$ is well defined for a.e. $x$. Moreover, $\mathcal{M}$ is measurable in $x$ and by Lemma \ref{lem:NLC2} $\mathcal{M}$ is $\sqrt{\frac{K-2}{K+2}}$-Lipschitz continuous in $\xi$ for a.e. $x$.
We can now argue precisely as in the linear case. If $\Om$ is simply connected we define the bivector field $v$ to be a solution of 
\begin{align*}
dv(x)=0,\,\,\, \delta v(x)=\A(x,\nabla u(x)), \,\,\, v_T=0, 
\end{align*}
and if $\Om$ is multiply connected we do suitable modification as in subsection \ref{sec:MultiD}. Then an identical argument proves that the multivector field $F=u+v$ solves the boundary value problem 
\begin{align}\label{eq:NonlinDB}
\left\{
    \begin{array}{ll}
\Dm F(x)=\mathcal{M}(x,\Dp F(x)) &  x\in \Om,\\
          F_T(x)=\phi(x) & x\in \dv \Om
      \end{array} \right. .
\end{align}

For the converse direction we note that $\Dp F:\Om \to \Lambda^1\R^n\oplus \Lambda^3\R^n$. Thus $\mathcal{M}(x,\Dp F(x))$ is not a priori well-defined. We therefore extend $\mathcal{M}:\Om \times \R^n \to \R^n$ to a mapping $\overline{\mathcal{M}}:\Om \times \Lambda\R^n \to \Lambda\R^n$ by defining 
\begin{align*}
\overline{\mathcal{M}}(x,w)=\mathcal{M}(x,\langle w\rangle_1). 
\end{align*}
It the follows that the extended map is still $\sqrt{\frac{K-2}{K+2}}$-Lipschitz. Indeed,
\begin{align*}
\vert \overline{\mathcal{M}}(x,w_1)-\overline{\mathcal{M}}(x,w_2)\vert&=\vert \mathcal{M}(x,\langle w_1\rangle_1)-\mathcal{M}(x,\langle w_2\rangle_1)\vert\\
&\leq \sqrt{\frac{K-2}{K+2}}\vert \langle w_1\rangle_1-\langle w_2\rangle_1\vert=\sqrt{\frac{K-2}{K+2}}\vert \langle w_1-w_2\rangle_1\vert\\
&\leq \sqrt{\frac{K-2}{K+2}}\vert  w_1-w_2\vert. 
\end{align*}
By abuse of notation we will write $\mathcal{M}$ instead of $\overline{\mathcal{M}}$. An identical argument as in section \ref{subsec:Conv}, shows $u=\frac{1}{2}(F+\overline{F})$ solves \eqref{eq:NonlinD}. We conclude this section by mentioning that by rewriting the second order equation \eqref{eq:NonlinD} as a nonlinear Beltrami equation as in \eqref{eq:NonlinDB} allowed the authors of \cite{ACFJK} to prove an improved Hölder regularity for solutions of \eqref{eq:NonlinD}.


\subsection{\sffamily Ellipticity of the Dirac--Beltrami equation and Null-Lagrangian}\label{sec:Elliptic}

We now consider the ellipticity of Dirac--Beltrami operator $\mathcal{P}_\mathcal{M}=\Dm-\mathcal{M}(x)\Dp$. We will consider the more general case when $\mathcal{P}_\mathcal{M}$ acts on general multivector fields $F: \Om \to \Lambda \R^n$ rather than just special case when $F: \Om \to \R\oplus \Lambda^2\R^n$ considered in the previous section. Following \cite{BB}, the symbol $\sigma_{\mathcal{M}}(x,\xi):=\sigma_{\mathcal{P}_\mathcal{M}}(x,\xi)$ acts on multivectors $w\in \Lambda \R^n$ according to 
\begin{align*}
\sigma_\mathcal{M}(x,\xi)w=\xi\gm w-\mathcal{M}(x)\xi\gp w,
\end{align*}
where $x,\xi\in \R^n$ and where $\mathcal{M}\in L^{\infty}(\Om,\LL(\Lambda \R^n))$ such that $\Vert \Vert \mathcal{M}\Vert\Vert_{L^\infty(\Om)}<1$. Assume that for some $\xi\neq 0$ there exists an $w\neq 0$ such that $\sigma_\mathcal{M}(x,\xi)w=0$. Then 
\begin{align*}
\xi \gm w=\mathcal{M}(x)\xi \gp w.
\end{align*}
Hence, 
\begin{align*}
\vert \xi \gm w\vert=\vert \xi\vert \vert w\vert \leq \Vert \mathcal{M}(x)\Vert \xi\vert\vert w\vert\Longrightarrow  \Vert \mathcal{M}(x)\Vert\geq 1. 
\end{align*}

Thus, $\mathcal{P}_\mathcal{M}$ is elliptic on a domain $U\subset \R^n$ whenever $\sup_{x\in U} \Vert \mathcal{M}(x)\Vert<1$. By Lemma \ref{lem:AccOp} in the appendix, this is equivalent to $A(x)$ being strictly positive, i.e., 
\begin{align*}
\langle A(x)\xi,\xi\rangle>0
\end{align*}
for all $\xi\in \R^n$ and all $x\in U$.

The Beltrami operator $\mathcal{P}_\mathcal{M}$ can be naturally defined on  $F\in \text{dom}_{max}(\Dp)\cap\text{dom}_{max}(\Dm)$, where $\text{dom}_{max}$ is the maximal domain of  
an unbounded operator, (e.g. \cite{BB}). In particular,
\begin{align*}
W_{d,\delta}^{1,2}(\Om,\Lambda)&=\text{dom}_{max}(\Dp)\cap\text{dom}_{max}(\Dm)\\&=\{F\in \mathscr{D}'(\Om,\Lambda \R^n): F, \Dp F,\Dm F\in L^2(\Om,\Lambda \R^n)\}\\
&=\{F\in \mathscr{D}'(\Om,\Lambda \R^n): F, d F,\delta F\in L^2(\Om,\Lambda \R^n)\}. 
\end{align*} 
Without any assumptions on traces of $F$, one has for Lipschitz domains in general the strict inclusion $W^{1,2}(\Om,\Lambda)\subset W_{d,\delta}^{1,2}(\Om,\Lambda)$. This should be contrasted with the two dimensional case. Indeed, by the identity 
\begin{align*}
\Vert Df(z)\Vert=\vert f_{\bar{z}}(z)\vert+\vert f_z(z)\vert
\end{align*}
for a mapping $f:\Om \to \C$ it follows that 
\begin{align*}
W^{1,2}(\Om,\C)=\{f\in \mathscr{D}'(\Om,\C): f, f_{\bar{z}},f_z\in L^{2}(\Om,\C)\}. 
\end{align*}
On the other hand we have for any $F\in W^{1,2}(\Om,\Lambda)$ with $\gamma_TF=0$ and $\Om$ a bounded $C^2$-domain then by Gaffney's inequality with traces (see Corollary 2.1.6 in \cite{Sw}) there exists a constant $C>0$ depending only $\Om$ such that
\begin{align*}
\Vert F\Vert_{W^{1,2}(\Om,\Lambda)}^2\leq C\int_{\Om}(\vert dF(x)\vert^2+\vert \delta F(x)\vert^2+\vert F(x)\vert^2)dx
\end{align*}
Thus, if $F\in W_{d,\delta}^{1,2}(\Om,\Lambda)$ and $\gamma_TF\in W^{1/2,2}(\dv \Om,\Lambda)$, we can write $F=F^T_0+\Phi$, with $\Phi\in W^{1,2}(\Om,\Lambda)$ and $F^T_0\in  W_{d,\delta}^{1,2}(\Om,\Lambda)$ with $\gamma_TF^T_0=0$. Hence, by the Gaffney inequality 
\begin{align*}
W^{1,2}(\Om,\Lambda)=\{F\in W_{d,\delta}^{1,2}(\Om,\Lambda): \gamma_TF\in W^{1/2,2}(\dv \Om,\Lambda)\}. 
\end{align*}
If however $\Om$ is only a Lipschitz domain we have in general only inclusion, see example 10.3.2 in \cite{R}.

We recall that the ellipticity condition \eqref{eq:NormalBounds} by Lemma \ref{lem:EllipticityBound} implies the distortion inequality

\begin{align*}
\vert E(x)\vert^2+\vert B(x)\vert^2\leq  \mathcal{K}\langle E(x),B(x)\rangle. 
\end{align*}
for the div-curl couple $(B,E)=(\delta F,d F)$ and some $\mathcal{K}\geq 2$. 
In \cite{IS}, $\langle dF(x),\delta F(x)\rangle$ is called the Jacobian of the div-curl pair $(B,E)$ and denoted by $J(x,\mathcal{F})$. However, since $\mathscr{E}_A[u](x)=\langle dF(x),\delta F(x)\rangle$ and to avoid confusion, we will instead refer to the \emph{null-Lagrangian}, e.g., \cite{BCO, IL},  as the the energy density of the field $F$, and write $\mathscr{E}(x,F)$. Here we recall that $\langle dF(x),\delta F(x)\rangle$ is a null-Lagrangian as 
\begin{align*}
\int_{\Om} \langle dF(x),\delta F(x)\rangle dx=0
\end{align*}
for all $F\in C^\infty_0(\Om,\Lambda \R^n)$. Using that $2(\vert dF(x)\vert^2+\vert \delta F(x)\vert^2)=\vert \Dp F(x)\vert^2+\vert \Dm F(x)\vert^2$
and $4\langle dF(x),\delta F(x)\rangle=\vert \Dp F(x)\vert^2-\vert \Dm F(x)\vert^2$, the distortion inequality \eqref{eq:Distort} can equivalently be expressed as 
\begin{align}\label{eq:Distort}
\vert \Dp F(x)\vert^2+\vert \Dm F(x)\vert^2\leq \frac{1}{2}\mathcal{K}(\vert \Dp F(x)\vert^2-\vert \Dm F(x)\vert^2). 
\end{align}
We note that in dimension 2, using that $\Vert DF(x)\Vert^2=\vert \dv F(x)\vert^2+\vert \overline{\dv} F(x)\vert^2$ and $ \det(DF(x))=J(x,F)=\vert \dv F(x)\vert^2-\vert \overline{\dv} F(x)\vert^2$ the inequality \eqref{eq:Distort} can be written as
\begin{align*}
\Vert DF(x)\Vert^2\leq \mathcal{K}J(F,x),
\end{align*}
which is the classical differential inequality characterizing $\mathcal{K}$-quasiregular mappings in the plane. 
Finally we note that in the case when $\mathcal{M}(x)=\mathcal{M}$ is constant in the domain $\Om$, then for any smooth $U\Subset \Om$ it follows form \eqref{eq:StokesD} that the following version of Stokes' theorem holds.
\begin{align}\label{eq:StokesDB}
\int_{U}\mathcal{P}_{\mathcal{M}}F(x)dx=\int_{\dv U}\sigma_{\mathcal{M}}(\nu(y))F(y)d\sigma(y)
\end{align}
for any $F\in W^{1,2}(\Om,\Lambda)$. \newline

When considering the general Dirac-Beltrami operator one should also distinguish the special case when the linear operator $\mathcal{M}(x)\in \LL(\Lambda \R^n)$ is grade preserving for almost all $x\in \Om$, i.e., when 
\begin{align*}
\mathcal{M}(x)(\Lambda^k \R^n)\subset \Lambda^k \R^n
\end{align*}
for all $k=0,1,2,...,n$ and almost all $x\in \Om$. Set $\mathcal{M}_k(x)=\mathcal{M}_k(x)\vert_{\Lambda^k \R^n}$ and let $F_k(x)=\langle F(x)\rangle_k$ for all $k=0,1,2,...,n$ so that $F=\sum_{k=0}^nF_k$. Then for every $k=0,1,2,...,n$ (where $F_j$=0 if $j<0$ or $j>n$)
\begin{align*}
\langle \mathcal{D}^\pm F\rangle_k=dF_{k-1}+\delta F_{k+1},
\end{align*}
which gives the coupled system of equations
\begin{align*}
dF_{k-1}-\delta F_{k+1}=\mathcal{M}_k(dF_{k-1}+\delta F_{k+1})
\end{align*}
for $k=0,1,2,...,n$, or equivalently 
\begin{align*}
\delta F_{k}&=(I+\mathcal{M}_{k+1})^{-1}(I-\mathcal{M}_{k+1})dF_{k-2}\\
d F_{k}&=(I-\mathcal{M}_{k+1})^{-1}(I+\mathcal{M}_{k+1})\delta F_{k+2}\
\end{align*}
Defining $\mathcal{A}_k(x)=(I+\mathcal{M}_k(x))^{-1}(I-\mathcal{M}_k(x))$ and using that $d^2=\delta^2=0$, we find that each component $F_k$ satisfies the equations
\begin{align}\label{eq:OverS}
\delta \mathcal{A}_{k+1}(x) dF_{k}(x)&=0\\
d \mathcal{A}_{k-1}(x)^{-1} \delta F_{k}(x)&=0
\end{align}
which form an \emph{overdetermined elliptic system}. In the special case when $\mathcal{M}(x)\equiv 0$, the Dirac--Beltrami equation reduces to the homogeneous Hodge--Dirac equation 
\begin{align*}
\Dm F(x)=0.
\end{align*}
Since $(\Dm)^2=-\Delta$, it follows that $\Delta F_k=(d\delta+\delta d)F_k=0$ for each $k=0,1,2,...,n$. However, in view of \eqref{eq:OverS}, we in fact have that 
\begin{align*}
\delta dF_{k}(x)&=0\\
d \delta F_{k}(x)&=0
\end{align*}
or each $k=0,1,2,...,n$.

\subsection{\sffamily Hodge Duality}

Assume that we have a self-dual Beltrami operator $\mathcal{M}(x)$, i.e., that $\mathcal{M}^c(x)=\mathcal{M}(x)$. Then by definition any solution 
\begin{align*}
\Dm F(x)=\mathcal{M}(x)\Dp F(x)
\end{align*}
 is also a solution of 
\begin{align*}
\Dm F(x)=\mathcal{M}^c(x)\Dp F(x)\Longleftrightarrow \Dm F(x)=\star \mathcal{M}(x)(\Dp F(x)\star)\Longleftrightarrow \Dm F(x)\star = \mathcal{M}(x)(\Dp F(x)\star)
\end{align*}

Thus, by Lemma \ref{lem:StarD}, any self-dual equation also solves
\begin{align*}
\Dp (\widehat{F(x)}\star)=\mathcal{M}(x)\Dm (\widehat{F(x)}\star). 
\end{align*}

Note that if $F:\Om \to \Lambda^{ev}V$, then $\widehat{F(x)}=F(x)$ and so the self-dual equations become 
\begin{align*}
\Dp (F(x)\star)=\mathcal{M}(x)\Dm (F(x)\star). 
\end{align*}

In dimension two the Hodge star map coincides with the standard complex structure $J$ of $\R^2$. In this case, the self-duality of $\mathcal{M}(x)$ is equivalent to the condition that 
\begin{align*}
J^{-1}\mathcal{M}(x)J=\mathcal{M} \Longleftrightarrow \mathcal{M}(x)J- J\mathcal{M}(x)=0 \text{ for a.e. $x\in \Om$},
\end{align*}
i.e., $\mathcal{M}(x)$ is $\C$-linear. In this case, Beltrami equation reduces to the \emph{anti}-$\C$-linear Beltrami equation 
\begin{align*}
\dbar f(z)=\mu(z)\overline{\dv f(z)}
\end{align*}

In subsection \ref{subsec:Conv} we saw that any solution of $\Dm F(x)=\mathcal{M}(x)\Dp F(x)$ with $F\in W^{1,2}(\Om,\Lambda^{(0,2)})$ also solved an equation of the form 
\begin{align*}
\Dp F(x)=\mathcal{C}(x)\Dm F(x)
\end{align*}
where $\mathcal{C}(x)$ is a conformal map for a.e. $x$. In view of the discussion in this subsection it follows that $F$ also solves its dual equation 
\begin{align*}
\Dp (F(x)\star)=\mathcal{C}(x)\Dm (F(x)\star).
\end{align*}


\section{\sffamily The Cauchy Transform on $\R^n$.}\label{sec:Cauchy}

\subsection{\sffamily Dirac Operators and the Cauchy Transforms}

\begin{Def}[Cauchy transform]
\label{def:Ctransform}
The \emph{Cauchy transforms} $\Cc^\pm$ are acting $C^\infty_0(\R^n,\Lambda)$ are defined by
\begin{align}
\label{eq:C1}
\Cc^+\Phi(x)=\frac{1}{\sigma_{n-1}}\int_{\R^n}\frac{y-x}{\vert y-x\vert^n}\gp \Phi(y)dy,\\
\label{eq:C2}
\Cc^-\Phi(x)=\frac{1}{\sigma_{n-1}}\int_{\R^n}\frac{x-y}{\vert x-y\vert^n}\gm \Phi(y)dy,
\end{align}
for $\Phi\in C^\infty_0(\R^n,\Lambda)$. 
\end{Def}

We also recall the Riesz potentials 
\begin{align*}
\mathcal{I}_\alpha \Phi(x)=\frac{1}{\omega_n}\int_{\R^n}\frac{\Phi(y)}{\vert y-x\vert^{n-\alpha}}dy. 
\end{align*} 
where $0<\alpha <n$. We then clearly have the bound 
\begin{align*}
\vert \Cc^+\Phi(x)\vert\leq \vert \mathcal{I}_1\vert \Phi\vert (x)\vert
\end{align*}
for all $\Phi\in  C^\infty_0(\R^n,\Lambda \R^n)$ and all $x\in \R^n$. 

The Hardy--Littlewood--Sobolev inequality for a Riesz potentials \cite[p. 119]{S} shows that for $1<p<n$ and $p^\ast=\frac{np}{n- p}<+\infty$ there exists a constant $A_p$ such that 
\begin{align*}
\Vert \mathcal{I}_1 f(x)\Vert_{p^\ast}\leq A_p\Vert f\Vert_p, 
\end{align*}
for every $f\in L^{p}(\R^n)$. This immediately implies the following theorem:

\begin{Thm}\label{thm:DiracC}
For all $1<p<n$ such that $p^\ast=\frac{np}{n- p}<+\infty$, the Cauchy transforms $\Cc^{\pm}$ extends to continuous maps $\Cc^{\pm}:L^p(\R^n,\Lambda \R^n)\to L^{p^\star}(\Om,\Lambda \R^n)$. In particular there exists an absolute constant $A_p=A(p,n)$ such that for all $f\in L^{p^\star}(\Om,\Lambda \R^n)$.
\begin{align*}
\Vert \Cc^\pm F\Vert_{p^\star}\leq A_p\Vert F\Vert_{p}.
\end{align*}
\end{Thm}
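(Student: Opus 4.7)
The plan is to reduce the claim directly to the Hardy--Littlewood--Sobolev inequality for the Riesz potential $\mathcal{I}_1$ through a pointwise majorization of $\mathcal{C}^\pm$ by $\mathcal{I}_1|\Phi|$, and then conclude by density.

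First, I would establish the pointwise bound rigorously. For $\Phi \in C^\infty_0(\mathbb{R}^n,\Lambda\mathbb{R}^n)$, the integrand in \eqref{eq:C1} is $\frac{y-x}{|y-x|^n}\gp \Phi(y)$, which is a Clifford product of a vector with a multivector. By Lemma \ref{lem:Norm} we have
\begin{align*}
\Bigl|\tfrac{y-x}{|y-x|^n}\gp \Phi(y)\Bigr| = \Bigl|\tfrac{y-x}{|y-x|^n}\Bigr|\,|\Phi(y)| = \frac{|\Phi(y)|}{|y-x|^{n-1}},
\end{align*}
and the same identity holds for $\gm$ by the analogous statement (which is immediate from the definition of the negative Clifford product or from $|v\gm w| = |v\gp \widehat{w}|$). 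Applying Minkowski's inequality for integrals (a.e. $x$, the integrand is absolutely integrable since $\Phi$ has compact support and $|y-x|^{-(n-1)}$ is locally integrable) yields
\begin{align*}
|\mathcal{C}^\pm\Phi(x)| \leq \frac{1}{\sigma_{n-1}}\int_{\mathbb{R}^n}\frac{|\Phi(y)|}{|y-x|^{n-1}}\,dy = \frac{\omega_n}{\sigma_{n-1}}\,\mathcal{I}_1|\Phi|(x).
\end{align*}

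Second, I would invoke the Hardy--Littlewood--Sobolev inequality, quoted in the excerpt just before the theorem: for $1<p<n$ and $p^\star = np/(n-p)$, there exists $A_p>0$ such that $\|\mathcal{I}_1 f\|_{p^\star}\leq A_p\|f\|_p$ for every $f\in L^p(\mathbb{R}^n)$. Taking $L^{p^\star}$-norms in the pointwise estimate, with $f = |\Phi|\in L^p(\mathbb{R}^n)$ (note $\|\,|\Phi|\,\|_p = \|\Phi\|_p$ in the multivector $L^p$-sense), gives
\begin{align*}
\|\mathcal{C}^\pm \Phi\|_{p^\star} \leq \frac{\omega_n}{\sigma_{n-1}}\,A_p\,\|\Phi\|_p,
\end{align*}
so $\mathcal{C}^\pm$ is a bounded linear map from $C^\infty_0(\mathbb{R}^n,\Lambda\mathbb{R}^n)$, equipped with the $L^p$-norm, into $L^{p^\star}(\mathbb{R}^n,\Lambda\mathbb{R}^n)$.

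Third, since $C^\infty_0(\mathbb{R}^n,\Lambda\mathbb{R}^n)$ is dense in $L^p(\mathbb{R}^n,\Lambda\mathbb{R}^n)$ for $1\leq p<\infty$, the operator $\mathcal{C}^\pm$ extends by continuity (uniquely) to a bounded linear map $L^p(\mathbb{R}^n,\Lambda\mathbb{R}^n)\to L^{p^\star}(\mathbb{R}^n,\Lambda\mathbb{R}^n)$ with the same constant. Restricting output to $\Omega$ only decreases the $L^{p^\star}$-norm, so the stated inequality holds on $\Omega$ as well. There is no genuine obstacle here; the only thing to be slightly careful about is the clean pointwise norm identity $|v\gp w| = |v|\,|w|$, which is Lemma \ref{lem:Norm}, and its negative-product analogue, so that the majorant is exactly $\mathcal{I}_1|\Phi|$ with no multiplicative loss depending on dimension (in contrast to the generic bound $|w_1\gp w_2|\leq 2^n|w_1||w_2|$ mentioned after that lemma).
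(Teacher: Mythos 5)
Your proposal is correct and is essentially the paper's own argument: the text immediately preceding the theorem establishes the pointwise domination $\vert \Cc^\pm\Phi(x)\vert\leq \vert \mathcal{I}_1\vert\Phi\vert(x)\vert$ (which rests on the norm identity of Lemma \ref{lem:Norm}, exactly as you note) and then applies the Hardy--Littlewood--Sobolev inequality, with the extension to all of $L^p$ by density left implicit. Your write-up merely makes the Minkowski-inequality step, the sharp constant $\omega_n/\sigma_{n-1}$, and the density argument explicit.
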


\begin{Thm}
Let $\Om$ be a bounded domain in $\R^n$. Let $G\in L^2(\R^n,\Lambda)$. Then the weak derivatives satisfy
\begin{align*}
\Dp \Cc^+G(x)=G(x)\,\,\, \text{and}\,\,\, \Dm \Cc^-G(x)=G(x). 
\end{align*}
\end{Thm}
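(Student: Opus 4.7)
The strategy is to reduce the claim to the Cauchy--Pompeiu formula of Section \ref{sec:Cauchy}, via the definition of weak derivative combined with the formal (skew-)adjointness of the Hodge--Dirac operators. The first observation is that the integration by parts formula \eqref{partIntDirac}, with vanishing boundary term for compactly supported functions, shows that on $C_0^\infty(\Om,\Lambda)$ the operator $\mathcal{D}^+$ is formally skew-adjoint while $\mathcal{D}^-$ is formally self-adjoint. Equivalently, the Clifford symbol $\sigma_{\mathcal{D}^+}(\xi)w=\xi\gp w$ is self-adjoint and $\sigma_{\mathcal{D}^-}(\xi)w=\xi\gm w$ is skew-adjoint (after integration by parts introduces one sign change).

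For any test field $\phi\in C_0^\infty(\Om,\Lambda)$, the plan is to compute by definition of the weak derivative
\begin{align*}
\langle \mathcal{D}^+\mathcal{C}^+G,\phi\rangle=-\langle \mathcal{C}^+G,\mathcal{D}^+\phi\rangle,
\end{align*}
then unfold $\mathcal{C}^+G$ and exchange the order of integration to obtain
\begin{align*}
-\int_{\R^n}\Big\langle G(y),\int_{\R^n}E(y-x)\gp \mathcal{D}^+\phi(x)\,dx\Big\rangle dy,
\end{align*}
using the pointwise identity $\langle v\gp a,b\rangle=\langle a,v\gp b\rangle$ for vectors $v$ (since $\bar v=v$). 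Finally, since $\phi\in C_0^\infty(\R^n,\Lambda)$, applying the Cauchy--Pompeiu formula to $\phi$ and renaming variables gives $\int E(x-y)\gp \mathcal{D}^+\phi(x)\,dx=\phi(y)$; using $E(y-x)=-E(x-y)$ the inner integral equals $-\phi(y)$ and the two signs cancel to produce $\langle G,\phi\rangle$. Since this holds for every test field, $\mathcal{D}^+\mathcal{C}^+G=G$ as a distribution on $\Om$. The argument for $\mathcal{D}^-$ is formally identical: self-adjointness of $\mathcal{D}^-$ replaces the first sign, the skew-adjointness of $\sigma_{\mathcal{D}^-}$ replaces the second, the Cauchy--Pompeiu formula for $\mathcal{D}^-$ supplies the third, and the same odd parity of $E$ supplies the fourth, producing once again $+G$.

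The only subtle step is justifying Fubini's theorem, since the kernel $|E(y-x)|\lesssim |y-x|^{1-n}$ is mildly singular. Because $\phi$ is smooth and compactly supported in $\Om$, the inner integral $\int |E(y-x)||\mathcal{D}^+\phi(x)|\,dx$ is uniformly bounded for $y$ in any bounded set and decays like $|y|^{1-n}$ at infinity, hence lies in $L^2(\R^n)$ for $n\geq 3$; paired against $G\in L^2(\R^n,\Lambda)$ this yields absolute convergence of the double integral. Alternatively, the Hardy--Littlewood--Sobolev bound of Theorem \ref{thm:DiracC} places $\mathcal{C}^+G$ in $L^{2n/(n-2)}(\R^n,\Lambda)$, so the $L^2$-pairing $\langle \mathcal{C}^+G,\mathcal{D}^+\phi\rangle$ is meaningful and the interchange of integration is legitimate. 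This integrability verification is the only genuine obstacle; the rest of the argument is an algebraic bookkeeping of signs.
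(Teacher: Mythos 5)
Your proof is correct, but it takes a genuinely different route from the paper's. The paper's own proof is a two-line density argument: for smooth compactly supported data the Cauchy--Pompeiu formula \eqref{eq:GenCF} (with vanishing boundary term) exhibits $\Cc^+$ as a right inverse of $\Dp$, and the general case is obtained by approximating $G\in L^2$ from $C_0^\infty$. You instead argue by duality: pair $\Cc^+G$ against a test field $\phi$, transpose $\Dp$ using its formal skew-adjointness, interchange the integrals, and apply Cauchy--Pompeiu to $\phi$ rather than to (an approximation of) $G$. The trade-off is instructive. Your route works directly for arbitrary $G\in L^2(\R^n,\Lambda)$ and makes explicit the two analytic points that the paper's ``standard approximation argument'' leaves implicit, namely the absolute convergence needed for Fubini and the continuity of the map $G\mapsto \Dp\Cc^+G$ in the relevant topologies; your Hardy--Littlewood--Sobolev/decay estimate on the inner integral settles both at once (for $n\geq 3$, which is the regime of the paper). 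The price is the careful sign bookkeeping: the identity $\langle v\gp a,b\rangle=\langle a,\overline{v}\gp b\rangle=\langle a,v\gp b\rangle$ for vectors, the odd parity $E(y-x)=-E(x-y)$, and, in the $\Dm$ case, the extra sign from the skew-adjointness of $w\mapsto \xi\gm w$ compensating the self-adjointness of $\Dm$ --- all of which you track correctly, so that both compositions return $+G$. Both arguments ultimately rest on the same Cauchy--Pompeiu formula, so neither is circular; yours is the more self-contained of the two.
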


\begin{proof}
First assume that $F\in C^\infty_0(\Om)$. Then by the Cauchy-Pompieu formula \eqref{eq:GenCF} that the solution to $\Dp F(x)=G(x)$ is given by $F(x)=\Cc^+G(x)$. Since $C^\infty_0(\Om)$ is dense in $L^2(\Om,\Lambda)$, the result follows by a standard approximation argument.
\end{proof}

\subsection{\sffamily Hölder estimates for the Cauchy Transforms}

\begin{Lem}
If $n=2k$ then 
\begin{align*}
&\frac{\xi-x}{\vert \xi-x\vert^n}-\frac{\xi-y}{\vert \xi-y\vert^n}=\frac{(\xi-x)\big[(\xi-y)^{n-1}-(\xi-x)^{n-1}\big](\xi-y)}{\vert \xi-x\vert^n\vert y-\xi\vert^n}
\end{align*}

If $n=2k+1$ then 

\begin{align*}
&\frac{\xi-x}{\vert \xi-x\vert^n}-\frac{\xi-y}{\vert \xi-y\vert^n}=\frac{(\xi-x)\big[\vert \xi-y\vert(\xi-y)^{n-2}-\vert \xi-x\vert(\xi-x)^{n-2}\big](\xi-y)}{\vert \xi-x\vert^n\vert y-\xi\vert^n}.
\end{align*}

\end{Lem}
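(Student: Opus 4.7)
The plan is to apply the classical non--commutative identity
$$X^{-1}-Y^{-1}=X^{-1}(Y-X)Y^{-1}$$
inside the (positive) Clifford algebra $\Delta \mathbb{R}^n$, after first recognising the Cauchy kernel $\frac{a}{|a|^n}$ as the Clifford inverse of a suitable vector. Let $a=\xi-x$ and $b=\xi-y$. Recall that for any nonzero vector $v\in\mathbb{R}^n$ one has $v\gp v=|v|^2$, hence $v^{-1}=v/|v|^2$, and more generally, for any integer $m\geq 0$,
$$v^{2m}=|v|^{2m},\qquad v^{2m+1}=v\,|v|^{2m}.$$
The strategy is therefore to exhibit a vector $V(a)$ whose Clifford inverse is $a/|a|^n$, and then apply the displayed identity.

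First I would treat the even case $n=2k$. Then $n-1$ is odd, so $a^{n-1}=a\,|a|^{n-2}$ is a vector of norm $|a|^{n-1}$. Its Clifford inverse is
$$(a^{n-1})^{-1}=\frac{a\,|a|^{n-2}}{|a|^{2(n-1)}}=\frac{a}{|a|^n}.$$
Applying the identity with $X=a^{n-1}$ and $Y=b^{n-1}$ gives immediately
$$\frac{a}{|a|^n}-\frac{b}{|b|^n}=(a^{n-1})^{-1}\bigl(b^{n-1}-a^{n-1}\bigr)(b^{n-1})^{-1}=\frac{a\bigl[(\xi-y)^{n-1}-(\xi-x)^{n-1}\bigr]b}{|a|^n|b|^n},$$
which is the first formula.

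For the odd case $n=2k+1$, the exponent $n-1$ is even and makes $a^{n-1}=|a|^{n-1}$ a scalar, so it is no longer the right choice of $X$. Instead I would take $V(a):=|a|\,a^{n-2}$. Since $n-2$ is odd, $a^{n-2}=a\,|a|^{n-3}$, so $V(a)=a\,|a|^{n-2}$ is again a vector of norm $|a|^{n-1}$, and exactly as above
$$\bigl(V(a)\bigr)^{-1}=\frac{a\,|a|^{n-2}}{|a|^{2(n-1)}}=\frac{a}{|a|^n}.$$
Applying the identity with $X=V(a)$, $Y=V(b)$ yields the second formula.

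The only real subtlety is the parity--dependent choice of $V$: one must split into cases so that $V(a)$ is a \emph{vector} (whose Clifford inverse is transparent). Once this observation is in place, both identities reduce to a one--line application of $X^{-1}-Y^{-1}=X^{-1}(Y-X)Y^{-1}$. There is no genuine analytic obstacle; the argument is purely algebraic in $\Delta\mathbb{R}^n$.
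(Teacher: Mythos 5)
Your proof is correct and rests on the same key observation as the paper's — namely that in the Clifford algebra $v^{2m}=\vert v\vert^{2m}$ for a vector $v$, so the norm powers in the denominators can be absorbed into Clifford powers of $\xi-x$ and $\xi-y$ and then factored. Packaging the paper's common-denominator computation as the identity $X^{-1}-Y^{-1}=X^{-1}(Y-X)Y^{-1}$ applied to the vectors $X=(\xi-x)\vert\xi-x\vert^{n-2}$, $Y=(\xi-y)\vert\xi-y\vert^{n-2}$ is a tidy but essentially equivalent presentation.
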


\begin{proof}

If $n=2k$, then $\vert \xi-x\vert^n=(\vert \xi-x\vert^2)^k=(\xi-x)^{2k}$. Consequently,

\begin{align*}
&(\xi-x)\vert \xi-y\vert^n-\vert \xi-x\vert^n(\xi-y)=(\xi-x)(\xi-y)^{2k-1}(\xi-y)+(\xi-x)(\xi-x)^{2k-1}(\xi-y)\\&=(\xi-x)\big[(\xi-y)^{2k-1}-(\xi-x)^{2k-1}\big](\xi-y).
\end{align*}

If $n=2k+1$, then $\vert \xi-x\vert^n=\vert \xi-x\vert (\vert \xi-x\vert^2)^k=\vert \xi-x\vert (\xi-x)^{2k}$. Consequently,

\begin{align*}
&(\xi-x)\vert \xi-y\vert^n-\vert \xi-x\vert^n(\xi-y)=(\xi-x)\vert \xi-y\vert(y-\xi)^{2k-1}(\xi-y)-(\xi-x)\vert \xi-x\vert(\xi-x)^{2k-1}(\xi-y)\\&=(\xi-x)\big[\vert \xi-y\vert(\xi-y)^{2k-1}-\vert \xi-x\vert(\xi-x)^{2k-1}\big](\xi-y).
\end{align*}

\end{proof}

\begin{Lem}\label{lem:IneqKernel}
If $n=2k$, $n>2$, we have the inequality 
\begin{align*}
\bigg\vert \frac{\xi-x}{\vert \xi-x\vert^n}-\frac{\xi-y}{\vert \xi-y\vert^n}\bigg\vert\leq c_n\sum_{j=1}^{n-1}\frac{\vert x-y\vert^{j}\vert \xi-x\vert^{n-1-j}}{\vert \xi-x\vert^{n-1}\vert y-\xi\vert^{n-1}}
\end{align*}
If $n=2k+1$, $n>2$, we have the inequality 
\begin{align*}
\bigg\vert \frac{\xi-x}{\vert \xi-x\vert^n}-\frac{\xi-y}{\vert \xi-y\vert^n}\bigg\vert\leq c_n\sum_{j=1}^{n-1}\bigg(\delta_{\vert \xi-x\vert>\vert \xi-y\vert}\frac{\vert x-y\vert^{j}\vert \xi-x\vert^{n-1-j}}{\vert \xi-x\vert^{n-1}\vert y-\xi\vert^{n-1}}+\delta_{\vert \xi-x\vert<\vert \xi-y\vert}\frac{\vert x-y\vert^{j}\vert \xi-y\vert^{n-1-j}}{\vert \xi-x\vert^{n-1}\vert y-\xi\vert^{n-1}}\bigg).
\end{align*}
\end{Lem}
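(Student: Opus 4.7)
The plan is to combine the factorization from the preceding lemma with a telescoping identity in the Clifford algebra, then convert to a norm bound using Lemma \ref{lem:Norm}. Set $a = \xi-x$ and $b = \xi-y$, so that $b-a = x-y$. The preceding lemma writes the difference of kernels as $(a \gp N \gp b)/(\vert a\vert^n \vert b\vert^n)$, with $N = b^{n-1} - a^{n-1}$ in the even case and $N = \vert b\vert\, b^{n-2} - \vert a\vert\, a^{n-2}$ in the odd case. Since $a$ and $b$ are vectors, iterated application of Lemma \ref{lem:Norm} gives $\vert a \gp N \gp b\vert = \vert a\vert\,\vert N\vert\,\vert b\vert$, so everything reduces to estimating $\vert N\vert$.

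For the even case ($n-1$ odd), I use the associative telescoping identity, valid in any associative algebra,
\begin{align*}
b^{n-1} - a^{n-1} = \sum_{j=0}^{n-2} b^j \gp (b-a) \gp a^{n-2-j} = \sum_{j=0}^{n-2} b^j \gp (x-y) \gp a^{n-2-j}.
\end{align*}
Each summand is a Clifford product of vectors (scalars pull out freely), so iterating Lemma \ref{lem:Norm} gives $\vert b^j \gp (x-y) \gp a^{n-2-j}\vert = \vert b\vert^j \vert x-y\vert \vert a\vert^{n-2-j}$. After dividing by $\vert a\vert^{n-1}\vert b\vert^{n-1}$, I obtain a sum in which each term carries a single factor of $\vert x-y\vert$ and mixed powers of $\vert a\vert, \vert b\vert$. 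To convert the mixed powers into powers of $\vert a\vert$ alone, I apply the triangle inequality $\vert b\vert \leq \vert a\vert + \vert x-y\vert$, binomially expand each $\vert b\vert^j$, and then reindex by the total power of $\vert x-y\vert$. This produces a bound of the form $c_n \sum_{l=1}^{n-1} \vert x-y\vert^l \vert \xi-x\vert^{n-1-l}/(\vert \xi-x\vert^{n-1}\vert \xi-y\vert^{n-1})$, matching the statement.

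For the odd case ($n-2$ odd), I split according to whether $\vert a\vert \geq \vert b\vert$ or $\vert b\vert \geq \vert a\vert$. When $\vert a\vert \geq \vert b\vert$, the decomposition
\begin{align*}
\vert b\vert\, b^{n-2} - \vert a\vert\, a^{n-2} = \vert a\vert\,(b^{n-2} - a^{n-2}) + (\vert b\vert - \vert a\vert)\, b^{n-2}
\end{align*}
reduces the estimate to the telescoping of $b^{n-2} - a^{n-2}$ as above, together with the reverse triangle inequality $\vert \vert b\vert - \vert a\vert\vert \leq \vert x-y\vert$. Since $\vert b\vert \leq \vert a\vert$, each $\vert b\vert^j$ is dominated by $\vert a\vert^j$, and the resulting bound is controlled by $c_n \vert x-y\vert \vert \xi-x\vert^{n-2}$. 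This is dominated by the $j=1$ term on the right-hand side of the statement (all other terms being nonnegative). The case $\vert b\vert \geq \vert a\vert$ is completely symmetric and produces the other indicator contribution.

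The only real obstacle is the combinatorial bookkeeping in the even case: the double sum obtained after binomial expansion must be reindexed carefully to match the form $\sum_{l=1}^{n-1}$ of the statement. The conceptual subtlety in the odd case is that $\vert b\vert b^{n-2} - \vert a\vert a^{n-2}$ is not symmetric in $a$ and $b$, which is what forces the case split and the appearance of the indicator functions.
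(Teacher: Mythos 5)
Your proposal is correct, and while the overall strategy is the same as the paper's (factor out $(\xi-x)$ and $(\xi-y)$ via the preceding lemma, use Lemma \ref{lem:Norm} to reduce everything to a norm estimate on the middle factor $N$), the way you estimate $N$ differs in both cases, and in the odd case your route is genuinely better. In the even case the paper expands $(\eta+(x-y))^{n-1}-\eta^{n-1}$ noncommutatively with $\eta=\xi-x$, so every word already carries powers of $\vert\xi-x\vert$ only and no further manipulation is needed; your telescoping identity $b^{n-1}-a^{n-1}=\sum_j b^j\gp(b-a)\gp a^{n-2-j}$ produces mixed powers $\vert b\vert^j\vert a\vert^{n-2-j}$ and costs you the extra triangle-inequality/binomial reindexing step, but the bookkeeping goes through and yields the same bound. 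In the odd case your argument is more careful than the paper's: the quantity to be controlled is the norm of the \emph{vector} difference $\vert b\vert\,b^{n-2}-\vert a\vert\,a^{n-2}=\vert\xi-y\vert^{n-2}(\xi-y)-\vert\xi-x\vert^{n-2}(\xi-x)$, whereas the paper's proof only estimates the scalar difference $\vert\xi-y\vert^{n-1}-\vert\xi-x\vert^{n-1}$, i.e.\ the difference of the norms, which does not by itself dominate the norm of the difference. Your decomposition $\vert a\vert(b^{n-2}-a^{n-2})+(\vert b\vert-\vert a\vert)b^{n-2}$ (and its mirror image on the other case) closes exactly that gap, and correctly explains why the asymmetry of $N$ in $a$ and $b$ forces the case split and the indicators. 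One cosmetic remark: your cases are naturally $\vert a\vert\geq\vert b\vert$ and $\vert a\vert\leq\vert b\vert$, whereas the statement's indicators are strict; this is a defect of the statement (shared by the paper's own proof) rather than of your argument, and is harmless since either bound is valid on the set $\vert\xi-x\vert=\vert\xi-y\vert$.
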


\begin{proof}
First assume that $n=2k$. Set $\eta=\xi-x$. Then 

\begin{align*}
(\xi-y)^{n-1}-(\xi-x)^{n-1}=(\eta+x-y)^{n-1}+\eta^{n-1}=\eta^{n-1}+R-\eta^{n-1},
\end{align*}
where $R$ is sum of products, each containing at least one factor $(x-y)$. Moreover, we have the estimate 
\begin{align*}
\vert R\vert\leq c_n\sum_{j=1}^{n-1}\vert x-y\vert^{j}\vert \eta\vert^{n-1-j}=c_n\sum_{j=1}^{n-1}\vert x-y\vert^{j}\vert \xi-x\vert^{n-1-j}
\end{align*}
for some constant $c_n$ depending on $n$. Thus,

\begin{align*}
\big\vert(\xi-x)\big[(\xi-y)^{n-1}-(\xi-x)^{n-1}\big](\xi-y)\big\vert\leq c_n\sum_{j=1}^{n-1}\vert\xi-x\vert\vert x-y\vert^{j}\vert \xi-x\vert^{n-1-j}\vert\xi-y\vert. 
\end{align*}

Now assume that $n=2k+1$. If $\vert \xi-y\vert>\vert \xi-x\vert$ set $\eta=\xi-x$, and we get 

\begin{align*}
&\vert \xi-y\vert^{n-1}-\vert \xi-x\vert^{n-1}=\vert \eta+x-y\vert^{n-1}-\vert \eta\vert^{n-1}\leq \big(\vert \eta\vert+\vert x-y\vert\big)^{n-1}-\vert \eta\vert^{n-1}\\
&\leq c_n\sum_{j=1}^{n-1}\vert x-y\vert^j\vert \eta\vert^{n-1-j}=c_n\sum_{j=1}^{n-1}\vert x-y\vert^j\vert \xi-x\vert^{n-1-j}.
\end{align*}
If $\vert \xi-y\vert<\vert \xi-x\vert$ set $\zeta=\xi-y$, and we get 
\begin{align*}
&\vert \xi-y\vert^{n-1}-\vert \xi-x\vert^{n-1}=\vert \zeta\vert^{n-1}-\vert \zeta+y-x\vert^{n-1}\leq \big(\vert \zeta\vert+\vert x-y\vert\big)^{n-1}-\vert \zeta\vert^{n-1}\\
&\leq c'_n\sum_{j=1}^{n-1}\vert x-y\vert^j\vert \zeta\vert^{n-1-j}=c'_n\sum_{j=1}^{n-1}\vert x-y\vert^j\vert \xi-y\vert^{n-1-j}.
\end{align*}
\end{proof}

\begin{Prop}\label{prop:CauchyHölder}
Let $n>2$ and assume that $p>n$. If $\Phi\in L^p(\R^n,\Lambda \R^n)$ then $\Cc^+\Phi$ is Hölder continuous with Hölder constant $\displaystyle\alpha=1-\frac{n}{p}$.
\end{Prop}

\begin{rem}
This is an extension of Theorem 4.3.13 in \cite{AIM} for the Cauchy transform in the plane to higher dimension. The only subtlety in proof compared to the plane is the more complicated estimate in Lemma \ref{lem:IneqKernel}.
\end{rem}

\begin{proof}
We may assume that $\vert x-y\vert<1$. By Lemma \ref{lem:IneqKernel}, 
\begin{align*}
&\vert \Cc^+\Phi(x)-\Cc^+\Phi(y)\vert=\bigg\vert \frac{1}{\omega_n}\int_{\R^n}\bigg(\frac{\xi-x}{\vert \xi-x\vert^n}-\frac{\xi-y}{\vert \xi-y\vert^n}\bigg)\gp \Phi(\xi)d\xi\bigg\vert\\
&\leq \frac{c_n}{\omega_n}\sum_{j=1}^{n-1}\int_{\R^n}\bigg(\frac{\vert x-y\vert^{j}\vert \xi-x\vert^{n-1-j}}{\vert \xi-x\vert^{n-1}\vert y-\xi\vert^{n-1}}+\frac{\vert x-y\vert^{j}\vert \xi-y\vert^{n-1-j}}{\vert \xi-x\vert^{n-1}\vert y-\xi\vert^{n-1}}\bigg)\vert  \Phi(\xi)\vert d\xi\\
&\leq \frac{c_n}{\omega_n}\sum_{j=1}^{n-1}\vert x-y\vert^j\int_{\R^n}\bigg(\frac{1}{\vert \xi-x\vert^{j}\vert y-\xi\vert^{n-1}}+\frac{1}{\vert \xi-x\vert^{n-1}\vert y-\xi\vert^{j}}\bigg)\vert  \Phi(\xi)\vert d\xi.
\end{align*}
We now apply Hölder's inequality with Hölder conjugate pair chosen so that $(n-1)q<n$. Then we get 
\begin{align*}
&\vert \Cc^+\Phi(x)-\Cc^+\Phi(y)\vert\\&\leq \frac{c_n}{\omega_n}\Vert \Phi\Vert_{L^p}\sum_{j=1}^{n-1}\vert x-y\vert^{j}\bigg\{\bigg(\int_{\R^n}\frac{1}{\vert \xi-x\vert^{qj}\vert \xi-y\vert^{q(n-1)}}d\xi\bigg)^{1/q}+\bigg(\int_{\R^n}\frac{1}{\vert \xi-x\vert^{q(n-1)}\vert y-\xi\vert^{qj}} d\xi\bigg)^{1/q}\bigg\}.
\end{align*}

Since 
\begin{align*}
&\int_{\R^n}\frac{1}{\vert \xi-x\vert^{qj}\vert \xi-y\vert^{q(n-1)}}d\xi=\int_{\R^n}\frac{1}{\vert \eta\vert^{qj}\vert \eta+x-y\vert^{q(n-1)}}d\eta\\
&=\int_{\R^n}\frac{\vert x-y\vert^n}{\vert x-y\vert^{qj+q(n-1)}\vert \zeta\vert^{qj}\vert \zeta+\frac{x-y}{\vert x-y\vert}\vert^{q(n-1)}}d\zeta\\
&=\vert x-y\vert^{n-qj-q(n-1)}\int_{\R^n}\frac{1}{\vert \zeta\vert^{qj}\vert \zeta+\frac{x-y}{\vert x-y\vert}\vert^{q(n-1)}}d\zeta
\end{align*}
and similarly for the other integral, and since  
\begin{align*}
I:=\sup_{x\neq y}\int_{\R^n}\frac{1}{\vert \zeta\vert^{qj}\vert \zeta+\frac{x-y}{\vert x-y\vert}\vert^{q(n-1)}}d\zeta<+\infty
\end{align*}
we find that 

\begin{align*}
&\vert \Cc^+\Phi(x)-\Cc^+\Phi(y)\vert\\&\leq \frac{2Ic_n}{\omega_n}\Vert \Phi\Vert_{L^p}\sum_{j=1}^{n-1}\vert x-y\vert^{j}\vert x-y\vert^{(n-qj-q(n-1))/q}\\
&\leq  \frac{2nIc_n}{\omega_n}\Vert \Phi\Vert_{L^p}\vert x-y\vert^{1-n/p}
\end{align*}

\end{proof}

\subsection{\sffamily BMO estimates for the Cauchy Transform}

In what follows we set $B_1(0)=\{x\in \R^n: \vert x\vert<1\}$ and recall that $n\vert B_1(0)\vert=\omega_{n-1}$

\begin{Lem}\label{lem:CauchyBall}
\begin{align*}
\Cc^+(\chi_{B_1(0)})(x)=\left\{
\begin{array}{ll}
\displaystyle\frac{1}{n}x & \text{if } \vert x\vert < 1,\\ &\\ \displaystyle \frac{1}{n}\frac{x}{\vert x\vert^n} & \text{if } \vert x\vert >1 .
\end{array} \right.
\end{align*}
\end{Lem}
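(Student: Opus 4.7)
The plan is to apply the Cauchy--Pompeiu formula \eqref{eq:GenCF} to the auxiliary vector field $F(y)=y/n$ on the domain $\Om = B_1(0)$. Using $\partial_j F = e_j/n$ together with $e_i\gp e_i = 1$, one computes $\Dp F(y)=\sum_i e_i\gp (e_i/n) = 1$ identically on $\R^n$, so the volume term in \eqref{eq:GenCF} is precisely $\Cc^+(\chi_{B_1(0)})(x)$. On $\partial B_1(0)$ the outward unit normal satisfies $\nu(y)=y$ with $|y|=1$, whence $\nu(y)\gp F(y) = y\gp (y/n) = |y|^2/n = 1/n$, and the boundary contribution in \eqref{eq:GenCF} simplifies to
$$
\frac{1}{n\sigma_{n-1}}\int_{\partial B_1(0)}\frac{y-x}{|y-x|^n}\,d\sigma(y).
$$

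Using the identity $\nabla_x |y-x|^{-(n-2)} = (n-2)(y-x)/|y-x|^n$, this boundary integral equals $\frac{1}{n(n-2)\sigma_{n-1}}\nabla_x u(x)$, where $u(x) := \int_{\partial B_1(0)}|y-x|^{-(n-2)}\,d\sigma(y)$ is the single-layer Newton potential of the unit sphere. The function $u$ is harmonic on $\R^n\setminus\partial B_1(0)$ and radial by rotational invariance, so on each component it is of the form $a+b|x|^{-(n-2)}$. Absence of a singularity at the origin together with the direct evaluation $u(0)=\sigma_{n-1}$ forces $u\equiv \sigma_{n-1}$ on $\{|x|<1\}$, while decay at infinity and the leading asymptotic $u(x)\sim \sigma_{n-1}|x|^{-(n-2)}$ as $|x|\to\infty$ force $u(x)=\sigma_{n-1}/|x|^{n-2}$ on $\{|x|>1\}$ (Newton's shell theorem).

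For $|x|<1$ the gradient $\nabla_x u$ vanishes, so \eqref{eq:GenCF} reduces to $x/n = \Cc^+(\chi_{B_1(0)})(x) + 0$, giving the first formula. For $|x|>1$ the point $x$ lies outside $\overline{\Om}$, and the same Stokes-theorem derivation of \eqref{eq:GenCF}, now with a kernel $E(y-x)$ that is smooth throughout $\overline{\Om}$, gives $0$ on the left-hand side. Combined with $\nabla_x u(x) = -(n-2)\sigma_{n-1}\,x/|x|^n$ this yields
$$
0 = \Cc^+(\chi_{B_1(0)})(x) - \frac{x}{n|x|^n},
$$
which is the second formula. The only non-routine step is the potential-theoretic evaluation of $u$, but this reduces to the classification of radial harmonic functions on $\R^n$ and poses no real obstacle.
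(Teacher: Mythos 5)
Your proof is correct, but it takes a genuinely different (and more self-contained) route than the paper's. The paper disposes of the lemma in one line: since $(\Dp)^2=\Delta$ and $\Cc^+$ is a right inverse of $\Dp$, one has $\Cc^+(\chi_{B_1(0)})=\Dp N=\nabla N$, where $N$ is the Newton potential of the unit ball, whose explicit form ($|x|^2/(2n)$ plus a constant inside, $-|x|^{2-n}/(n(n-2))$ outside) immediately yields the two formulas. You instead feed the explicit primitive $F(y)=y/n$ (with $\Dp F\equiv 1$) into the Cauchy--Pompeiu formula \eqref{eq:GenCF}, which trades the volume Newton potential for the single-layer potential of the unit sphere, and you evaluate the latter by Newton's shell theorem; the exterior case is handled by the standard observation that the Cauchy--Pompeiu identity degenerates to $0=\text{volume}+\text{boundary}$ when $x\notin\overline{\Om}$. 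All the individual computations ($\Dp F=1$, $\nu(y)\gp F(y)=1/n$ on the sphere, the gradient identity for $|y-x|^{2-n}$, and the two branches of the shell theorem) check out, and the signs work. Both arguments ultimately rest on the same classical potential theory, but yours makes the mechanism explicit rather than appealing to the inversion property of $\Cc^+$ together with decay at infinity. One small caveat: the division by $n-2$ restricts your argument to $n\geq 3$, consistent with the paper's standing assumption $n>2$ but not literally covering $n=2$, where the single-layer potential is logarithmic and the computation must be adjusted.
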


\begin{proof}
This follows from the well-known formula for the Newton potential of a ball and the fact that $(\Dp)^2=\Delta$. 
\end{proof}

\begin{Lem}\label{lem:BMO}
Let $\phi\in L^{p}(\R^n,\Lambda \R^n)$ with $1<p<\infty$ and set $\phi_r(x)=\phi(rx)$. Then 
\begin{align*}
\Cc^+(\phi_r)(x)=r^{-1}\Cc^+(\phi)(rx)
\end{align*}
\end{Lem}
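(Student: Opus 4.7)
The plan is to prove this by a direct change of variables in the defining integral. The key observation is that the kernel $K(w) := w/|w|^n$ of $\Cc^+$ is homogeneous of degree $-(n-1)$, i.e.\ $K(\lambda w) = \lambda^{-(n-1)} K(w)$ for $\lambda>0$, and this exactly balances the Jacobian factor from the dilation of the integration variable to leave a single factor of $r^{-1}$.

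First I would write out
\begin{align*}
\Cc^+(\phi_r)(x) = \frac{1}{\sigma_{n-1}}\int_{\R^n}\frac{y-x}{|y-x|^n}\gp \phi(ry)\,dy
\end{align*}
and perform the substitution $z = ry$, so that $y = z/r$ and $dy = r^{-n}dz$. Under this change of variables one has
\begin{align*}
\frac{y-x}{|y-x|^n} = \frac{(z-rx)/r}{|(z-rx)/r|^n} = r^{n-1}\,\frac{z-rx}{|z-rx|^n},
\end{align*}
so the $r^{n-1}$ from the homogeneity of the kernel combines with the $r^{-n}$ from the Jacobian to produce an overall factor $r^{-1}$:
\begin{align*}
\Cc^+(\phi_r)(x) = \frac{r^{-1}}{\sigma_{n-1}}\int_{\R^n}\frac{z-rx}{|z-rx|^n}\gp \phi(z)\,dz = r^{-1}\Cc^+(\phi)(rx).
\end{align*}

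For the computation to be valid I need the integrals to be absolutely convergent. For $\phi\in C^\infty_0(\R^n,\Lambda\R^n)$ this is immediate, and the identity extends to $L^p(\R^n,\Lambda\R^n)$ with $1<p<n$ (in particular to $p<n$ where Theorem~\ref{thm:DiracC} gives boundedness $\Cc^+:L^p\to L^{p^\ast}$) by density, since both sides depend continuously on $\phi$ in appropriate norms; for the range $n\le p<\infty$ one first applies the identity pointwise on Schwartz functions (or on $L^p\cap L^q$ with $q<n$) and then extends by the H\"older-type estimate of Proposition~\ref{prop:CauchyH�lder}, which gives continuous dependence of $\Cc^+\phi$ on $\phi$.

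There is no real obstacle here, this is essentially a bookkeeping exercise in scaling; the only point that deserves a sentence is the justification of the change of variables in the limiting $L^p$ sense, which is routine from the boundedness statements already established in this section.
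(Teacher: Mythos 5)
Your proof is correct and is essentially identical to the paper's: both perform the substitution $\xi = ry$ and use the degree $-(n-1)$ homogeneity of the kernel against the Jacobian factor $r^{-n}$ to extract the factor $r^{-1}$. The extra paragraph on extending from compactly supported (or Schwartz) fields to general $L^p$ by density is a reasonable addition that the paper omits.
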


\begin{proof}
\begin{align*}
\Cc^+(\phi_r)(x)&=\frac{1}{\sigma_{n-1}}\int_{\R^n}\frac{y-x}{\vert y-x\vert^n}\gp \phi(ry)dy=\{ry=\xi\}\\
&=\frac{1}{\sigma_{n-1}}\int_{\R^n}\frac{r^{-1}\xi-x}{\vert r^{-1}\xi-x\vert^n}\gp \phi(\xi)r^{-n}d\xi\\
&=\frac{1}{r\sigma_{n-1}}\int_{\R^n}\frac{\xi-rx}{\vert \xi-rx\vert^n}\gp \phi(\xi)d\xi.
\end{align*}
\end{proof}

\begin{Prop}\label{prop:BMO}
Let $\phi\in L^n(\R^n,\Lambda \R^n)$. Then $\Cc^\pm\phi\in BMO(\R^n,\Lambda \R^n)$ and 
\begin{align*}
\Vert \Cc^\pm \phi \Vert_{BMO}\leq C_n^\pm\Vert \phi\Vert_{L^n}. 
\end{align*}
for some constant $C_n^\pm$ depending on $n$. 
\end{Prop}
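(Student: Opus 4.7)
The plan is to prove this by the classical endpoint argument for Riesz-type potentials, adapted to the Clifford setting. I will treat $\mathcal{C}^+$ in detail; the argument for $\mathcal{C}^-$ is identical modulo replacing positive by negative Clifford multiplication, which has no effect on the scalar pointwise bounds.

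First, I would reduce to a single ball by invariance. Translating the integrand in the definition of $\mathcal{C}^+$ shows that the operator commutes with translations, and Lemma \ref{lem:BMO} gives the dilation rule $\mathcal{C}^+(\phi_r)(x)=r^{-1}\mathcal{C}^+(\phi)(rx)$. Since $BMO$ is invariant under translations and dilations (as a seminorm), and since $\|\phi_r\|_{L^n}=r^{-1}\|\phi\|_{L^n}$, the two scalings match and it suffices to exhibit a constant $c\in\Lambda\R^n$ such that
\begin{equation*}
\frac{1}{|B_1(0)|}\int_{B_1(0)}|\mathcal{C}^+\phi(x)-c|\,dx\leq C_n\|\phi\|_{L^n(\R^n)}.
\end{equation*}

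Next, I would split $\phi=\phi_1+\phi_2$ where $\phi_1=\phi\chi_{B_2(0)}$ is the local part and $\phi_2=\phi-\phi_1$ is the far-field part, and choose $c:=\mathcal{C}^+\phi_2(0)$ (which will be seen to be finite). For the local part I would use Lemma \ref{lem:Norm} to obtain the pointwise domination
\begin{equation*}
|\mathcal{C}^+\phi_1(x)|\leq \frac{1}{\sigma_{n-1}}\int_{B_2(0)}\frac{|\phi(y)|}{|y-x|^{n-1}}\,dy,
\end{equation*}
then integrate over $B_1(0)$, apply Fubini, use the uniform estimate $\int_{B_1(0)}|y-x|^{1-n}\,dx\leq C_n$ valid for every $y$, and finish with H\"older's inequality against the $L^n$ norm of $\phi$ on $B_2(0)$ (pairing with the bounded measure of $B_2(0)$). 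This controls the local contribution by $C_n\|\phi\|_{L^n}$.

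For the far-field part, for $x\in B_1(0)$ and $|\xi|\geq 2$ we have $|\xi-x|\geq |\xi|/2$, and either Lemma \ref{lem:IneqKernel} applied in this regime (the dominant term corresponds to $j=1$, since the other powers of $|x-y|$ only improve things inside the ball) or a direct mean-value estimate on $\xi\mapsto (\xi-tx)/|\xi-tx|^n$ yields
\begin{equation*}
\left|\frac{\xi-x}{|\xi-x|^n}-\frac{\xi}{|\xi|^n}\right|\leq C_n\,|x|\,|\xi|^{-n}.
\end{equation*}
Multiplying by $\phi(\xi)$ (using $|v\gp w|=|v||w|$), integrating over $\{|\xi|\geq 2\}$, and applying H\"older with the conjugate pair $(n,n/(n-1))$, I reduce matters to finiteness of $\int_{|\xi|\geq 2}|\xi|^{-n^2/(n-1)}\,d\xi$, which holds since $n^2/(n-1)>n$. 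This simultaneously shows that $\mathcal{C}^+\phi_2(0)$ is well defined and yields $|\mathcal{C}^+\phi_2(x)-\mathcal{C}^+\phi_2(0)|\leq C_n\|\phi\|_{L^n}$ uniformly for $x\in B_1(0)$. Combining with the local estimate closes the proof.

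The one step that requires care is the far-field kernel bound: Lemma \ref{lem:IneqKernel} as stated mixes the even- and odd-dimensional cases with several terms, so I expect the main bookkeeping obstacle to be verifying that all the terms in that lemma collapse to the simple majorant $|x-y|\cdot|\xi|^{-n}$ when $|\xi|\gtrsim 1$ and $x,y\in B_1(0)$; this is essentially because $|\xi-x|,|\xi-y|\asymp|\xi|$ in that region, so every summand is bounded by $C_n|x-y|\,|\xi|^{-n}$, and only the $j=1$ term survives as the leading contribution after H\"older. Everything else is standard Calder\'on--Zygmund-style endpoint bookkeeping.
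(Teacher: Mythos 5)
Your argument is correct, and it takes a genuinely different route from the paper's. Both proofs begin with the same rescaling to the unit ball via Lemma \ref{lem:BMO}, but from there they diverge: the paper takes the comparison constant to be the actual mean $(\Cc^+\phi)_{B_1(0)}$, computes that mean by Fubini using the explicit formula for $\Cc^+(\chi_{B_1(0)})$ from Lemma \ref{lem:CauchyBall}, and then estimates the resulting combined kernel $\psi(x,y)$ by showing that $g(y)=\int_{B_1(0)}|\psi(x,y)|\,dx$ lies in $L^{n/(n-1)}$. You instead split $\phi$ into a local piece supported in $B_2(0)$ and a far-field piece, handle the local piece by the elementary $\mathcal{I}_1$-type Fubini bound, and handle the far field by the first-order cancellation $\bigl|\tfrac{\xi-x}{|\xi-x|^n}-\tfrac{\xi}{|\xi|^n}\bigr|\leq C_n|x|\,|\xi|^{-n}$. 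Your route has the advantage of making the decisive gain of one power of $|\xi|$ at infinity completely explicit: the kernel itself only decays like $|\xi|^{1-n}$, which pairs with $L^n$ by H\"older to the logarithmically divergent integral $\int_{|\xi|>1}|\xi|^{-n}\,d\xi$, so some cancellation against a reference point is indispensable at this endpoint. (Indeed, the paper's far-field majorant $c_2|y|^{1-n}$ leads, at the last step, to exactly that divergent integral; the signs in its kernel $\psi$ must be arranged so that the two terms cancel to order $|\xi|^{-n}$, which is precisely the estimate you prove.)

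One small inaccuracy to fix in your write-up: for general $\phi\in L^n$ the value $\Cc^+\phi_2(0)=\int_{|\xi|\geq 2}\tfrac{\xi}{|\xi|^n}\gp\phi(\xi)\,d\xi$ need \emph{not} converge absolutely --- by the computation above it is exactly the borderline divergent case --- so your parenthetical claim that the far-field estimate shows $\Cc^+\phi_2(0)$ is finite is not justified. What your estimate shows is that the \emph{difference} $\Cc^+\phi_2(x)-\Cc^+\phi_2(0)$ converges absolutely and is uniformly bounded on $B_1(0)$. This is harmless for the $BMO$ seminorm, but you should either run the argument for $\phi\in C_0^\infty$ and conclude by density (as the paper does), or define $\Cc^+\phi$ from the outset through a kernel renormalized by subtracting its value at a reference point for large $|\xi|$, and take the constant $c$ accordingly.
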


\begin{proof}
It is enough to consider $\Cc^+$ as the proof for $\Cc^-$ is analogous. 
Since functions in $BMO(\R^n,\Lambda \R^n)$ are only defined up to constants we first need to give a meaning to $\Cc^+\phi$ in $BMO$ as an equivalence class modulo constants. Therefore we are free to add a constant to the original definition that may depend on the function $\phi$ itself since this gives the same element in $BMO$. For $\phi\in L^n(\R^n,\Lambda \R^n)$ we define 
\begin{align*}
 \Cc^+ \phi(x):=\frac{1}{\sigma_{n-1}}\int_{\R^n}\bigg[\frac{y-x}{\vert y-x\vert^n}-\frac{y}{\vert y\vert^n}\chi_{B_1(0)}(y)\bigg]\gp \phi(y)dy.
\end{align*}

Then the integral converges for almost every $x$. Since $C^\infty_0(\Om,\Lambda \R^n)$ is dense in $L^n(\R^n,\Lambda \R^n)$ it is sufficient to prove the BMO-estimate for a smooth compactly supported function.

For any ball $B_r(x_0)$ set 
\begin{align*}
(\phi)_{B_r(x_0)}:=\frac{1}{\vert B_r(x_0)\vert}\int_{B_r(x_0)}\phi(x)dx. 
\end{align*}

After a translation we may assume that $x_0=0$. Set $\phi_r(x)=\phi(rx)$. By Lemma \ref{lem:BMO}, 

Since 
\begin{align*}
&\frac{1}{\vert B_r(0)\vert}\int_{B_r(0)}\vert \Cc^+\phi(x)-( \Cc^+ \phi)_{B_r(0)}\vert dx=\{r\xi=x\}\\&=\frac{1}{\vert B_r(0)\vert}\int_{B_1(0)}\vert \Cc^+\phi(r\xi )-( \Cc^+ \phi)_{B_r(0)}\vert r^nd\xi\\
&=\frac{r}{\vert B_1(0)\vert}\int_{B_1(0)}\vert \Cc^+\phi_r(x )-( \Cc^+ \phi_r)_{B_1(0)}\vert dx.
\end{align*}
We see it is enough to prove that for all $r>0$
 
\begin{align*}
\frac{r}{\vert B_1(0)\vert}\int_{B_1(0)}\vert \Cc^+\phi_r(x )-( \Cc^+ \phi_r)_{B_1(0)}\vert dx &\leq C_n\Vert \phi(x)\Vert_n\\ &\Longleftrightarrow\\
\frac{1}{\vert B_1(0)\vert}\int_{B_1(0)}\vert \Cc^+\phi_r(x )-( \Cc^+ \phi_r)_{B_1(0)}\vert dx &\leq C_n \Vert \phi_r(x)\Vert_n.
\end{align*}
for some constant $C_n$ depending only on $n$.

Now,

\begin{align*}
( \Cc^+ \phi)_{B_1(0)}&=\frac{1}{\vert B_1(0)\vert}\int_{B_1(0)}\bigg(\frac{1}{\sigma_{n-1}}\int_{\R^n}\bigg[\frac{y-x}{\vert y-x\vert^n}-\frac{y}{\vert y\vert^n}\chi_{B_1(0)}(y)\bigg]\gp \phi(y)dy\bigg)dx\\
&=\frac{1}{\vert B_1(0)\vert}\int_{\R^n}\bigg(\frac{1}{\sigma_{n-1}}\int_{B_1(0)}\frac{y-x}{\vert y-x\vert^n}dx\bigg)\gp \phi(y)dy-\frac{1}{\sigma_{n-1}}\int_{\R^n}\frac{y}{\vert y\vert^n}\chi_{B_1(0)}(y)\gp \phi(y)dy\\
&=-\frac{1}{\vert B_1(0)\vert}\int_{\R^n}\Cc^+(\chi_{B_1(0)})(x)\gp \phi(y)dy-\frac{1}{\sigma_{n-1}}\int_{ B_1(0)}\frac{y}{\vert y\vert^n}\gp \phi(y)dy.
\end{align*}
Using Lemma \ref{lem:CauchyBall}

\begin{align*}
&-\frac{1}{\vert B_1(0)\vert}\int_{\R^n}\Cc^+(\chi_{B_1(0)})(x)\gp \phi(y)dy\\&=-\frac{1}{\vert B_1(0)\vert}\int_{B_1(0)}\Cc^+(\chi_{B_1(0)})(x)\gp \phi(y)dy-\frac{1}{\vert B_1(0)\vert}\int_{\R^n\setminus B_1(0)}\Cc^+(\chi_{B_1(0)})(x)\gp \phi(y)dy\\
&=-\frac{1}{n\vert B_1(0)\vert}\int_{B_1(0)}y\gp \phi(y)dy-\frac{1}{n\vert B_1(0)\vert}\int_{\R^n\setminus B_1(0)}\frac{y}{\vert y\vert^n}\gp \phi(y)dy\\
&=-\frac{1}{\sigma_{n-1}}\int_{B_1(0)n}y\gp \phi(y)dy-\frac{1}{\sigma_{n-1}}\int_{\R^n\setminus B_1(0)}\frac{y}{\vert y\vert^n}\gp \phi(y)dy.
\end{align*}
Hence 
\begin{align*}
( \Cc^+ \phi)_{B_1(0)}&=-\frac{1}{\vert B_1(0)\vert}\int_{\R^n}\Cc^+(\chi_{B_1(0)})(x)\gp \phi(y)dy-\frac{1}{\sigma_{n-1}}\int_{ \R^n}\frac{y}{\vert y\vert^n}\gp \phi(y)dy.
\end{align*}
Thus, 
\begin{align*}
&\Cc^+\phi(x)-( \Cc^+ \phi)_{B_1(0)}\\&=\frac{1}{\sigma_{n-1}}\int_{B_1(0)}\bigg\{\frac{y-x}{\vert y-x\vert^n}+y\bigg\}\gp \phi(y)dy+\frac{1}{\sigma_{n-1}}\int_{\R^n\setminus B_1(0)}\bigg\{\frac{y-x}{\vert y-x\vert^n}+\frac{y}{\vert y\vert^n}\bigg\}\gp \phi(y)dy.
\end{align*}

Define the function 
\begin{align*}
\psi(x,y):=\left\{
\begin{array}{ll}
\displaystyle \frac{y-x}{\vert y-x\vert^n}+y& \text{if } \vert y\vert < 1,\\ &\\ \displaystyle\frac{y-x}{\vert y-x\vert^n}+\frac{y}{\vert y\vert^n} & \text{if } \vert y\vert >1 .
\end{array} \right.
\end{align*}

Then 
\begin{align*}
&\frac{1}{\vert B_1(0)\vert }\int_{B_1(0)}\vert \Cc^+\phi(x)-( \Cc^+ \phi)_{B_1(0)}\vert dx\leq \frac{1}{\sigma_{n-1}\vert B_1(0) \vert }\int_{B_1(0)}\bigg(\int_{\R^n}\vert \psi(x,y)\vert \vert \phi(y)\vert dy\bigg)dx\\
&=\frac{1}{\sigma_{n-1}\vert B_1(0)\vert }\int_{\R^n}\bigg(\int_{B_1(0)}\vert \psi(x,y)\vert dx\bigg) \vert \phi(y)\vert dy.
\end{align*}

We now estimate $\int_{B_1(0)}\vert \psi(x,y)\vert dx$. When $\vert y\vert<1$ we get
\begin{align*}
\int_{B_1(0)}\vert \psi(x,y)\vert dx\leq \int_{B_1(0)}\frac{1}{\vert y-x\vert^{n-1}}dx+\int_{B_1(0)}\vert y\vert dx\leq c_1,
\end{align*}
for some constant $c_1$ independent of $y$.  When $\vert y\vert>1$ we get
\begin{align*}
\int_{B_1(0)}\vert \psi(x,y)\vert dx&\leq \int_{B_1(0)}\frac{1}{\vert y-x\vert^{n-1}}dx+\int_{B_1(0)}\frac{1}{\vert y\vert^n} dx\\
&\leq  \frac{1}{\vert y\vert^{n-1}}\bigg(\int_{B_1(0)n}\frac{1}{\vert 1-y^{-1}\gp x\vert^{n-1}}dx+\int_{B_1(0)}dx\bigg)\\
&\leq  \frac{c_2}{\vert y\vert^{n-1}}
\end{align*}
for some constant $c_2$ independent of $y$. Let $g(y)=\int_{B_1(0)}\vert \psi(x,y)\vert dx$. This gives 
\begin{align*}
\Vert g\Vert_{L^{n/(n-1)}(\R^n)}^{(n-1)/n}=\int_{B_1(0)}c_1^{n/(n-1)}dy+c_2^{n/(n-1)}\int_{\R^n\setminus B_1(0)}\frac{1}{\vert y\vert^{n}}dy<+\infty
\end{align*}
By Hölder's inequality
\begin{align*}
&\frac{1}{\vert B_1(0)\vert }\int_{B_1(0)}\vert \Cc^+\phi(x)-( \Cc^+ \phi)_{B_1(0)}\vert dx\leq \frac{1}{\sigma_{n-1}\vert B_1(0)\vert }\int_{\R^n}g(y)\vert \phi(y)\vert dy\\
&\leq C_n\Vert \phi\Vert_{L^n}
\end{align*}
for some constant $C_n$, and the proof is complete. 
\end{proof}

\subsection{\sffamily Compactness Properties of the Cauchy Transforms}

In many applications one is more interested in the restriction of the Cauchy transforms to bounded domains $\Om\subset \R^n$. We write the restriction of $\Cc^\pm F\vert_{\Om}$ as multiplication with the characteristic function $\chi_\Om$. On bounded domains the Cauchy transforms have the following compactness properties:
\begin{Thm}\label{thm:CompactC}
Let $\Om$ be a bounded domain in $\R^n$. Then the following operators are compact.
\begin{itemize}
\item[(i)] For $n<p\leq \infty$,
\begin{align*}
\chi_\Om\circ \Cc^\pm: L^{p}(\R^n,\Lambda \R^n)\to C^{\alpha}(\Om,\Lambda \R^n),\,\,\,\, 0\leq \alpha <1-n/p
\end{align*}
\item[(ii)] For $1\leq p<n$,
\begin{align*}
\chi_\Om\circ \Cc^\pm: L^{p}(\R^n,\Lambda \R^n)\to L^{s}(\Om,\Lambda \R^n),\,\,\,\, 1\leq s<p^\star=\frac{np}{n-p}
\end{align*}
\end{itemize}
\end{Thm}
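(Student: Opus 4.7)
The plan is to treat parts (i) and (ii) separately, each by combining a boundedness property of $\mathcal{C}^\pm$ established earlier in this section with a classical compact embedding on the bounded domain $\Om$. Since $\mathcal{C}^+$ and $\mathcal{C}^-$ have the same kernel modulus $|y-x|^{-(n-1)}$ and differ only by the choice of Clifford product, it suffices to carry out the argument for $\mathcal{C}^+$.

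For part (i), I would first promote Proposition \ref{prop:CauchyH�lder} to the boundedness statement $\chi_\Om \circ \mathcal{C}^+ : L^p(\R^n,\Lambda \R^n) \to C^{1-n/p}(\overline{\Om},\Lambda \R^n)$ for $n<p\le \infty$. The H\"older seminorm is controlled directly by that proposition, and the $L^\infty(\Om)$ bound follows from the pointwise estimate $|\mathcal{C}^+\Phi(x)|\le c_n\int |y-x|^{-(n-1)}|\Phi(y)|\,dy$ combined with H\"older's inequality, since $|y-x|^{-(n-1)}\in L^{p'}$ uniformly for $x\in\Om$ over any fixed ball containing $\Om$ precisely when $(n-1)p'<n$, i.e., $p>n$. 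Compactness into $C^\alpha(\overline{\Om})$ for $0\le \alpha<1-n/p$ then follows from Arzel\`a--Ascoli together with the elementary H\"older interpolation $\|f\|_{C^\alpha}\le \|f\|_{C^0}^{1-\theta}\|f\|_{C^{1-n/p}}^{\theta}$ with $\theta=\alpha/(1-n/p)$, which upgrades precompactness in $C^0$ to precompactness in $C^\alpha$. The case $p=\infty$ is identical, with $1-n/p$ replaced by any $\beta<1$, since the proof of Proposition \ref{prop:CauchyH�lder} remains valid for $q$ slightly above $1$.

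For part (ii) in the range $1<p<n$, I would combine the Hardy--Littlewood--Sobolev bound of Theorem \ref{thm:DiracC} with a Calder\'on--Zygmund derivative estimate. A direct differentiation gives
\[
\partial_{x_i}\frac{y_j-x_j}{|y-x|^n}=-\frac{\delta_{ij}}{|y-x|^n}+n\frac{(y_i-x_i)(y_j-x_j)}{|y-x|^{n+2}},
\]
which is a Calder\'on--Zygmund convolution kernel (homogeneous of degree $-n$ with vanishing spherical mean), so $\nabla \mathcal{C}^+ : L^p(\R^n)\to L^p(\R^n)$ is bounded for every $1<p<\infty$. Combined with the HLS bound this yields $\chi_\Om\circ \mathcal{C}^+ : L^p(\R^n)\to W^{1,p}(\Om)$ bounded, and the Rellich--Kondrachov theorem then supplies the compact embedding $W^{1,p}(\Om)\hookrightarrow L^s(\Om)$ for every $1\le s<p^*$. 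The endpoint $p=1$ cannot be handled in this way since $\mathcal{I}_1$ is only of weak type $(1,n/(n-1))$; in that case I would verify the Fr\'echet--Kolmogorov compactness criteria in $L^s(\Om)$ directly, splitting the kernel into a near part supported in $|y-x|<\eps$ (whose $L^s$ contribution is small uniformly in the sequence by Minkowski's integral inequality, since $(n-1)\cdot 1<n$) and a bounded smooth far part (whose translations converge uniformly).

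I expect the main technical obstacle to be precisely the endpoint $p=1$, where the Sobolev embedding path in (ii) collapses and one must return to direct quantitative kernel estimates; the rest of the argument is essentially bookkeeping. The strict inequalities $\alpha<1-n/p$ in (i) and $s<p^*$ in (ii) are forced by the well-known non-compactness of the H\"older/Sobolev embedding at the critical exponent itself.
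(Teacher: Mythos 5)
Your proof follows the same route as the paper's: part (i) is the H\"older estimate for the Cauchy transform combined with Arzel\`a--Ascoli (with your interpolation step from $C^{1-n/p}$ down to $C^\alpha$ making explicit what the paper leaves implicit), and part (ii) for $1<p<n$ is the Hardy--Littlewood--Sobolev bound plus a Calder\'on--Zygmund gradient estimate feeding into Rellich--Kondrachov, exactly the path indicated by the paper's citation of Stein and of Theorem 4.3.14 in Astala--Iwaniec--Martin. The one point where you go beyond the paper's two-line proof is the endpoint $p=1$, where the Riesz-potential bound degenerates to weak type and the $W^{1,1}\hookrightarrow L^s$ route is unavailable; your Fr\'echet--Kolmogorov patch is the correct repair of a gap the paper glosses over (note only that the integrability condition for the near part of the kernel is $(n-1)s<n$, i.e.\ $s<n/(n-1)$, which is the standing hypothesis, rather than $(n-1)\cdot 1<n$ as written).
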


\begin{proof}
The proofs are the same as the proof of Theorem 4.3.14 in \cite{AIM} with suitable modifications. The proof of claim $(ii)$ follows from Proposition \ref{prop:CauchyHölder} and an application of Arzela-Ascoli's theorem. The proof of claim $(ii)$ follows from Theorem 1 in \cite[page 120]{S} and Rellich-Kondrachov compact embedding theorem. 
\end{proof}


\section{\sffamily The Beurling-Ahlfors and Riesz Transforms on $\R^n$.}

\subsection{\sffamily The Beurling-Ahlfors Transform on $\R^n$.}

In the ground breaking papers \cite{DS}  and \cite{IM1} the authors introduced a higher dimensional generalisation of the classical Beurling-Ahlfors transform to study quasiconformal $4$-manifolds and removable singularities of quasiregular maps in even dimensions respectively. Furthermore this operator was also used to give a local formulae for characteristic classes on even dimensional quasiconformal manifolds, see \cite{CDT}. This higher dimensional operator, denoted by $\mathcal{S}$ is intimately connected with Hodge-Morrey decompositions of differential forms on $\R^n$. Indeed, let $\omega\in L^p(\R^n, \Lambda^k \R^n)$ be a differential $k$-form. Then the Hodge-Morrey decomposition of $\omega$ is given by
\begin{align*}
\omega=d \alpha+\delta \beta,
\end{align*}
with $\alpha\in \text{ker}(\delta)\cap W^{1,p}(\R^n, \Lambda^{k-1}\R^n)$ and  $\beta\in \text{ker}(d)\cap W^{1,p}(\R^n, \Lambda^{k+1}\R^n)$ and where $\delta=d^\ast$. The Beurling-Ahlfors transform on $\mathcal{S}:L^p(\R^n, \Lambda^k \R^n)\to L^p(\R^n, \Lambda^k \R^n)$ is defined by the rule 
\begin{align*}
\mathcal{S}(\omega)=d\alpha-\delta \beta,
\end{align*}
or alternatively $\mathcal{S}=(d\delta-\delta d)\circ \Delta^{-1}$, where $\Delta$ is the Laplacian.

In \cite{IM1} Theorem 8.1 it is shown that the Beurling-Ahlfors operator has the following properties:
\begin{itemize}
\item[(i)] $\mathcal{S}(d\alpha)=d\alpha$ for all $\alpha$ with $d\alpha\in L^p(\R^n, \Lambda^k \R^n)$.
\item[(ii)] $\mathcal{S}(\delta \alpha)=-\delta\alpha$ for all  $\alpha$ with $\delta \alpha\in L^p(\R^n, \Lambda^k \R^n)$.
\item[(iii)] $\mathcal{S}$ is self-adjoint and $\mathcal{S}\circ \mathcal{S}=\text{Id}$ and $\mathcal{S}^{-1}=\mathcal{S}$. 
\item[(iv)] $\mathcal{S}$ anti-commutes with the Hodge star, i.e., 
\begin{align*}
\mathcal{S}\star=-\star \mathcal{S}.
\end{align*} 
\item[(v)] $\mathcal{S}: L^2(\R^n, \Lambda \R^n)\to  L^2(\R^n, \Lambda \R^n)$ is an isometry and $\mathcal{S}$ is bounded on $L^p(\R^n,\Lambda \R^n)$. 
\end{itemize}
 For applications it is of great interest to know the $L^p$-norm of $\mathcal{S}$. Indeed, in the paper \cite{IM1} Theorem 8.4 the authors prove the following bound 
\begin{align}\label{thm:BoundIM}
\Vert \mathcal{S}\Vert_{L^p(\R^n,\Lambda \R^n)}\leq C(n)\max\bigg\{\frac{1}{p-1},p-1\bigg\}, \,\, 1<p<+\infty,
\end{align}
where $n$ is the dimension and $C(n)$ is a polynomial in $n$ not depending on $p$. In fact the authors conjecture that $C(n)=1$ for all $n$. This conjecture remains open so far  and has been the focus of great attention over the years. It is even open in dimension $n=2$. The bound \ref{thm:BoundIM} has subsequently been improved over the years by several authors, e.g., \cite{BL},\cite{NV},\cite{PSW} using in particular Bellman function techniques and Burkholder's sharp inequality for martingale transforms on $L^p$. The best bound so far is proven in \cite{H} and given by 
\begin{align}\label{normB1}
\Vert \mathcal{S}\Vert_{L^p(\R^n,\Lambda \R^n)}\leq (1+n/2)(p-1),
\end{align}
where $p^\star:=\max\{p/(p-1),p\}$. In fact when $\mathcal{S}$ is restricted to the spaces $L^p(\R^n,\Lambda^k\R^n)$, then in \cite{H} it is shown that the $L^p$-norms are have the estimates 
\begin{align}\label{normB2}
\Vert \mathcal{S}\Vert_{L^p(\R^n,\Lambda^k \R^n)}\leq \bigg(\frac{2k(n-k)}{n}+1\bigg)(p^\star-1)\leq (2k+1)(p^\star-1). 
\end{align}

\subsection{\sffamily Factorisation formula}

Let $\mathscr{S}(\R^n, \Lambda \R^n)$ denote the Schwartz space of differential forms on $\R^n$, i.e. 
\begin{align*}
\mathscr{S}(\R^n, \Lambda \R^n):=\{\omega\in C^\infty(\R^n, \Lambda \R^n): \sup_{x}\vert x^\beta\dv^\alpha \omega(x)\ \vert\text{ for all muti-indices $\alpha$ and $\beta$}\}. 
\end{align*}

 One easily checks that $\mathscr{S}(\R^n, \Lambda \R^n)$ is a Fréchet algebra, closed under the products $\wedge,\ri,\li,\gp,\gm$. Moreover, $\mathscr{S}(\R^n, \Lambda \R^n)$ is dense in $L^p(\R^n, \Lambda \R^n)$ for $1<p<+\infty$. 
 
 The Fourier transform $\mathcal{F}$ of a multicovector field $\omega\in \mathscr{S}(\R^n, \Lambda \R^n)$ is defined as
 \begin{align*}
 \mathcal{F}(\omega)(\xi)=\frac{1}{(\sqrt{2\pi})^n}\int_{\R^n}\omega(x)e^{-i\langle x,\xi\rangle}dx. 
 \end{align*}

In Theorem 8.2 in \cite{IM1} it is shown that $\mathcal{S}$ is an isometry on $L^2(\R^n, \Lambda^k \R^n)$ for every $k$. Let $M(\xi): \R^n \to \LL(\R^n)$ be the orthogonal transformation 
\begin{align*}
M(\xi)=I-2\vert \xi\vert^{-1}\xi\otimes \xi,
\end{align*}
being the reflection in the hyperplane orthogonal to $\xi$. Let $\widehat{M}_\xi: \LL(\Lambda \R^n)\to  \LL(\Lambda \R^n)$ denote its exterior extension (in the notation in \cite{IM1} $\widehat{M}(\xi)=M_{\#}(\xi)$). In Theorem 8.10 in \cite{IM1} is shown that the Fourier multiplier of $\mathcal{S}:L^2(\R^n, \Lambda \R^n)\to L^2(\R^n, \Lambda \R^n)$ is given by 
\begin{align*}
\mathcal{F}(\mathcal{S}\omega)(\xi)&=\widehat{M}(\xi)\mathcal{F}(\omega)(\xi).
\end{align*}

Here $\widehat{M}(\xi)$ is identified with its complexification acting on $\Lambda \R^n\otimes \C$ so that for any $w_1,w_2\in \Lambda \R^n$ we have $\widehat{M}(\xi)(w_1+iw_2)=\widehat{M}(\xi)(w_1)+i\widehat{M}(\xi)(w_2)$.

\begin{Lem}\label{lem:BMulti}
The Fourier multiplier of the Beurling transform can be expressed as 
\begin{align}
\mathcal{F}(\mathcal{S}\omega)(\xi)&=-\frac{i\xi }{\vert \xi\vert}\gp \widehat{\mathcal{F}(\omega)(\xi)}\gp \frac{i\xi }{\vert \xi\vert}. 
\end{align}
\end{Lem}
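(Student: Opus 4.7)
The plan is to combine the stated Fourier multiplier identity from Theorem 8.10 of \cite{IM1}, namely $\mathcal{F}(\mathcal{S}\omega)(\xi)=\widehat{M}(\xi)\mathcal{F}(\omega)(\xi)$, with the Clifford-algebraic description of the exterior extension of a reflection given in Lemma \ref{lem:Refl}. Since $M(\xi)$ is by definition the orthogonal reflection in the hyperplane orthogonal to $\xi$, Lemma \ref{lem:Refl} immediately yields
\begin{align*}
\widehat{M}(\xi) w = \xi \gp \widehat{w} \gp \xi^{-1}
\end{align*}
for every $w\in \Lambda \R^n$ and every $\xi \neq 0$.

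Next I would compute the Clifford inverse $\xi^{-1}$. From the defining relation $\xi \gp \xi = \langle \xi,\xi\rangle = \vert \xi\vert^{2}$ (a consequence of the Riesz identities \eqref{eq:RId1}--\eqref{eq:RId2} applied to a vector paired with itself), one reads off $\xi^{-1} = \xi / \vert \xi\vert^{2}$. Substituting this into the previous display gives the symmetric form
\begin{align*}
\widehat{M}(\xi) w = \frac{\xi}{\vert \xi\vert}\gp \widehat{w}\gp \frac{\xi}{\vert \xi\vert}.
\end{align*}

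To finish, I would insert the purely cosmetic factors of $i$. Since $i\in \C$ is a complex scalar it commutes with every element of the Clifford algebra, so $(i\xi/\vert\xi\vert) \gp \widehat{w} \gp (i\xi/\vert\xi\vert) = i^{2}\,(\xi/\vert\xi\vert)\gp \widehat{w}\gp (\xi/\vert\xi\vert) = -(\xi/\vert\xi\vert)\gp \widehat{w}\gp (\xi/\vert\xi\vert)$, and therefore
\begin{align*}
\widehat{M}(\xi) w = -\frac{i\xi}{\vert \xi\vert}\gp \widehat{w}\gp \frac{i\xi}{\vert \xi\vert}.
\end{align*}
Applying this identity with $w = \mathcal{F}(\omega)(\xi)$ and combining with Theorem 8.10 of \cite{IM1} gives the claimed formula. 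There is no serious obstacle here: once Lemma \ref{lem:Refl} and the Clifford relation $\xi\gp\xi=\vert\xi\vert^{2}$ are in hand, the identity is a one-line computation, with the factors of $i$ serving only to expose the manifestly self-adjoint, involutive form of the multiplier (consistent with $\mathcal{S}^{2}=I$ and self-adjointness of $\mathcal{S}$).
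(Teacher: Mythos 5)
Your argument is correct and coincides with the paper's first proof of Lemma \ref{lem:BMulti}: both invoke Theorem 8.10 of \cite{IM1} for the multiplier $\widehat{M}(\xi)$, apply Lemma \ref{lem:Refl} to write the exterior extension of the reflection as conjugation by $\xi$, use $\xi^{-1}=\xi/\vert\xi\vert^{2}$, and insert the factors of $i$ at the end. (The paper also records a second, independent proof via the Fourier symbols of $d$ and $\delta$, but your route is exactly its Proof 1.)
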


\begin{rem}
Note that here $\widehat{\omega(\xi)}$ denotes the involution on $\Lambda V$ ($\Lambda V\otimes \C$) and {\bf not} the Fourier transform.
\end{rem}

We will give to different proofs.
\begin{proof}[Proof 1]
Since $M(\xi)$ is a reflection in the hyperplane orthogonal to $\xi$, Theorem 4.1.3 in \cite{R} gives
\begin{align*}
M(\xi)v=-\xi\gp v\gp \xi^{-1}
\end{align*}
for every $v\in V$. 
Lemma \ref{lem:Refl} equation \eqref{ReflFormula} and the fact that the Clifford multiplicative inverse of a vector is given by $\xi^{-1}=\xi/\vert \xi\vert^2$ give
\begin{align*}
\mathcal{F}(\mathcal{S}\omega)(\xi)&=\xi\gp \widehat{\mathcal{F}(\omega)(\xi)}\gp \xi^{-1}=\frac{\xi}{\vert \xi \vert}\gp\widehat{\mathcal{F}(\omega)}\gp \frac{\xi}{\vert \xi \vert}\\
&=-\frac{i\xi}{\vert \xi \vert}\gp\widehat{\mathcal{F}(\omega)}\gp \frac{i\xi}{\vert \xi \vert}.
\end{align*}
\end{proof}

\begin{proof}[Proof 2]
We use the fact that $\mathcal{F}(d\omega)(\xi)=i\xi\wedge \mathcal{F}(\omega)(\xi)$ and $\mathcal{F}(\delta \omega)(\xi)=-i\xi\ri \mathcal{F}(\omega)(\xi)$. Set $\eta(\xi)=-\vert \xi\vert^{-2}\mathcal{F}(\omega)(\xi)$. Thus, 
\begin{align*}
&\mathcal{F}((d\delta-\delta d)\circ \Delta^{-1}\omega)(\xi)=i\xi \wedge (-i\xi \ri \eta (\xi))+i\xi \ri(i\xi \wedge\eta (\xi) )\\
&=(\xi \ri +\xi \wedge)(-\xi \ri \eta (\xi))+(\xi \ri +\xi \wedge)(\xi \wedge \eta (\xi))=\xi\gp(-\xi \ri \eta (\xi)+\xi \wedge \eta (\xi))\\
&=\xi \gp (\widehat{\eta(\xi)}\li \xi+\widehat{\eta(\xi)}\wedge)=\xi(\gp \widehat{\eta(\xi)}\gp)\\
&=\xi(\gp -\vert \xi\vert^{-2}\widehat{\mathcal{F}(\omega)(\xi)}\gp)\\
&=-\frac{i\xi}{\vert \xi \vert}\widehat{\mathcal{F}(\omega)}\gp \frac{i\xi}{\vert \xi \vert}. 
\end{align*}
Here we have used the fact that $\xi\wedge( \xi \wedge w)=0$ and $\xi\ri( \xi \ri w)=0$ for any multivector $w$. 
\end{proof}

\begin{Def}
The scalar Riesz transforms $\mathcal{R}_\nu$ acting on $L^p(\R^n)$ for any $\nu\in \mathbb{S}^{n-1}$ are given by the singular integrals
\begin{align*}
\mathcal{R}_\nu f(x)=\frac{2}{  \sigma_{n}}\text{p.v.}\int_{\R^n}\frac{(x_\nu-y_\nu)f(y)}{\vert x-y\vert^{n+1}}dy
\end{align*}
where $x_\nu=\langle x,\nu\rangle$. 
\end{Def}

The $L^p$-norms of $\mathcal{R}_\nu$ are given by $\Vert \mathcal{R}_\nu\Vert_p=H_p$, (Theorem 12.11 in \cite[p. 304]{IM2}) for any $\nu\in \mathbb{S}^{n-1}$ and the Fourier transforms of $\mathcal{R}_\nu \omega$ are given by
\begin{align*}
\mathcal{F}[\mathcal{R}_\nu\omega](\xi)=\frac{i\xi_\nu}{\vert \xi\vert}\mathcal{F}(\omega)(\xi). 
\end{align*}

\begin{Def}
Define the Clifford-Riesz transforms $\mathcal{R}^+$ and $\mathcal{R}^-$ are defined by
\begin{align*}
\mathcal{R}^{+}f(x)&:=\frac{2}{\sigma_n}\text{p.v.}\int_{\R^n}\frac{x-y}{\vert x-y\vert^{n+1}}\gp f(y)dy,\\
\mathcal{R}^{-}f(x)&:=\frac{2}{\sigma_n}\text{p.v.}\int_{\R^n}\frac{x-y}{\vert x-y\vert^{n+1}}\gm f(y)dy,
\end{align*}
\end{Def}
These operators arise naturally when considering Hardy spaces of multivector fields with respect to the Hodge-Dirac operators. For more on these operators we refer the reader to Chapter 4 in \cite{GM} and \cite{LMcIS}

\begin{Lem}\label{lem:Riesz}
$\mathcal{R}^+ $ and $\mathcal{R}^-$ are bounded operators on $L^p(\R^n, \Lambda \R^n)$ with norm estimates
\begin{align*}
 \Vert \mathcal{R}^{\pm} \Vert_p\leq 2H_p\frac{\sigma_{n-1}}{\sigma_n}.  
\end{align*}
\end{Lem}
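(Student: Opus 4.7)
The plan is to apply the classical method of rotations of Calder\'on--Zygmund, reducing the bound on $\mathcal{R}^\pm$ to the bound on the one-dimensional directional Hilbert transform
\[
H_\omega f(x):=\text{p.v.}\int_{\R}\frac{f(x-t\omega)}{t}\,dt,
\]
and to cancel the Clifford factor via Lemma~\ref{lem:Norm}.

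First I would perform the substitution $z=x-y$ and switch to polar coordinates $z=t\omega$, $t>0$, $\omega\in\mathbb{S}^{n-1}$, so that $\frac{z}{|z|^{n+1}}\,dz=\frac{\omega}{t}\,dt\,d\sigma(\omega)$. The non-integrable factor $1/t$ forces one to exploit the symmetry $\omega\mapsto -\omega$ on the sphere: folding the two hemispheres (and using $H_{-\omega}=-H_\omega$) produces the compact formula
\[
\mathcal{R}^{+}f(x)=\frac{1}{\sigma_n}\int_{\mathbb{S}^{n-1}}\omega\gp H_{\omega}f(x)\,d\sigma(\omega),
\]
together with the analogous identity using $\gm$ for $\mathcal{R}^{-}$. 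This is the standard method of rotations, now with the Clifford-valued ``symbol'' $\omega\mapsto \omega\gp\cdot$.

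Next I would note that, by Fubini along lines parallel to $\omega$, $H_\omega$ is the one-dimensional Hilbert transform acting in the direction $\omega$, and hence by Pichorides' theorem $\|H_\omega f\|_{L^p(\R^n,\Lambda\R^n)}\le H_p\,\|f\|_p$ (extending to multivector-valued $f$ by applying the scalar bound componentwise in an orthonormal basis of $\Lambda\R^n$, which is justified since $H_\omega$ is a scalar Calder\'on--Zygmund operator). Now Lemma~\ref{lem:Norm} gives $|\omega\gp w|=|w|$ for any unit vector $\omega$ and multivector $w$, and Minkowski's integral inequality applied to the spherical average yields
\[
\|\mathcal{R}^{+}f\|_p\le\frac{1}{\sigma_n}\int_{\mathbb{S}^{n-1}}\|H_\omega f\|_p\,d\sigma(\omega)\le\frac{\sigma_{n-1}}{\sigma_n}H_p\,\|f\|_p,
\]
which implies (and in fact sharpens by a factor of $2$) the claimed bound. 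The identical argument with $\omega\gm\cdot$ replacing $\omega\gp\cdot$ handles $\mathcal{R}^{-}$, and a standard density argument from $\mathscr{S}(\R^n,\Lambda\R^n)$ extends the estimate to all of $L^p$.

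The main obstacle will be the rigorous justification of the method-of-rotations step when the ``symbol'' $\omega\gp\cdot$ is operator-valued rather than scalar. This turns out to be routine once one observes that the Clifford kernel factors as the unit vector $\omega$ (independent of the radial variable $t$) times the scalar odd kernel $1/t$, so that the vector $\omega$ commutes with the $t$-integration and the classical hemisphere-folding argument applies verbatim; the interchange of integration and principal value is then the same as in the scalar case.
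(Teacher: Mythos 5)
Your argument is the method of rotations, which is exactly the route the paper takes: the paper simply cites Theorem 12.5.1 of Iwaniec--Martin for odd, degree $-n$ homogeneous operator-valued kernels and then computes $\int_{\mathbb{S}^{n-1}}|\Gamma(\omega)|\,d\sigma(\omega)=2\sigma_{n-1}/\sigma_n$ via Lemma \ref{lem:Norm}, whereas you unwind that black box by hand (polar coordinates, hemisphere folding, $|\omega\gp w|=|w|$, Minkowski). The unwinding is correct and the bookkeeping giving $\mathcal{R}^+f=\sigma_n^{-1}\int_{\mathbb{S}^{n-1}}\omega\gp H_\omega f\,d\sigma(\omega)$ is right. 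The one step you gloss over is the claim $\|H_\omega f\|_{L^p(\R^n,\Lambda\R^n)}\le H_p\|f\|_p$ ``by applying the scalar bound componentwise'': for $p\neq 2$ the componentwise scalar bounds $\|(H_\omega f)_s\|_p\le H_p\|f_s\|_p$ do not recombine into the $\Lambda\R^n$-valued ($\ell^2$-valued) bound with the same constant, since the $L^p$ norm of $(\sum_s|g_s|^2)^{1/2}$ is not controlled by the $\ell^p$ sum of the $\|g_s\|_p$ without a dimension-dependent loss. What is needed is the Hilbert-space-valued boundedness of the Hilbert transform (Marcinkiewicz--Zygmund, or UMD for Hilbert spaces), which costs a dimension-free but $p$-dependent factor; this is precisely what the cited theorem in Iwaniec--Martin packages, and it is plausibly the origin of the extra factor in the stated bound $2H_p\sigma_{n-1}/\sigma_n$. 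So your claimed sharpening to $H_p\sigma_{n-1}/\sigma_n$ is not justified as written, but the lemma's weaker bound does follow once you replace ``componentwise'' by the vector-valued Hilbert transform estimate.
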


\begin{proof}
It is enough to consider $\mathcal{R}^+$. We apply the real method of rotation, Theorem 12.5.1 in \cite{IM2}. $\Gamma: \R^n \to \LL(\Lambda V)$ is given by 
\begin{align*}
\Gamma(x)=\frac{2}{ \sigma_{n}}\frac{x}{\vert x\vert^{n-1}}\gp ,
\end{align*}
and so for all $x\in \mathbb{S}^{n-1}$ and all $t\in \R$, we have $\vert t\vert^{n+1}\Gamma(tx)=t\Gamma(x)$. Hence by Theorem 12.5.1. in \cite{IM2} $\mathcal{R}^\pm $ are bounded in $L^p(\R^n, \Lambda \R^n)$ for all $1<p<\infty$ and 
\begin{align*}
\Vert \mathcal{R}^+ \Vert_p\leq \frac{\pi}{2} H_p\int_{\mathbb{S}^{n-1}}\vert \Gamma(x)\vert d\sigma(x),
\end{align*}
where
\begin{align*}
\vert \Gamma (x)\vert=\max\{\vert \Gamma(x)w\vert: w\in \Lambda V, \vert w\vert=1\}. 
\end{align*}
Since $x\in \mathbb{S}^{n-1}$, $2 \omega_{n}^{-1}\vert \Gamma (x)\vert=\max\{\vert x\gp w\vert: w\in \Lambda V, \vert w\vert=1\}=\max\{\vert w\vert: w\in \Lambda V, \vert w\vert=1\}=1$ by Lemma \ref{lem:Norm}. Thus,
\begin{align*}
\int_{\mathbb{S}^{n-1}}\vert \Gamma(x)\vert d\sigma(x)=\frac{2}{ \omega_{n}}\int_{\mathbb{S}^{n-1}}\frac{\vert x\vert}{\vert x\vert^{n+1}}d\sigma(x)=\frac{2\omega_{n-1}}{\omega_n}.
\end{align*}
\end{proof}

It is sometimes useful to introduce the formal Riesz transform symbol 
\begin{align*}
\mathbf{R}:=\sum_{j=1}^n\mathcal{R}_{e_j}e_j
\end{align*}
with respect to an ON-basis $\{e_j\}_j$ of $\R^n$ much in the same way as the formal nabla symbol $\nabla$. Using this symbol we can write formally 
\begin{align*}
\mathcal{R}^+F(x)=\mathbf{R}\gp F(x),\,\,\, \mathcal{R}^-F(x)=\mathbf{R}\gm F(x),
\end{align*}

In addition, because of the algebraic decomposition of the action of Clifford multiplication into exterior an interior multiplication we have 
\begin{align*}
\mathbf{R}\gp F(x)=\mathbf{R}\wedge F(x)+\mathbf{R}\ri F(x), \,\,\, \mathbf{R}\gm F(x)=\mathbf{R}\wedge F(x)-\mathbf{R}\ri F(x),
\end{align*}
where 
\begin{align*}
\mathbf{R}\wedge F(x)&=\frac{2}{\sigma_n}\text{p.v.}\int_{\R^n}\frac{x-y}{\vert x-y\vert^{n+1}}\wedge F(y)dy,\\
\mathbf{R}\ri F(x)&=\frac{2}{\sigma_n}\text{p.v.}\int_{\R^n}\frac{x-y}{\vert x-y\vert^{n+1}}\ri F(y)dy.
\end{align*}

\begin{Prop}\label{prop:BFactor}
The Beurling-Ahlfors operator factorises according to 
\begin{align*}
\mathcal{S}(\omega)=-\mathbf{R}\gp(\widehat{\omega} \gp \mathbf{R})=-(\mathbf{R}\gp \widehat{\omega})\gp \mathbf{R}
\end{align*}
for all $\omega\in L^p(\R^n, \Lambda \R^n)$ and $1<p<\infty$.
\end{Prop}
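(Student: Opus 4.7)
The plan is to verify the identity via the Fourier transform, exploiting the explicit multiplier formula for $\mathcal{S}$ from Lemma \ref{lem:BMulti} and the known multipliers for the scalar Riesz transforms. Since all three operators involved are bounded on $L^p(\R^n,\Lambda\R^n)$ for $1<p<\infty$ (the Riesz transforms by classical theory, $\mathcal{S}$ by the estimates \eqref{normB1}, \eqref{normB2}), and $\mathscr{S}(\R^n,\Lambda\R^n)$ is dense in $L^p$, it suffices to prove the identity for $\omega\in\mathscr{S}(\R^n,\Lambda\R^n)$, where the Fourier transform is a bijection.

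First, I compute the Fourier symbol of the formal operator $\mathbf{R}\gp$ acting on the left. Since $\mathcal{R}_{e_j}$ is a scalar Fourier multiplier commuting with the constant Clifford factor $e_j$, for $F\in\mathscr{S}$ we have
\begin{equation*}
\mathbf{R}\gp F(x)=\sum_{j=1}^{n} e_j\gp \mathcal{R}_{e_j}F(x),\qquad \mathcal{F}[\mathbf{R}\gp F](\xi)=\sum_{j=1}^{n}e_j\gp \frac{i\xi_j}{|\xi|}\mathcal{F}(F)(\xi)=\frac{i\xi}{|\xi|}\gp\mathcal{F}(F)(\xi).
\end{equation*}
By the same computation applied on the right,
\begin{equation*}
\mathcal{F}[F\gp\mathbf{R}](\xi)=\mathcal{F}(F)(\xi)\gp\frac{i\xi}{|\xi|}.
\end{equation*}
Because the main involution is a constant-coefficient linear map on $\Lambda\R^n$, it commutes with $\mathcal{F}$, so $\mathcal{F}(\widehat{\omega})(\xi)=\widehat{\mathcal{F}(\omega)(\xi)}$.

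Applying these two symbol formulas successively (first to $\widehat{\omega}$ and then to the resulting Schwartz field), and using associativity of the Clifford product (which handles the equality of the two parenthesisations $\mathbf{R}\gp(\widehat{\omega}\gp\mathbf{R})$ and $(\mathbf{R}\gp\widehat{\omega})\gp\mathbf{R}$ at the level of their Fourier symbols), I obtain
\begin{equation*}
\mathcal{F}\bigl[-(\mathbf{R}\gp\widehat{\omega})\gp\mathbf{R}\bigr](\xi)=-\frac{i\xi}{|\xi|}\gp\widehat{\mathcal{F}(\omega)(\xi)}\gp\frac{i\xi}{|\xi|}.
\end{equation*}
The right-hand side is precisely $\mathcal{F}(\mathcal{S}\omega)(\xi)$ by Lemma \ref{lem:BMulti}, so the claim follows by Fourier inversion for $\omega\in\mathscr{S}$, and then for general $\omega\in L^p$ by density and boundedness.

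The only subtle point is ensuring that the order of composition is tracked correctly when one writes $(\mathbf{R}\gp\widehat{\omega})\gp\mathbf{R}$ as a composition of two singular integral operators acting on the intermediate Schwartz field; one must verify that the left and right Clifford factors $e_j$ and $e_k$ genuinely commute with the scalar operators $\mathcal{R}_{e_k}$ and $\mathcal{R}_{e_j}$ respectively, which reduces to the trivial fact that $\mathcal{R}_\nu$ acts componentwise on $\Lambda\R^n$. Once this is observed the calculation is purely symbolic and the two parenthesisations agree because $\frac{i\xi}{|\xi|}$ is a single vector in $\Lambda^1\R^n$ and Clifford multiplication is associative.
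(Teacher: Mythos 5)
Your proposal is correct and follows essentially the same route as the paper: reduce to Schwartz fields by density and $L^p$-boundedness, compute the Fourier symbols of $\mathbf{R}\gp$ and $\gp\mathbf{R}$ from the scalar Riesz multipliers, apply them successively, and match the result against the multiplier formula for $\mathcal{S}$ in Lemma \ref{lem:BMulti}. The paper packages the two successive multiplier applications as two invocations of Theorem 5 in Stein's book, but the underlying computation is the one you carried out.
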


\begin{proof}
It is enough to establish the factorisation for $\omega\in \mathscr{S}(\R^n, \Lambda \R^n)$. Indeed, since the Schwartz space of multivector fields is dense in $L^{p}(\R^n,\Lambda \R^n)$ for $1<p<\infty$, and $\mathcal{S},\mathbf{R}\gp, \gp \mathbf{R}$ and $\mathcal{R}\gm$ are bounded on $L^{p}(\R^n,\Lambda \R^n)$, the result follows by continuity. The functions $x_\nu/\vert x\vert^{n+1}$ define tempered homogeneous distributions for each $\nu$. Hence, by acting component wise $\mathcal{R}_{\nu}\omega\in \mathscr{S}(\R^n, \Lambda \R^n)$ for every $\omega \in\mathscr{S}(\R^n, \Lambda \R^n)$ and consequently also $\mathbf{R}\gp \omega, \omega\gp \mathbf{R},\mathbf{R}\gm \omega \in \mathscr{S}(\R^n, \Lambda \R^n)$. Setting $\eta(\xi)=\widehat{\mathcal{F}(\omega)(\xi)}\gp i\xi/\vert \xi \vert$, we apply Theorem 5 in \cite[Chap. 3  p.73]{S}
first to $-\frac{i\xi}{\vert \xi \vert}\gp \eta$ and then to $\eta(\xi)$ and the result follows. The other formula follows similarly. 
\end{proof}

\begin{rem}
In the complex case we set $\mathcal{R}_\C=\mathcal{R}_1+i\mathcal{R}_2$. Then we have $\mathcal{S}_{\C}=\mathcal{R}_\C^2$ where 
\begin{align*}
\mathcal{S}_{\C}f(z)=-\frac{1}{\pi}\text{p.v.}\int_{\C}\frac{f(w)}{(z-w)^2}dA(w). 
\end{align*}
is the complex Beurling-Ahlfors transform. In $\R^2$ the even part of $\Lambda \R^2$, i.e. the linear subspace $\Lambda^0\R^2\oplus\Lambda^2\R^2$ is a subalgebra of the euclidean Clifford algebra $\Delta (\R^2)$ isomorphic to $\C$. In particular, Clifford multiplication by $e_1$ define a map $e_1: V\to \Lambda^0\R^2\oplus\Lambda^2\R^2$ where $e_{12}\cong i$. Thus, with this identification we have 
\begin{align*}
\mathcal{R}_\C\cong \gp (e_1\gp \mathcal{R})=(e_1\gp \mathcal{R})\gp 
\end{align*}
and so assuming that $\omega\in L^p(\R^2, \Lambda^0\R^2\oplus\Lambda^2\R^2)$ so $\widehat{\omega}=\omega$ we have 
\begin{align*}
\mathcal{S}_\C\omega&=(\mathcal{R}_\C\circ \mathcal{R}_\C)(\omega)=(e_1\gp \mathcal{R})\gp (\omega \gp (e_1\gp \mathcal{R}))\\
&=e_1\gp (\mathcal{R}\gp (\omega \gp e_1)\gp \mathcal{R}))\\
&=e_1\gp  (\mathcal{R}\gp (\omega \gp e_1)\gp \mathcal{R})\\
&=e_1\gp  (\mathcal{R}\gp (\widehat{\widehat{\omega \gp e_1}})\gp \mathcal{R})\\
&=e_1\mathcal{S}(\widehat{\omega \gp e_1}). 
\end{align*}
\end{rem}

In \cite{IM1} Theorem 8.12 an integral representation formula for the Beurling transform is given. Using Clifford algebra we can avoid the use of matrices and their minors altogether and present this formula in the more compact form as 
\begin{align*}
\BB F(x)&=\sum_{l=0}^n\frac{n-2l}{n}\langle F(x)\rangle_l\nonumber\\&+\frac{1}{\sigma_{n-1}}\text{p.v.}\int_{\R^n}\frac{(x-y)\gp F(y)\gp (x-y)^{-1}-\sum_{l=0}^nn^{-1}(2l-n)\langle F(y)\rangle_l}{\vert x-y\vert^n}dy
\end{align*}
for $F\in L^p(\R^n,\Lambda)$.
Using the linear operator $\mathbf{B}$ on multivectors defined by
\begin{align*}
\mathbf{B}w=\sum_{j=1}^ne_j\gp (e_j \gm w)=\sum_{j=1}^ne_j\gp \widehat{w} \gp e_j, 
\end{align*}
for any $w\in \Lambda \R^n$ we can express the Beurling-Ahlfors transform even more compactly as 
\begin{align}\label{eq:BInt}
\BB F(x)&=\mathbf{B}F(x)+\frac{1}{\sigma_{n-1}}\text{p.v.}\int_{\R^n}\frac{(x-y)\gp F(y)\gp (x-y)^{-1}-\mathbf{B}F(y)}{\vert x-y\vert^n}dy
\end{align}

\begin{rem}
The property of $\mathbf{B}$ used in the representation formula \eqref{eq:BInt} is that $\mathbf{B}w=(n-2k)w$ whenever $w\in \Lambda^k\R^n$. Moreover, $\mathbf{B}$ is related to the \emph{skew Euler operator} $\mathbf{E}\in \LL(\Lambda \R^n)$ given by 
\begin{align*}
\mathbf{E}w=\sum_{j=1}^ne_j\wedge(e_j\ri w),
\end{align*}
through the identity 
\begin{align*}
\mathbf{B}=2\mathbf{E}+nI. 
\end{align*}
By Theorem 5.5.2 in \cite{GW} $\mathbf{E}$ is the generator of 
\begin{align*}
\text{End}_{GL(\R^n)}(\Lambda \R^n)=\{L\in \LL(\Lambda \R^n): [L,\widehat{T}]=0\, \text{ for all $T\in \text{GL}(\R^n)$}\}.
\end{align*}
In addition, by Lemma 5.5.1 in \cite{GW} 
\begin{align*}
[\mathbf{E},T]=pT
\end{align*}
for all $T\in \LL(\Lambda \R^n)$ such that $T: \Lambda^{k}\R^n\to \Lambda^{k+p}\R^n$ for all $k$.
\end{rem}

\subsection{\sffamily Connection to the Cauchy Transforms on $\R^n$.}

We now come to the important connection between the Cauchy transforms and the Beurling--Ahlfors transform on $\R^n$.

\begin{Thm}\label{thm:DiracBA}
For any $\Phi\in L^2(\R^n,\Lambda)$, the weak derivatives satisfies 
\begin{align}
 \Dm \Cc^+\Phi=\BB\Phi,\,\,\, \Dp \Cc^-\Phi=\BB\Phi.
\end{align}
\end{Thm}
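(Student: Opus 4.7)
The plan is to combine the operator identities $\BB\Dp = \Dm$ and $\BB\Dm = \Dp$ with the right-inverse properties $\Dp\Cc^+ = I$ and $\Dm\Cc^- = I$ established in the preceding theorem, which yields both formulas at once.

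To verify $\BB\Dp = \Dm$, I first argue algebraically: since $d^2 = \delta^2 = 0$, the cross terms in $(d\delta - \delta d)(d + \delta) = d\delta d - \delta d\delta$ vanish, and substituting $d\delta d = \Delta d$ and $\delta d\delta = \Delta\delta$ (both consequences of $d\delta + \delta d = \Delta$) gives $(d\delta - \delta d)\Dp = \Delta(d - \delta) = \Delta \Dm$. Composing with $\Delta^{-1}$ on the right, and using that $\Delta$ commutes with $\Dm$, yields $\BB\Dp = \Dm$; the symmetric argument with $\Dm$ in place of $\Dp$ gives $\BB\Dm = \Dp$. Consequently, for $\Phi \in C^\infty_0(\R^n,\Lambda)$,
\begin{align*}
\Dm\Cc^+\Phi = (\BB\Dp)\Cc^+\Phi = \BB(\Dp\Cc^+\Phi) = \BB\Phi, \qquad \Dp\Cc^-\Phi = (\BB\Dm)\Cc^-\Phi = \BB\Phi,
\end{align*}
and the extension to arbitrary $\Phi \in L^2(\R^n,\Lambda)$ follows by density, using boundedness of $\BB$ on $L^2$ from \eqref{normB1} together with continuity of weak differentiation.

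The main obstacle is to make the manipulation of $\Delta^{-1}$ rigorous, since $\Cc^+\Phi$ lies in $L^{2^\ast}$ by Theorem \ref{thm:DiracC} rather than in $L^2$, and the unbounded operators $\Delta, \Delta^{-1}$ do not compose routinely on this space. The cleanest route is to verify $\BB\Dp = \Dm$ at the level of Fourier multipliers on $\mathscr{S}'(\R^n,\Lambda)$: one computes from the definitions that $\sigma_\BB(\xi)w = |\xi|^{-2}(\xi\gm)(\xi\gp w)$ and $\sigma_{\Dp}(\xi)w = i\xi\gp w$, so that
\begin{align*}
\sigma_\BB(\xi)\sigma_{\Dp}(\xi)w = |\xi|^{-2}(\xi\gm)\bigl(\xi\gp(i\xi\gp w)\bigr) = i\xi\gm w = \sigma_{\Dm}(\xi)w,
\end{align*}
using associativity of Clifford multiplication and the identity $\xi\gp(\xi\gp w) = |\xi|^2 w$. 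Since the symbols agree on $\R^n\setminus\{0\}$, the operator identity holds distributionally; the analogous multiplier calculation gives $\sigma_\BB\sigma_{\Dm} = \sigma_{\Dp}$, completing the proof.
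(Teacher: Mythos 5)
Your proposal is correct and follows the same route as the paper: the paper's proof simply combines the identity $\BB\circ\Dp=\Dm$ (which it treats as immediate from the definition of $\BB$ via the Hodge decomposition $\Dp F=dF+\delta F\mapsto dF-\delta F=\Dm F$) with the right-inverse property $\Dp\Cc^+=I$ from the preceding theorem. Your additional symbol computation $\sigma_\BB(\xi)\sigma_{\Dp}(\xi)=\sigma_{\Dm}(\xi)$ is a correct and welcome rigorization of the step the paper leaves implicit, but it does not change the argument's structure.
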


\begin{proof}
By definition of the Beurling--Ahlfors transform, if $\Dp F(x)=\Phi(x)$, then $\BB\Phi(x)=\Dm F(x)$. By Theorem \ref{thm:DiracC}, $F(x)=\Cc^+\Phi(x)$, and so $\Dm F(x)=\Dm \Cc^+\Phi(x)$. This proves the first identity. The second follow similarly. 
\end{proof}


\section{\sffamily Hodge-Dirac Equation with Normal and Tangential Boundary Conditions}

\begin{Def}[Hardy spaces]
Let $\Om$ be a bounded Lipschitz domain. The \emph{Hardy spaces} $\mathbb{H}_\pm^2(\Om)$ are defined by
\begin{align*}
\mathbb{H}_\pm^2(\Om):=\{F\in C^\infty(\Om,\Lambda): \mathcal{D}^\pm F(x)=0,\,\, F^\ast\in L^2(\dv \Om)\}
\end{align*}
endowed with the norm $\Vert F\Vert_{\mathbb{H}_\pm^2(\Om)}:=\Vert F^\ast\Vert_{L^2(\dv \Om)}$. Here $F^\ast$ denotes the nontangential maximal function. 
\end{Def}

It is known, e.g. \cite{GM}, that the pointwise nontangetial trace of any $F\in \mathbb{H}_\pm^2(\Om)$ exists for $\mathscr{H}^{n-1}$-a.e. on $\dv \Om$ and that there exists positive constants $c\leq C$ independent of $F$ such that 
\begin{align*}
c\Vert F\Vert_{\mathbb{H}_\pm^2(\Om)} \leq  \Vert F\Vert_{\dv \Om}\Vert_{L^2(\dv \Om,\Lambda)}\leq C\Vert F\Vert_{\mathbb{H}_\pm^2(\Om)}.
\end{align*}

In addition the Cauchy integral theorem 
\begin{align*}
\int_{\dv \Om}\overline{F(x)}\gp \nu(x)\gp G(x)d\sigma(x)=0
\end{align*}
hold for every $F,G\in \mathbb{H}_+^2(\Om)$ and analogously for $\mathbb{H}_-^2(\Om)$. Finally, Proposition 4.2 in \cite{Mar}, see also \cite{IMS}, implies that the following norms are all equivalent
\begin{align}\label{eq:EquivNorm}
\Vert F^\ast\Vert_{L^2(\dv \Om)}\approx \Vert F_T\Vert_{L^2(\dv \Om,\Lambda)}\approx \Vert F_N\Vert_{L^2(\dv \Om,\Lambda)}. 
\end{align}

\begin{Thm}[Theorem 1.7 in \cite{MMMT}]
\label{thm:HDT1}
The boundary value problem
\begin{align}\label{eq:HDT1}
\left\{
    \begin{array}{ll}
   \Dp F(x)=0 & x\in \Om,\\
       \nu(x)\wedge F(x)=f(x), & x\in \dv \Om.  
      \end{array} \right.
\end{align}
with $f\in L_N^2(\dv \Om,\Lambda)$ is solvable if and only if $f\in \{\mathcal{H}_T( \Om)\vert_{\dv \Om})\}^{\perp}=\{\omega\vert_{\dv \Om}:\omega\in \mathcal{H}_T(\dv \Om)\}^{\perp}$. 
\end{Thm}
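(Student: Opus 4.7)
The plan is to split the proof into the necessity of the orthogonality condition (a short integration by parts) and its sufficiency (a closed-range argument on the boundary).

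For necessity, I would take an arbitrary $\omega\in\mathcal{H}_T(\Om)$ and apply the Clifford-valued Green's identity \eqref{partIntDirac} with $G=\omega$. Since $\Dp F=0$ and $\Dp\omega=d\omega+\delta\omega=0$, both volume terms vanish, leaving $\int_{\dv\Om}\overline{\omega}\gp\nu\gp F\,d\sigma=0$. Projecting this Clifford-valued identity onto grade zero and using $\langle u,w\rangle=\langle\overline{u}\gp w\rangle_0$ converts it into $\int_{\dv\Om}\langle\nu\gp F,\omega\rangle\,d\sigma=0$. Expanding $\nu\gp F=\nu\ri F+\nu\wedge F$ and using the adjointness of $\nu\wedge$ and $\nu\ri$, one sees $\langle\nu\ri F,\omega\rangle=\langle F,\nu\wedge\omega\rangle=0$, since $\omega_T=0$ forces $\omega|_{\dv\Om}=\nu\wedge h$ and hence $\nu\wedge\omega=0$. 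The identity then reduces to $\int_{\dv\Om}\langle f,\omega\rangle\,d\sigma=0$, which is precisely $f\perp\mathcal{H}_T(\Om)|_{\dv\Om}$.

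For sufficiency, the Sokhotski--Plemelj formula identifies $\mathbb{H}^2_+(\Om)$ with $H^2_+(\dv\Om)$ via nontangential limits, so the BVP is equivalent to solving $Tg=f$, where $T:H^2_+(\dv\Om)\to L^2_N(\dv\Om,\Lambda)$ is defined by $T(g):=\nu\wedge g$. By the lemma characterising $\mathcal{H}_T(\Om)$ as the space of $\Dp$-monogenic fields with zero tangential trace, $\ker T=H^2_+(\dv\Om)\cap L^2_N(\dv\Om,\Lambda)=\mathcal{H}_T(\Om)|_{\dv\Om}$, which is finite-dimensional by de Rham--Hodge. Applying the trace-norm equivalence \eqref{eq:EquivNorm} on the orthogonal complement of this kernel gives $\|g\|_{L^2(\dv\Om)}\lesssim\|g_T\|_{L^2(\dv\Om)}=\|\nu\wedge g\|_{L^2(\dv\Om)}$, so $T$ has closed range, and the Banach closed-range theorem yields $\text{range}(T)=(\ker T^\ast)^\perp$ in $L^2_N(\dv\Om,\Lambda)$.

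The hard part will be identifying $\ker T^\ast$ with $\mathcal{H}_T(\Om)|_{\dv\Om}$; the inclusion $\supset$ follows from the necessity direction. For the converse, $\phi\in\ker T^\ast$ means $\nu\ri\phi\perp H^2_+(\dv\Om)$, equivalently $\nu\ri\phi\in H^2_-(\dv\Om)$ under the Hardy decomposition. Solving $\Dp G=0$ on $\R^n\setminus\overline{\Om}$ with this datum via the exterior Cauchy integral and comparing Plemelj jumps exhibits $\phi$ as the boundary trace of a tangential harmonic field on $\Om$. The $L^2$-boundedness of the Cauchy transform on Lipschitz boundaries from \cite{CMM} and the de Rham--Hodge isomorphism $\mathcal{H}_T^k(\Om)\cong\mathbf{H}^k_{rel}(\Om)$ are the key technical inputs; the detailed verification on Lipschitz $\dv\Om$ is the content of \cite{MMMT}, Theorem 1.7.
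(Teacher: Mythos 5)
The paper does not prove this statement at all: it is quoted verbatim as Theorem 1.7 of \cite{MMMT} and used as a black box, so there is no internal proof to compare against. Judged on its own terms, your necessity argument is complete and uses exactly the machinery the paper has already set up: the Cauchy integral theorem for $\mathbb{H}^2_+(\Om)$ (or equivalently \eqref{partIntDirac} with both volume terms vanishing), the identity $\langle u,w\rangle=\langle\overline{u}\gp w\rangle_0$, the adjointness of $\nu\wedge$ and $\nu\ri$, and the fact that $\omega_T=0$ forces $\nu\wedge\omega=0$ on $\dv\Om$. That half I would accept as written.

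The sufficiency half is a reasonable closed-range outline, but two steps deserve flagging. First, the identification $\ker T=\mathcal{H}_T(\Om)\vert_{\dv\Om}$ quietly upgrades a Hardy-class monogenic field with vanishing tangential trace to an element of $\mathcal{H}_T(\Om)$, which the paper defines as a subspace of $C^\infty(\overline{\Om},\Lambda\R^n)$; on a Lipschitz domain this boundary regularity is itself a theorem, not a definition. Second, and more seriously, the step ``$\nu\ri\phi\perp H^2_+(\dv\Om)$ hence $\nu\ri\phi\in H^2_-(\dv\Om)$'' uses the Hardy splitting $L^2(\dv\Om,\Lambda)=H^2_+\oplus H^2_-$ as if it were an \emph{orthogonal} decomposition. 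On a Lipschitz boundary it is only a topological direct sum realized by the non-self-adjoint Plemelj projections $\tfrac12(I\pm\EE^+)$, so the annihilator of $H^2_+$ is $\ker\bigl(\tfrac12(I+\EE^+)^\ast\bigr)$, not $H^2_-$. Identifying that annihilator with traces of tangential harmonic fields is precisely the hard content of \cite{MMMT}, which you acknowledge but do not supply. So your write-up is best read as: necessity proved, sufficiency reduced by a correct functional-analytic frame to the duality statement that the cited reference establishes.
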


\begin{Thm}[Theorem 1.7 in \cite{MMMT}]
The boundary value problem
\begin{align}\label{eq:HDT2}
\left\{
    \begin{array}{ll}
   \Dp F(x)=0 & x\in \Om,\\
       \nu(x)\ri F(x)=f(x), & x\in \dv \Om.  
      \end{array} \right.
\end{align}
with $f\in L_T^2(\dv \Om,\Lambda)$ is solvable if and only if $f\in \{\mathcal{H}_N( \Om)\vert_{\dv \Om})\}^{\perp}=\{\omega\vert_{\dv \Om}:\omega\in \mathcal{H}_N(\dv \Om)\}^{\perp}$. 
\end{Thm}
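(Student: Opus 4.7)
My plan is to handle the two implications separately: the necessity will follow from a direct Green's identity argument, while the sufficiency will reduce to Theorem \ref{thm:HDT1} via the Hodge-star duality.

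For the necessity, I will begin by applying the Clifford integration by parts formula \eqref{partIntDirac} to a solution $F$ and a test field $\omega \in \mathcal{H}_N(\Om)$. Since $\Dp F = 0$ in $\Om$ by assumption, and $\Dp \omega = 0$ follows from $\omega$ being a harmonic field with $\omega_N = 0$, both volume integrals vanish, and the scalar part of the boundary integral yields
\begin{align*}
0 = \int_{\dv\Om}\langle \omega(y),\nu(y)\gp F(y)\rangle\,d\sigma(y).
\end{align*}
Decomposing $\nu\gp F = \nu\wedge F_T + \nu\ri F$ via the Riesz identity \eqref{eq:RId1}, and using $\omega|_{\dv\Om} = \omega_T$ so that $\nu\ri\omega = 0$ (which is a consequence of $\nu\ri\nu\ri = 0$), the adjoint relation $\langle\omega,\nu\wedge F_T\rangle = \langle\nu\ri\omega, F_T\rangle$ kills the first piece pointwise, leaving $\int_{\dv\Om}\langle\omega,f\rangle\,d\sigma = 0$. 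Hence $f\perp \mathcal{H}_N(\Om)|_{\dv\Om}$.

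For the sufficiency, I will invoke the Hodge-star bijection $F \mapsto G := \widehat{F}\star$. By Lemma \ref{lem:StarD}, $\Dp F = 0$ in $\Om$ if and only if $\Dm G = 0$ in $\Om$, with $F$ recovered from $G$ via $F = G\star$. A direct computation using the commutation relation $\nu\wedge\widehat{F} = F\wedge\nu$ together with the identity $(\Theta_1\wedge\Theta_2)\star = \Theta_2\ri(\Theta_1\star)$ will show that right-starring interchanges $\nu\wedge$ and $\nu\ri$ at each boundary point (up to signs and involutions). Accordingly, the normal-type boundary condition $\nu\ri F = f$ translates into a tangential-type condition $\nu\wedge G = \widetilde f$ with some $\widetilde f\in L^2_N(\dv\Om,\Lambda)$ depending linearly and boundedly on $f$. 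Since $\star\mathcal{H}_T^k(\Om) = \mathcal{H}_N^{n-k}(\Om)$, the orthogonality $f\perp\mathcal{H}_N|_{\dv\Om}$ transfers to $\widetilde f\perp\mathcal{H}_T|_{\dv\Om}$. The analogue of Theorem \ref{thm:HDT1} for the operator $\Dm$ with tangential boundary condition---which follows from the same Hardy-space and layer-potential arguments of \cite{MMMT} since $\Dm$ is also elliptic of Dirac type and enjoys the same non-tangential trace theory---then produces $G$, and setting $F := G\star$ yields the required solution of \eqref{eq:HDT2}.

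The main obstacle is making the Hodge-star transfer of boundary conditions completely precise at the level of $L^2$ traces. One must verify that the composition $\widehat{\cdot}\circ\star$ intertwines $L^2_T(\dv\Om,\Lambda)$ and $L^2_N(\dv\Om,\Lambda)$ isomorphically, that it commutes with the tangential and normal trace maps in the appropriate sense, and that the non-tangential maximal function of $G$ is controlled by that of $F$, which is ensured by the norm equivalences \eqref{eq:EquivNorm}. These are routine Clifford-algebraic bookkeeping matters, but carefully tracking the signs and grade shifts across the three operations $\widehat{\cdot}$, $\star$, and $\nu\wedge/\nu\ri$ is the only non-trivial step in the reduction.
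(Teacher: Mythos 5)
You should first be aware that the paper does not prove this statement: both it and Theorem \ref{thm:HDT1} are imported verbatim from Theorem 1.7 of \cite{MMMT}, so your argument is not being measured against an internal proof but is instead a derivation of one citation from the other. Your necessity direction is correct. Applying \eqref{partIntDirac} with $G=\omega\in\mathcal{H}_N(\Om)$ (so $\Dp\omega=0$), taking scalar parts via $\langle w_1,w_2\rangle=\langle \overline{w_1}\gp w_2\rangle_0$, splitting $\nu\gp F=\nu\ri F+\nu\wedge F$, and using $\langle\omega,\nu\wedge F\rangle=\langle\nu\ri\omega,F\rangle=0$ indeed yields $\int_{\dv\Om}\langle\omega,f\rangle\,d\sigma=0$. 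The one point to make explicit is that \eqref{partIntDirac} is stated for $C^1(\overline{\Om},\Lambda)$ fields, whereas a solution of \eqref{eq:HDT2} is a priori only a Hardy-space field with $L^2$ nontangential maximal function on a Lipschitz boundary; the identity must be justified by integrating over interior approximating domains and passing to the limit using nontangential convergence of the traces.

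The sufficiency direction is structurally sound, but your choice of transform introduces an avoidable dependency. Sending $F\mapsto G=\widehat{F}\star$ converts $\Dp F=0$ into $\Dm G=0$ (Lemma \ref{lem:StarD}), so you need the $\Dm$-analogue of Theorem \ref{thm:HDT1}, which is not literally the cited result; and applying the star duality a second time to remove the $\Dm$ just returns you to \eqref{eq:HDT2}, so your appeal to ``the same layer-potential arguments'' is really a second citation of \cite{MMMT} rather than a reduction. A cleaner transform is right Clifford multiplication by the volume element, $G:=F\gp e_V$. Since $e_V$ is constant, $\Dp G=(\Dp F)\gp e_V=0$, so the equation is preserved rather than converted to $\Dm$. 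Comparing grades in $\nu\gp G=(\nu\ri F)\gp e_V+(\nu\wedge F)\gp e_V$ gives $\nu\wedge G=(\nu\ri F)\gp e_V=f\gp e_V$ and $\nu\ri G=(\nu\wedge F)\gp e_V$, and the same grade count applied to $f$ shows $\nu\wedge(f\gp e_V)=(\nu\ri f)\gp e_V=0$, i.e. $f\gp e_V\in L^2_N(\dv\Om,\Lambda)$. Moreover $w\mapsto w\gp e_V$ is a pointwise isometry (Lemma \ref{lem:Norm} together with $e_V\gp\overline{e_V}=1$) that carries $\mathcal{H}_N(\Om)$ onto $\mathcal{H}_T(\Om)$, since it commutes with $\Dp$ and exchanges the vanishing of normal and tangential traces; hence $f\perp\mathcal{H}_N(\Om)\vert_{\dv\Om}$ transfers exactly to $f\gp e_V\perp\mathcal{H}_T(\Om)\vert_{\dv\Om}$, and Theorem \ref{thm:HDT1} applies as stated. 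Setting $F:=G\gp e_V^{-1}$ then closes the argument with no new input. With that substitution your proof is complete; as written, the $\Dm$-analogue remains an unproved (though standard) ingredient.
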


Since $\nu(x)\wedge F(x)=\nu(x)\wedge F_T(x)$ and $\nu(x)\ri F(x)=\nu(x)\wedge F_N(x)$ and so 
\begin{align*}
F_T(x)=\nu(x)\ri (\nu(x)\wedge F_T(x))=\nu(x)\ri f(x)\in L_T^2(\dv \Om,\Lambda),
\end{align*}
and 
\begin{align*}
F_N(x)=\nu(x)\wedge  (\nu(x)\ri F_N(x))=\nu(x)\wedge f(x)\in  L_N^2(\dv \Om,\Lambda).
\end{align*}
 
Therefore, the solvability condition that $f \in\{\mathcal{H}_T( \Om)\vert_{\dv \Om})\}^{\perp}$ is equivalent to \newline$F_T\in \nu(x)\ri \{\mathcal{H}_T( \Om)\vert_{\dv \Om})\}^{\perp}$ and the solvability condition that $f \in\{\mathcal{H}_N( \Om)\vert_{\dv \Om})\}^{\perp}$ is equivalent to $F_N\in \nu(x)\wedge \{\mathcal{H}_N( \Om)\vert_{\dv \Om})\}^{\perp}$.


It is clear that any solution of \eqref{eq:HDT1} is unique only up to addition of a field $h$ that solves 
\begin{align*}
\left\{
    \begin{array}{ll}
   \Dp h(x)=0 & x\in \Om,\\
      h_T(x)=0, & x\in \dv \Om, 
      \end{array} \right.
\end{align*}
i.e. $h\in \mathcal{H}_T(\Om)$. Similarly, 
any solution of \eqref{eq:HDT2} is unique only up to addition of a field $h$ that solves 
\begin{align*}
\left\{
    \begin{array}{ll}
   \Dp h(x)=0 & x\in \Om,\\
      h_N(x)=0, & x\in \dv \Om, 
      \end{array} \right.
\end{align*}
 We may get a unique solution if we impose the period conditions
 \begin{align*}
 \int_{\dv \Om}\langle F_N(x),h(x)\rangle d\sigma(x)=\wp_h^T 
 \end{align*}
for every $h\in \mathcal{H}_T(\dv \Om)$ and any choice of real numbers $\wp_h^T$ in \eqref{eq:HDT1}, and a similar if we impose the period conditions
\begin{align*}
 \int_{\dv \Om}\langle F_T(x),h(x)\rangle d\sigma(x)=\wp_h^N 
 \end{align*}
for every $h\in \mathcal{H}_N(\dv \Om)$ and any choice of real numbers $\wp_h^T$ in \eqref{eq:HDT2}.  Among these choices there are the distinguished choices of requiring either $F_N\perp \mathcal{H}_T(\dv \Om)$ or $F_T\perp \mathcal{H}_N(\dv \Om)$. Moreover in the case when $\Om  \cong B_1^n(0)=\{x\in \R^n: \vert x\vert<1\}$, then 
\begin{align*}
\mathcal{H}_T(\Om)=\{ce_V: c\in \R\}.
\end{align*}
Hence the condition that $F_N\perp \mathcal{H}_T(\dv \Om)$ is equivalent to requiring that $\int_{\dv \Om}\langle F_N(x),e_V\rangle d\sigma(x)=\int_{\dv \Om}\langle F(x),e_V\rangle d\sigma(x)=0$. 

Note that if $\Om$ is a $C^2$-domain and either $F_T\in W^{1/2,2}(\dv \Om,\Lambda)$ or $F_N\in W^{1/2,1}(\dv \Om,\Lambda)$, then the corresponding boundary value is \emph{D-elliptic} in the sense of Definition 3.7 in \cite{BB}. Moreover by Theorem 7.17 in \cite{BB2}, any solution $F$ with D-elliptic boundary conditions satisfy $F\in W^{1,2}(\Om,\Lambda)$. Thus in particular if $F\in \mathbb{H}_\pm^2(\Om)$ then $\mathcal{D}^\pm F\in L^2(\Om)$.

\subsection{\sffamily Tangential to Normal Hilbert Transform.}

\begin{Def}\label{def:HTan}
Let $f\in L_T^2(\dv \Om,\Lambda)$ and let $F$ be the unique solution to the half-Dirichlet boundary value problem 
\begin{align}\label{eq:BVu1}
\left\{
    \begin{array}{ll}
   \Dp F(x)=0 & x\in \Om,\\
     F_T=f   & x\in \dv\Om,\\
     f\in \nu(x)\ri \{\mathcal{H}_T( \Om)\vert_{\dv \Om})\}^{\perp}&x \in \dv\Om,\\
     F_N\perp \mathcal{H}_T(\dv \Om)& x\in \dv\Om.
      \end{array} \right.
\end{align}
The \emph{tangential to normal Hilbert transform} is (by \ref{eq:EquivNorm}) the bounded linear map $\mathcal{H}_{TN}^+: L_T^2(\dv \Om,\Lambda)\to  L_N^2(\dv \Om,\Lambda)$ which gives the normal trace of the unique solution $F$, i.e.,
\begin{align*}
\mathcal{H}_{TN}^+(f)=F_N. 
\end{align*}
Similarly, let $f\in L_N^2(\dv \Om,\Lambda)$ and let $F$ be the unique solution to the half-Dirichlet boundary value problem . 
\begin{align}\label{eq:BVu2}
\left\{
    \begin{array}{ll}
   \Dp F(x)=0 & x\in \Om,\\
       F_N=f  & x\in \dv\Om,\\
       f\in \nu(x)\wedge \{\mathcal{H}_N( \Om)\vert_{\dv \Om})\}^{\perp}& x \in \dv\Om,\\
     F_T\perp \mathcal{H}_N(\dv \Om)& x\in \dv\Om.
      \end{array} \right.
\end{align}
The \emph{normal to tangential Hilbert transform} is (by \ref{eq:EquivNorm}) the bounded linear map $\mathcal{H}_{NT}^+: L_N^2(\dv \Om,\Lambda)\to  L_T^2(\dv \Om,\Lambda)$ which gives the tangential trace of the unique solution $F$, i.e.,
\begin{align*}
\mathcal{H}_{TN}^+(f)=F_T. 
\end{align*}
The tangential to normal and normal to tangential Hilbert transforms $\mathcal{H}_{TN}^-$ and $\mathcal{H}_{NT}^-$ with respect to the Hodge-Dirac operator $\Dm$ are defined analogously.  
\end{Def}

\begin{rem}
By Theorem 7.17 in \cite{BB2}, $\mathcal{H}_{TN}^\pm: W^{1/2,2}_T(\dv \Om,\Lambda)\to W^{1/2,2}_N(\dv \Om,\Lambda)$ and $\mathcal{H}_{NT}^\pm: W^{1/2,2}_N(\dv \Om,\Lambda)\to W^{1/2,2}_T(\dv \Om,\Lambda)$ boundedly. 
\end{rem}

It follows from the equivalence of norms \eqref{eq:EquivNorm} that all the operators $\mathcal{H}_{NT}^\pm$ and $\mathcal{H}_{TN}^\pm$ are bounded.

We note that if $\Om$ is simply connected, then the condition that $F_N\perp \mathcal{H}_T(\dv \Om)$ is automatically satisfied if $F: \Om \to \oplus_{k=0}^{n-1}\Lambda \R^k$ and similarly $F_T\perp \mathcal{H}_T(\dv \Om)$ is automatically satisfied if $F: \Om \to \oplus_{k=1}^{n}\Lambda \R^k$.

Using the operator $\mathcal{H}_{TN}^+(f)$ and the Cauchy integral theorem we can express the solution of \eqref{eq:BVu1} according to 
\begin{align*}
F(x)=\int_{\dv \Om}E(y-x)\gp \nu(y)(f(y)+\mathcal{H}_{TN}^+(f)(y))d\sigma(y)
\end{align*}

In view of the discussion in Subsection \ref{sec:Cauchy}, solvability \eqref{eq:HDT1} is equivalent to the requirement that $\gamma F=F_T+F_N=f+F_N\in H^2_+(\dv \Om)$. This implies that any such $g=F_N$ is a solution of the singular integral equation 
\begin{align*}
(I-\EE^{+})(g)(x)=-(I-\EE^{+})(f)(x)
\end{align*}

In particular $\mathcal{H}_{TN}^+$ is equals $(I-\EE^{+})^{-1}\circ (-I+\EE^{+})$ on $L_N^2(\dv \Om,\Lambda)\setminus \mathcal{H}_T(\dv \Om)$. In particular it is an open problem whether we can express 
\begin{align*}
\mathcal{H}_{TN}^+=\sum_{l=0}^\infty(\EE^{+})^l(-I+\EE^{+})
\end{align*}
as a Neumann series when $\Om$ is a general simply connected Lipschitz domain. For more on singular boundary equations we refer the reader to \cite{FJR,McL,HW,HMOMPET}. 

\subsection{\sffamily Inhomogenous Hodge--Dirac Equation}

We now consider the inhomogenous Hodge--Dirac equation 
\begin{align}\label{eq:HDT3}
\left\{
    \begin{array}{ll}
   \Dp F(x)=G(x) & x\in \Om,\\
        F_T(x)=f(x) & x\in \dv \Om,  
      \end{array} \right.
\end{align}
on a bounded Lipschitz domain $\Om$ where $G\in L^2(\Om,\Lambda)$. Using the Cauchy transform $\Cc^+$, we have  $\Dp \Cc^+(\chi_\Om G)=\chi_\Om G$. Hence we define 
the new field 
\begin{align*}
H(x):=F(x)-\Cc^+(\chi_\Om G)(x).
\end{align*}
Then $\Dp H(x)=\Dp F(x)-\Dp\Cc^+(\chi_\Om G)(x)=0$ for $x\in \Om$. Hence $F$ solves \eqref{eq:HDT3} if and only if $H$ solves the homogeneous Hodge--Dirac equation 
\begin{align}\label{eq:HDT4}
\left\{
    \begin{array}{ll}
   \Dp H(x)=0 & x\in \Om,\\
        H_T(x)=f(x)-(\Cc^+\chi_\Om G)_T(x) & x\in \dv \Om.  
      \end{array} \right.
\end{align}

Using Theorem \ref{thm:HDT1}, we have proven:

\begin{Thm}\label{thm:InHDirac}
Let $\Om$ be a bounded Lipschitz domain $\Om$ and let $G\in L^2(\Om,\Lambda)$. 
\begin{align*}
\left\{
    \begin{array}{ll}
   \Dp F(x)=G(x) & x\in \Om,\\
        F_T(x)=f(x) \in L^2(\dv \Om)& x\in \dv \Om,  \\
        F_N-\gamma_N\Cc^+(\chi_\Om G)\perp \mathcal{H}_T(\Om)
      \end{array} \right.
\end{align*}
is uniquely solvable for $F\in W_{d,\delta}^{1,2}(\Om,\Lambda \R^n)$ if and only if 
\begin{align*}
f-(\Cc^+\chi_\Om G)_T\in \nu(x)\ri \{\mathcal{H}_T(\Om)\vert_{\dv \Om}\}^\perp. 
\end{align*}

\end{Thm}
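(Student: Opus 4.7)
The proof plan follows the reduction strategy already sketched in the paragraph immediately preceding the statement: we split off a particular solution produced by the Cauchy transform and reduce to the homogeneous tangential boundary value problem for $\Dp$.

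First I would define $H(x) := F(x) - \Cc^+(\chi_\Om G)(x)$. By Theorem \ref{thm:DiracC} (or rather the identity $\Dp \Cc^+ G = G$ proven in the section on the Cauchy transform), together with the fact that $\Cc^+(\chi_\Om G) \in L^2_{\mathrm{loc}}(\R^n,\Lambda)$ with $\Dp \Cc^+(\chi_\Om G) = \chi_\Om G$ in $\mathscr{D}'(\R^n,\Lambda)$, the restriction $\Cc^+(\chi_\Om G)|_\Om$ lies in $W^{1,2}_{d,\delta}(\Om,\Lambda)$. Hence $F\in W^{1,2}_{d,\delta}(\Om,\Lambda)$ solves the inhomogeneous problem if and only if $H\in W^{1,2}_{d,\delta}(\Om,\Lambda)$ solves
\begin{align*}
\left\{
\begin{array}{ll}
\Dp H(x) = 0 & x\in \Om,\\
H_T(x) = f(x) - (\Cc^+(\chi_\Om G))_T(x) & x \in \dv \Om.
\end{array}\right.
\end{align*}
Moreover the uniqueness side-condition transforms into $H_N \perp \mathcal{H}_T(\Om)$, since $F_N - \gamma_N \Cc^+(\chi_\Om G) = H_N$ by linearity of the normal trace.

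Next I would apply Theorem \ref{thm:HDT1} to this reduced homogeneous problem. That theorem gives existence in $W^{1,2}_{d,\delta}(\Om,\Lambda)$ if and only if
\begin{align*}
f - (\Cc^+(\chi_\Om G))_T \in \nu(x)\ri\{\mathcal{H}_T(\Om)|_{\dv \Om}\}^\perp,
\end{align*}
which is exactly the stated compatibility condition (recall that the equivalent formulation in the earlier discussion was $\nu \wedge (H_T) = \nu \wedge f - \nu \wedge(\Cc^+\chi_\Om G)_T \in \{\mathcal{H}_T(\Om)|_{\dv \Om}\}^\perp$). The non-uniqueness in that theorem is precisely addition of a field in $\mathcal{H}_T(\Om)$, and the orthogonality condition $H_N \perp \mathcal{H}_T(\Om)$ singles out a unique representative: given any two solutions $H_1, H_2$, their difference $h = H_1 - H_2 \in \mathcal{H}_T(\Om)$ satisfies $h_T = 0$ and, by the orthogonality, $\langle h_N, h\rangle_{L^2(\dv\Om)} = 0$; combined with the equivalence of norms \eqref{eq:EquivNorm} this forces $h = 0$.

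The only thing genuinely to check beyond the above bookkeeping is that $\Cc^+(\chi_\Om G)|_\Om$ lies in $W^{1,2}_{d,\delta}(\Om,\Lambda)$; this is the mild technical obstacle since $\Om$ is only Lipschitz and we do not have $W^{1,2}$-regularity of the Cauchy transform up to the boundary in general. But $\chi_\Om G \in L^2(\R^n,\Lambda)$, so $\Cc^+(\chi_\Om G) \in L^2_{\mathrm{loc}}(\R^n,\Lambda)$ by Theorem \ref{thm:CompactC}, and $\Dp \Cc^+(\chi_\Om G) = \chi_\Om G \in L^2$ globally, while $\Dm \Cc^+(\chi_\Om G) = \BB(\chi_\Om G) \in L^2(\R^n,\Lambda)$ by the $L^2$-boundedness of the Beurling--Ahlfors transform (Theorem \ref{thm:DiracBA} together with the boundedness of $\BB$ on $L^2$). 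Both $dF = \tfrac{1}{2}(\Dp F + \Dm F)$ and $\delta F = \tfrac{1}{2}(\Dp F - \Dm F)$ are then in $L^2$, so the restriction lies in $W^{1,2}_{d,\delta}(\Om,\Lambda)$, which closes the argument.
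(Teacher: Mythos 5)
Your proposal is correct and follows essentially the same route as the paper: subtract the particular solution $\Cc^+(\chi_\Om G)$, reduce to the homogeneous tangential problem, and invoke Theorem \ref{thm:HDT1}. The paper's own argument is terser; your extra verifications (that $\Cc^+(\chi_\Om G)|_\Om\in W^{1,2}_{d,\delta}$ via $\Dp\Cc^+=\mathrm{Id}$ and $\Dm\Cc^+=\BB$, and that the orthogonality condition forces uniqueness) are sound and only make explicit what the paper leaves implicit.
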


\subsection{\sffamily Dirichlet Problem for Laplace Equation}

In the case of the Laplace equation with Dirichlet boundary condition, we have $A(x)\equiv I$, and so $\mathcal{M}(x)=\mathscr{C}(I)\equiv 0$. Thus, Theorem \ref{thm:Main1} shows that Laplace equation reduces to the homogeneous Hodge-Dirac equation 

\begin{align*}
\left\{
    \begin{array}{ll}
   \Dm F(x)=0 & x\in \Om,\\
        F_T(x)=\phi(x)& x\in \dv \Om.  
      \end{array} \right.
\end{align*}
which has the solution 
\begin{align*}
F(x)=\frac{1}{\sigma_{n-1}}\int_{\dv \Om}\frac{x-y}{\vert x-y\vert}\gm \nu(y)\gm (\phi(y)+\mathcal{H}_{TN}^-(\phi)(y))d\sigma(y). 
\end{align*}

Using that $u(x)=\frac{1}{2}(F(x)+\overline{F(x)})$, we get 
\begin{align*}
u(x)&=\frac{1}{2\sigma_{n-1}}\int_{\dv \Om}\bigg(\frac{x-y}{\vert x-y\vert^n}\gm \nu(y)\gm (\phi(y)+\mathcal{H}_{TN}^-(\phi)(y))+ \frac{y-x}{\vert y-x\vert^n}\gp \nu(y)\gp ((\phi(y)+\overline{\mathcal{H}_{TN}^-(\phi)(y)})\bigg)d\sigma(y).
\end{align*}
If we define 
\begin{align*}
f^+(x)&=\frac{1}{2\sigma_{n-1}}\int_{\dv \Om} \frac{y-x}{\vert y-x\vert^n}\gp \nu(y)\gp ((\phi(y)+\overline{\mathcal{H}_{TN}^-(\phi)(y)})d\sigma(y)\\
f^-(x)&=\frac{1}{2\sigma_{n-1}}\int_{\dv \Om} \frac{x-y}{\vert y-x\vert^n}\gm \nu(y)\gm ((\phi(y)+\mathcal{H}_{TN}^-(\phi)(y))d\sigma(y),
\end{align*}
we see that $u=f^++f^-$ is the sum of two fields such that $\Dp f^+=0$ and $\Dm f^-=0$. This is analogous to the two dimensional case, where every harmonic function on a simply connected domain can be written (uniquely up to a constant) as a sum of a holomorphic and anti-homorphic function.


\section{\sffamily Representation Formulas for the Solution to the Dirichlet Problem}

\subsection{\sffamily Reduction of Homogeneous Dirac--Beltrami Equation to an an Inhomogeneous Equation}\label{sec:1}

Let $\Om\subset \R^n$ be a smooth bounded simply connected domain. We consider the Dirac--Beltrami equation subject to tangential boundary conditions
\begin{align}\label{eq:DiracBeltrami}
\left\{
    \begin{array}{ll}
     \Dm F(x)=\mathcal{M}(x)\Dp F(x), & x\in \Om,\\
      F_T(x)=\phi(x), & x\in \dv \Om.  
      \end{array} \right.
\end{align}
where $\phi\in W^{1/2,2}(\dv \Om)$.
We now consider the  multivector field $H(x)\in W^{1,2}(\Om,\Lambda)\cap C^\omega(\Om,\Lambda)$ defined to be the solution of the homogeneous Hodge-Dirac equation
\begin{align*}
\left\{
    \begin{array}{ll}
     \Dm H=0 & x\in \Om,\\
      H_T(x)=\phi(x)& x\in \dv \Om,  
      \end{array} \right.
\end{align*}
and define the multivector field $G:=F-H$. Then 
\begin{align*}
\Dp G(x)&=\Dp F(x)-\Dp H(x),\\
\Dm G(x)&=\Dm F(x)-\Dm H(x)=\Dm F(x).
\end{align*}
Hence,
\begin{align*}
\Dm G(x)&=\Dm F(x)=\mathcal{M}(x)\Dp F(x)=\mathcal{M}(x)(\Dp G(x)+\Dp H(x))\\&=\mathcal{M}(x)\Dp G(x)+\mathcal{M}(x)\Dp H(x)
\end{align*}
and $G_T(x)=F_T(x)-H_T(x)=\phi(x)-\phi(x)=0$. Thus, $G$ solves the inhomogeneous Dirac--Beltrami equation
\begin{align*}
\left\{
    \begin{array}{ll}
    \Dm G(x)-\mathcal{M}(x)\Dp G(x)=\mathcal{M}(x)\Dp H(x)& x\in \Om,\\
      G_T(x)=0& x\in \dv \Om,  
      \end{array} \right.
\end{align*}

The representation formula for the monogenic field $H$ is given by 
\begin{align*}
H=\mathcal{E}^-\circ (I+\mathcal{H}_{TN}^-)\phi
\end{align*}

\subsection{\sffamily  Tangential and Normal Beurling--Ahlfors Operators}

\begin{Def}\label{def:BTan}
Let $\Om\subset \R^n$ be a bounded Lipschitz domain. Let $G\in L^2(\Om,\Lambda \R^n)$. Consider a solution $F$ of the inhomogeneous Hodge-Dirac equation with tangential boundary condition equal to zero
\begin{align}\label{def:tBA}
\left\{
    \begin{array}{ll}
    \Dm F(x)=G(x)& x\in \Om,\\
      F_T(x)=0& x\in \dv \Om.
     \end{array} \right.
\end{align}
\emph{The tangential Beurling--Ahlfors Operator} $\BB_T$ on $\Om$ is defined according to the rule 
\begin{align*}
\BB_T(G)=\Dp F. 
\end{align*}
 Consider a solution $F$ the inhomogeneous Hodge-Dirac equation with normal boundary condition equal to zero
\begin{align}\label{def:nBA}
\left\{
    \begin{array}{ll}
    \Dm F(x)=G(x)& x\in \Om,\\
      F_N(x)=0& x\in \dv \Om.
            \end{array} \right.
\end{align}
\emph{The normal Beurling--Ahlfors Operator} $\BB_N$ on $\Om$ is defined according to the rule 
\begin{align*}
\BB_N(G)=\Dp F. 
\end{align*}
\end{Def}

\begin{Prop}\label{prop:BAIso}
The tangential and normal Beurling--Ahlfors transforms are isometries on $L^2(\Om, \Lambda \R^n)$. 
\end{Prop}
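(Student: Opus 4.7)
The plan is to reduce the isometry property for both $\BB_T$ and $\BB_N$ to the vanishing of the cross term $\int_\Om \langle dF, \delta F\rangle\,dx$ for $F$ solving the defining boundary value problem, and to prove that vanishing by two complementary integrations by parts, each keyed to one of the two boundary conditions. Since $\Dp = d+\delta$ and $\Dm = d-\delta$, expanding gives
\begin{align*}
\Vert \Dp F\Vert_{L^2}^2 - \Vert \Dm F\Vert_{L^2}^2 = 4\int_\Om \langle dF, \delta F\rangle\,dx,
\end{align*}
and the defining relations $\Dm F = G$, $\Dp F = \BB_\# G$ (with $\# \in \{T,N\}$) rephrase this as $\Vert \BB_\# G\Vert_{L^2}^2 - \Vert G\Vert_{L^2}^2 = 4\int_\Om \langle dF, \delta F\rangle\,dx$. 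The isometry claim therefore collapses to $\int_\Om \langle dF, \delta F\rangle\,dx = 0$.

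Using $d^2 = 0$ and $\delta^2 = 0$ together with the two Green's formulas
\begin{align*}
(d\omega, \eta)_{L^2} &= (\omega, \delta \eta)_{L^2} + \int_{\dv \Om} \langle \nu\wedge \omega, \eta\rangle\,d\sigma,\\
(\delta \omega, \eta)_{L^2} &= (\omega, d\eta)_{L^2} - \int_{\dv \Om} \langle \nu \ri \omega, \eta\rangle\,d\sigma,
\end{align*}
applied in turn with $(\omega, \eta) = (F, \delta F)$ and $(\omega, \eta) = (F, dF)$, I would produce the two equivalent boundary representations
\begin{align*}
\int_\Om \langle dF, \delta F\rangle\,dx = \int_{\dv \Om} \langle \nu \wedge F, \delta F\rangle\,d\sigma = -\int_{\dv \Om} \langle \nu \ri F, dF\rangle\,d\sigma.
\end{align*}
Since $\nu \wedge F = \nu \wedge F_T$ on $\dv \Om$ by the tangential/normal trace decomposition recalled in the preliminaries, the first boundary integral vanishes under the condition $F_T = 0$ defining $\BB_T$; since $\nu \ri F = \nu \ri F_N$, the second vanishes under the condition $F_N = 0$ defining $\BB_N$. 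Either way the cross term is zero and the isometry follows.

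The main technical obstacle is justifying the integration by parts at the regularity provided by the defining boundary value problem. On a $C^2$-domain the Gaffney inequality lifts the solution from $W^{1,2}_{d,\delta}(\Om,\Lambda)$ to $W^{1,2}(\Om,\Lambda)$ and the classical Green's formulas apply verbatim; on a Lipschitz domain $F$ need only lie in $W^{1,2}_{d,\delta}(\Om,\Lambda)$, and one has to interpret the boundary pairings through the weak tangential and normal trace maps, using the associated duality Green's formulas (which hold by density of smooth forms with prescribed vanishing tangential or normal trace). Once these pairings are set up, the vanishing argument above goes through unchanged.
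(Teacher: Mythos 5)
Your proof is correct and follows essentially the same route as the paper: expand the squared norms of $\Dp F$ and $\Dm F$, reduce the isometry claim to the vanishing of the null-Lagrangian term $\int_\Om\langle dF,\delta F\rangle\,dx$, and kill it via the two boundary representations of Proposition \ref{prop:IntPart}, one annihilated by $F_T=0$ and the other by $F_N=0$. Your pairing of each boundary condition with the correct boundary integrand ($\nu\wedge F=\nu\wedge F_T$ for $\BB_T$, $\nu\ri F=\nu\ri F_N$ for $\BB_N$) is in fact stated more carefully than in the paper's own proof, and your remarks on the Lipschitz versus $C^2$ regularity needed to justify the integration by parts are a welcome addition.
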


\begin{proof}
Let $G\in L^2(\Om, \Lambda \R^n)$ and let $F$ as in \eqref{def:tBA}. Then 
\begin{align*}
\Vert G\Vert^2_{L^2(\Om, \Lambda \R^n)}&=\int_{\Om}\langle G(x),G(x)\rangle dx=\int_{\Om}\langle \Dm F(x),\Dm F(x)\rangle dx=\int_{\Om}\langle (d-\delta) F(x),(d-\delta) F(x)\rangle dx\\
&=\int_{\Om} \Big\{\vert d F(x)\vert^2+\vert \delta F(x)\vert^2 -2\langle d F(x),\delta F(x)\rangle \Big\}dx=\{\text{by \eqref{eq:IntPart1}}\}\\
&=\int_{\Om} (\vert d F(x)\vert^2+\vert \delta F(x)\vert^2)dx -2\int_{\dv \Om} \langle \nu(x)\ri F(x),\delta F(x)\rangle d\sigma(x)\\
&=\int_{\Om} (\vert d F(x)\vert^2+\vert \delta F(x)\vert^2)dx
\end{align*}
since $F_T(x)=0$ implies that $\nu(x)\ri F(x)=0$ $\sigma$-a.e. $x\in \dv \Om$. On the other hand by definition of $\BB_T$
\begin{align*}
\Vert \BB_T(G)\Vert^2_{L^2(\Om, \Lambda \R^n)}&=\int_{\Om}\langle \Dp F(x),\Dp F(x)\rangle dx=\int_{\Om}\langle (d+\delta) F(x),(d+\delta) F(x)\rangle dx\\
&=\int_{\Om} \Big\{\vert d F(x)\vert^2+\vert \delta F(x)\vert^2 +2\langle d F(x),\delta F(x)\rangle \Big\}dx\\
&=\int_{\Om} (\vert d F(x)\vert^2+\vert \delta F(x)\vert^2)dx.
\end{align*}
Hence $\Vert \BB_T(G)\Vert_{L^2(\Om, \Lambda \R^n)}=\Vert G\Vert_{L^2(\Om, \Lambda \R^n)}$. The proof for $\BB_N$ is similar using the integration by formula \eqref{eq:IntPart2} instead and that $F_N=0$ implies that $\nu(x)\wedge F(x)=0$ $\sigma$-a.e. $x\in \dv \Om$. 
\end{proof}

\begin{Prop}
The tangential and normal Beurling--Ahlfors transforms have the following integral representations
\begin{align*}
\BB_T(G)&=\BB(G\chi_\Om)-\Dp\circ \mathcal{E}^-\circ (I-\mathcal{H}_{TN}^-)\circ \gamma_T\circ \Cc^-(G\chi_\Om),\\
\BB_N(G)&=\BB(G\chi_\Om)-\Dp\circ \mathcal{E}^-\circ (I-\mathcal{H}_{NT}^-)\circ \gamma_N\circ \Cc^-(G\chi_\Om). 
\end{align*}
where $\BB$ is the Beurling-Ahlfors operator on $\R^n$. 
\end{Prop}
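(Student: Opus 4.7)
The plan is to realize the solution of the tangential (resp. normal) inhomogeneous Hodge--Dirac problem defining $\BB_T$ (resp. $\BB_N$) as the Cauchy potential of $G$ corrected by a $\Dm$-monogenic field whose tangential (resp. normal) boundary value cancels the corresponding boundary trace of the potential. Once this decomposition is set up, applying $\Dp$ and using the identity $\Dp \Cc^- = \BB$ on $\R^n$ from Theorem~\ref{thm:DiracBA} immediately yields the claimed representations.

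To begin, I will extend $G$ to $\R^n$ trivially by $\widetilde G := G\chi_\Om \in L^2(\R^n,\Lambda \R^n)$ and set $\widetilde F := \Cc^-(\widetilde G)$. The key properties of $\widetilde F$ are that $\Dm \widetilde F = \widetilde G$ on all of $\R^n$ by the fundamental-solution property of $\Cc^-$ (Theorem~\ref{thm:DiracC}), so in particular $\Dm \widetilde F|_\Om = G$, and simultaneously $\Dp \widetilde F = \BB(\widetilde G)$ on $\R^n$ by Theorem~\ref{thm:DiracBA}. Thus $\widetilde F|_\Om$ is a distinguished particular solution of $\Dm u = G$ whose $\Dp$-image is already the global Beurling--Ahlfors transform of $\widetilde G$. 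It will typically fail to satisfy the boundary condition in~\eqref{def:tBA}, so I will correct it.

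Now let $F$ denote the unique solution of~\eqref{def:tBA} (normalized, if necessary, by the orthogonality condition to $\mathcal{H}_T(\Om)$), so that by definition $\BB_T(G) = \Dp F$. The difference $H := \widetilde F|_\Om - F$ then satisfies $\Dm H = G - G = 0$ on $\Om$, and its tangential trace is $\gamma_T H = \gamma_T\widetilde F|_\Om - 0 = \gamma_T \Cc^-(\widetilde G)$. Hence $H$ is precisely a $\Dm$-monogenic multivector field with prescribed tangential trace, which is the setting of the Hodge--Dirac boundary value problem solved in Section~\ref{sec:1} (equivalently, Theorem~\ref{thm:RepSol} with $\mathcal{M}\equiv 0$). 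Invoking the Cauchy-integral representation derived there, $H$ is reconstructed from its tangential trace by the formula
\[
H \;=\; \mathcal{E}^- \circ (I - \mathcal{H}_{TN}^-) \circ \gamma_T \circ \Cc^-(\widetilde G),
\]
the sign and shape of which come from the Sokhotski--Plemelj jump relations together with Definition~\ref{def:HTan} of $\mathcal{H}_{TN}^-$. Applying $\Dp$ to $F = \widetilde F|_\Om - H$ and substituting $\Dp \widetilde F = \BB(\widetilde G)$ yields the desired formula for $\BB_T(G)$. The representation for $\BB_N(G)$ is obtained by the same argument, now subtracting a $\Dm$-monogenic correction with prescribed \emph{normal} trace equal to $\gamma_N \Cc^-(\widetilde G)$ and invoking the analogous reconstruction $H = \mathcal{E}^- \circ (I-\mathcal{H}_{NT}^-) \circ \gamma_N\circ \Cc^-(\widetilde G)$ based on the normal-to-tangential Hilbert transform.

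The main obstacle is the careful bookkeeping around the boundary operators: one must verify that $\widetilde F|_\Om \in W^{1,2}_{d,\delta}(\Om,\Lambda)$ so that $\gamma_T \Cc^-(\widetilde G)$ and $\gamma_N \Cc^-(\widetilde G)$ make sense as elements of the appropriate trace spaces on $\dv \Om$ (this uses the mapping properties of $\Cc^-$ recalled in Section~\ref{sec:Cauchy} together with the Gaffney-type equivalence of norms on $W^{1,2}_{d,\delta}(\Om,\Lambda)$), and that the signs produced by the interior nontangential limits of $\mathcal{E}^-$ are consistent with the conventions used to \emph{define} $\mathcal{H}_{TN}^-$ and $\mathcal{H}_{NT}^-$ in Definition~\ref{def:HTan}. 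Modulo these conventions, the argument reduces to additivity of $\Dp$ on the decomposition $F = \widetilde F|_\Om - H$ and does not require anything beyond Theorems~\ref{thm:DiracC} and~\ref{thm:DiracBA} and the representation formula for $\Dm$-monogenic fields.
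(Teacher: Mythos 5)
Your proposal is correct and follows essentially the same route as the paper: decompose the solution of the boundary value problem defining $\BB_T$ as the Cauchy potential $\Cc^-(G\chi_\Om)$ plus a $\Dm$-monogenic correction determined by the tangential trace of that potential, then apply $\Dp$ and use $\Dp\Cc^-=\BB$. The paper's proof is identical up to the sign of the correction term (it writes $\tilde F = F - \Cc^-(G\chi_\Om)$ where you write $H = \Cc^-(G\chi_\Om)|_\Om - F$), and it likewise relies on matching the sign conventions of $\mathcal{E}^-$ and $\mathcal{H}_{TN}^-$ exactly as you note.
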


Observe that $\text{supp}(\BB_T(G))\nsubseteq \Om$ in general.

\begin{proof}
We only consider the tangential Beurling--Ahlfors transform. The proof for the normal Beurling--Ahlfors transform is analogous. Using the Cauchy transform $\Cc^-$ we reduce \eqref{def:tBA} to a homogeneous equation be considering $\tilde{F}=F-\Cc^-(G\chi_\Om)$. Then $\tilde{F}$ solves 
\begin{align}\label{def:tBA}
\left\{
    \begin{array}{ll}
    \Dm \tilde{F}(x)=0& x\in \Om,\\
      \gamma_T\tilde{F}(x)=-\gamma_T\Cc^-(G\chi_\Om)& x\in \dv \Om.
      \end{array} \right.
\end{align}
and has the representation $\tilde{F}(x)=-\mathcal{E}^-\circ (I-\mathcal{H}_{TN}^-)\circ \gamma_T\circ \Cc^-(G\chi_\Om)(x)$. Hence 
\begin{align*}
F(x)&=\tilde{F}(x)+\Cc^-(G\chi_\Om)(x)\\
&=-\mathcal{E}^-\circ (I-\mathcal{H}_{TN}^-)\circ \gamma_T\circ \Cc^-(G\chi_\Om)(x)+\Cc^-(G\chi_\Om)(x),
\end{align*}
which gives 
\begin{align*}
\BB_T(G)(x)&=\Dp F(x)=\Dp\Cc^-(G\chi_\Om)(x)-\Dp\circ \mathcal{E}^-\circ (I-\mathcal{H}_{TN}^-)\circ \gamma_T\circ \Cc^-(G\chi_\Om)(x)\\
&=\BB(G\chi_\Om)(x)-\Dp\circ \mathcal{E}^-\circ (I-\mathcal{H}_{TN}^-)\circ \gamma_T\circ \Cc^-(G\chi_\Om)(x).
\end{align*}
We observe that the boundary term $-\Dp\circ \mathcal{E}^-\circ (I-\mathcal{H}_{TN}^-)\circ \gamma_T\circ \Cc^-(G\chi_\Om)(x)$ is a smooth (in fact real analytic) multivector field in $\Om$. 
\end{proof}

\subsection{\sffamily  Reduction of Inhomogeneous Dirac--Beltrami Equation to an Inhomogeneous Hodge--Dirac Equation}

Since $\mathcal{M} (\Dp G+\Dp H)\in L^2$, we have using the tangential Beurling--Ahlfors transform we have 
\begin{align*}
\Dp G(x)=\BB_T(\Dm G)=\BB_T(\mathcal{M} (\Dp G+\Dp H)=\BB_T\circ \mathcal{M} \Dp G+\BB_T\circ \mathcal{M}\Dp H. 
\end{align*}
or equivalently 
\begin{align*}
(I-\BB_T\circ \mathcal{M})(\Dp G)=\BB_T\mathcal{M}(\Dp H). 
\end{align*}
Since $\sup_{x\in \Om} \Vert \mathcal{M}(x)\Vert<1$ by Lemma \ref{lem:EllipticityBound}, and $\Vert S_T\Vert=1$ on $L^2(\Om,\Lambda \R^n)$ by Proposition \ref{prop:BAIso}, we have 
\begin{align*}
\Vert \BB_T\circ \mathcal{M}\Vert_T\leq \Vert S_T\Vert \Vert \mathcal{M}\Vert= \Vert \Vert \mathcal{M}\Vert\Vert_{L^{\infty}(\Om}=M<1. 
\end{align*}
Hence the Neumann series 
\begin{align*}
(I-\BB_T\circ \mathcal{M})^{-1}=\sum_{k=0}^{\infty}(\BB_T\circ\mathcal{M})^k
\end{align*}
converges and the operator $I-\BB_T\circ \mathcal{M}$ is invertible on $L^2(\Om,\Lambda \R^n)$. Thus $G$ solves the inhomogeneous Hodge-Dirac equation 
\begin{align}\label{eq:InHD}
\left\{
    \begin{array}{ll}
     \Dp G(x)=\Phi(x)& x\in \Om,\\
      G_T(x)=0 & x\in \dv \Om.  
      \end{array} \right.
\end{align}
where $\Phi(x)=(I-\BB_T\circ \mathcal{M})^{-1}(\BB_T\mathcal{M}(\Dp H))(x)$. We define the operator 
\begin{align}\label{def:OpB}
\Pi_{\mathcal{M}}:=(I-\BB_T\circ \mathcal{M})^{-1}\circ \BB_T\mathcal{M},
\end{align}
so that $\Phi=\Pi_{\mathcal{M}}H$.

\subsection{\sffamily  Tangential Cauchy Transform}\label{sec:2}

\begin{Def}\label{def:CTan}
Let $\Om$ be a bounded Lipschitz domain in $\R^n$ and consider a solution $F$ of the inhomogeneous Hodge-Dirac equation 
\begin{align}\label{eq:InHD}
\left\{
    \begin{array}{ll}
     \Dp F(x)=\Phi(x)& x\in \Om\\
      F_T(x)=0 & x\in \dv \Om\\
      \gamma_N\Cc^+(\chi_\Om \Phi)\perp \mathcal{H}_T(\Om),&x\in \dv \Om
      \end{array} \right.
\end{align}
with $\Phi\in L^2(\Om,\Lambda \R^n)$. The \emph{tangential Cauchy transform} with respect to the domain $\Om$ is defined to be
\begin{align*}
\mathcal{C}_T^+(\Phi)(x):=F(x).
\end{align*}
Similarly, let $\Om$ be a bounded Lipschitz domain in $\R^n$ and consider a solution $F$ of the inhomogeneous Hodge-Dirac equation 
\begin{align}\label{eq:InHD2}
\left\{
    \begin{array}{ll}
     \Dm F(x)=\Phi(x)& x\in \Om\\
      F_N(x)=0 & x\in \dv \Om\\
     \gamma_T\Cc^-(\chi_\Om \Phi)\perp \mathcal{H}_N(\Om).
      \end{array} \right.
\end{align}
The \emph{normal Cauchy transform} $\Cc_T^-$ of $\Phi\in L^2(\Om,\Lambda\R^n)$ with respect to the domain $\Om$ is defined to be
\begin{align*}
\mathcal{C}_N^+(\Phi)(x):=F(x).
\end{align*}

\end{Def}

It follows immediately from the definitions that we have the relation
\begin{align*}
\BB_T(\Phi)(x)=\Dp \Cc_T^-(\Phi)(x). 
\end{align*}

\begin{Prop}
The tangential Cauchy transform has the integral representation 
\begin{align}
\mathcal{C}_T^+(\Phi)(x)=\mathcal{C}^+(\chi_\Om\Phi)(x)-\int_{\dv \Om}E(y-x)\gp \nu(y)(\gamma_T\Cc^+(\chi_\Om\Phi)(y)+\mathcal{H}_{TN}^+\gamma_T\Cc^+(\chi_\Om\Phi)(y))d\sigma(y). 
\end{align}
\end{Prop}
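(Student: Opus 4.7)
The plan is to reduce the inhomogeneous half-Dirichlet problem defining $\Cc_T^+(\Phi)$ to a homogeneous one, and then apply the Cauchy integral formula for monogenic fields together with the tangential-to-normal Hilbert transform. First I would set $G := F - \Cc^+(\chi_\Om \Phi)$, where $\Cc^+$ is the full-space Cauchy transform from Definition \ref{def:Ctransform}. Since $\Dp \Cc^+(\chi_\Om \Phi) = \chi_\Om \Phi$ in $\R^n$ and $\Dp F = \Phi$ in $\Om$, the field $G$ satisfies
\begin{align*}
\left\{
\begin{array}{ll}
\Dp G(x) = 0, & x \in \Om, \\
G_T(x) = -\gamma_T \Cc^+(\chi_\Om \Phi)(x), & x \in \dv \Om,
\end{array}
\right.
\end{align*}
so the problem is reduced to a homogeneous half-Dirichlet problem for the Hodge-Dirac operator $\Dp$.

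Next I would verify that the orthogonality/compatibility conditions in Definition \ref{def:CTan} (i.e.\ $\gamma_N \Cc^+(\chi_\Om \Phi) \perp \mathcal{H}_T(\Om)$) together with Theorem \ref{thm:InHDirac} guarantee that $-\gamma_T \Cc^+(\chi_\Om \Phi)$ lies in $\nu \ri \{\mathcal{H}_T(\Om)|_{\dv \Om}\}^\perp$ and that the unique solution $G$ of the reduced problem satisfies $G_N \perp \mathcal{H}_T(\dv \Om)$. Under these normalizations, Definition \ref{def:HTan} of the tangential-to-normal Hilbert transform applies directly, giving
\begin{align*}
G_N = -\mathcal{H}_{TN}^+\bigl(\gamma_T \Cc^+(\chi_\Om \Phi)\bigr).
\end{align*}

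Finally, since $\Dp G = 0$ in $\Om$ and $G$ has $L^2$ boundary trace (via the equivalence of norms \eqref{eq:EquivNorm}), the Cauchy integral formula from Section~\ref{sec:Cauchy} yields
\begin{align*}
G(x) = \int_{\dv \Om} E(y-x) \gp \nu(y) \gp \bigl(G_T(y) + G_N(y)\bigr)\, d\sigma(y).
\end{align*}
Substituting the expressions for $G_T$ and $G_N$ and then using $\Cc_T^+(\Phi) = \Cc^+(\chi_\Om \Phi) + G$ gives the claimed formula.

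The main obstacle, which is really bookkeeping rather than hard analysis, is the verification in the second step that the compatibility and uniqueness conditions match up correctly, so that $\mathcal{H}_{TN}^+$ is applicable and the Cauchy integral formula reproduces $G$ exactly (rather than $G$ modulo an element of $\mathcal{H}_T(\Om)$). Everything else is the direct reduction-plus-representation scheme used for Theorem \ref{thm:InHDirac}.
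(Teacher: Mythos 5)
Your proposal is correct and follows essentially the same route as the paper: subtract $\Cc^+(\chi_\Om\Phi)$ to reduce to the homogeneous half-Dirichlet problem, recover the normal trace via $\mathcal{H}_{TN}^+$, and represent the resulting monogenic field by the Cauchy integral formula. The compatibility bookkeeping you flag is exactly what the orthogonality condition in Definition \ref{def:CTan} is there to ensure, so no further work is needed.
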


\begin{proof}
Set $G=F-\mathcal{C}^+(\chi_\Om\Phi)(x)$. Then $G$ solves the homogeneous Hodge-Dirac equation 
\begin{align}\label{eq:HDirac1}
\left\{
    \begin{array}{ll}
     \Dp G(x)=0& x\in \Om,\\
     G_T(x)= -\gamma_T\Cc^+(\chi_\Om\Phi)(x) & x\in \dv \Om,\\
      \gamma_N\Cc^+(\chi_\Om \Phi)\perp \mathcal{H}_T(\Om),& x\in \dv \Om.
      \end{array} \right.
\end{align}
By the definition of the tangential to normal Hilbert transform $G$ is given by 
\begin{align*}
G(x)=-\int_{\dv \Om}E(y-x)\gp \nu(y)(\gamma_T\Cc^+(\chi_\Om\Phi)(y)+\mathcal{H}_{TN}^+\gamma_T\Cc^+(\chi_\Om\Phi)(y))d\sigma(y)
\end{align*}
and hence $F$ is given by 
\begin{align*}
F(x)=\mathcal{C}^+(\chi_\Om\Phi)(x)-\int_{\dv \Om}E(y-x)\gp \nu(y)(\gamma_T\Cc^+(\chi_\Om\Phi)(y)+\mathcal{H}_{TN}^+\gamma_T\Cc^+(\chi_\Om\Phi)(y))d\sigma(y).
\end{align*}
\end{proof}

\subsection{\sffamily Representation Formula for the Solution of the Dirichlet Problem}

Combining the reduction steps in Subsections \ref{sec:1}-\ref{sec:2} we can now give a representation formula for the solution of the Dirichlet problem \eqref{eq:DirPro1} when $\Om$ is a simply connected Lipschitz domain. We have 
\begin{align*}
F&=G+H=\mathcal{C}_T^+\circ \Pi_{\mathcal{M}}\circ \mathcal{E}^-\circ (I+\mathcal{H}_{TN}^-)\phi+\mathcal{E}^-\circ (I+\mathcal{H}_{TN}^-)\phi\\
&=(\mathcal{C}_T^+\circ \Pi_{\mathcal{M}}\Dp H)(x)+H(x).
\end{align*}
Moreover, since \eqref{eq:HDirac1} is uniquely solvable by Theorem 1.7 in \cite{MMMT}, the reduction of \eqref{eq:DiracBeltrami} to  \eqref{eq:HDirac1} shows that \eqref{eq:DiracBeltrami} is uniquely solvable as well. This proves Theorem \ref{thm:Solvability}. 
To conclude we have proven that unique solution $F$ to the half-Dirichlet boundary value problem 
\begin{align*}
\left\{
    \begin{array}{ll}
    \Dm F(x)=\mathcal{M}(x)\Dm F(x), & x\in \Om,\\
      \gamma_TF(x)=\phi(x), & x\in \dv \Om.  
      \end{array} \right.
\end{align*}
is given by 
\begin{align}\label{eq:RepForm}
F(x)=(\mathcal{C}_T^+\circ \Pi_{\mathcal{M}}\Dp H)(x)+H(x)
\end{align}
where we recall that 
\begin{align*}
\Pi_{\mathcal{M}}(f)(x)&=(I-\BB_T\circ \mathcal{M})^{-1}\circ \BB_T\mathcal{M}(f)(x)
\end{align*}
and 
\begin{align*}
H(x)=\mathcal{E}^-\circ (I+\mathcal{H}_{TN}^-)\phi. 
\end{align*}
In particular, this also gives a representation formula for the solution of the Dirichlet problem 
\begin{align*}
\left\{
    \begin{array}{ll}
    \text{div}\, A(x)\nabla u(x)=0, & x\in \Om,\\
      \gamma u(x)=\phi(x), & x\in \dv \Om.  
      \end{array} \right.
\end{align*}
through 
\begin{align*}
u(x)=\langle F(x)\rangle_0=\frac{1}{2}(F(x)+\overline{F(x)})
\end{align*}

This completes the proof of Theorem \ref{thm:RepSol}. 

\begin{rem}
One should note that the only operator in the representation formula \eqref{eq:RepForm} that depends on the coefficient matrix $A(x)$ is the operator $\Pi_{\mathcal{M}}$. All the other operators are either independent of both the domain $\Om$ and $A(x)$, such as $\Cc^+$ and $\mathcal{E}^+$, or only depends on the geometry of $\Om$ such as $\mathcal{H}_{TN}^\pm$. Thus, all information about $A(x)$ is encoded in the operator $\Pi_{\mathcal{M}}$. 
\end{rem}

If we define the operator 
\begin{align*}
\mathcal{T}:=\mathcal{C}_T^+\circ (I-\BB_T\circ \mathcal{M})^{-1}\circ \BB_T\mathcal{M}\circ \Dp
\end{align*}
then the representation formula \eqref{eq:RepForm} can be given the more suggestive form 
\begin{align}\label{eq:Rep2}
F=(I+\mathcal{T})H. 
\end{align}

Since $H$ solves $\Dm H=0$ and solutions of the homogeneous Hodge--Dirac equation have many properties in common with holomorphic functions, we can view \eqref{eq:Rep2} as non-local deformation of $H$. This is somewhat reminiscent of the Stoilow factorization in two dimensions where every solution $f\in W^{1,2}_{loc}(\Om,\C)$ of the Beltrami equation 
\begin{align*}
\overline{\dv}f(z)=\mu(z)\dv f(z)
\end{align*}
for some $\mu\in L^\infty(\Om,\C)$ such that $\Vert \mu\Vert_\infty=k<1$ admits a factorization 
\begin{align*}
f=h\circ g,
\end{align*}
where $h$ is a holomorphic function and $g\in W^{1,1}_{loc}(\Om)$ is a fixed homeomorphic solution of the Beltrami equation, see Theorem 5.5.1. in \cite{AIM}. Of course the Stoilow factorization implies much stronger consequence for the solutions and can be viewed as local deformation of holomorphic functions.

We now generalize the representation formula \eqref{eq:RepForm} to multiply connected domains and inhomogeneous equations. 
\begin{Thm}\label{thm:RepIn}
The solution $F$ of \eqref{eq:MultiDB} has the representation formula
\begin{align}\label{eq:RepMulti}
F=H+\Cc_T^+(I-\BB_T\mathcal{M})^{-1}\BB_T\mathcal{M}\Dp H+\Cc_T^+(I-\BB_T\mathcal{M})^{-1}\BB_T(I+\mathcal{M})\omega_T.
\end{align}
The solution $F$ of \eqref{eq:DBIn} has the representation formula
\begin{align}\label{eq:RepIn}
F=H+\Cc_T^+(I-\BB_T\mathcal{M})^{-1}\BB_T\mathcal{M}\Dp H+\Cc_T^+(I-\BB_T\mathcal{M})^{-1}\BB_T(I+\mathcal{M})G.
\end{align}
\end{Thm}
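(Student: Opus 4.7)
The plan is to mimic the reduction procedure used for the half-Dirichlet problem in Theorem~\ref{thm:RepSol}, but now carrying along the extra inhomogeneous source term ($\omega_T$ in \eqref{eq:MultiDB}, $G$ in \eqref{eq:DBIn}). Since the two cases are structurally identical once one observes that the right-hand side of \eqref{eq:MultiDB} and \eqref{eq:DBIn} both have the form $\mathcal{M}(x)\mathcal{D}^+F+(I+\mathcal{M})\Psi$ with $\Psi\in L^2(\Om,\Lambda\R^n)$, I will treat them in parallel and write $\Psi$ for either $\omega_T$ or $G$ in what follows. Let $H=\mathcal{E}^-\circ(I+\mathcal{H}^-_{TN})\phi\in W^{1,2}(\Om,\Lambda)$ be the unique solution of the homogeneous tangential half-Dirichlet problem $\Dm H=0$ in $\Om$, $H_T=\phi$ on $\dv\Om$, as in Subsection~\ref{sec:1}.

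Set $\tilde F:=F-H$, so that $\tilde F_T=0$ on $\dv\Om$ and
\begin{align*}
\Dm \tilde F(x)=\Dm F(x)=\mathcal{M}(x)\Dp F(x)+(I+\mathcal{M}(x))\Psi(x)=\mathcal{M}(x)\Dp \tilde F(x)+\mathcal{M}(x)\Dp H(x)+(I+\mathcal{M}(x))\Psi(x).
\end{align*}
Because $\tilde F_T=0$, Definition~\ref{def:BTan} gives $\Dp \tilde F=\BB_T(\Dm \tilde F)$, and substituting the identity above yields
\begin{align*}
(I-\BB_T\circ\mathcal{M})\,\Dp \tilde F=\BB_T\mathcal{M}\,\Dp H+\BB_T(I+\mathcal{M})\Psi.
\end{align*}
By Proposition~\ref{prop:BAIso} the operator $\BB_T$ is an isometry on $L^2(\Om,\Lambda)$, so $\Vert \BB_T\circ\mathcal{M}\Vert_{L^2\to L^2}\le M<1$ and the Neumann series defines a bounded inverse $(I-\BB_T\circ\mathcal{M})^{-1}=\sum_{k\ge0}(\BB_T\mathcal{M})^k$ on $L^2(\Om,\Lambda)$. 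Hence $\Dp \tilde F$ is explicitly given by
\begin{align*}
\Dp \tilde F=(I-\BB_T\mathcal{M})^{-1}\bigl(\BB_T\mathcal{M}\,\Dp H+\BB_T(I+\mathcal{M})\Psi\bigr).
\end{align*}

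Finally, $\tilde F$ is itself recovered as the unique solution in $W^{1,2}_{d,\delta}$ (modulo $\mathcal{H}_T^2(\Om)$) of the tangential inhomogeneous Hodge--Dirac problem $\Dp \tilde F=\Phi$, $\tilde F_T=0$, so by Definition~\ref{def:CTan} we have $\tilde F=\Cc_T^+\Phi$, and $F=H+\tilde F$ yields both \eqref{eq:RepMulti} and \eqref{eq:RepIn}. The points where care is required are twofold: first, one must check that the right-hand side $\Phi$ lies in the domain of $\Cc_T^+$, i.e. that $\gamma_N\Cc^+(\chi_\Om\Phi)\perp\mathcal{H}_T(\Om)$; this is automatic in the simply connected inhomogeneous case \eqref{eq:DBIn} because $\mathcal{H}_T^1(\Om)=0$, and in the multiply connected case it is enforced exactly by the orthogonality conditions $\int_\Om\langle\delta F,\kappa\rangle\,dx=0$ for all $\kappa\in\mathcal{H}_T^1(\Om)$ that characterize $\omega_T$ in Theorem~\ref{thm:MultiDB}. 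Second, uniqueness in \eqref{eq:RepMulti} is secured by the normalization $F\perp\mathcal{H}_T^2(\Om)$, which matches the normalization built into $\Cc_T^+$. The main technical obstacle is therefore not the algebraic manipulation, which is a direct adaptation of Subsections~\ref{sec:1}--\ref{sec:2}, but rather verifying that the compatibility condition against $\mathcal{H}_T^1(\Om)$ propagates correctly through the inversion $(I-\BB_T\mathcal{M})^{-1}$ so that $\Phi$ is admissible as input to $\Cc_T^+$.
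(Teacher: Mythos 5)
Your proposal is correct and follows exactly the route the paper intends: the paper omits the proof of Theorem \ref{thm:RepIn}, stating only that it is derived in the same way as \eqref{eq:RepForm}, and your argument is precisely that derivation with the source term $(I+\mathcal{M})\Psi$ carried through the reduction, the Neumann-series inversion of $I-\BB_T\mathcal{M}$, and the application of $\Cc_T^+$. Your added remarks on the admissibility of $\Phi$ for $\Cc_T^+$ and on the role of the orthogonality conditions in the multiply connected case are consistent with the normalizations already built into Theorem \ref{thm:MultiDB} and Definition \ref{def:CTan}.
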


The proof of the representation formulas \eqref{eq:RepMulti}-\eqref{eq:RepIn} are derived in exactly the same way as \eqref{eq:RepForm} and are therefore omitted.


\subsection{\sffamily Representation formula for solutions in the non-linear uniformly elliptic case}

In the nonlinear case we again prove solvability and representation formulas in a more general setting for fully non-linear Dirac--Beltrami equations which takes values in the the full De Rham complex and need not be derived from an associated second order uniformly quasilinear elliptic equation. 

\begin{Thm}\label{thm:NonLinDBRep}
Let $\Om\subset \R^n$ be a bounded simply connected Lipschitz domain and assume that $\mathcal{M}:\Om\times \Lambda \R^n \to \Lambda \R^n$ satisfy the following.
\begin{itemize}
\item[(i)] $\mathcal{M}(x,0)=0$ for all $x\in \Om$
\item[(ii)] $x\mapsto \mathcal{M}(x,w)$ is measurable in $x$ for all $w\in \Lambda \R^n$. 
\item[(iii)] $\mathcal{M}(x,\cdot)$ is continuous $k$-Lipschitz continuous for some $0\leq k<1$ for a.e. $x\in \Om$.
\end{itemize} 
Then the boundary value problem for the fully non-linear Dirac--Beltrami equation 
\begin{align}\label{eq:BVNonDB}
\left\{
    \begin{array}{ll}
\Dm F(x)=\mathcal{M}(x,\Dp F(x)) &  x\in \Om,\\
          F_T(x)=\phi(x) & x\in \dv \Om
      \end{array} \right. .
\end{align}
with $\phi\in W^{1/2,2}(\dv \Om, \Lambda)$ is uniquely solvable in $W^{1,2}_{d,\delta}(\Om,\Lambda)$ up to addition of constants. Moreover the solution is given by the representation formula 
\begin{align}
F=H+\Cc_T^+(I-\BB_T\mathcal{M})^{-1}(-\BB)_T\Dm H, 
\end{align}
where $H$ is a solution to 
\begin{align*}
\left\{
    \begin{array}{ll}
\Dp H(x)=0 &  x\in \Om,\\
          H_T(x)=\phi(x) & x\in \dv \Om
      \end{array} \right. .
\end{align*}
\end{Thm}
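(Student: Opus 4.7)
\bigskip
\noindent \textbf{Proof plan.} My strategy is to reduce \eqref{eq:BVNonDB} to a fixed point problem for $\Dp F$ in $L^2(\Om,\Lambda)$, solvable by the Banach contraction principle because $\BB_T$ is an isometry and $\mathcal{M}(x,\cdot)$ is a strict contraction. First I would handle the boundary data by picking $H\in W^{1,2}(\Om,\Lambda)$ solving $\Dp H=0$ with $H_T=\phi$; existence is the linear theory of Theorem 1.7 in \cite{MMMT} (via the tangential Cauchy integral, as in the linear case earlier in the paper). Setting $G:=F-H$ one has $G_T=0$, and because $\Dp H=0$,
\begin{align*}
\Dp G(x)=\Dp F(x),\qquad \Dm G(x)=\mathcal{M}(x,\Dp G(x))-\Dm H(x).
\end{align*}

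\bigskip
\noindent The point of this reduction is that $G$ now has vanishing tangential trace, so I can apply the tangential Beurling--Ahlfors operator $\BB_T$ of Definition \ref{def:BTan} to the second equation, obtaining
\begin{align*}
\Dp G=\BB_T\bigl(\mathcal{M}(\cdot,\Dp G)\bigr)-\BB_T\Dm H.
\end{align*}
Writing $u:=\Dp G\in L^2(\Om,\Lambda)$ this becomes the fixed-point equation $u=Tu$, with
\begin{align*}
Tu:=\BB_T\bigl(\mathcal{M}(\cdot,u)\bigr)-\BB_T\Dm H.
\end{align*}
Using Proposition \ref{prop:BAIso} (which says $\Vert\BB_T\Vert_{L^2\to L^2}=1$) together with the pointwise $k$-Lipschitz estimate on $\mathcal{M}$ I obtain
\begin{align*}
\Vert Tu_1-Tu_2\Vert_{L^2}\leq \Vert \mathcal{M}(\cdot,u_1)-\mathcal{M}(\cdot,u_2)\Vert_{L^2}\leq k\Vert u_1-u_2\Vert_{L^2},
\end{align*}
so $T$ is a strict contraction. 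Banach's fixed point theorem delivers a unique $u_*\in L^2(\Om,\Lambda)$. Symbolically $u_*=(I-\BB_T\mathcal{M})^{-1}(-\BB_T\Dm H)$, where the inverse is justified by the convergent Picard iteration rather than a Neumann series (since $\mathcal{M}$ is nonlinear).

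\bigskip
\noindent Next I reconstruct $G$ from $u_*$. Since $G_T=0$, $\Dp G=u_*$ and $u_*\in L^2(\Om,\Lambda)$, the tangential Cauchy transform $\Cc_T^+$ of Definition \ref{def:CTan} gives $G=\Cc_T^+ u_*$, provided the compatibility condition $\gamma_N\Cc^+(\chi_\Om u_*)\perp \mathcal{H}_T(\Om)$ is verified --- this reduces, via Stokes' theorem and the fact that $\Dm G\in L^2$ with $G_T=0$, to orthogonality of an exact form to $\mathcal{H}_T(\Om)$. Hence
\begin{align*}
F=H+G=H+\Cc_T^+(I-\BB_T\mathcal{M})^{-1}(-\BB_T)\Dm H,
\end{align*}
which is the claimed representation. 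Uniqueness up to elements of $\mathcal{H}_T(\Om)$ follows by the same contraction: if $F_1,F_2$ solve \eqref{eq:BVNonDB} with the same $\phi$, then $G:=F_1-F_2$ has $G_T=0$ and $\Dp G=\BB_T(\mathcal{M}(\cdot,\Dp F_1)-\mathcal{M}(\cdot,\Dp F_2))$, so $\Vert\Dp G\Vert_{L^2}\leq k\Vert\Dp G\Vert_{L^2}$, forcing $\Dp G=0$ and then $\Dm G=0$, i.e.\ $G\in\mathcal{H}_T(\Om)$, which on a simply connected domain is one-dimensional (spanned by the volume form) and thus ``constants'' in the sense of the theorem.

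\bigskip
\noindent The main obstacle I anticipate is the careful verification of the mapping properties needed for the representation formula: namely, that the fixed point $u_*$ obtained in $L^2(\Om,\Lambda)$ genuinely lies in the range of $\Dp$ acting on the partial Sobolev space $W^{1,2}_{d,\delta}(\Om,\Lambda)$ with vanishing tangential trace, and that the compatibility condition with $\mathcal{H}_T(\Om)$ needed to invoke $\Cc_T^+$ is automatic. Everything else is a transcription of the linear argument leading to \eqref{eq:RepForm}, with Neumann series replaced by Picard iteration and the contraction constant $M=\Vert\Vert\mathcal{M}\Vert\Vert_{L^\infty}$ replaced by the Lipschitz constant $k$.
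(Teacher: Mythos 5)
Your proposal is correct and follows essentially the same route as the paper: subtract a monogenic extension $H$ of the boundary data, apply $\BB_T$ to turn the equation for $G=F-H$ into a fixed-point problem for $\Dp G$ in $L^2$, use the isometry of $\BB_T$ together with the $k$-Lipschitz bound on $\mathcal{M}(x,\cdot)$ to get a strict contraction solvable by Picard iteration, and reconstruct $G$ with $\Cc_T^+$. The only cosmetic difference is that you fold the inhomogeneous term $-\BB_T\Dm H$ into the contraction mapping itself, while the paper shows $I-T$ with $T\Phi=\BB_T\mathcal{M}(\cdot,\Phi)$ is a homeomorphism and then solves $(I-T)\Phi=\Psi$; your explicit uniqueness argument matches what the paper leaves implicit in ``argue as in the linear case.''
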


\begin{proof}
We take a slightly different route in the nonlinear case. Let $H\in W^{1,2}(\Om)$ be a solution of 
\begin{align*}
\left\{
    \begin{array}{ll}
\Dp H(x)=0 &  x\in \Om,\\
          H_T(x)=\phi(x) & x\in \dv \Om
      \end{array} \right. .
\end{align*}
Define $G=F-H$. Then 
\begin{align*}
\Dm G(x)=\Dm F(x)-\Dm H(x)=\mathcal{M}(x,\Dp F(x)) -\Dm H(x)=\mathcal{M}(x,\Dp G(x)) -\Dm H(x).
\end{align*}
Consequently $G$ solves the boundary value problem 
\begin{align*}
\left\{
    \begin{array}{ll}
\Dm G(x)=\mathcal{M}(x,\Dp G(x))-\Dm H(x) &  x\in \Om,\\
          G_T(x)=0 & x\in \dv \Om
      \end{array} \right. .
\end{align*}
if and only if $F$ solves \eqref{eq:BVNonDB}. Moreover, we note that both the left and right hand side lie in $L^2$. We may therefore apply the tangential Beurling--Ahlfors transform $\BB_T$ to both sides and get 
\begin{align*}
\left\{
    \begin{array}{ll}
\Dp G(x)=\BB_T\mathcal{M}(x,\Dp G(x))-\BB_T\Dm H(x) &  x\in \Om,\\
          G_T(x)=0 & x\in \dv \Om
      \end{array} \right. .
\end{align*}
Consider the operator $T:L^2(\Om,\Lambda)\to L^2(\Om,\Lambda)$ defined by 
\begin{align*}
T(\Phi)(x):=\BB_T\mathcal{M}(x,\Phi)(x)
\end{align*}
fo $\Phi\in L^2(\Om,\Lambda)$. By assumption on $\mathcal{M}$ and the fact that $\BB_T$ is an isometry on $L^2$ we get for all $\Phi_1,\Phi_2\in L^2(\Om,\Lambda)$
\begin{align*}
\Vert T(\Phi_1)-T(\Phi_2)\Vert^2_{L^2}&=\int_{\Om}\vert \BB_T(\mathcal{M}(x,\Phi_1(x))-\mathcal{M}(x,\Phi_2(x))\vert^2dx\\&=\int_{\Om}\vert \mathcal{M}(x,\Phi_1(x))-\mathcal{M}(x,\Phi_2(x))\vert^2dx\leq  k^2\int_{\Om}\vert \Phi_1(x)-\Phi_2(x)\vert^2dx\\&=k^2\Vert \Phi_1-\Phi_2\Vert^2_{L^2}.
\end{align*}
Thus $T$ is a $k$-contraction on $L^2(\Om,\Lambda)$. It then follows from Banach's fixed point theorem that the operator $I-T:L^2(\Om,\Lambda)\to L^2(\Om,\Lambda)$ is a homeomorphism (\cite[Thm. 2.1 p. 11]{DuGr}). Thus $(I-T)^{-1}$ exists and is continuous. Moreover, by induction 
\begin{align*}
\Vert T^n(\Phi)\Vert^2_{L^2}\leq k^n\Vert \Phi\Vert_{L^2}^2. 
\end{align*}
Hence $\lim_{n\to \infty}T^n\Phi=0$ for every $\Phi\in L^2$. Thus $0$ is the unique fixed point of $T$. This implies that the equation 
\begin{align*}
(I-T)\Phi=\Psi
\end{align*}
for $\Phi$ has the unique solution given by the limit of iterations given by the method of successive approximations generalizing the Neumann series in the linear case. More precisely if we let $\Phi_0=0$, we set  
\begin{align*}
\Phi_1&=T(\Phi_0)+\Psi=\Psi,\\
\Phi_2&=T(\Phi_1)+\Psi=T(\Psi)+\Psi\\
&\vdots\\
\Phi_n&=T(\Phi_{n-1})+\Psi=T(T(\Phi_{n-2})+\Psi)+\Psi=...,
\end{align*}
and 
\begin{align*}
\lim_{n\to \infty}\Phi_n=\Phi. 
\end{align*}
Thus, we see that $G$ solves 
\begin{align*}
\left\{
    \begin{array}{ll}
\Dp G(x)=(I-\BB_T\mathcal{M})^{-1}(-\BB_T)\Dm H(x) &  x\in \Om,\\
          G_T(x)=0 & x\in \dv \Om
      \end{array} \right. .
\end{align*}
From here on we can argue as in the linear case. 
\end{proof}


\section{\sffamily Local Estimates}

\subsection{\sffamily Higher Integrability and Caccioppoli Inequalities}

\begin{Def}[Critical exponents]
Let $p_M$ be defined so that 
\begin{align}\label{def:CriticalP}
M\Vert \BB\Vert_{L^{p_M}(\R^n,\Lambda^{(1,3)})\R^n}=1
\end{align}
We call $p_M$ the critical exponent with respect to $M$. 
\end{Def}

If Iwaniec's conjecture is true, i.e. that 
\begin{align*}
\Vert \BB\Vert_{L^p(\R^n,\Lambda \R^n)}=\max\{(p-1)^{-1},p-1\},
\end{align*}
the the critical exponent is equal to 
\begin{align*}
p_M=1+\frac{1}{M}. 
\end{align*}

We now recall the $L^p$-Gaffney inequality. 
\begin{Thm}[Proposition 4.3 in \cite{Scott}]
Let $F\in C_0^{\infty}(\R^n,\Lambda \R^n)$. Then for all $1<p<n$ there exists a constant $C=C(n,p)$ such that
\begin{align}\label{eq:LpGaff}
\int_{\R^n}\vert \nabla \otimes F(x)\vert^pdx\leq C\int_{\R^n}(\vert dF(x)\vert^p+\vert \delta F(x)\vert^p)dx 
\end{align}
Equivalently, there exists a constant $C'=C'(n,p)$ such that 
\begin{align}\label{eq:LpGaff}
\int_{\R^n}\vert \nabla \otimes F(x)\vert^pdx\leq C'\int_{\R^n}(\vert \Dp F(x)\vert^p+\vert \Dm F(x)\vert^p)dx 
\end{align}
\end{Thm}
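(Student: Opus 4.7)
The plan is to establish the inequality from a pointwise algebraic bound, bypassing Calderón--Zygmund theory entirely. As a byproduct, this route gives the result for every $1<p<\infty$, not just $p<n$.

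The first step is to invoke the identity recorded in the Hodge--Dirac section,
\begin{align*}
\dv_j F(x)=\tfrac{1}{2}\bigl(e_j\gp \Dp F(x)-e_j\gm \Dm F(x)\bigr).
\end{align*}
Lemma \ref{lem:Norm} gives $|e_j\gp w|=|w|$, and the same holds for the negative Clifford product: writing $e_j\gm w=-e_j\ri w+e_j\wedge w$ with the two summands in complementary grades (hence orthogonal) yields $|e_j\gm w|^2=|e_j\ri w|^2+|e_j\wedge w|^2=|e_j\gp w|^2=|w|^2$. Consequently
\begin{align*}
|\dv_j F(x)|\leq \tfrac{1}{2}\bigl(|\Dp F(x)|+|\Dm F(x)|\bigr)
\end{align*}
pointwise. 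Squaring, summing over $j$, and using $(a+b)^2\leq 2(a^2+b^2)$ gives
\begin{align*}
|\nabla\otimes F(x)|^2\leq \tfrac{n}{2}\bigl(|\Dp F(x)|^2+|\Dm F(x)|^2\bigr).
\end{align*}

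Second, I raise to the power $p/2$ and apply $(a+b)^{p/2}\leq C_p(a^{p/2}+b^{p/2})$ (with $C_p=1$ for $p\leq 2$ and $C_p=2^{p/2-1}$ for $p\geq 2$) to obtain the pointwise bound $|\nabla\otimes F|^p\leq C_{n,p}(|\Dp F|^p+|\Dm F|^p)$. Since $|\Dp F|=|dF+\delta F|\leq |dF|+|\delta F|$ and similarly for $\Dm F$, one concludes $|\Dp F|^p+|\Dm F|^p\leq 2^p(|dF|^p+|\delta F|^p)$, so
\begin{align*}
|\nabla\otimes F(x)|^p\leq C'_{n,p}\bigl(|dF(x)|^p+|\delta F(x)|^p\bigr).
\end{align*}
Integrating over $\R^n$ delivers the stated inequality.

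Main obstacle: essentially none, once the partial-derivative identity and the multiplicativity of Clifford multiplication on norms are in hand. A useful sanity check in the case $p=2$ is the pointwise cancellation $|\Dp F|^2+|\Dm F|^2=2(|dF|^2+|\delta F|^2)$, which together with the integration-by-parts identity $\int\langle dF,\delta F\rangle\,dx=0$ for $F\in C_0^\infty$ recovers the Weitzenb\"ock equality $\int|\nabla\otimes F|^2\,dx=\int(|dF|^2+|\delta F|^2)\,dx$ on flat $\R^n$. The only subtle point is confirming $|e_j\gm w|=|w|$, which I would dispatch by the orthogonal-grade argument above rather than attempting to extend the proof of Lemma \ref{lem:Norm} directly.
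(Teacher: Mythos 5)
Your argument has a fatal gap: the pointwise bound $\vert \nabla\otimes F(x)\vert\le C_n(\vert \Dp F(x)\vert+\vert \Dm F(x)\vert)$ that you extract in the first step is false, so no amount of raising to powers and integrating can rescue the proof. A counterexample is $F=\nabla h=dh$ for a non-affine harmonic polynomial $h$ (say $h=x_1^2-x_2^2$, cut off away from the point of interest to obtain compact support): then $dF=d^2h=0$ and $\delta F=\delta dh=-\Delta h=0$ near that point, so $\Dp F=\Dm F=0$ there, while $\nabla\otimes F=\operatorname{Hess}h\neq0$. The same example shows that the displayed identity $\dv_jF=\tfrac12(e_j\gp\Dp F-e_j\gm\Dm F)$ cannot be used as a pointwise identity of fields in the way you need; expanding $e_j\gp(\nabla\gp F)-e_j\gm(\nabla\gm F)=\sum_k(e_j\gp e_k\gp\dv_kF-e_j\gm e_k\gm\dv_kF)$, only the $k=j$ term produces $2\dv_jF$, and the cross terms $k\neq j$ do not cancel (take $F=\phi e_1$ in $\R^2$: the right-hand side evaluates to $\dv_1\phi\,e_1-\dv_2\phi\,e_2$, not to $\dv_1\phi\,e_1$). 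Your own $p=2$ sanity check already signals the problem: the Gaffney identity at $p=2$ is an \emph{integral} identity requiring the integration by parts $\int\langle dF,\delta F\rangle\,dx=0$; it is not pointwise, precisely because $\vert\nabla\otimes F\vert^2\neq\vert dF\vert^2+\vert\delta F\vert^2$ at individual points.

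The inequality is genuinely non-local, which is why the paper cites Scott's Proposition 4.3 rather than giving an algebraic proof. The standard route is to write $\dv_jF=\dv_j\Delta^{-1}\Dp(\Dp F)$, i.e.\ to express each partial derivative as a second-order Riesz transform (a composition of $\mathcal{R}_j$ with the operators $\mathbf{R}\wedge$ and $\mathbf{R}\ri$) applied to $dF$ and $\delta F$, and then invoke $L^p$-boundedness of these Calder\'on--Zygmund operators for $1<p<\infty$. That the constant necessarily depends on $p$ and that the inequality fails at $p=1$ and $p=\infty$ (Ornstein's non-inequality) is a further indication that no pointwise algebraic bound can prove it.
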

We note that since $C^{\infty}_0(\Om,\Lambda \R^n)$ is dense in $W^{1,p}(\R^n,\Lambda \R^n)$, if follows from an approximation argument that \eqref{eq:LpGaff} holds in $W^{1,p}(\R^n,\Lambda\R^n)$ as well. Moreover, if the left hand side of \eqref{eq:LpGaff} is finite so is the right hand side.

\begin{Lem}\label{lem:LocaId}
Let $F\in W^{1,2}_{d,\delta}(\Om,\Lambda^{(0,2)}\R^n)$ be a solution of $\Dm F=\mathcal{M}\Dp F$. Furthermore, let $\mathcal{M}_0$ denote the extension by zero of $\mathcal{M}$ from $\Om$ to all of $\R^n$ and let $\sigma_{\mathcal{M}}(\xi,x)$ denote the symbol of $\mathcal{P}_{\mathcal{M}}=\Dm-\mathcal{M}\Dp$ acting point-wise on multivectors $w\in \Lambda\R^n$ through 
\begin{align*}
\sigma_{M}(\xi,x)w=\xi \gm w-\mathcal{M}(x)\xi\gp w,
\end{align*}
for any $\xi \in \R^n$. Then for any $\eta\in C^\infty_0(\Om)$, $\eta F$ solves the equation 
\begin{align}\label{eq:LocalDB}
\mathcal{P}_{\mathcal{M}}(\eta(x)F(x))=\sigma_{\mathcal{M}}(\nabla \eta(x),x)F(x)
\end{align}
in $\R^n$.
Furthermore, if we let $\mathcal{M}_0$ denote the extension of $\mathcal{M}$ by zero from $\Om$ to $\R^n$ we have the recursion identity 
\begin{align}\label{eq:Recursion}
\eta(x)F(x)=(\Cc^-(I-\mathcal{M}_0(x)\BB)^{-1}\sigma_{\mathcal{M}}(\nabla \eta,x)F)(x)
\end{align}
and the identities
\begin{align}
\label{eq:RecursionD1}
&\eta(x)\Dp F(x)=\BB(I-\mathcal{M}_0(x)\BB)^{-1}\sigma_{\mathcal{M}}(\nabla \eta,x)F)(x)-\nabla \eta(x)\gp F(x),\\
\label{eq:RecursionD2}
&\eta(x)\Dm F(x)=(I-\mathcal{M}_0(x)\BB)^{-1}\sigma_{\mathcal{M}}(\nabla \eta,x)F)(x)-\nabla \eta(x)\gm F(x).
\end{align}
\end{Lem}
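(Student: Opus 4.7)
First I would verify the localization identity $\mathcal{P}_{\mathcal{M}_0}(\eta F) = \sigma_{\mathcal{M}}(\nabla\eta, x)F$ on $\R^n$. Since $\eta$ is scalar-valued, the Leibniz rule applied to the formal nabla symbol gives
\begin{align*}
\Dp(\eta F) = \nabla\eta \gp F + \eta\,\Dp F, \qquad \Dm(\eta F) = \nabla\eta \gm F + \eta\,\Dm F.
\end{align*}
Subtracting $\eta(\Dm F - \mathcal{M}\Dp F) = 0$, which holds on $\Om \supset \mathrm{supp}\,\eta$ (so that $\mathcal{M}$ may be replaced by $\mathcal{M}_0$), leaves
\begin{align*}
\Dm(\eta F) - \mathcal{M}_0 \Dp(\eta F) = \nabla\eta \gm F - \mathcal{M}_0\,\nabla\eta \gp F = \sigma_{\mathcal{M}}(\nabla\eta, x) F
\end{align*}
as an identity in $L^2(\R^n, \Lambda)$.

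Next I would exploit the compact support of $G := \eta F$ in $\R^n$ to establish the auxiliary identity $\Dp G = \BB\,\Dm G$ together with $\Dm G = \BB\,\Dp G$. The key step is the recovery formula $G = \Cc^+ \Dp G$: setting $H := \Cc^+ \Dp G$, Theorem \ref{thm:DiracC} gives $\Dp(H - G) = 0$, while both $H$ and $G$ decay at infinity (the former by the Hardy--Littlewood--Sobolev bound on $\Cc^+$, the latter by compact support). Each scalar component of $H - G$ is then harmonic on $\R^n$ and vanishes at infinity, hence zero by Liouville. Applying $\Dm$ and invoking Theorem \ref{thm:DiracBA} gives $\Dm G = \Dm \Cc^+ \Dp G = \BB \Dp G$, and the analogous computation with $\Cc^-$ yields the other half. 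Substituting $\Dp(\eta F) = \BB\,\Dm(\eta F)$ into the localization identity produces $(I - \mathcal{M}_0 \BB)\Dm(\eta F) = \sigma_{\mathcal{M}}(\nabla\eta, x) F$. Since $\BB$ is an $L^2$-isometry by Theorem 8.2 of \cite{IM1} and $\|\mathcal{M}_0\|_\infty = M < 1$, the operator norm $\|\mathcal{M}_0 \BB\|_{L^2 \to L^2} \leq M < 1$, so $I - \mathcal{M}_0 \BB$ is invertible on $L^2(\R^n, \Lambda)$ via a convergent Neumann series and
\begin{align*}
\Dm(\eta F) = (I - \mathcal{M}_0 \BB)^{-1} \sigma_{\mathcal{M}}(\nabla\eta, x) F.
\end{align*}
Combining this with $\Dm(\eta F) = \eta \Dm F + \nabla\eta \gm F$ yields \eqref{eq:RecursionD2}.

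Finally, applying $\Cc^-$ to the previous display and using $\Cc^- \Dm(\eta F) = \eta F$ (the same Liouville argument as in the second paragraph, with $\Cc^-$ in place of $\Cc^+$) gives the recursion \eqref{eq:Recursion}. For \eqref{eq:RecursionD1} I would apply $\BB$ to the identity for $\Dm(\eta F)$, use $\Dp(\eta F) = \BB\,\Dm(\eta F)$ together with the algebraic resolvent identity $\BB(I - \mathcal{M}_0 \BB)^{-1} = (I - \BB\mathcal{M}_0)^{-1}\BB$, and conclude by the Leibniz splitting $\Dp(\eta F) = \eta \Dp F + \nabla\eta \gp F$.

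The main technical subtlety is the inversion formula $G = \Cc^\pm \mathcal{D}^\pm G$ at the regularity $W^{1,2}_{d,\delta}$ for compactly supported $G$. Beyond the Liouville argument sketched above, it can be handled by approximating $G$ by a sequence in $C_c^\infty(\R^n, \Lambda)$ in the graph norm of $\mathcal{D}^\pm$ (so that the Cauchy--Pompieu formula applies classically, with the boundary term over a large enclosing ball vanishing identically for large radii) and passing to the limit using continuity of $\Cc^\pm : L^2 \to L^{2^*}$ from Theorem \ref{thm:DiracC}.
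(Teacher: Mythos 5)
Your proposal is correct and follows essentially the same route as the paper: Leibniz rule to get the localized inhomogeneous equation, the substitution $\Dp(\eta F)=\BB\,\Dm(\eta F)$ valid for compactly supported fields, Neumann-series inversion of $I-\mathcal{M}_0\BB$ using that $\BB$ is an $L^2$-isometry and $M<1$, and finally the Cauchy transform to recover $\eta F$. Your Liouville/decay justification of the recovery formula $G=\Cc^{\pm}\mathcal{D}^{\pm}G$ is a welcome expansion of a step the paper leaves implicit; the resolvent identity you invoke for \eqref{eq:RecursionD1} is harmless but unnecessary, since applying $\BB$ to the formula for $\Dm(\eta F)$ already gives the stated expression.
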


\begin{proof}
Set $\Psi=\eta F$. We have
\begin{align}\label{eq:LocalD1}
\Dp \Psi(x)&=\Dp(\eta(x) F(x))=\nabla \eta(x)\gp F(x)+\eta(x) \Dp F(x),\\
\label{eq:LocalD2}
\Dm \Psi(x)&=\Dm(\eta(x) F(x))=\nabla \eta(x)\gm F(x)+\eta(x) \Dm F(x).
\end{align}
Hence 
\begin{align*}
\mathcal{M}\Dp \Psi(x)&=\mathcal{M}(x)(\nabla \eta(x)\gp F(x))+\eta(x) \mathcal{M}(x)\Dp F(x)
\end{align*}
Thus $\Psi$ solves the inhomogeneous Hodge-Dirac equation 
\begin{align*}
\Dm \Psi(x)-\mathcal{M}(x)\Dp \Psi(x)&=\eta(x)(\Dm F(x)-\mathcal{M}(x)\Dp F(x))+\nabla \eta(x)\gm F(x)-\mathcal{M}(x)\nabla \eta(x)\gm F(x)\\
&=\sigma_{M}(\nabla \eta(x),x)F(x)
\end{align*}
in $\R^n$.
Using the Beurling-Ahlfors transform in $\R^n$ we find
\begin{align}\label{eq:Cap1}
\Dm \Psi(x)&=(I-\mathcal{M}_0 \BB)^{-1}\sigma_{\mathcal{M}}(\nabla \eta,x)F(x),\\
\label{eq:Cap2}
\Dp \Psi(x)&=\BB \circ (I-\mathcal{M}_0(x) \BB)^{-1}\sigma_{\mathcal{M}}(\nabla \eta,x)F(x). 
\end{align}
Taking the Cauchy transform gives the recursion identity
\begin{align*}
\eta(x)F(x)=\Psi(x)=\Cc^-(I-\mathcal{M}_0(x))^{-1}\sigma_{\mathcal{M}}(\nabla \eta,x)F(x).
\end{align*}
Finally using the \eqref{eq:Cap1}-\eqref{eq:Cap2} together \eqref{eq:LocalD1}-\eqref{eq:LocalD2} gives the identities \eqref{eq:RecursionD1}-\eqref{eq:RecursionD2} and the lemma is proved. 
 \end{proof}

Using Lemma \ref{lem:LocaId} we get the following version of Meyers' higher integrability theorem.

\begin{Thm}\label{thm:Meyers}
Let $F\in W^{1,2}_{d,\delta}(\Om,\Lambda^{(0,2)} \R^n)$ solve the Dirac--Beltrami equation $\Dm F=\mathcal{M}(x)\Dp F$ in $\Om$. Then for every $p$ such that $p\leq 2^\ast=2n/(n-2)$ and $2\leq p<p_M$, $F\in W^{1,p}_{loc}(\Om,\Lambda \R^n)$ and for every $\eta\in C^{\infty}_0(\Om)$ we have the Caccioppoli inequality 
\begin{align*}
\int_{\R^n}\vert \eta(x)\vert^{p}\vert \nabla \otimes F(x)\vert^{p}dx\leq C\int_{\R^n}\vert \nabla \eta(x)\vert^{p}\vert F(x)\vert^{p}dx.
\end{align*}
for some constant $C$ depending only on $n,p, \Vert \BB\Vert_{p}$ and $M$. 
\end{Thm}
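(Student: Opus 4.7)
The plan is to use the recursion identities of Lemma \ref{lem:LocaId} to transfer the problem to one about Calder\'on--Zygmund operators on $\R^n$, and then combine with the $L^p$--Gaffney inequality. Sobolev embedding $W^{1,2}_{\mathrm{loc}}(\Om) \hookrightarrow L^{2^\ast}_{\mathrm{loc}}(\Om)$ already gives $F \in L^p_{\mathrm{loc}}(\Om, \Lambda \R^n)$ for every $p \le 2^\ast$; the content of the theorem is to promote this to control of $\nabla \otimes F$ in $L^p$.

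Fix $\eta \in C_0^\infty(\Om)$. Because $p < p_M$, by the definition \eqref{def:CriticalP} we have $M \Vert \BB \Vert_p < 1$, and since $\Vert \mathcal{M}_0 \Vert_{L^\infty(\R^n)} \le M$, the operator $\mathcal{M}_0 \BB$ is a strict contraction on $L^p(\R^n, \Lambda \R^n)$. Hence the Neumann series $(I-\mathcal{M}_0 \BB)^{-1} = \sum_{k\ge 0}(\mathcal{M}_0 \BB)^k$ converges in operator norm. Using Lemma \ref{lem:Norm}, the source term $\sigma_{\mathcal{M}}(\nabla \eta, x) F = \nabla \eta \gm F - \mathcal{M}(x)(\nabla \eta \gp F)$ is pointwise bounded by $(1+M) |\nabla \eta| |F|$, hence lies in $L^p(\R^n, \Lambda \R^n)$ by the Sobolev step above. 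Inserting this bound into the identities \eqref{eq:RecursionD1}--\eqref{eq:RecursionD2} and using the $L^p$--boundedness of $\BB$ yields
\begin{equation*}
\Vert \eta \Dp F \Vert_{L^p(\R^n)} + \Vert \eta \Dm F \Vert_{L^p(\R^n)} \le C \bigl\Vert |\nabla \eta|\,|F| \bigr\Vert_{L^p(\R^n)},
\end{equation*}
with $C$ depending only on $n$, $p$, $M$, and $\Vert \BB \Vert_p$.

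From the Leibniz rules $\Dp(\eta F) = \nabla \eta \gp F + \eta \Dp F$ and $\Dm(\eta F) = \nabla \eta \gm F + \eta \Dm F$, the previous estimate upgrades to the same $L^p$ bound on $\Vert \Dp(\eta F) \Vert_p + \Vert \Dm(\eta F) \Vert_p$. Since $\eta F$ is compactly supported in $\R^n$, the $L^p$--Gaffney inequality \eqref{eq:LpGaff} applies and bounds $\Vert \nabla \otimes (\eta F) \Vert_p$ by the same right-hand side. Finally, $\eta (\nabla \otimes F) = \nabla \otimes (\eta F) - (\nabla \eta)\otimes F$ together with the triangle inequality gives
\begin{equation*}
\int_{\R^n} |\eta(x)|^p |\nabla \otimes F(x)|^p\,dx \le C \int_{\R^n} |\nabla \eta(x)|^p |F(x)|^p\,dx,
\end{equation*}
which is the claimed Caccioppoli inequality. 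Choosing $\eta$ to be a standard cutoff between concentric balls $B_r \Subset B_{2r} \Subset \Om$ then shows $F \in W^{1,p}_{\mathrm{loc}}(\Om, \Lambda \R^n)$.

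The crux of the argument is the Neumann series step: convergence fails precisely when $M \Vert \BB \Vert_p \ge 1$, i.e.\ at $p = p_M$, and this is the sharp obstruction to extending the range of integrability by this method. Any improvement beyond $p_M$ would require a genuinely different argument or sharper information on the $L^p$--norm of the Beurling--Ahlfors transform. A minor technical point is the upper bound $p \le 2^\ast$: it is forced by the single Sobolev embedding step used to place $|\nabla \eta| |F|$ in $L^p$, and the method could in principle be iterated to enlarge this range up to $p_M$ by repeated Sobolev embeddings, but the statement records only the one-step version.
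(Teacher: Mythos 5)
Your proposal is correct and follows essentially the same route as the paper: the recursion identities of Lemma \ref{lem:LocaId}, the Neumann series for $(I-\mathcal{M}_0\BB)^{-1}$ on $L^p$ valid precisely for $p<p_M$, the $L^p$--Gaffney inequality applied to the compactly supported field $\eta F$, and the Leibniz rule to pass between $\eta\,\nabla\otimes F$ and $\nabla\otimes(\eta F)$. The only additions are cosmetic (the explicit pointwise bound $(1+M)|\nabla\eta|\,|F|$ on the source term and the closing remarks on sharpness), so nothing further is needed.
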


\begin{proof}
By the Sobolev embedding theorem, $F\in L^{p}(\R^n,\Lambda \R^n)$ for all $1< p\leq 2^\ast=2n(n-2)$. Taking any $p$ such that $p\leq 2^\ast=2n/(n-2)$ and $2\leq p<p_M$. Then by the identities \eqref{eq:RecursionD1}-\eqref{eq:RecursionD2} of Lemma \ref{lem:LocaId}

\begin{align*}
&\int_{\Om}\vert \eta(x)\vert^p (\vert \Dp F(x) \vert^p+\vert \Dm F(x) \vert^p)dx\\
&\leq 2^{p-1}(\Vert (I-\mathcal{M}_0 \BB)^{-1}\Vert^p_p +\Vert \BB(I-\mathcal{M}_0 \BB)^{-1}\Vert^p_p)\int_{\Om}\vert \sigma_{\mathcal{M}}(\nabla \eta,x)F(x)\vert^pdx\\
&+2^{p-1}\int_{\Om}\vert\nabla \eta(x)\vert^p\vert F(x)\vert^pdx\\
&\leq C(\Vert\vert\nabla \eta(x)\vert^p\Vert_\infty\int_{\text{supp}(\nabla \eta)}\vert F(x)\vert^pdx
\end{align*}
since $\Vert (I-\mathcal{M}_0 \BB)^{-1}\Vert_p<+\infty$ by the assumption on $p$ for some constant $C=C(n,p,\Vert S\Vert_p,M)$. 

A similar argument shows that for $\Psi=\eta F$,
\begin{align*}
&\int_{\Om}(\vert  \Dp \Psi(x) \vert^p+\vert \Dm \Psi(x) \vert^p)dx\leq C'(\Vert\vert\nabla \eta(x)\vert^p\Vert_\infty\int_{\text{supp}(\nabla \eta)}\vert F(x)\vert^pdx
\end{align*}
for some constant $C'$. 
Thus, by the $L^p$-Gaffney and the inequality above we have 
\begin{align*}
\int_{\R^n}\vert \nabla \otimes \Psi(x)\vert^{p}dx\leq C''\int_{\R^n}\vert \nabla \eta(x)\vert^{p}\vert F(x)\vert^{p}dx
\end{align*}
for some constant depending $C''=C''(p,n,\Vert \BB\Vert_p,M)$. Hence $\Psi\in W^{1,p}_0(\R^n,\Lambda \R^n)$. Using that 
\begin{align*}
\nabla \otimes (\eta(x) F(x))=\nabla \eta(x)\otimes F(x)+\eta(x)\nabla \otimes F(x), 
\end{align*}
we find
\begin{align*}
\int_{\R^n}\vert \eta(x)\vert^{p}\vert \nabla \otimes F(x)\vert^{p}dx\leq (C''+1)\int_{\R^n}\vert \nabla \eta(x)\vert^{p}\vert F(x)\vert^{p}dx.
\end{align*}

\end{proof}

\begin{rem}
That solution $u$ of \eqref{eq:DirPro1} has a higher integrability is well-known and goes back to Meyers in \cite{Mey}. The proof here follows the proof of Theorem 5.4.2 in \cite{AIM} of the Beltrami equation in the plane, originally due to Bojarski. What is perhaps a bit more surprising is that the $L^p$-norms of the Beurling-Ahlfors transform enters into the assumptions also in higher dimension. 
\end{rem}


\subsection{\sffamily Hölder Regularity}

The proof that weak solutions of 
\begin{align}\label{eq:WeakS}
\text{div}\,A(x)\nabla u(x)=0
\end{align}
are Hölder continuous goes back to the classical work of E. De Giorgi in \cite{DeG} using Caccioppoli inequalities and J. Nash in \cite{Nash} using parabolic equations. Later the same result was also proved using Harnack inequalities by J. Moser in \cite{Moser}. Here of course as usual this result holds true under the uniform ellipticity assumption  
\begin{align*}
\lambda \vert v\vert^2\leq \langle A(x)v,v\rangle\leq \Lambda \vert v\vert^2
\end{align*}
for all $v\in \R^n$ and $x\in \Om$ for some domain $\Om \subset \R^n$ when $A$ is symmetric, though the symmetry of $A$ is not needed.

We will now first show that the Hölder continuity of $u$ implies Hölder continuity of the associated field $F$ solving the Dirac--Beltrami equation.

For the readers convenience we recall the definition of the Campanato spaces $\LL^{p,\lambda}(\Om,\Lambda)$. 

\begin{Def}[Camapanto spaces]
Set $\Om_{\rho}(x_0)=\Om\cap B_{\rho}(x_0)$.
\begin{align*}
\LL^{p,\lambda}(\Om,\Lambda):=\bigg\{F\in L^p(\Om,\Lambda): \sup_{\substack{x_0\in \Om\\ \rho>0}}\rho^{-\lambda}\int_{\Om_\rho(x_0)}\vert F(x)-(F)_{x_0,\rho}\vert^pdx<+\infty\bigg\},
\end{align*}
where 
\begin{align*}
(F)_{x_0,\rho}:=\frac{1}{\vert \Om_{x_0}(\rho)\vert}\int_{\Om_{x_0}(\rho)}F(x)dx. 
\end{align*}
The Campanato space $\LL^{\lambda,p}(\Om,\Lambda)$ is equipped with the semi-norm 
\begin{align*}
[F]_{p,\lambda}^p:=\sup_{\substack{x_0\in \Om\\ \rho>0}}\rho^{-\lambda}\int_{\Om_\rho(x_0)}\vert F(x)-(F)_{x_0,\rho}\vert^pdx
\end{align*}
and the norm 
\begin{align*}
\Vert F\Vert_{\LL^{p,\lambda}(\Om,\Lambda)}:=[F]_{p,\lambda}+\Vert F\Vert_{L^p(\Om,\Lambda)}. 
\end{align*}
\end{Def}

We also recall Campanato's theorem
\begin{Thm}[Campanato's theorem]
For $n<\lambda\leq n+p$ and $\alpha=(\lambda-n)/p$ we have $\LL^{p,\Lambda}(\Om,\Lambda)\cong C^{0,\alpha}(\Om,\Lambda)$ and the Hölder semi-norm
\begin{align*}
[F]_{C^{0,\alpha}}:=\sup_{\substack{x,y\in \Om\\ x\neq y}}\frac{\vert F(x)-F(y)\vert}{\vert x-y\vert^\alpha}
\end{align*}
is equivalent to the Campanato semi-norm $[F]_{p,\lambda}$. 
\end{Thm}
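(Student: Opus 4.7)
The statement is a two-sided norm equivalence, so my plan is to prove the inclusions $\LL^{p,\lambda} \hookrightarrow C^{0,\alpha}$ and $C^{0,\alpha} \hookrightarrow \LL^{p,\lambda}$ separately, with $\alpha = (\lambda-n)/p$. The easy direction $C^{0,\alpha} \hookrightarrow \LL^{p,\lambda}$ is direct: if $F \in C^{0,\alpha}$, then for $x \in \Om_\rho(x_0)$ one has $|F(x) - (F)_{x_0,\rho}| \leq \frac{1}{|\Om_\rho(x_0)|}\int_{\Om_\rho(x_0)} |F(x) - F(y)|\,dy \leq (2\rho)^\alpha [F]_{C^{0,\alpha}}$. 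Raising to the $p$-th power and integrating gives $\int_{\Om_\rho(x_0)} |F - (F)_{x_0,\rho}|^p dx \leq C \rho^{p\alpha + n} [F]^p_{C^{0,\alpha}}$, and since $p\alpha + n = \lambda$ by construction, this is exactly the Campanato estimate $[F]_{p,\lambda} \leq C [F]_{C^{0,\alpha}}$.

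For the nontrivial inclusion $\LL^{p,\lambda} \hookrightarrow C^{0,\alpha}$, the core plan is to show that the mean values $(F)_{x_0,\rho}$ form a Cauchy net as $\rho \to 0^+$, use this to define a pointwise representative $\tilde F(x_0) := \lim_{\rho \to 0^+} (F)_{x_0,\rho}$ (which coincides with $F$ almost everywhere by Lebesgue differentiation), and then prove a Hölder bound on $\tilde F$ directly. The engine is a dyadic oscillation estimate: writing
\begin{equation*}
(F)_{x_0,\rho/2} - (F)_{x_0,\rho} = \frac{1}{|\Om_{\rho/2}(x_0)|}\int_{\Om_{\rho/2}(x_0)}\!(F(z) - (F)_{x_0,\rho})\,dz,
\end{equation*}
then applying Hölder's inequality (with the measure-density $|\Om_{\rho/2}(x_0)| \geq c \rho^n$) together with the Campanato bound on the enclosing set $\Om_\rho(x_0)$ yields $|(F)_{x_0,\rho/2} - (F)_{x_0,\rho}| \leq C \rho^\alpha [F]_{p,\lambda}$. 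Since $\alpha > 0$, summing this as a geometric series over radii $\rho 2^{-k}$ gives Cauchy convergence and the quantitative rate $|(F)_{x_0,\rho} - \tilde F(x_0)| \leq C \rho^\alpha [F]_{p,\lambda}$.

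To promote the convergence rate into a Hölder bound, for $x,y \in \Om$ with $r := |x-y|$ small I insert the mean values $(F)_{x,2r}$ and $(F)_{y,2r}$ via
\begin{equation*}
|\tilde F(x) - \tilde F(y)| \leq |\tilde F(x) - (F)_{x,2r}| + |(F)_{x,2r} - (F)_{y,2r}| + |(F)_{y,2r} - \tilde F(y)|.
\end{equation*}
The outer two terms are bounded by $C r^\alpha [F]_{p,\lambda}$ by the previous step. For the middle term, the key geometric observation is $\Om_r(x) \subset \Om_{2r}(x) \cap \Om_{2r}(y)$; averaging the constant difference over $\Om_r(x)$ and inserting $\pm F(z)$ reduces it to two averages of the form $\frac{1}{|\Om_r(x)|}\int_{\Om_r(x)}|F - (F)_{\cdot,2r}|$, each controlled by $C r^\alpha [F]_{p,\lambda}$ via Hölder's inequality and the Campanato condition on the enclosing ball of radius $2r$. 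Combining gives $|\tilde F(x) - \tilde F(y)| \leq C |x-y|^\alpha [F]_{p,\lambda}$.

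The only genuinely delicate point is the uniform measure-density assumption $|\Om_\rho(x_0)| \geq c \rho^n$ that underlies every averaging step; this holds for Lipschitz domains and is implicit in the statement. Once this geometric hypothesis is in force, the vector-valued extension to $\Lambda \R^n$-valued functions is automatic, since all estimates are componentwise in an orthonormal basis of $\Lambda \R^n$. I expect no serious obstacle: the theorem is classical and the outline above is the original Campanato--Meyers argument transcribed to the multivector setting.
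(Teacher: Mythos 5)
Your proposal is correct and is essentially the argument the paper points to: the paper gives no proof of its own but cites Theorem 5.5 of Giaquinta--Martinazzi, and your dyadic-averaging/geometric-series/three-term-overlap scheme is exactly that classical Campanato argument, with the measure-density condition $\vert \Om_\rho(x_0)\vert\geq c\rho^n$ correctly identified as the only geometric hypothesis needed (valid for the Lipschitz domains used in the paper, and trivially for the interior balls in Lemma 9.2). The componentwise reduction for $\Lambda\R^n$-valued fields is also exactly the paper's remark that ``the proof for vector valued functions is the same.''
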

For a proof for scalar functions see Theorem 5.5 in \cite{GiMa}. The proof for vector valued functions is the same. 

\begin{Lem}\label{lem:Camp}
Let $F\in W^{1,2}_{d,\delta}(\Om,\Lambda)$ be a solution of $\Dm F=\mathcal{M}\Dp F$. Then for every $B_r(x_0)\Subset \Om$ there exists a constant $C=C(n,\lambda,\Lambda)$ such that 
\begin{align*}
\int_{B_r(x_0)}\vert F(x)-(F)_{x_0,r}\vert^2dx\leq C\int_{B_{2r}(x_0)}\vert u(x)-(u)_{x_0,2r}\vert^2dx. 
\end{align*}
In particular, for any $U\Subset \Om$ we have $ [F]_{2,\lambda}\leq C[u]_{2,\lambda}$ on $U$.  
\end{Lem}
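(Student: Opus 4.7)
The approach is to decompose $F=u+v$ where $u=\langle F\rangle_0$ is scalar and $v=\langle F\rangle_2$ is bivector-valued, exploit Clifford-grade orthogonality to obtain the pointwise identity
\[
|F-(F)_{x_0,r}|^{2}=|u-(u)_{x_0,r}|^{2}+|v-(v)_{x_0,r}|^{2},
\]
and bound the two summands separately by $\int_{B_{2r}}|u-(u)_{x_0,2r}|^{2}\,dx$. The scalar summand is immediate from the best-constant property of the mean: since $(u)_{x_0,r}$ minimizes $c\mapsto\int_{B_r}|u-c|^{2}\,dx$ over constants,
\[
\int_{B_r}|u-(u)_{x_0,r}|^{2}\,dx\le \int_{B_r}|u-(u)_{x_0,2r}|^{2}\,dx\le \int_{B_{2r}}|u-(u)_{x_0,2r}|^{2}\,dx.
\]

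For the bivector summand, first apply the Poincar\'e inequality on $B_r$ to reduce matters to bounding $r^{2}\int_{B_r}|\nabla v|^{2}\,dx$. The key input is that the equation $\Dm F=\mathcal{M}\Dp F$ combined with the grade decomposition $F=u+v$ forces the gauge conditions $dv=0$ and $\delta v=A\nabla u$ derived in Section~\ref{sec:GaugeCond}. Fix a cutoff $\eta\in C_0^\infty(B_{3r/2})$ with $\eta\equiv 1$ on $B_r$ and $|\nabla\eta|\le C/r$, and apply the flat-space Gaffney identity $\int_{\R^n}|\nabla\Phi|^{2}=\int_{\R^n}(|d\Phi|^{2}+|\delta\Phi|^{2})$ to the compactly supported bivector field $\Phi=\eta(v-c)$. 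Using $d\Phi=d\eta\wedge(v-c)$ (since $dv=0$) and $\delta\Phi=\eta A\nabla u-d\eta\ri(v-c)$, together with the ellipticity bound $|A\nabla u|\le \Lambda^{-1}|\nabla u|$, produces the Caccioppoli-type inequality
\[
\int_{B_r}|\nabla v|^{2}\,dx\le C\int_{B_{3r/2}}|\nabla u|^{2}\,dx+\frac{C}{r^{2}}\int_{B_{3r/2}\setminus B_r}|v-c|^{2}\,dx
\]
valid for every constant bivector $c$.

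The main obstacle is eliminating the lower-order term $\int|v-c|^{2}$, since $v$ enjoys no direct local bound in terms of $u$. I would handle this by Widman's hole-filling trick: choose $c=(v)_{B_{3r/2}\setminus B_r}$ and apply the Poincar\'e inequality on the fat annulus $B_{3r/2}\setminus B_r$ (whose bounded aspect ratio $3/2$ forces a Poincar\'e constant of size at most $Cr^{2}$) to convert the annulus integral of $|v-c|^{2}$ into $Cr^{2}$ times the annulus integral of $|\nabla v|^{2}$. Adding $C\int_{B_r}|\nabla v|^{2}\,dx$ to both sides of the resulting estimate yields a contractive inequality
\[
\int_{B_r}|\nabla v|^{2}\,dx\le \theta\int_{B_{3r/2}}|\nabla v|^{2}\,dx+C\int_{B_{3r/2}}|\nabla u|^{2}\,dx
\]
with $\theta=C/(C+1)<1$ independent of $r$. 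The standard iteration lemma for nondecreasing functions satisfying such contractive bounds on a sequence of fat annuli then absorbs the $\nabla v$ term to yield $\int_{B_r}|\nabla v|^{2}\,dx\le C\int_{B_{3r/2}}|\nabla u|^{2}\,dx$. Finally, the standard Caccioppoli inequality for $\operatorname{div}A\nabla u=0$ on the pair $B_{3r/2}\subset B_{2r}$ converts the right-hand side into $(C/r^{2})\int_{B_{2r}}|u-(u)_{x_0,2r}|^{2}\,dx$, and assembling the ingredients closes the estimate with $C=C(n,\lambda,\Lambda)$. The Campanato-seminorm bound $[F]_{2,\lambda}\le C[u]_{2,\lambda}$ on any $U\Subset\Om$ is then immediate from the definition of the seminorm.
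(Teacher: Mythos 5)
Your overall strategy (reduce everything to the Caccioppoli inequality for $u$) matches the paper's, and the scalar part, the grade-orthogonality splitting, and the use of the gauge conditions $dv=0$, $\delta v=A\nabla u$ are all fine. The gap is in the bivector part, specifically in the claim that ``the standard iteration lemma \dots absorbs the $\nabla v$ term to yield $\int_{B_r}|\nabla v|^2\le C\int_{B_{3r/2}}|\nabla u|^2$.'' Your hole-filled inequality has the form $f(\rho)\le\theta f(R)+Cg(R)$ with $f(\rho)=\int_{B_\rho}|\nabla v|^2$ and the \emph{larger} radius on the right; iterating it pushes the radii outward, so after $k$ steps you retain a residual $\theta^k\int_{B_{(3/2)^k r}}|\nabla v|^2$, and the iteration must stop when $(3/2)^k r$ reaches $\mathrm{dist}(x_0,\partial\Om)$. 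What you actually obtain is a decay estimate of the form
\[
\int_{B_r}|\nabla v|^2\le \Big(\tfrac{r}{R}\Big)^{\beta}\int_{B_R}|\nabla v|^2+C\int_{B_R}|\nabla u|^2,\qquad R\sim\mathrm{dist}(x_0,\partial\Om),
\]
in which both terms live on a macroscopic ball and the first is an $F$-dependent quantity not controlled by $\int_{B_{2r}}|u-(u)_{x_0,2r}|^2$. This is the classical Widman trick, and it yields decay, not absorption; the Giaquinta--Giusti absorption lemma you appear to invoke requires the contraction with a \emph{fixed} $\theta<1$ for \emph{all} pairs $\rho<R$, whereas your $\theta(\rho,R)\to1$ as $\rho\to R$. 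So the stated inequality, with a constant depending only on $n,\lambda,\Lambda$ and with $B_{2r}$ on the right, does not follow from your argument.

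The detour through $\int|\nabla v|^2$ is also unnecessary. The paper applies the Poincar\'e inequality for multivector fields in the form
\[
\int_{B_r(x_0)}|F(x)-(F)_{x_0,r}|^2\,dx\le Cr^2\int_{B_r(x_0)}\big(|dF(x)|^2+|\delta F(x)|^2\big)\,dx
\]
(Theorem \ref{thm:Poinc}), whose right-hand side involves only $dF$ and $\delta F$ rather than the full gradient $\nabla F$. Since $dF=\nabla u$ and $\delta F=A\nabla u$, this gives $\int_{B_r}|F-(F)_{x_0,r}|^2\le C(1+\Lambda^2)\,r^2\int_{B_r}|\nabla u|^2$ with no cutoff, no lower-order term and no iteration, after which the ordinary Caccioppoli inequality for $\operatorname{div}A\nabla u=0$ on the pair $B_r\subset B_{2r}$ finishes the proof. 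If you want to keep your decomposition, the fix is simply to use this $(d,\delta)$-form of the Poincar\'e inequality for $v$ in place of the Gaffney-plus-hole-filling step. (A minor slip: the ellipticity bound should read $|A\nabla u|\le\Lambda|\nabla u|$, not $\Lambda^{-1}|\nabla u|$.)
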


\begin{proof}
By the Poincaré inequality \ref{thm:Poinc},
\begin{align*}
\int_{B_r(x_0)}\vert F(x)-(F)_{x_0,r}\vert^2dx\leq Cr^2\int_{B_r(x_0)}(\vert dF(x)\vert^2+\vert \delta F(x)\vert^2)dx
\end{align*}
By the gauge condition \eqref{eq:GaugeCond}, $dF(x)=du(x)=\nabla u(x)$ and $\delta F(x)=\delta v(x)=A(x)\nabla u(x)$. Hence, by the ellipticity assumption \eqref{eq:EBound1}
\begin{align*}
\vert dF(x)\vert^2+\vert \delta F(x)\vert^2=\vert \nabla u(x))\vert^2+\vert A(x)\nabla u(x)\vert^2\leq (1+\Lambda)\vert \nabla u(x)\vert^2
\end{align*}
By the Caccioppoli inequality in Theorem 4.4 in \cite{GiMa} there exists a constant $C=C(\lambda,\Lambda)$ such that for any $R>r$ and $\xi\in \R$, 
\begin{align*}
\int_{B_r(x_0)}\vert \nabla u(x)\vert^2dx\leq \frac{C}{(R-r)^2}\int_{B_R(x_0)}\vert u(x)-\xi\vert^2dx.
\end{align*}
Taking $\xi=(u)_{x_0,R}$ and $R=2r$ the conclusion follows. 
\end{proof}

\begin{Thm}\label{thm:Hölderloc}
Let $F\in W^{1,2}_{d,\delta}(\Om,\Lambda^(0,2))$ be a solution of $\Dm F=\mathcal{M}\Dp F$. Then $F\in C^{0,\alpha}_{loc}(\Om,\Lambda)$ for some $0<\alpha<1$. 
\end{Thm}

\begin{proof}
By Theorem \eqref{thm:Main1}, $u=\langle F\rangle_0$ is a weak solution  the uniformly elliptic equation 
\begin{align*}
\text{div}\, A(x)\nabla u(x)=0
\end{align*}
where $A(x)=(I-\mathcal{M}(x))(I+\mathcal{M}(x))^{-1}$. Hence, by De Giorgi-Nash-Moser theorem, $u\in C^{0,\alpha}_{loc}(\Om)$. By Campanato's theorem and Lemma \ref{lem:Camp}, $F\in C^{0,\alpha}_{loc}(\Om,\Lambda)$. 
\end{proof}

Though Theorem \ref{thm:Hölderloc} shows that solutions of the Dirac--Beltrami equation are locally Hölder continuous, the proof of this fact reduces to the proof De Giorgi-Nash-Moser theorem in the scalar case. This is somewhat unsatisfactory from the point of view of this paper, which is to derive properties of solutions to the scalar equation \eqref{eq:DirPro1} using the first order Dirac-Beltrami equation for the vector valued field $F$. We therefore give another proof under the assumption that the ellipticity constant $M$ is sufficiently small and the dimension is odd. This proof applies also to solutions of the general Dirac-Beltrami equation, where the field $F$ takes values in the full exterior algebra and is not derived from some scalar second order uniformly elliptic equation. In this case the De Giorgi-Nash-Moser theorem is not applicable.

\begin{Thm}
Assume that $n\geq 3$ is odd. Furthermore, assume that 
\begin{align*}
M\Vert \BB\Vert_{2n}<1. 
\end{align*}
Then any solution $F\in W^{1,2}_{d,\delta}(\Om,\Lambda)$ of $\Dm F(x)=\mathcal{M}(x)\Dm F(x)$ is belongs to $C^{1/2}_{loc}(\Om,\Lambda)$
\end{Thm}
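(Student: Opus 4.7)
The plan is to combine the local representation from Lemma \ref{lem:LocaId} with a bootstrap argument on the Sobolev scale, using the Cauchy transform to trade integrability for regularity until we land in an $L^p$ space with $p>n$, where Proposition \ref{prop:CauchyH�lder} delivers the $1/2$-Hölder regularity. Concretely, fix a cutoff $\eta\in C^\infty_0(\Om)$ that equals $1$ on a ball $B\Subset \Om$, and work with the identity
\begin{align*}
\eta F = \Cc^- (I-\mathcal{M}_0\BB)^{-1}\sigma_{\mathcal{M}}(\nabla \eta,\cdot) F
\end{align*}
from \eqref{eq:Recursion}. The map $F\mapsto \sigma_{\mathcal{M}}(\nabla\eta,\cdot)F$ is multiplication by the compactly supported bounded multiplier $\nabla\eta\otimes(\cdot)-\mathcal{M}(x)\nabla\eta\otimes(\cdot)$, so it preserves the $L^p$ scale; everything reduces to $L^p$-mapping properties of $(I-\mathcal{M}_0\BB)^{-1}$ and of $\Cc^-$.

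The invertibility step is the first tool. Since $\Vert\mathcal{M}_0\BB\Vert_p\leq M\Vert\BB\Vert_p$, and $M\Vert\BB\Vert_2=M<1$ together with $M\Vert\BB\Vert_{2n}<1$, the Riesz--Thorin interpolation theorem yields $M\Vert\BB\Vert_p<1$ for all $p\in[2,2n]$. Hence $I-\mathcal{M}_0\BB$ is invertible on $L^p(\R^n,\Lambda\R^n)$ by a Neumann series for every such $p$. Combined with Theorem \ref{thm:DiracC} (Hardy--Littlewood--Sobolev for $\Cc^-$), which gives $\Cc^-:L^p\to L^{p^*}$ continuously whenever $1<p<n$ with $p^*=np/(n-p)$, and with the analog of Proposition \ref{prop:CauchyH�lder} for $\Cc^-$ (same proof), which gives $\Cc^-:L^p\to C^{0,1-n/p}$ for $p>n$, the tools are in place.

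The bootstrap runs as follows. Starting from $F\in L^2_{loc}(\Om,\Lambda)$, assume inductively that $F\in L^{p_k}_{loc}$ with $p_k\in[2,2n]$ and $p_k<n$. Then $\sigma_{\mathcal{M}}(\nabla\eta,\cdot)F\in L^{p_k}$ with compact support; inversion keeps it in $L^{p_k}$; applying $\Cc^-$ upgrades $\eta F$ to $L^{p_{k+1}}$ with $p_{k+1}=np_k/(n-p_k)$. Iteration from $p_0=2$ gives $p_k=2n/(n-2k)$, and \emph{this is where oddness enters}: the equation $p_k=2n$ has the integer solution $k=(n-1)/2$ precisely because $n$ is odd. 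One checks that for all $0\leq k\leq (n-3)/2$ one has $p_k\leq 2n/3<n$, so the Sobolev step is legitimate, and the final iteration lifts $p=2n/3$ to $p^*=2n$ exactly. After $(n-1)/2$ steps we reach $F\in L^{2n}_{loc}$. One last application of the identity, now inverting on $L^{2n}$ and invoking the Hölder estimate for $\Cc^-$ with $p=2n>n$, yields $\eta F\in C^{0,1/2}(\R^n,\Lambda)$ with Hölder exponent $1-n/(2n)=1/2$. Since $\eta\equiv 1$ on an arbitrary $B\Subset\Om$, we conclude $F\in C^{0,1/2}_{loc}(\Om,\Lambda)$. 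The only delicate bookkeeping is verifying that the intermediate exponents stay strictly below $n$ until the final jump to $2n$, which is where the odd-dimensional assumption does its work.
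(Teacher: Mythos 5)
Your proposal is correct and follows essentially the same route as the paper's proof: the localized recursion identity $\eta F=\Cc^-(I-\mathcal{M}_0\BB)^{-1}\sigma_{\mathcal{M}}(\nabla\eta,\cdot)F$, Neumann-series invertibility of $I-\mathcal{M}_0\BB$ on $L^p$ for $2\leq p\leq 2n$, the Sobolev bootstrap $p_k=2n/(n-2k)$ through the Cauchy transform, terminating exactly at $p=2n$ after $(n-1)/2$ steps because $n$ is odd, and finally the H\"older estimate for the Cauchy transform at $p=2n>n$ giving exponent $1/2$. Your explicit Riesz--Thorin interpolation step for the invertibility and the exponent bookkeeping are details the paper leaves implicit (the paper instead makes the nested choice of cutoffs $\eta_j$ explicit), but the argument is the same.
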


\begin{proof}
Let $n=2m+1$. By the recursion identity \eqref{eq:Recursion}
\begin{align*}
\eta(x)F(x)=(\Cc^-(I-\mathcal{M}_0(x)\BB)^{-1}\sigma_{\mathcal{M}}(\nabla \eta,x)F)(x)
\end{align*}
for any $\eta\in C^\infty_0(\Om)$. By the assumption on $M$, $(I-\mathcal{M}_0(x)\BB)^{-1}$ is invertible on $L^p(\Om,\Lambda)$ for any $2\leq p\leq 2n$. By Theorem \ref{thm:CompactC}
\begin{align*}
\Vert \eta F\Vert_{p^\ast}\leq C\Vert F\Vert_{L^p(\text{supp}(\eta))}
\end{align*}
whenever $1<p<n$. By Appendix \ref{App:2}, this can be iterated $m-1$ times. Thus we may choose $m$ test functions $\eta_j$ such that $\text{supp}(\eta_{j+1})\subset \text{supp}(\eta_{j})$ and $\eta_{j}(x)=1$ for $x\in \text{supp}(\eta_{j+1})$ for $j=1,2,...,m-1$. This gives first the inequality 
\begin{align*}
\Vert \eta_{m-1} F\Vert_{2n}\leq C'\Vert F\Vert_{L^2(\text{supp}(\eta_1))}
\end{align*}
By Proposition \ref{prop:CauchyHölder},
\begin{align*}
\Vert \eta_mF(x)\Vert_{C^{0,1/2}}\leq C''\Vert \sigma_{\mathcal{M}}(\nabla \eta ,x)F\Vert_{L^{2n}}\leq  C'''\Vert F\Vert_{L^2(\text{supp}(\eta_1))}. 
\end{align*}
and the proof is complete. 
\end{proof}


\appendix

\section{\sffamily Auxiliary Algebraic Results}
\subsection{\sffamily Cayley Transform}

\begin{Def}
Let $V$ be a real finite dimensional vector space and let $T\in \LL(V)$ be such that $-1\notin \sigma(T)$. The \emph{Cayley transform} of $T$ is defined by 
\begin{align*}
\Cs(T):=(I-T)\circ (I+T)^{-1}
\end{align*}
\end{Def}

Recall that an operator $T\in \LL(V)$ is called positive if for all $x\neq 0$,
\begin{align*}
\langle Tx,x\rangle >0.
\end{align*}

\begin{Lem}\label{lem:IdentityC}
Assume that $-1\notin \sigma(T)$. Set $y=(I+T)(x)$. Then we have the following identities.
\begin{align}
\label{eq:IdCayley1}
\Cs(T)(y)&=(I-T)(x)\\
\label{eq:IdCayley2}
x&=\frac{1}{2}(y+\Cs(T)(y))\\
\label{eq:IdCayley3}
Tx&=\frac{1}{2}(y-\Cs(T)(y))
\end{align}
\end{Lem}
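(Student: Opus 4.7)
The proof is essentially a direct computation from the definition $\Cs(T) = (I-T)\circ(I+T)^{-1}$, so my plan is to unpack that definition and then combine the resulting expressions linearly. The only hypothesis being used is that $-1 \notin \sigma(T)$, which ensures $(I+T)^{-1}$ exists as an element of $\LL(V)$, and therefore $\Cs(T)$ is well-defined on all of $V$.

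For \eqref{eq:IdCayley1}, I would start from $y = (I+T)(x)$ and apply $(I+T)^{-1}$ to obtain $x = (I+T)^{-1}(y)$; then applying $(I-T)$ to both sides yields
\[
(I-T)(x) = (I-T)\circ(I+T)^{-1}(y) = \Cs(T)(y),
\]
which is exactly the first identity. For \eqref{eq:IdCayley2} and \eqref{eq:IdCayley3}, I would use \eqref{eq:IdCayley1} together with the defining relation $y=(I+T)(x)$ and simply add and subtract:
\[
y + \Cs(T)(y) = (I+T)(x) + (I-T)(x) = 2x, \qquad y - \Cs(T)(y) = (I+T)(x) - (I-T)(x) = 2Tx.
\]
Dividing by $2$ gives both remaining identities.

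There is no real obstacle here; the content of the lemma is just that the Cayley transform realises the linear change of variables $(x,Tx) \leftrightarrow (y,\Cs(T)(y))$ via the relations $y = x+Tx$ and $\Cs(T)(y) = x - Tx$. The lemma will later be invoked pointwise with $T = A(x)$, $x = \nabla u(x)$, and $y = (I+A(x))\nabla u(x) = E(x)+B(x) = \Dp F(x)$, so the point of recording it separately is simply to have \eqref{eq:IdCayley2}--\eqref{eq:IdCayley3} on hand for recovering $\nabla u$ and $A\nabla u$ from $\Dp F$ and $\Dm F$ without needing to re-derive the algebra each time.
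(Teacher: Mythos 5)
Your computation is correct and complete: the invertibility of $I+T$ from $-1\notin\sigma(T)$ gives \eqref{eq:IdCayley1} directly from the definition of $\Cs(T)$, and adding/subtracting $y=(I+T)(x)$ and $\Cs(T)(y)=(I-T)(x)$ yields \eqref{eq:IdCayley2} and \eqref{eq:IdCayley3}. The paper omits the proof as immediate, and your argument is exactly the intended one.
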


\begin{Lem}\label{lem:AccOp}
An operator $T\in \LL(V)$ is positive if and only if $\Vert \Cs(T)\Vert<1$. Furthermore, in this case $\Cs$ is an involution. 
\end{Lem}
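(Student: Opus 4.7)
The plan is to reduce both directions of the equivalence to a single pointwise computation comparing $\|(I+T)x\|^2$ and $\|(I-T)x\|^2$, and then to verify the involution property by a direct algebraic manipulation. First I would note that positivity of $T$ already implies $-1 \notin \sigma(T)$: if $Tx = -x$ for some nonzero $x$, then $\langle Tx,x\rangle = -\|x\|^2 < 0$, contradicting positivity. Hence $(I+T)$ is invertible and $\Cs(T)$ is well defined (this is required throughout, and is part of the standing hypothesis of the lemma).

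For the equivalence, the key identity is
\begin{align*}
\|(I+T)x\|^2 - \|(I-T)x\|^2 = 4\langle Tx, x\rangle,
\end{align*}
obtained by expanding both norms via the inner product. Combined with the substitution $y = (I+T)x$ from Lemma \ref{lem:IdentityC}, which gives $\Cs(T)y = (I-T)x$, this identity reads
\begin{align*}
\|y\|^2 - \|\Cs(T)y\|^2 = 4\langle Tx, x\rangle, \qquad x = (I+T)^{-1}y.
\end{align*}
If $T$ is positive, the right-hand side is strictly positive for every $y \neq 0$, so $\|\Cs(T)y\| < \|y\|$ pointwise. Since $V$ is finite dimensional, the unit sphere is compact and $y \mapsto \|\Cs(T)y\|$ is continuous, hence the supremum is attained and $\|\Cs(T)\| < 1$. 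Conversely, if $\|\Cs(T)\| < 1$, then for every nonzero $y$ we have $\|\Cs(T)y\| \leq \|\Cs(T)\|\,\|y\| < \|y\|$. Given any nonzero $x$, set $y = (I+T)x$; since $(I+T)$ is invertible, $y \neq 0$, and the identity above forces $\langle Tx,x\rangle > 0$, so $T$ is positive.

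For the involution property, assuming $\|\Cs(T)\| < 1$ guarantees $-1 \notin \sigma(\Cs(T))$ and hence $\Cs(\Cs(T))$ is defined. Setting $S := \Cs(T) = (I-T)(I+T)^{-1}$, I would compute directly
\begin{align*}
I + S &= (I+T)(I+T)^{-1} + (I-T)(I+T)^{-1} = 2(I+T)^{-1},\\
I - S &= (I+T)(I+T)^{-1} - (I-T)(I+T)^{-1} = 2T(I+T)^{-1},
\end{align*}
and therefore
\begin{align*}
\Cs(S) = (I-S)(I+S)^{-1} = 2T(I+T)^{-1}\cdot \tfrac{1}{2}(I+T) = T,
\end{align*}
which shows $\Cs \circ \Cs = \mathrm{Id}$ on the set of operators with $\|\Cs(T)\| < 1$.

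There is no real obstacle here; the only mildly delicate point is passing from the pointwise strict inequality $\|\Cs(T)y\| < \|y\|$ to the strict operator norm bound $\|\Cs(T)\| < 1$, which rests on the compactness of the unit sphere in the finite-dimensional space $V$. All other steps are purely algebraic.
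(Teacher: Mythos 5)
Your proof is correct and follows essentially the same route as the paper: both rest on the polarization identity equating $\Vert y\Vert^2-\Vert \Cs(T)y\Vert^2$ (with $y=(I+T)x$) to a positive multiple of $\langle Tx,x\rangle$, followed by a direct algebraic computation for the involution. Your version is if anything slightly cleaner, since you invoke Lemma \ref{lem:IdentityC} directly and make explicit the compactness step needed to pass from the pointwise inequality $\Vert\Cs(T)y\Vert<\Vert y\Vert$ to $\Vert\Cs(T)\Vert<1$.
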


\begin{proof}
If $T$ is positive then $-1\notin \sigma(T)$ and so $\Cs(T)$ is well-defined. Set $M=\Cs(T)$. For any $x\in V$ different from $0$ we have 
\begin{align*}
\vert x\vert^2-\vert Mx\vert^2&=\langle x,x\rangle -\langle x,M^\ast Mx\rangle=\langle x,(I-M^\ast M)x\rangle\\
&=\{\langle x,M^\ast x\rangle-\langle x,M x\rangle=0\}=\langle x,(I-M)^\ast(I+M)x\rangle\\
&=\langle (I-M)x,(I+M)x\rangle=\{\text{ identity \eqref{eq:IdCayley1}}\}=\langle T(I+M)x,(I+M)x\rangle
&=\langle Ty,y\rangle,
\end{align*}
with $y=(I+M)x$. Hence $\Vert M\Vert<1$ and so $-1\notin \sigma(M)$. Conversely, if $\Vert M\Vert<1$, then 
\begin{align*}
0<(1-\Vert M\Vert)^2\vert x\vert^2=\vert x\vert^2-\vert Mx\vert^2=\langle Ty,y\rangle.
\end{align*}
Hence $T$ is positive and $-1\notin \sigma(T)$.  Moreover, by definition 
\begin{align*}
M(I+T)=I-T
\end{align*}
which implies that $T=(I-M)(I+M)^{-1}=\Cs(M)$. Thus, $\Cs$ is an involution.
\end{proof}

We note that if $T=\frac{1}{2}(T+T^\ast)+\frac{1}{2}(T-T^\ast)$, i.e., $T$ is decomposed into its symmetric and anti-symmetric part, then for all $x\neq 0$, we have 
\begin{align*}
\langle Tx,x \rangle=\frac{1}{2}\langle (T+T^\ast)x,x \rangle,
\end{align*}
since 
\begin{align*}
\langle (T-T^\ast)x,x \rangle=\langle Tx,x \rangle-\langle T^\ast x,x \rangle=\langle Tx,x \rangle-\langle x,Tx \rangle=0. 
\end{align*}
\begin{ex}
Assume that $A=\sigma I$ is an isotropic matrix with $\sigma\neq -1$. Then 
\begin{align*}
\Cs(A)=\frac{1-\sigma}{1+\sigma}I. 
\end{align*}
\end{ex}

\begin{ex}
Assume that $A$ is a self-adjoint matrix with eignvalues $0<\lambda_1<..<\lambda_n$ and that $\lambda_1\leq 1\leq \lambda_n$. By the spectral theorem $A=R^\ast DR$ where $R$ is a rotation and $D$ is a diagonal matrix in the eigenvalues of $A$. Hence 
\begin{align*}
\Cs(A)&=(I-A)(I+A)^{-1}=(I-R^\ast DR)(I+R^\ast DR)^{-1}=R^\ast (I-D)R(R^\ast (I+D)R))^{-1}\\&=R^\ast (I-D)RR^\ast (I+D)^{-1}R=R^\ast(I-D)(I+D)R\\
&=R^\ast\Cs(D)R. 
\end{align*}
Hence 
\begin{align*}
\det(\Cs(A))=\prod_{j=1}^n\frac{1-\lambda_j}{1+\lambda_j}. 
\end{align*}
Thus 
\begin{align*}
\Vert \Cs(A)\Vert&=\Vert \Cs(D)\Vert=\max_{\lambda \in \sigma(A)}\bigg\vert \frac{1-\lambda}{1+\lambda}\bigg\vert 
=\max\bigg\{ \frac{1-\lambda_1}{1+\lambda_1},\frac{\lambda_n-1}{\lambda_n+1}\bigg\}.
\end{align*}
In particular,
\begin{align*}
\Vert \Cs(A)\Vert\leq \max\{\lambda_1^{-1},\lambda_n\}\leq \lambda_1^{-1}+\lambda_n,
\end{align*}
and in the special case when $\lambda_{1}^{-1}=\lambda_n$, then 
\begin{align*}
\Vert \Cs(A)\Vert&=\Vert \Cs(D)\Vert=\max\bigg\{ \frac{\lambda_1^{-1}-1}{1+\lambda_1^{-1}},\frac{\lambda_n-1}{\lambda_n+1}\bigg\}=\frac{\lambda_n-1}{\lambda_n+1}. 
\end{align*}
\end{ex}

The example above can be generalized to normal matrices to yield the following lemma:
\begin{Lem}\label{lem:NormalNorm}
Let $A\in \LL(V)$ be a normal matrix such that $-1\neq \sigma(A)$. Then 
\begin{align}
\Vert \Cs(A)\Vert\leq \max\bigg\{ \frac{1-\sigma_1}{1+\sigma_1},\frac{\sigma_n-1}{\sigma_n+1}\bigg\},
\end{align}
where $\sigma_1\leq \sigma_n$ are the smallest and largest singular value of $A$ respectively. 
\end{Lem}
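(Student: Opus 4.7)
The plan is to reduce the claim to a scalar estimate on the eigenvalues via the complex spectral theorem. Complexify $V$ to $V_\C := V\otimes_\R \C$ and extend $A$ to a complex linear $A_\C \in \LL(V_\C)$; since $A$ is real normal, $A_\C$ is a complex normal operator, hence by the spectral theorem there is a unitary $U$ on $V_\C$ and a diagonal matrix $D=\operatorname{diag}(\lambda_1,\ldots,\lambda_n)$ of eigenvalues $\lambda_j \in \C$ with $A_\C = UDU^\ast$. The hypothesis $-1 \notin \sigma(A)$ forces $1+\lambda_j \neq 0$ for every $j$, so $\Cs(D) = \operatorname{diag}\bigl((1-\lambda_j)/(1+\lambda_j)\bigr)$ is well defined and $\Cs(A_\C) = U\,\Cs(D)\,U^\ast$. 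Because $A$ and $A^\ast$ commute, $\Cs(A)$ is again normal, so its operator norm equals its spectral radius; conjugation by a unitary preserves the operator norm and passage from $V$ to $V_\C$ does not change it, giving
$$
\Vert \Cs(A)\Vert \;=\; \max_{1\leq j\leq n}\left|\frac{1-\lambda_j}{1+\lambda_j}\right|.
$$
Moreover, for a normal matrix the singular values coincide with $|\lambda_j|$, so $\sigma_1 = \min_j |\lambda_j|$ and $\sigma_n = \max_j|\lambda_j|$, and the task reduces to a pointwise inequality on each $\lambda_j$.

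\textbf{Scalar step.} For arbitrary $\lambda\in\C$ I use the elementary identity
$$
\left|\frac{1-\lambda}{1+\lambda}\right|^2 \;=\; \frac{1-2\operatorname{Re}\lambda+|\lambda|^2}{1+2\operatorname{Re}\lambda+|\lambda|^2},
$$
which for fixed $r = |\lambda|$ is monotonically decreasing in $\operatorname{Re}\lambda$. Under the positivity (accretivity) assumption on $A$ that places its spectrum in the closed right half plane --- i.e.\ $\operatorname{Re}\lambda_j \geq 0$, cf.\ Lemma \ref{lem:AccOp} --- this quantity is bounded by its value at $\lambda = r$, namely
$$
\left|\frac{1-\lambda}{1+\lambda}\right| \;\leq\; \frac{|1-r|}{1+r}, \qquad r=|\lambda|.
$$
The real-variable function $r\mapsto |1-r|/(1+r)$ is strictly decreasing on $[0,1]$ and strictly increasing on $[1,\infty)$, hence unimodal on $[\sigma_1,\sigma_n]$ with maximum attained at one of the endpoints, yielding $\max\{(1-\sigma_1)/(1+\sigma_1),\,(\sigma_n-1)/(\sigma_n+1)\}$. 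Combining with the reduction above gives the claimed operator norm bound.

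\textbf{Main obstacle.} The delicate point is the positivity condition $\operatorname{Re}\lambda_j \geq 0$, which is essential: a purely imaginary eigenvalue $\lambda = it$ would give $|(1-it)/(1+it)| = 1$, exceeding the asserted bound whenever $\sigma_1 < 1 < \sigma_n$. Thus a purely spectral argument using only the singular values of a general normal matrix is insufficient, and one must exploit that the accretivity encoded by the ambient ellipticity inequality $\langle A(x)v,v\rangle \geq \lambda|v|^2$ forces the Hermitian part $\tfrac{1}{2}(A+A^\ast)$ to be positive definite, pinning the spectrum of $A_\C$ to the closed right half plane. This is the nontrivial geometric input; once it is in hand, the rest of the proof is the routine monotonicity computation above.
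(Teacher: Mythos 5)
Your reduction (complexify, diagonalize the normal operator unitarily, use $\Vert \Cs(A)\Vert=\max_j|(1-\lambda_j)/(1+\lambda_j)|$ and $\sigma_j=|\lambda_j|$) is exactly the paper's route; the paper's own proof is two lines referring back to the preceding self-adjoint example. The gap is in your scalar step. You correctly note that for fixed $r=|\lambda|$ the quantity
\begin{align*}
\left|\frac{1-\lambda}{1+\lambda}\right|^2=\frac{1-2\operatorname{Re}\lambda+r^2}{1+2\operatorname{Re}\lambda+r^2}
\end{align*}
is decreasing in $\operatorname{Re}\lambda$; but on the arc $\{|\lambda|=r,\ \operatorname{Re}\lambda\ge 0\}$ one has $\operatorname{Re}\lambda\in[0,r]$, so a decreasing function is \emph{maximized} at $\operatorname{Re}\lambda=0$ (where the value is $1$) and \emph{minimized} at $\operatorname{Re}\lambda=r$. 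Your displayed inequality $|(1-\lambda)/(1+\lambda)|\le |1-r|/(1+r)$ is therefore backwards: accretivity yields only the lower bound $|(1-\lambda)/(1+\lambda)|\ge|1-r|/(1+r)$ together with the trivial upper bound $1$, and your estimate holds only when $\lambda=r$ is real.

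This is not a repairable slip. Take $V=\R^2$, let $J$ be the rotation by $\pi/2$, and set $A_\eps=\eps I+J$ with $\eps>0$ small. Then $A_\eps$ is normal and strictly positive ($\langle A_\eps v,v\rangle=\eps|v|^2$), both singular values equal $\sqrt{1+\eps^2}$, so the right-hand side of the lemma is $(\sqrt{1+\eps^2}-1)/(\sqrt{1+\eps^2}+1)=O(\eps^2)$; but the eigenvalues are $\eps\pm i$, whence $\Vert\Cs(A_\eps)\Vert=\sqrt{(2-2\eps+\eps^2)/(2+2\eps+\eps^2)}\to 1$ as $\eps\to 0$. So the bound in terms of singular values alone fails for genuinely non-symmetric normal matrices even under strict accretivity --- precisely the phenomenon you flagged in your ``main obstacle'' paragraph, except that pushing the spectrum into the open right half-plane does not cure it. The statement is correct (with equality) when the spectrum of $A$ is real and positive, i.e.\ in the self-adjoint case computed in the example that the paper's proof appeals to; for merely normal accretive $A$ one has $\Vert\Cs(A)\Vert^2=\max_j(1-2\operatorname{Re}\lambda_j+|\lambda_j|^2)/(1+2\operatorname{Re}\lambda_j+|\lambda_j|^2)$, which must be controlled through the accretivity constant $\min_j\operatorname{Re}\lambda_j$ (as in Lemma \ref{lem:AccOp} and the polarization argument of Lemma \ref{lem:EllipticityBound}) and not through $\sigma_1,\sigma_n$ alone. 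You were right to sense that a purely spectral-radius argument is insufficient; the proposed fix does not close the gap, and in this form no fix can.
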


\begin{proof}
We use the fact that normal matrices are diagonizable by unitary matrices and that we have the relation $\sigma_j=\vert \lambda_j\vert$ between the singular values and eigenvalues of $A$. Furthermore, for a normal matrix $A$ we have $\Vert A\Vert=\sup\{\vert \lambda\vert: \lambda\in \sigma(A)\}. $ The statement then follows from an identical argument as in the example above. 
\end{proof}

\begin{Prop}\label{prop:CayleyExt}
Let $T\in \LL(V)$ and assume that $\Vert T\Vert<1$. Then $\Vert \Lambda T\Vert <1.$
\end{Prop}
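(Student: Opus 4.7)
The plan is to diagonalise $T$ via its singular value decomposition and then exploit the multiplicativity of the exterior extension under composition. Writing $T = U\Sigma V^{*}$ with $U,V \in O(V)$ and $\Sigma = \operatorname{diag}(\sigma_1,\dots,\sigma_n)$, where $\sigma_1 \geq \cdots \geq \sigma_n \geq 0$ are the singular values of $T$ and $\sigma_1 = \Vert T\Vert < 1$, I would invoke the uniqueness clause of Definition \ref{def:ExteriorExt} together with property (iv) to obtain the factorisation $\widehat{T} = \widehat{U} \circ \widehat{\Sigma} \circ \widehat{V^{*}}$, after which each factor can be bounded separately.

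The first step is to check that the exterior extension of any orthogonal transformation is itself orthogonal on $\Lambda V$. This will follow from the Gram determinant formula for the induced inner product: for $O \in O(V)$ and decomposable multivectors,
\begin{align*}
\langle \widehat{O}(u_1 \wedge \cdots \wedge u_k),\, \widehat{O}(v_1 \wedge \cdots \wedge v_k)\rangle = \det[\langle O u_i, O v_j\rangle] = \det[\langle u_i, v_j\rangle],
\end{align*}
and multilinearity promotes this identity to all of $\Lambda V$. Thus $\widehat{U}$ and $\widehat{V^{*}}$ are isometries of $\Lambda V$, and the operator norm reduces to $\Vert \widehat{T}\Vert = \Vert \widehat{\Sigma}\Vert$.

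Next, in the ON basis $\{e_j\}_j$ diagonalising $\Sigma$, the induced ON basis $\{e_s\}_{s \subset \overline{n}}$ of $\Lambda V$ consists of eigenvectors of $\widehat{\Sigma}$: iterating property (iv) gives $\widehat{\Sigma}(e_s) = \left(\prod_{j\in s}\sigma_j\right) e_s$. Since $\sigma_j \leq \sigma_1 < 1$ for every $j$, any nonempty index set $s$ satisfies $\prod_{j\in s}\sigma_j \leq \sigma_1 = \Vert T\Vert$, so the operator norm of $\widehat{\Sigma}$ on $\bigoplus_{k\geq 1}\Lambda^k V$ is at most $\Vert T\Vert < 1$.

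A subtlety that must be acknowledged is that $\widehat{T}(1) = 1$ by Definition \ref{def:ExteriorExt}(i), so the scalar component always contributes unit norm; the inequality $\Vert \widehat{T}\Vert < 1$ is therefore to be interpreted as the operator norm on the orthogonal complement of the scalars, $\bigoplus_{k\geq 1}\Lambda^k V$. This is precisely the setting required in Theorem \ref{thm:Main1}, where $\widehat{\mathcal{M}}$ is applied to $\Dp F \in \bigoplus_{k\geq 1}\Lambda^k \R^n$. Beyond verifying the Gram determinant identity above—which I expect to be the only nontrivial ingredient—the argument is just multiplicativity of singular values under the exterior power construction.
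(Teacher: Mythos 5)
Your proof is correct and follows essentially the same route as the paper's: the paper uses the polar factorization $T=R\circ S$ and the spectral theorem for the positive factor $S$ where you use the singular value decomposition, but both rest on the multiplicativity of the exterior extension under composition, the fact that orthogonal maps extend to isometries of $\Lambda V$, and the observation that the eigenvalues of the extension of the diagonal factor are products of singular values, each bounded by $\Vert T\Vert<1$. Your remark about the scalar component is in fact a gain in precision over the paper: since $\widehat{T}(1)=1$ forces norm $1$ on $\Lambda^0V$, the paper's maximum is silently taken only over degrees $k\geq 1$, and your explicit restriction to $\bigoplus_{k\geq 1}\Lambda^kV$ is the correct reading of the statement as it is used.
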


\begin{proof}
By polar factorization $T=R\circ S=S\circ R$. Hence $\Vert T\Vert=\Vert S\Vert$. Let $\sigma_{max}$ be the largest singular value of $T$, i.e. the largest eigenvalue of $S$. Then $\Vert T\Vert=\sigma_{max}$. 
Since $\Lambda T=\Lambda (S\circ R)=\Lambda S\circ \Lambda R$, $\Vert T\Vert=\Vert \Lambda S\Vert$. 
By the spectral theorem $S$ has an ON-eigenbasis $\{\xi_j\}_j$ which induces and ON-eigenmultivector basis for $\Lambda S$. Let $\sigma_1\leq ...\leq \sigma_n$ be the eignvalues for $S$ ordered according to size. Since the largest eigenvalue for $\Lambda^kS$ is $\sigma_n\sigma_{n-1}...\sigma_{n-k+1}$ and $\sigma_n\sigma_{n-1}...\sigma_{n-k+1}\leq \sigma_n^k$, we have 
\begin{align*}
\Vert \Lambda T\Vert=\max\{\sigma_n,\sigma_n\sigma_{n-1},....,\sigma_{n}...\sigma_1\}\leq \max\{\sigma_n,\sigma_n^n\}.
\end{align*}
By assumption, $\Vert T\Vert<1$ so $\sigma_n<1$. Hence $\Vert \Lambda T\Vert\leq \sigma_n^n=\Vert T\Vert^n<1$. 
\end{proof}

\begin{Prop}
Let $T\in \LL(V)$ be self-adjoint and positive. Then $\Lambda T$ is self-adjoint and positive.
\end{Prop}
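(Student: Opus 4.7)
The plan is to use the spectral theorem on $T$ to diagonalize the exterior extension. Since $T$ is self-adjoint and positive, the spectral theorem yields an orthonormal eigenbasis $\{e_j\}_{j=1}^n$ of $V$ with strictly positive eigenvalues $\lambda_1,\ldots,\lambda_n>0$. I will then show that the induced basis $\{e_s\}_{s\subset \bar{n}}$ of $\Lambda V$ is simultaneously an ON-eigenbasis for $\widehat{T}=\Lambda T$, from which both claims follow at once.

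The key computation uses property (iv) of Definition \ref{def:ExteriorExt}, namely that $\widehat{T}$ is an algebra homomorphism with respect to $\wedge$. For $s=\{s_1<\cdots<s_k\}\subset \bar{n}$ we have
\begin{align*}
\widehat{T}(e_s)=\widehat{T}(e_{s_1}\wedge\cdots\wedge e_{s_k})=T(e_{s_1})\wedge\cdots\wedge T(e_{s_k})=\lambda_{s_1}\cdots\lambda_{s_k}\,e_s,
\end{align*}
together with $\widehat{T}(1)=1$ from (i). Setting $\lambda_s:=\prod_{j\in s}\lambda_j$ (with the empty product equal to $1$), we obtain $\widehat{T}(e_s)=\lambda_s e_s$ with $\lambda_s>0$ for every $s\subset\bar{n}$. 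Since $\{e_s\}_{s\subset \bar{n}}$ is orthonormal in $\Lambda V$ and $\widehat{T}$ is diagonal in this basis with strictly positive entries, $\widehat{T}$ is self-adjoint and positive.

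Concretely, for any $w=\sum_s c_s e_s\in \Lambda V$ we get $\langle \widehat{T}w,w\rangle=\sum_s \lambda_s|c_s|^2\geq \big(\min_s \lambda_s\big)|w|^2>0$ whenever $w\neq 0$, which gives strict positivity, while $\langle \widehat{T}w_1,w_2\rangle=\sum_s \lambda_s \langle w_1,e_s\rangle\langle e_s,w_2\rangle=\langle w_1,\widehat{T}w_2\rangle$ gives self-adjointness. There is no real obstacle here; the only point to be careful about is invoking the multiplicative property of $\widehat{T}$ on the wedge basis, which is exactly built into the definition of the exterior extension.
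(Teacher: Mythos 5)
Your proof is correct and follows essentially the same route as the paper: diagonalize $T$ via the spectral theorem, observe that the induced ON-basis $\{e_s\}$ of $\Lambda V$ consists of eigenmultivectors of $\widehat{T}$ with eigenvalues $\prod_{j\in s}\lambda_j>0$, and read off both properties. If anything, your write-out of $\langle\widehat{T}w,w\rangle=\sum_s\lambda_s|c_s|^2$ for a general $w$ is slightly more careful than the paper's appeal to "linearity" (positivity is not a linear condition, so checking it only on eigenmultivectors needs exactly the orthogonal expansion you supply).
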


\begin{proof}
The self-adjointness of $\Lambda T$ follows from the identity $\Lambda T^\ast=(\Lambda T)^\ast$. By linearity it is enough to prove that $\langle \Lambda Tw,w\rangle>0$ for all $w\in \Lambda^kV $. By the spectral theorem $T$ has an ON-eigenbasis $\{\xi_j\}_j$ which induces and ON-eigenbasis for $\Lambda^kT$. Hence let $\xi_{i_1}\wedge...\wedge\xi_{i_k}$ be an eigenmultivector for $\Lambda^kV$ corresponding to eigenvalue $\lambda_{i_1}...\lambda_{i_k}$. Then
\begin{align*}
\langle \Lambda T(\xi_{i_1}\wedge...\wedge\xi_{i_k}),\xi_{i_1}\wedge...\wedge\xi_{i_k}\rangle=\lambda_{i_1}...\lambda_{i_k}\langle \xi_{i_1}\wedge...\wedge\xi_{i_k},\xi_{i_1}\wedge...\wedge\xi_{i_k}\rangle >0. 
\end{align*}
\end{proof}

\begin{Prop}
Let $T\in \LL(V)$ and assume that $T$ is invertible, normal (i.e., $T^\ast T=TT^\ast$) and that $-1\notin \sigma(T)$. Then 
\begin{align*}
 \Cs(T^{-1})=-\Cs(T). 
\end{align*}
\end{Prop}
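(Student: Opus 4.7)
The plan is to establish the identity by a direct algebraic manipulation, since the Cayley transform is built entirely from $T$ via rational operations, and all such expressions commute with $T$ itself. No spectral information beyond the hypothesis $-1\notin\sigma(T)$ is really needed for the computation; the normality assumption will be used only to ensure that the Cayley transform on $T^{-1}$ is well-defined in the setting of the preceding lemmas (and more importantly inherited from the context in which the proposition will be applied).

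First I would check that $\Cs(T^{-1})$ makes sense, i.e.\ that $-1 \notin \sigma(T^{-1})$. Indeed, if $T^{-1}v = -v$ for some nonzero $v$, then $v = -Tv$, giving $Tv=-v$, contradicting $-1\notin\sigma(T)$. Hence $(I+T^{-1})$ is invertible.

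Next comes the key algebraic step. Write
\begin{align*}
I - T^{-1} &= T^{-1}(T-I) = -T^{-1}(I-T),\\
I + T^{-1} &= T^{-1}(T+I),
\end{align*}
so that $(I+T^{-1})^{-1} = (I+T)^{-1}T$. Substituting into the definition of the Cayley transform gives
\begin{align*}
\Cs(T^{-1}) = (I-T^{-1})(I+T^{-1})^{-1} = -T^{-1}(I-T)(I+T)^{-1}T.
\end{align*}

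Finally I would exploit the fact that $T$ commutes with every rational function of itself. Since $I-T$ and $(I+T)^{-1}$ are polynomials (resp.\ inverses of polynomials) in $T$, they commute with both $T$ and $T^{-1}$. Therefore
\begin{align*}
T^{-1}(I-T)(I+T)^{-1}T = (I-T)(I+T)^{-1}T^{-1}T = (I-T)(I+T)^{-1} = \Cs(T),
\end{align*}
and the identity $\Cs(T^{-1}) = -\Cs(T)$ follows. The only genuine step is therefore the factorization of $I\mp T^{-1}$ through $T^{-1}$; no obstacle is anticipated, as the proof reduces to elementary operator algebra, with normality playing no essential role other than harmonizing with the standing hypotheses in the surrounding subsection.
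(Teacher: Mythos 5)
Your proof is correct, but it takes a genuinely different route from the paper. The paper's argument uses normality essentially: it invokes the spectral theorem to write $T=UDU^\ast$ with $U$ unitary and $D$ diagonal, verifies the scalar identity $\frac{1-x^{-1}}{1+x^{-1}}=-\frac{1-x}{1+x}$ entrywise, and conjugates back. Your argument instead factors $I-T^{-1}=-T^{-1}(I-T)$ and $I+T^{-1}=T^{-1}(I+T)$ and then uses only the fact that rational expressions in $T$ commute with one another; this is pure operator algebra and shows that the normality hypothesis is superfluous — the identity holds for any invertible $T$ with $-1\notin\sigma(T)$ (and your observation that $I+T^{-1}=T^{-1}(I+T)$ is a product of invertibles already gives well-definedness of $\Cs(T^{-1})$ for free). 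Your version is both more elementary and more general; it also sidesteps a small wrinkle in the paper's argument, namely that unitary diagonalization of a normal operator on a \emph{real} inner product space requires passing to the complexification. What the paper's spectral approach buys is consistency with the neighbouring results (e.g.\ Lemma \ref{lem:NormalNorm}), where the eigenvalue picture is genuinely needed, but for this particular identity your computation is the cleaner one.
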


\begin{proof}
First observe that if $x\in \R$ and $x\neq -1$, then 
\begin{align*}
\frac{1-x^{-1}}{1+x^{-1}}=-\frac{1-x}{1+x}.
\end{align*}
Thus, if $T$ is a diagonal map $D$, then $\Cs(D^{-1})=-\Cs(D)$. Since $T$ is normal, $T$ is diagonizable by a unitary map $U$ and so is $T^{-1}$. Hence
\begin{align*}
 \Cs(T^{-1})= \Cs(UD^{-1}U^\ast)=U \Cs(D^{-1})U^\ast=-U \Cs(D)U^\ast=-\Cs(T).
\end{align*}
\end{proof}

\subsection{\sffamily Ellipticity Bounds}

\begin{Lem}\label{lem:EllipticityBound}
Let $V$ a finite dimensional real euclidean vector space and assume that $A\in \LL(V)$ is positive and normal. Assume in addition that $A$ satisfies the inequalities
\begin{align}\label{eq:Eineq1}
\lambda \vert v\vert^2\leq \langle Av,v\rangle, \quad \Vert A\Vert\leq \Lambda.
\end{align}
Then 
\begin{align}\label{eq:Eineq2}
\vert v\vert^2+\vert Av\vert^2\leq 2\bigg(\frac{1+ \mathcal{M}^{2}}{1- \mathcal{M}^{2}}\bigg)\langle Av,v\rangle,
\end{align}
where 
\begin{align*}
\mathcal{M}=\Vert \mathscr{C}(A)\Vert=\max\bigg\{ \frac{1-\lambda}{1+\lambda},\frac{\Lambda-1}{\Lambda+1}\bigg\}.
\end{align*}.

Conversely any $A\in \LL(V)$ that satisfy 
\begin{align}\label{eq:Eineq3}
\vert v\vert^2+\vert Av\vert^2\leq \mathcal{K}\langle Av,v\rangle.
\end{align}
for some $\mathcal{K}\geq 2$ implies that $A$ also satisfies the two sided inequality 
\begin{align}\label{eq:Eineq4}
\mathcal{K}^{-1}\vert v\vert^2\leq \langle Av,v\rangle\leq \mathcal{K}\vert v\vert^2
\end{align}
\end{Lem}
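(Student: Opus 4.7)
The heart of the proof is the Cayley transform substitution. Set $M = \mathscr{C}(A)$; since $A$ is positive, Lemma \ref{lem:AccOp} gives that $-1 \notin \sigma(A)$, $M$ is well-defined, and $\|M\| < 1$. For any $x \in V$, set $y = (I+A)x$. Then by the identities (\ref{eq:IdCayley2})--(\ref{eq:IdCayley3}) of Lemma \ref{lem:IdentityC} we have $x = \tfrac{1}{2}(y + My)$ and $Ax = \tfrac{1}{2}(y - My)$. Expanding the squared norms and using symmetry of the real inner product (so that the cross terms $\langle My, y\rangle$ and $\langle y, My\rangle$ are equal and cancel in $\langle Ax, x\rangle$), one obtains the clean identities
\begin{align*}
|x|^2 + |Ax|^2 &= \tfrac{1}{2}\bigl(|y|^2 + |My|^2\bigr),\\
\langle Ax, x\rangle &= \tfrac{1}{4}\bigl(|y|^2 - |My|^2\bigr).
\end{align*}

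The forward inequality (\ref{eq:Eineq2}) is then equivalent, after clearing denominators, to the pointwise bound $|My|^2 \leq \mathcal{M}^2 |y|^2$, since the function $t \mapsto (1+t)/(1-t)$ is monotone and an elementary rearrangement shows the inequality $(1+t)(1-\mathcal{M}^2) \leq (1+\mathcal{M}^2)(1-t)$ is equivalent to $t \leq \mathcal{M}^2$ (with $t = |My|^2/|y|^2$). But $|My|^2 \leq \|M\|^2 |y|^2 = \mathcal{M}^2 |y|^2$ by definition of $\mathcal{M} = \|\mathscr{C}(A)\|$. The explicit value $\mathcal{M} = \max\{(1-\lambda)/(1+\lambda),\,(\Lambda-1)/(\Lambda+1)\}$ is then read off from Lemma \ref{lem:NormalNorm}, using that for a normal operator the singular values equal the moduli of the eigenvalues, together with the bracketing $\lambda \leq \sigma_1 \leq \sigma_n \leq \Lambda$ coming from the hypothesis (\ref{eq:Eineq1}).

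For the converse, the argument is purely an exercise in dropping terms and applying Cauchy--Schwarz. Starting from (\ref{eq:Eineq3}), dropping $|Av|^2$ on the left gives $|v|^2 \leq \mathcal{K}\langle Av, v\rangle$, i.e.\ the lower bound in (\ref{eq:Eineq4}). For the upper bound, dropping $|v|^2$ on the left yields $|Av|^2 \leq \mathcal{K}\langle Av, v\rangle \leq \mathcal{K}|Av||v|$ by Cauchy--Schwarz, whence $|Av| \leq \mathcal{K}|v|$; applying Cauchy--Schwarz once more gives $\langle Av, v\rangle \leq |Av||v| \leq \mathcal{K}|v|^2$.

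The main obstacle I expect is the subtle point in the forward direction that the stated form of $\mathcal{M}$ in terms of $\lambda$ and $\Lambda$ is really a \emph{consequence} of Lemma \ref{lem:NormalNorm} rather than the definition used in the core inequality: the Cayley-transform step works for any positive operator with $\mathcal{M}$ interpreted as the operator norm of $\mathscr{C}(A)$, and normality only enters when identifying the spectrum of $\mathscr{C}(A)$ via moduli of eigenvalues. One needs to verify carefully that the coercivity constant $\lambda$ controls the minimum singular value in the normal setting (for self-adjoint $A$ this is tautological; in the general normal case one should either check it directly from the eigenvalue structure or, as is probably intended, read $\mathcal{M}$ throughout as $\|\mathscr{C}(A)\|$ and invoke Lemma \ref{lem:NormalNorm} at the very end).
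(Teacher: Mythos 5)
Your proof is correct and follows essentially the same route as the paper: both hinge on the Cayley-transform substitution $y=(I+A)v$ together with the bound $\Vert\mathscr{C}(A)\Vert<1$ from Lemma \ref{lem:AccOp} (the paper phrases the forward step via the polarization identity $\langle Av,v\rangle=\tfrac14(|Av+v|^2-|Av-v|^2)$ and the identity $I-A=\mathscr{C}(A)(I+A)$ rather than your exact identities, but the algebraic content is identical), and the converse is the same dropping-of-terms plus Cauchy--Schwarz argument. Your closing observation---that the displayed formula for $\mathcal{M}$ is obtained only at the end by invoking Lemma \ref{lem:NormalNorm}, while the core inequality uses $\mathcal{M}=\Vert\mathscr{C}(A)\Vert$ throughout---is exactly how the paper proceeds.
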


\begin{proof}
First assume that \eqref{eq:Eineq1} holds. We have the identity 
\begin{align*}
I-A=\mathscr{C}(A)\circ (I+A). 
\end{align*}
and by Lemma \ref{lem:AccOp} $\mathcal{M}:=\Vert \mathscr{C}(A)\Vert<1$. By the polarization identity 
\begin{align*}
\langle Av,v\rangle&=\frac{1}{4}(\vert Av+v\vert^2-\vert Av-v\vert^2)\\
&=\frac{1}{4}(\vert Av+v\vert^2-\vert\mathscr{C}(A)\circ (I+A)v\vert^2)\\
&\geq \frac{1}{4}(\vert Av+v\vert^2- \mathcal{M}^{2}\vert v+Av\vert^2)\\
&=\frac{1- \mathcal{M}^{2}}{4}(\vert v\vert^2+\vert Av\vert^2+2\langle Av,v\rangle).
\end{align*}

Hence,
\begin{align*}
\bigg(\frac{1- \mathcal{M}^{2}}{4}\bigg)(\vert v\vert^2+\vert Av\vert^2)\leq \bigg(1-\frac{1- \mathcal{M}^{2}}{2}\bigg)\langle Av,v\rangle,
\end{align*}
or equivalently 
\begin{align}\label{ineq:Ellip}
\vert v\vert^2+\vert Av\vert^2\leq 2\bigg(\frac{1+ \mathcal{M}^{2}}{1- \mathcal{M}^{2}}\bigg)\langle Av,v\rangle. 
\end{align}
Since $\mathcal{M}<1$, it follows that $\frac{1+ \mathcal{M}^{2}}{1- \mathcal{M}^{2}}>1$. By Lemma \ref{lem:NormalNorm} the result follows. 
For the converse direction the assumptions imply 
\begin{align*}
\langle Av,v\rangle\geq \frac{1}{\mathcal{K}}(\vert v\vert^2+\vert Av\vert^2)> \frac{1}{\mathcal{K}}\vert v\vert^2. 
\end{align*}
Hence $A$ is strictly positive and thus invertible. Moreover, the Cauchy-Schwarz inequality together with \eqref{eq:Eineq3} give for every $v\neq 0$ 
\begin{align*}
\frac{\vert v\vert}{\vert Av\vert}+\vert Av\vert\leq \mathcal{K},
\end{align*}
which implies that for every $v$ such that $\vert v\vert=1$
\begin{align*}
\vert Av\vert\leq \mathcal{K}. 
\end{align*}
\end{proof}

\subsection{\sffamily Nonlinear Cayley Transform}

\begin{Lem}\label{lem:NLC1}
Let $\A: \Om\times \R^n \to \R^n$ be a mapping that satisfy the conditions $(i)-(ii)$ in Definition \ref{def:Afield}. Moreover assume that $\A$ is monotone, i.e., that for a.e. $x\in \Om$ and all $\xi_1,\xi_2\in \R^n$
\begin{align*}
\langle \A(x,\xi_1)-\A(x,\xi_2),\xi_1-\xi_1\rangle\geq 0.
\end{align*}
Then the mapping $T(x,\cdot): \R^n \to \R^n$ defined by $T(x,\xi)=\xi+\A(x,\xi)$ is a homeomorphism for a.e. $x\in \Om$. 
\end{Lem}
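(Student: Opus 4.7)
The plan is to verify the four standard properties of a homeomorphism separately: continuity, injectivity, surjectivity, and continuous inverse. Continuity of $T(x,\cdot)$ is immediate from condition (ii) of Definition \ref{def:Afield}. Everything else flows from a single observation: $T(x,\cdot)$ is \emph{strongly monotone with constant $1$}. Indeed, by the monotonicity hypothesis,
\begin{align*}
\langle T(x,\xi_1)-T(x,\xi_2),\xi_1-\xi_2\rangle
=|\xi_1-\xi_2|^2+\langle \A(x,\xi_1)-\A(x,\xi_2),\xi_1-\xi_2\rangle
\geq |\xi_1-\xi_2|^2.
\end{align*}
Combined with the Cauchy--Schwarz inequality, this yields the global lower bound $|T(x,\xi_1)-T(x,\xi_2)|\geq |\xi_1-\xi_2|$, which simultaneously proves injectivity of $T(x,\cdot)$ and shows that the inverse (defined on the image) is 1-Lipschitz, hence continuous.

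The substantive step is surjectivity. Fix $y\in\R^n$ and set $F(\xi):=T(x,\xi)-y$. Using the strong monotonicity with $\xi_2=0$ I would derive the coercivity estimate
\begin{align*}
\langle F(\xi),\xi\rangle
=\langle T(x,\xi)-T(x,0),\xi\rangle+\langle T(x,0)-y,\xi\rangle
\geq |\xi|^2-|T(x,0)-y|\,|\xi|,
\end{align*}
so for any radius $R>|T(x,0)-y|$ one has $\langle F(\xi),\xi\rangle>0$ on $\partial B_R(0)$. A standard consequence of the Brouwer fixed point theorem (the Poincar\'e--Miranda / degree corollary: a continuous $F:\overline{B_R}\to\R^n$ with $\langle F(\xi),\xi\rangle>0$ on $\partial B_R$ has a zero in $B_R$) then produces $\xi$ with $T(x,\xi)=y$. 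Alternatively, since $T(x,\cdot)$ is continuous and injective, invariance of domain gives that its image is open, while coercivity makes $T(x,\cdot)$ proper (preimages of compact sets are bounded), so the image is also closed; a nonempty clopen subset of $\R^n$ is all of $\R^n$.

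Finally, the qualification ``for a.e.\ $x$'' is handled by carrying out the above argument at every $x$ in the full-measure set where the continuity hypothesis (ii) and the pointwise monotonicity both hold. The main obstacle is the surjectivity argument, and essentially the only choice is which topological tool to invoke (degree theory via Brouwer, or invariance of domain together with properness); no infinite-dimensional machinery such as Browder--Minty is needed since we are working in $\R^n$. The injectivity/inverse-continuity half of the argument is essentially a one-line consequence of strong monotonicity.
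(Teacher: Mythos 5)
Your proof is correct, and the key computation --- strong monotonicity of $T(x,\cdot)$ with constant $1$ --- is exactly the one the paper uses. Where you diverge is in the topological bookkeeping. The paper deduces surjectivity by citing the Browder--Minty theorem (a monotone-operator result stated for reflexive Banach spaces) and then gets continuity of the inverse from invariance of domain. You instead observe that strong monotonicity plus Cauchy--Schwarz gives the quantitative bound $\vert T(x,\xi_1)-T(x,\xi_2)\vert\geq \vert\xi_1-\xi_2\vert$, which makes the inverse $1$-Lipschitz outright --- a cleaner and strictly more informative conclusion than what invariance of domain provides --- and you prove surjectivity by purely finite-dimensional means (the Brouwer-type zero-existence lemma for a coercive field on a ball, or equivalently the open-plus-closed-image argument using properness). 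Both routes are valid; yours has the advantage of being self-contained in $\R^n$ and of isolating the Lipschitz estimate for $T^{-1}$, while the paper's is shorter on the page at the cost of invoking heavier machinery. One small point worth noting either way: the monotonicity hypothesis as printed in the statement contains a typo ($\xi_1-\xi_1$ in the inner product should read $\xi_1-\xi_2$), which both you and the paper silently correct.
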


\begin{proof}
By definition of $\A$, $T$ is continuous for a.e. $x$. By the monotonicity of $\A$ it follows that 
\begin{align*}
\langle T(x,\xi)-T(x,\zeta),\xi-\zeta\rangle =\vert \xi-\zeta \vert^2+\langle \A(x,\xi)-\A(x,\zeta),\xi-\zeta\rangle \geq \vert \xi-\zeta \vert^2.
\end{align*}
Thus $T$ is injective and coercive for a.e. $x\in \Om$. By the Browder-Minty theorem (\cite[Thm. 2.2 p. 39]{Sho}) $T$ is surjective for a.e. $x\in \Om$. Finally, by the invariance of domain theorem it follows that $T$ is a homeomorphism for a.e. $x\in \Om$. 
\end{proof}

By Lemma \ref{lem:NLC1} it follows that the nonlinear Cayley transform $\mathcal{M}(x,\xi)=(I-\A(x,\xi))\circ (I+\A(x,\xi))^{-1}$ is well defined for a.e. $x\in \Om$. 

Moreover, we have
\begin{align*}
\xi-\A(x,\xi)=\mathcal{M}(x,\xi+\A(x,\xi))
\end{align*}
Using that the map $\xi\mapsto \xi +\A(\xi)$ is invertible we can make the change of variables for a.e. $x\in \Om$ 
\begin{align*}
\zeta_x=\xi+\A(x,\xi). 
\end{align*}
Hence we get the following useful identities 
\begin{align}\label{eq:Cid}
\xi&=\frac{1}{2}(\zeta+\mathcal{M}(x,\zeta))\\
\A(x,\xi)&=\frac{1}{2}(\zeta-\mathcal{M}(x,\zeta))
\end{align}

\begin{Lem}\label{lem:NLC2}
Let $\A: \Om\times \R^n \to \R^n$ be a mapping that satisfy the conditions $(i)-(iv)$ in Definition \ref{def:Afield}. Then the Cayley transform $\mathcal{M}(x,\cdot)$ is $\sqrt{\frac{K-2}{K+2}}$-Lipschitz continuous and 
$\mathcal{M}(x,0)=0$ for all $x\in \Om$. 
\end{Lem}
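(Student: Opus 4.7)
The plan is to verify both claims by direct algebraic manipulation, using the identities \eqref{eq:Cid} derived immediately before Lemma \ref{lem:NLC2} together with the ellipticity bound (iv) in Definition \ref{def:Afield}.

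First I would establish that $\mathcal{M}(x,0)=0$. Recall from Lemma \ref{lem:NLC1} that the map $T(x,\xi):=\xi+\A(x,\xi)$ is a homeomorphism of $\R^n$ for almost every $x\in \Om$. Since $\A(x,0)=0$ by condition (iii), we have $T(x,0)=0$, so $T(x,\cdot)^{-1}(0)=0$. Plugging $\zeta=0$ into \eqref{eq:Cid} forces $\xi=0$ and hence $\mathcal{M}(x,0)=0$.

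For the Lipschitz estimate, fix $\zeta_1,\zeta_2\in \R^n$ and set $\xi_i:=T(x,\cdot)^{-1}(\zeta_i)$, $a:=\zeta_1-\zeta_2$, $b:=\mathcal{M}(x,\zeta_1)-\mathcal{M}(x,\zeta_2)$. The identities \eqref{eq:Cid} rewrite as
\begin{align*}
\xi_1-\xi_2 = \tfrac{1}{2}(a+b), \qquad \A(x,\xi_1)-\A(x,\xi_2) = \tfrac{1}{2}(a-b).
\end{align*}
The parallelogram law and polarization then yield
\begin{align*}
\vert \xi_1-\xi_2\vert^2+\vert \A(x,\xi_1)-\A(x,\xi_2)\vert^2 = \tfrac{1}{2}\bigl(\vert a\vert^2+\vert b\vert^2\bigr),
\end{align*}
\begin{align*}
\langle \A(x,\xi_1)-\A(x,\xi_2),\,\xi_1-\xi_2\rangle = \tfrac{1}{4}\bigl(\vert a\vert^2-\vert b\vert^2\bigr).
\end{align*}
Substituting these two formulas into the ellipticity bound (iv) and clearing denominators produces the inequality $(K+2)\vert b\vert^2\leq (K-2)\vert a\vert^2$, which is exactly the claimed Lipschitz bound with constant $\sqrt{(K-2)/(K+2)}$.

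There is no substantive obstacle in this argument: once the change of variables $\zeta=\xi+\A(x,\xi)$ has been shown to be a homeomorphism (the only nontrivial input, supplied by Lemma \ref{lem:NLC1}), the proof reduces to a short algebraic identity. The only detail to watch is the sign/orientation in the polarization formula, which must yield $\vert a\vert^2-\vert b\vert^2$ rather than $\vert b\vert^2-\vert a\vert^2$; this is dictated by the direction of the substitution in \eqref{eq:Cid}, and ensures compatibility with the monotone ellipticity bound.
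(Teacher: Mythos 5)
Your proposal is correct and follows essentially the same route as the paper's proof: both use the change of variables $\zeta=\xi+\A(x,\xi)$ and the identities \eqref{eq:Cid} to rewrite the two sides of the ellipticity bound (iv) as $\tfrac{1}{2}(\vert a\vert^2+\vert b\vert^2)$ and $\tfrac{1}{4}(\vert a\vert^2-\vert b\vert^2)$, then rearrange to get $(K+2)\vert b\vert^2\leq (K-2)\vert a\vert^2$. The only cosmetic difference is that you invoke the parallelogram law and polarization by name, whereas the paper expands the inner products explicitly; your verification of $\mathcal{M}(x,0)=0$ via $T(x,0)=0$ is slightly more detailed than the paper's one-line appeal to condition (iii).
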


\begin{proof}
Set $\zeta_1=\xi_1 +\A(x,\xi_1)$ and $\zeta_2=\xi_2+\A(x,\xi_2)$.  We have
We have 
\begin{align*}
 \langle \A(x,\xi_1)-\A(x,\xi_2),\xi_1-\xi_2\rangle&=\frac{1}{4}\langle \zeta_1-\mathcal{M}(x,\zeta_1)-\zeta_2+\mathcal{M}(x,\zeta_2), \zeta_1+\mathcal{M}(x,\zeta_1)- \zeta_2-\mathcal{M}(x,\zeta_2)\rangle\\
&=\frac{1}{4}\vert \zeta_1-\zeta_2\vert^2-\frac{1}{4}\vert\mathcal{M}(z,\zeta_1)-\mathcal{M}(x,\zeta_2) \vert^2\\+&\frac{1}{4}\langle \zeta_1-\zeta_2,\mathcal{M}(x,\zeta_1)-\mathcal{M}(x,\zeta_2)\rangle+\frac{1}{4}\langle -\mathcal{M}(x,\zeta_1)+\mathcal{M}(x,\zeta_2),\zeta_1-\zeta_2\rangle\\
&=\frac{1}{4}\vert \zeta_1-\zeta_2\vert^2-\frac{1}{4}\vert\mathcal{M}(x,\zeta_1)-\mathcal{M}(z,\zeta_2) \vert^2.
\end{align*}
and 
\begin{align*}
\vert \xi_1-\xi_2\vert^2+\vert \A(x,\xi_1)-\A(x,\xi_2)\vert^2&=\frac{1}{4}\vert \zeta_1+\mathcal{M}(x,\zeta_1)- \zeta_2-\mathcal{M}(x,\zeta_2)\vert^2+\frac{1}{4}\vert  \zeta_1-\mathcal{M}(x,\zeta_1)-\zeta_2+\mathcal{M}(x,\zeta_2)\vert^2\\
&=\frac{1}{2}\vert \zeta_1-\zeta_2\vert^2+\frac{1}{2}\vert \mathcal{M}(x,\zeta_1)-\mathcal{M}(x,\zeta_2)\vert^2
\end{align*}

By assumption $(iv)$ on $\A$ in Definition \ref{def:Afield} we get the inequality 
\begin{align*}
\frac{1}{2}\vert \zeta_1-\zeta_2\vert^2+\frac{1}{2}\vert \mathcal{M}(x,\zeta_1)-\mathcal{M}(x,\zeta_2)\vert^2\leq \frac{K}{4}\vert \zeta_1-\zeta_2\vert^2-\frac{K}{4}\vert\mathcal{M}(x,\zeta_1)-\mathcal{M}(z,\zeta_2) \vert^2
\end{align*}
which implies 
\begin{align*}
 \mathcal{M}(x,\zeta_1)-\mathcal{M}(x,\zeta_2)\vert\leq \sqrt{\frac{K-2}{K+2}}\vert \zeta_1-\zeta_2\vert.
\end{align*}
By assumption $(iii)$ in Definition \ref{def:Afield} we get that $\mathcal{M}(x,0)=0$ for all $x\in \Om$. 
\end{proof}

\begin{rem}
The proof of Lemma \ref{lem:NLC2} is also given in \cite{ACFJK}. We have included it for the sake of completeness. 
\end{rem}

\section{\sffamily Iteration of Sobolev Conjugate Exponent}\label{App:2}

Define the function 
\begin{align*}
f_n(x):=\frac{nx}{n-x}
\end{align*}
for $x\in [0,n)$. Set $f_n^{k}(x):=\underbrace{f_n\circ f_n\circ...\circ f_n(x)}_{k \text{ times}}$. Proof by induction shows that 
\begin{align*}
f_n^k(x):=\frac{n^kx}{n^k-kn^{k-1}x}=\frac{nx}{n-kx}. 
\end{align*}

This gives 
\begin{align*}
f_n^k(2)=\frac{2n}{n-2k}. 
\end{align*}
as long as $f_n^k(2)\leq n$. The iteration stops when $f_n^k(2)\geq n$.
If $n=2m$ and $k=m-1$ gives 
\begin{align*}
f_{2m}^{m-1}(2)=\frac{4m}{2m-2(m-1)}=2m=n. 
\end{align*}
So in even dimension the iteration stops at $k=m-1$ giving $f_n^k(2)=n$. In odd dimension $n=2m+1$ we get with $k=m$
\begin{align*}
f_{2m+1}^{m}(2)=\frac{2(2m+1)}{2m+1-2m)}=2(2m+1)=2n. 
\end{align*}

\section{\sffamily Auxiliary Analytic Results}

\subsection{\sffamily Integration by Parts Formula}

\begin{Def}
Let $\Om\subset \R^n$ be a $C^2$-domain. Define
\begin{align*}
W^{1,2}_d(\Om,\Lambda)&:=\{F\in \mathscr{D}'(\Om,\Lambda): F,d F\in L^2(\Om,\Lambda)\},\\
W^{1,2}_\delta(\Om,\Lambda)&:=\{F\in  \mathscr{D}'(\Om,\Lambda): F,\delta F\in L^2(\Om,\Lambda)\}.
\end{align*}
\end{Def}

\begin{Thm}[Trace Theorems]
Let $\Om\subset \R^n$ be a $C^2$-domain. There exits a bounded linear operator $\gamma_N: W^{1,2}_\delta(\Om,\Lambda)\to W^{-1/2,2}(\dv \Om,\Lambda)$ such that for every $\Phi\in C^{\infty}(\overline{\Om},\Lambda)$
\begin{align*}
\gamma_N\Phi=\nu(x)\ri \Phi(x)\vert_{\dv \Om}. 
\end{align*}
\end{Thm}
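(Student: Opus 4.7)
\medskip

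The plan is to construct $\gamma_N$ by duality, using the classical trace theorem for $W^{1,2}(\Om,\Lambda)$ together with the integration-by-parts formula that on smooth forms relates $\delta$ and $d$ via a $\nu\ri$ boundary term. Concretely, identify $W^{-1/2,2}(\dv\Om,\Lambda)$ with the dual of $W^{1/2,2}(\dv\Om,\Lambda)$ and, for $F\in W^{1,2}_\delta(\Om,\Lambda)$ and $\psi\in W^{1/2,2}(\dv\Om,\Lambda)$, define
\begin{align*}
\langle \gamma_N F,\psi\rangle \;:=\; \int_\Om \langle F(x),d\Phi(x)\rangle\,dx \;-\; \int_\Om \langle \delta F(x),\Phi(x)\rangle\,dx,
\end{align*}
where $\Phi\in W^{1,2}(\Om,\Lambda)$ is any extension of $\psi$, guaranteed to exist with $\Vert \Phi\Vert_{W^{1,2}(\Om)}\leq C\Vert\psi\Vert_{W^{1/2,2}(\dv\Om)}$ by the classical trace theorem for $W^{1,2}$ on a $C^2$-domain.

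First I would verify that the right-hand side is independent of the extension. If $\Phi_1,\Phi_2$ are two extensions of $\psi$, then $\Phi_0:=\Phi_1-\Phi_2\in W^{1,2}_0(\Om,\Lambda)$, so there exist $\Phi_0^{(k)}\in C_0^\infty(\Om,\Lambda)$ converging to $\Phi_0$ in $W^{1,2}$. For $\Phi_0^{(k)}$ the identity
\begin{align*}
\int_\Om \langle F,d\Phi_0^{(k)}\rangle\,dx \;=\; \int_\Om \langle \delta F,\Phi_0^{(k)}\rangle\,dx
\end{align*}
holds by the very definition of $\delta F$ as the distributional $(-d^*)$ of $F$, and passing to the limit using $F,\delta F\in L^2$ shows it holds for $\Phi_0$. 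Hence $\gamma_N F$ is well-defined on $W^{1/2,2}(\dv\Om,\Lambda)$.

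Next I would establish linearity and boundedness. Linearity in $F$ (and in $\psi$, via linearity of any linear extension operator) is immediate. For boundedness, Cauchy-Schwarz yields
\begin{align*}
|\langle \gamma_N F,\psi\rangle| \;\leq\; \Vert F\Vert_{L^2(\Om)}\Vert d\Phi\Vert_{L^2(\Om)} + \Vert \delta F\Vert_{L^2(\Om)}\Vert \Phi\Vert_{L^2(\Om)} \;\leq\; \Vert F\Vert_{W^{1,2}_\delta(\Om,\Lambda)}\Vert \Phi\Vert_{W^{1,2}(\Om)},
\end{align*}
and choosing a bounded extension operator $\mathcal{E}\colon W^{1/2,2}(\dv\Om,\Lambda)\to W^{1,2}(\Om,\Lambda)$ gives $\Vert \Phi\Vert_{W^{1,2}}\leq C\Vert\psi\Vert_{W^{1/2,2}}$, so that $\Vert \gamma_N F\Vert_{W^{-1/2,2}(\dv\Om)}\leq C\Vert F\Vert_{W^{1,2}_\delta(\Om)}$.

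Finally, for $\Phi\in C^\infty(\overline{\Om},\Lambda)$ we have the classical integration by parts (valid on $C^2$-domains by the divergence theorem applied to the interior-multiplication form)
\begin{align*}
\int_\Om \langle \delta \Phi,\Psi\rangle\,dx \;=\; \int_\Om \langle \Phi,d\Psi\rangle\,dx \;-\; \int_{\dv\Om}\langle \nu\ri \Phi,\gamma\Psi\rangle\,d\sigma,
\end{align*}
for every $\Psi\in C^\infty(\overline{\Om},\Lambda)$. Comparing with the definition of $\gamma_N\Phi$ (with $\Psi$ playing the role of the extension of $\psi=\gamma\Psi$) identifies $\langle\gamma_N\Phi,\psi\rangle$ with the boundary integral $\int_{\dv\Om}\langle \nu\ri \Phi,\psi\rangle\,d\sigma$, i.e.\ $\gamma_N\Phi=\nu\ri\Phi|_{\dv\Om}$ as claimed. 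The main point of subtlety, and the one requiring the $C^2$-regularity of $\Om$, is to justify the classical boundary integration-by-parts formula and to ensure a bounded right inverse to the full trace map exists with the stated norm bound; both are standard on smooth domains and completely justify passage to the duality-defined extension.
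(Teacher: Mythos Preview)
The paper does not actually supply a proof of this trace theorem; it is stated in the appendix as a standard result, and the companion integration-by-parts theorem immediately following it is attributed to Temam. Your approach---defining $\gamma_N$ by duality via the integration-by-parts identity, checking well-definedness on $W^{1,2}_0$ by density of $C_0^\infty$, and then identifying the operator on smooth forms---is exactly the standard argument and is correct in structure.

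There is, however, a sign slip running through your computation. With the paper's convention $\delta=-d^*$, for compactly supported $\Phi_0^{(k)}$ one has
\[
\int_\Om\langle F,d\Phi_0^{(k)}\rangle\,dx \;=\; -\int_\Om\langle \delta F,\Phi_0^{(k)}\rangle\,dx,
\]
not the identity with a plus sign that you wrote. Consequently the well-defined duality pairing is
\[
\langle\gamma_N F,\psi\rangle \;=\; \int_\Om\langle F,d\Phi\rangle\,dx \;+\; \int_\Om\langle \delta F,\Phi\rangle\,dx,
\]
with a plus, matching the paper's integration-by-parts formula. Your final smooth Stokes identity has the same sign flipped. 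The two errors cancel, so your conclusion is right, but each intermediate step is off by a sign under the stated convention.
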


\begin{Thm}[See \cite{Tem} Thm 1.2 p. 7]
\label{thm:IntPart1}
For every $F\in W^{1,2}_\delta(\Om,\Lambda)$ and every $G\in W^{1,2}(\Om)$ the integration we have the integration by parts formula
\begin{align*}
\int_{\Om}\langle F(x),d G(x)\rangle dx+\int_{\Om}\langle \delta F(x),G(x)\rangle dx=\int_{\dv \Om}\langle \gamma_NF(x),\gamma G(x)\rangle d\sigma(x)
\end{align*}
Similarly, for 
$F\in W^{1,2}_d(\Om,\Lambda)$ and every $G\in W^{1,2}(\Om)$ the integration we have the integration by parts formula
\begin{align*}
\int_{\Om}\langle F(x),\delta G(x)\rangle dx+\int_{\Om}\langle dF(x),G(x)\rangle dx=\int_{\dv \Om}\langle \gamma_NF(x),\gamma G(x)\rangle d\sigma(x)
\end{align*}
\end{Thm}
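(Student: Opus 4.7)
The strategy is the standard two-step scheme: establish the identity for smooth fields via Stokes' theorem, then extend by density, with the trace map $\gamma_N$ ultimately defined by the formula itself.

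First I would prove the identity under the extra assumption $F,G \in C^\infty(\overline{\Om},\Lambda)$. In this smooth setting the right-hand side is well-defined as an honest $L^2$-pairing on $\dv\Om$, and the identity follows from Stokes' theorem. Specifically, I would decompose pointwise along grade: it suffices to consider $F\in\Lambda^{k+1}$ and $G\in\Lambda^{k}$, since for non-matching grades both sides vanish. Then the Riesz identity $\nabla\gp(G\wedge\star F) = \nabla\wedge(G\wedge\star F) + \nabla\ri(G\wedge\star F)$, together with the algebraic identities relating $d,\delta$ and the Hodge star, give the pointwise divergence identity
\begin{equation*}
\langle F,dG\rangle + \langle \delta F,G\rangle = \nabla\cdot X_{F,G},
\end{equation*}
for an explicit vector field $X_{F,G}$ with boundary trace $\nu\cdot X_{F,G} = \langle \nu\ri F, G\rangle$. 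Integrating and applying the classical divergence theorem on the $C^2$-domain $\Om$ yields the desired identity for smooth fields. Alternatively, one can specialize the Clifford integration by parts formula \eqref{partIntDirac} (restricting to the homogeneous components of matching grade) which was already recorded in the paper.

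Next I would handle the density step. The space $C^\infty(\overline{\Om},\Lambda)$ is dense in $W^{1,2}_\delta(\Om,\Lambda)$ for a bounded $C^2$-domain $\Om$; this is a standard convolution-and-cutoff argument (translate slightly inward, mollify, use that $\delta$ commutes with translation and mollification). Similarly $C^\infty(\overline{\Om},\Lambda)$ is dense in $W^{1,2}(\Om,\Lambda)$. Given $F\in W^{1,2}_\delta(\Om,\Lambda)$ and $G\in W^{1,2}(\Om,\Lambda)$, pick sequences $F_j,G_j\in C^\infty(\overline{\Om},\Lambda)$ with $F_j\to F$ in $W^{1,2}_\delta$ and $G_j\to G$ in $W^{1,2}$. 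The two volume integrals on the left pass to the limit immediately by Cauchy--Schwarz.

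The main obstacle, and the reason the statement is nontrivial beyond smooth fields, is the boundary term: one must make sense of $\gamma_N F$ as an element of $W^{-1/2,2}(\dv\Om,\Lambda)$ and show the pairing is continuous. My plan is to \emph{define} $\gamma_N F$ via the identity: for $F\in W^{1,2}_\delta(\Om,\Lambda)$, consider the linear functional
\begin{equation*}
\ell_F(\psi) := \int_\Om \langle F,d\widetilde{\psi}\rangle\,dx + \int_\Om \langle \delta F,\widetilde{\psi}\rangle\,dx,\qquad \psi\in W^{1/2,2}(\dv\Om,\Lambda),
\end{equation*}
where $\widetilde{\psi}\in W^{1,2}(\Om,\Lambda)$ is any lift with $\gamma\widetilde{\psi}=\psi$ (such a bounded right inverse to $\gamma$ exists by the trace theorem on $C^2$-domains). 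The first key point is that $\ell_F(\psi)$ depends only on $\psi$, not on the chosen lift: if $\widetilde{\psi}\in W^{1,2}_0(\Om,\Lambda)$, density of $C_c^\infty(\Om,\Lambda)$ in $W^{1,2}_0$ combined with the smooth-field identity (whose boundary term vanishes on $C_c^\infty$) forces $\ell_F(\widetilde{\psi})=0$. The second key point is the a priori bound
\begin{equation*}
|\ell_F(\psi)| \le (\|F\|_{L^2(\Om)} + \|\delta F\|_{L^2(\Om)})\,\|\widetilde{\psi}\|_{W^{1,2}(\Om)} \le C\,\|F\|_{W^{1,2}_\delta}\,\|\psi\|_{W^{1/2,2}(\dv\Om)},
\end{equation*}
using the boundedness of the lift. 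Hence $\gamma_N F := \ell_F \in W^{-1/2,2}(\dv\Om,\Lambda)$, with $\|\gamma_N F\|_{W^{-1/2,2}} \le C\|F\|_{W^{1,2}_\delta}$, and this definition agrees with $\nu\ri F|_{\dv\Om}$ when $F$ is smooth (by the smooth-case identity). Passing to the limit $F_j\to F$, $G_j\to G$ in the smooth identity then produces the stated formula, with the boundary pairing interpreted as the $W^{-1/2,2}$--$W^{1/2,2}$ duality. The case $F\in W^{1,2}_d(\Om,\Lambda)$ is entirely analogous: interchange the roles of $d$ and $\delta$ and repeat, using the Riesz identity $\nu\gp w = \nu\ri w + \nu\wedge w$ to identify the appropriate boundary trace.
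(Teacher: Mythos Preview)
Your proposal is correct. Note, however, that the paper does not supply its own proof of this theorem: it is stated with a citation to Temam \cite{Tem}, Thm.~1.2, p.~7, and used as a black box. The argument you outline---establish the identity for smooth fields by the divergence theorem, then \emph{define} the weak normal trace $\gamma_N F\in W^{-1/2,2}(\dv\Om,\Lambda)$ as the functional $\ell_F$ on $W^{1/2,2}(\dv\Om,\Lambda)$ via the formula itself, check well-definedness using that $\ell_F$ vanishes on $W^{1,2}_0$ by the distributional definition of $\delta F$, and bound it using a continuous right inverse to the full trace---is precisely the classical approach taken in Temam's reference (there carried out for vector fields in $H(\mathrm{div},\Om)$, which is the $k=1$ case of $W^{1,2}_\delta$). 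One small remark: in your well-definedness step you invoke ``the smooth-field identity'' to see $\ell_F(\widetilde\psi)=0$ for $\widetilde\psi\in C_c^\infty$, but in fact no smoothness of $F$ is needed there---this vanishing is exactly the definition of $\delta F$ as a weak derivative, which is cleaner and is what you actually need since $F$ is not assumed smooth at that stage.
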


\begin{Prop}\label{prop:IntPart}
Let $F\in W^{1,2}(\Om,\Lambda)$ and let $U\Subset \Om$ be a $C^2$-domain. Let $\nu(x)$ denote the unit outer normal of $\dv U$ at $x$. Then 
\begin{align}\label{eq:IntPart1}
\int_{U}\langle d F(x),\delta F(x)\rangle dx&=\int_{\dv U}\langle \nu(x)\wedge F(x),\delta F(x)\rangle d\sigma(x),\\ \label{eq:IntPart2}
&=\int_{\dv U}\langle d F(x),\nu(x)\ri  F(x)\rangle d\sigma(x).
\end{align}
\end{Prop}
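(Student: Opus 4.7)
My plan is to derive both identities as specializations of the integration by parts formulas in Theorem \ref{thm:IntPart1}, exploiting the nilpotency relations $\delta^2 = 0 = d^2$ to annihilate the interior term in each case.

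For \eqref{eq:IntPart1}, I would apply the first formula of Theorem \ref{thm:IntPart1} with $\widetilde F := \delta F$ (which lies in $W^{1,2}_\delta(U)$ trivially, since $\delta(\delta F) = 0$) and $G := F \in W^{1,2}(U)$. The interior term $\int_U \langle \delta^2 F, F\rangle\,dx$ vanishes, leaving
\begin{equation*}
\int_U \langle \delta F, dF\rangle\,dx = \int_{\dv U} \langle \nu \ri (\delta F), F\rangle\,d\sigma.
\end{equation*}
Using the adjointness $\langle \nu \ri w_1, w_2\rangle = \langle w_1, \nu \wedge w_2\rangle$ from Subsection 3.1 together with the symmetry of the real inner product, the right-hand side becomes $\int_{\dv U} \langle \nu \wedge F, \delta F\rangle\,d\sigma$, which is \eqref{eq:IntPart1}.

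The derivation of \eqref{eq:IntPart2} is mirror-symmetric. I would apply the second formula of Theorem \ref{thm:IntPart1}, reading the boundary trace there as $\gamma_T \cdot = \nu \wedge \cdot$ (the natural trace for fields in $W^{1,2}_d$), to $\widetilde F := dF$ (for which $d\widetilde F = d^2 F = 0$) and $G := F$. The interior term once again drops out, and the resulting boundary integral $\int_{\dv U} \langle \nu \wedge dF, F\rangle\,d\sigma$ equals $\int_{\dv U} \langle dF, \nu \ri F\rangle\,d\sigma$ by the same Riesz adjointness, giving \eqref{eq:IntPart2}.

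The main technical point is the meaning of the boundary pairings for general $F \in W^{1,2}$: the Sobolev trace gives $\gamma F \in W^{1/2,2}(\dv U, \Lambda)$, whereas $\delta F$ and $dF$ possess only weak boundary traces in $W^{-1/2,2}(\dv U, \Lambda)$ — precisely the traces $\nu \ri (\delta F)$ and $\nu \wedge (dF)$ whose existence is guaranteed by nilpotency ($\delta F \in W^{1,2}_\delta$, $dF \in W^{1,2}_d$). The boundary integrals in \eqref{eq:IntPart1}--\eqref{eq:IntPart2} are therefore to be read as duality pairings between $W^{1/2,2}(\dv U, \Lambda)$ and $W^{-1/2,2}(\dv U, \Lambda)$, in line with the trace-theoretic interpretation used in Theorem \ref{thm:IntPart1}. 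Equivalently, and perhaps more transparently, one can approximate $F$ in $W^{1,2}(U)$ by $F_n \in C^\infty(\overline U, \Lambda)$, run both identities classically for $F_n$ via Stokes' theorem, and pass to the limit — all the relevant convergences (interior in $L^2$, boundary in $W^{\pm 1/2,2}$) are compatible.
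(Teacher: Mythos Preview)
Your proof is correct and follows essentially the same route as the paper: apply Theorem~\ref{thm:IntPart1} with $\widetilde F=\delta F\in W^{1,2}_\delta$ (respectively $\widetilde F=dF\in W^{1,2}_d$) and $G=F$, use $\delta^2=0$ (respectively $d^2=0$) to kill the interior term, and rewrite the boundary pairing via the adjointness of $\nu\wedge$ and $\nu\ri$. Your explicit discussion of the $W^{1/2,2}$--$W^{-1/2,2}$ duality and the smooth-approximation alternative adds useful detail that the paper's terse proof leaves implicit.
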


\begin{proof}
Since $\delta^2 F(x)=0$ in the sense of distributions and $\delta F\in L^2(\Om,\Lambda)$ it follows that $\delta F\in W^{1,2}_{\delta}(\Om)$. Moreover, since $\gamma F\in W^{1/2,2}(\dv U,\Lambda)\subset L^2(\dv U,\sigma;\Lambda)$, $\gamma_T(x)F=\nu(x)\wedge F(x)$ for $\sigma$ a.e. $x\in \dv U$. Hence by Theorem \ref{thm:IntPart1} the first identity follows. The other one follows similarly. 
\end{proof}

\subsection{\sffamily Poincaré Inequalities}

\begin{Def}
Let $\Om$ be a smooth bounded domain in $\R^n$. We recall the Hodge decomposition (Theorem 10.5.1 in \cite{IM2})
\begin{align*}
L^2(\Om,\Lambda)=d W_T^{1,2}(\Om,\Lambda)\oplus \delta W_N^{1,2}(\Om,\Lambda)\oplus \mathcal{H}_2(\Om),
\end{align*}
where $ \mathcal{H}_2(\Om):=\{h\in C^\infty(\Om,\Lambda): dh(x)=0,\,\, \delta h(x)=0\}\cap L^2(\Om,\Lambda)$. The harmonic projection operator $\mathbf{H}_\Om^2$ is defined as the orthogonal projection 
\begin{align*}
\mathbf{H}_{\Om}^2:L^2(\Om,\Lambda)\to  \mathcal{H}_2(\Om). 
\end{align*}
\end{Def}

\begin{Thm}[Poincaré inequality on balls]
\label{thm:Poinc}
Let $F\in W^{1,2}(B_r(0),\Lambda)$. Let $\mathbf{H}_r^2=\mathbf{H}_{B_r(0)}^2$ be the harmonic projection on $B_r(0)$. Then the there exists a constant $C=C(n,\mathbb{B}^n)$ such that 
\begin{align*}
\int_{B_r(0)}\vert F(x)-\mathbf{H}_r^2F(x)\vert^2dx\leq Cr^2\int_{B_r(0)}(\vert dF(x)\vert^2+\vert \delta F(x)\vert^2)dx
\end{align*}
where 
\begin{align*}
\mathbf{H}_r^2F(x)=\int_{B_r(0)}F(y)dy.
\end{align*}
\end{Thm}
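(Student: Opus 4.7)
The plan is to reduce to the unit ball by scaling, then combine the orthogonal Hodge decomposition with duality (integration by parts) and a Friedrichs-type estimate on the Hodge potentials.

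First I would set $y=x/r$ and define $G(y):=F(ry)$ on $B_1(0)$. Since $d$ and $\delta$ are first-order, $dG(y)=r\,(dF)(ry)$ and $\delta G(y)=r\,(\delta F)(ry)$, so a change of variables gives
\begin{align*}
\int_{B_1}\vert G-\mathbf{H}_1^2 G\vert^2\,dy &= r^{-n}\int_{B_r}\vert F-\mathbf{H}_r^2 F\vert^2\,dx,\\
\int_{B_1}(\vert dG\vert^2+\vert\delta G\vert^2)\,dy &= r^{2-n}\int_{B_r}(\vert dF\vert^2+\vert \delta F\vert^2)\,dx.
\end{align*}
The harmonic projection intertwines with this dilation because pullback by $y\mapsto ry$ is a linear isomorphism between $\mathcal{H}_2(B_r)$ and $\mathcal{H}_2(B_1)$. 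Hence the inequality on $B_r$ reduces to the case $r=1$ with an extra factor $r^2$ appearing precisely from the scaling of $d$ and $\delta$.

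Next, on $B_1$ I would invoke the orthogonal Hodge decomposition quoted above,
\begin{align*}
F=d\alpha+\delta\beta+h,\quad \alpha\in W_T^{1,2}(B_1,\Lambda),\ \beta\in W_N^{1,2}(B_1,\Lambda),\ h=\mathbf{H}_1^2 F\in\mathcal{H}_2(B_1),
\end{align*}
choosing the gauge $\delta\alpha=0$ with $\alpha\perp\mathcal{H}_T(B_1)$, and dually $d\beta=0$ with $\beta\perp\mathcal{H}_N(B_1)$, which is possible without altering $d\alpha$ or $\delta\beta$. Since $h\perp d\alpha$ and $h\perp\delta\beta$ in $L^2$,
\begin{align*}
\Vert F-h\Vert_{L^2}^2=\langle F,d\alpha\rangle+\langle F,\delta\beta\rangle.
\end{align*}
Applying the integration by parts identities and noting that the boundary contributions vanish because $\alpha_T=0$ and $\beta_N=0$ produces
\begin{align*}
\Vert F-h\Vert_{L^2}^2=\langle\delta F,\alpha\rangle+\langle dF,\beta\rangle\leq\Vert\delta F\Vert_{L^2}\Vert\alpha\Vert_{L^2}+\Vert dF\Vert_{L^2}\Vert\beta\Vert_{L^2}.
\end{align*}

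The remaining step is the Friedrichs-type bounds $\Vert\alpha\Vert_{L^2}\leq C\Vert d\alpha\Vert_{L^2}$ and $\Vert\beta\Vert_{L^2}\leq C\Vert\delta\beta\Vert_{L^2}$ on $B_1$, with constant depending only on $n$. Under the gauge $\delta\alpha=0$, $\alpha_T=0$ the Gaffney inequality with traces (Corollary 2.1.6 in \cite{Sw}, cited in Section~\ref{sec:Elliptic}) gives $\Vert\alpha\Vert_{W^{1,2}}^2\leq C(\Vert d\alpha\Vert_{L^2}^2+\Vert\alpha\Vert_{L^2}^2)$; the compact embedding $W^{1,2}\hookrightarrow L^2$ together with $\alpha\perp\mathcal{H}_T^{k-1}(B_1)$ lets me absorb the zero-order term via a standard compactness-contradiction argument, and the same reasoning handles $\beta$. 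Combining with the Pythagorean identity $\Vert d\alpha\Vert_{L^2}^2+\Vert\delta\beta\Vert_{L^2}^2=\Vert F-h\Vert_{L^2}^2$ and canceling one factor of $\Vert F-h\Vert_{L^2}$ yields $\Vert F-h\Vert_{L^2}\leq C(\Vert dF\Vert_{L^2}+\Vert\delta F\Vert_{L^2})$; squaring and undoing the rescaling inserts the factor $Cr^2$ and completes the proof.

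The main obstacle will be the Friedrichs estimate for the Hodge potentials $\alpha$ and $\beta$: this is the one place where the geometry of the ball genuinely enters, and it relies on a nontrivial compactness argument (equivalently, on elliptic regularity for the Hodge Laplacian with absolute/relative boundary conditions on $B_1$) to eliminate the zero-order term that the raw Gaffney inequality leaves behind. Everything else is essentially bookkeeping around the scaling and the orthogonality built into the Hodge decomposition.
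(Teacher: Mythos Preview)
The paper does not actually supply a proof of this theorem: it is stated in the appendix as an auxiliary analytic fact and then immediately followed by the next subsection on isoperimetric-type inequalities, with no argument given. So there is nothing in the paper to compare your proposal against.

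That said, your approach is sound and is exactly the kind of argument one would expect: the scaling reduction to $B_1$ is correct and produces the factor $r^2$ for free; the orthogonal Hodge decomposition $F=d\alpha+\delta\beta+h$ with $\alpha_T=0$, $\beta_N=0$ is precisely the decomposition the paper quotes from \cite{IM2}; the integration-by-parts step, with boundary terms killed by the tangential/normal conditions on $\alpha$ and $\beta$, is the standard duality move; and the Friedrichs-type estimates $\Vert\alpha\Vert\le C\Vert d\alpha\Vert$, $\Vert\beta\Vert\le C\Vert\delta\beta\Vert$ under those gauges follow from Gaffney plus compactness exactly as you outline (and on the ball the relevant tangential/normal harmonic spaces in the correct degrees are trivial, so the orthogonality constraint is vacuous or easily satisfied). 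One small cosmetic point: with the paper's convention $\delta=-d^\ast$, the integration by parts gives $\langle F,d\alpha\rangle=-\langle\delta F,\alpha\rangle$ rather than $+\langle\delta F,\alpha\rangle$, but this is immaterial once you pass to Cauchy--Schwarz.
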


\subsection{\sffamily Isoperimetric Type Inequalities}

\begin{Thm}[Isoperimetric type inequality]
\label{thm:IsoIneq}
Let $B,E\in L^2(\Om,\R^n)$ where $\Om$ is a bounded domain satisfy 
\begin{align*}
\text{div } B=0,\,\,\,\, \text{curl } E=0
\end{align*}
in the sense of distributions. Then there exists a constant $c_n> 0$ depending only on dimension such that for every $x_0\in \Om$ and every $0<r<R$ such that $B_R(x_0)\Subset \Om$ 
\begin{align}\label{ineq:Jac}
\int_{B_r(x_0)}\langle B(x),E(x)\rangle dx\leq c_n\bigg(\int_{\dv B_r(x_0)} (\vert E(x)\vert^2+\vert B(x)\vert^2)^{(n-1)/n}d\sigma(x)\bigg)^{n/(n-1)}
\end{align}
\end{Thm}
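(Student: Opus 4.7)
The plan is to exploit the div-curl structure of $(B,E)$ through a scalar potential for $E$, combined with the critical Sobolev--Poincaré inequality on the sphere. Since $B_R(x_0)\Subset\Om$ is simply connected and $\text{curl}\,E=0$ distributionally, there exists $u\in W^{1,2}(B_R(x_0))$, unique up to an additive constant, with $\nabla u=E$. By Fubini applied to $|E|^{2}+|B|^{2}\in L^{1}(B_R(x_0))$, for a.e.\ $r\in(0,R)$ the restrictions $E|_{\dv B_r(x_0)}$ and $B|_{\dv B_r(x_0)}$ lie in $L^{2}(\dv B_r(x_0))$ and the trace $u|_{\dv B_r(x_0)}\in W^{1/2,2}(\dv B_r(x_0))$; I would prove the inequality for such $r$.

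First I would integrate by parts, using $\text{div}\,B=0$, to turn the bulk integral into a boundary integral:
\begin{align*}
\int_{B_r(x_0)}\langle B,E\rangle\,dx=\int_{B_r(x_0)}\langle B,\nabla u\rangle\,dx=\int_{\dv B_r(x_0)}u(y)\,\langle B(y),\nu(y)\rangle\,d\sigma(y).
\end{align*}
Then I would subtract the spherical mean $\bar u=|\dv B_r(x_0)|^{-1}\int_{\dv B_r(x_0)}u\,d\sigma$, which is legitimate because $\int_{\dv B_r(x_0)}\langle B,\nu\rangle\,d\sigma=\int_{B_r(x_0)}\text{div}\,B\,dx=0$. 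Next I would apply Hölder with the Hölder-conjugate pair $p=2(n-1)/n$ and $p^{*}=2(n-1)/(n-2)$, chosen so that $1/p-1/p^{*}=1/(n-1)$ is the critical Sobolev exponent on the $(n-1)$-sphere. The Sobolev--Poincaré inequality on $\dv B_r(x_0)$ at this critical exponent then yields
\begin{align*}
\|u-\bar u\|_{L^{p^{*}}(\dv B_r(x_0))}\le c_n\|\nabla_\tau u\|_{L^{p}(\dv B_r(x_0))}\le c_n\|E\|_{L^{p}(\dv B_r(x_0))},
\end{align*}
where the second inequality uses $|\nabla_\tau u|\le|\nabla u|=|E|$. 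The final step uses the pointwise bound $|E|,|B|\le(|E|^{2}+|B|^{2})^{1/2}$ so that
\begin{align*}
\|E\|_{L^{p}}\|B\|_{L^{p}}\le\Bigl\|(|E|^{2}+|B|^{2})^{1/2}\Bigr\|_{L^{p}(\dv B_r(x_0))}^{2}=\biggl(\int_{\dv B_r(x_0)}(|E|^{2}+|B|^{2})^{(n-1)/n}\,d\sigma\biggr)^{n/(n-1)},
\end{align*}
the last equality following from $p/2=(n-1)/n$ and $2/p=n/(n-1)$.

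The core technical point, and in my view the only nontrivial one, is the \emph{scale invariance} of the Sobolev--Poincaré inequality on $\dv B_r(x_0)$ at the critical exponent: the exponents $p$ and $p^{*}$ are precisely tuned so that the powers of $r$ from rescaling the surface measure and the tangential gradient cancel exactly, leaving a constant $c_n$ depending only on dimension and no stray power of $r$. This is what makes the claimed inequality valid uniformly in $r$. A secondary but routine issue is the existence of boundary traces of $L^{2}$ vector fields on individual spheres, handled by Fubini for a.e.\ $r\in(0,R)$; the hypothesis $r<R$ with $B_R(x_0)\Subset\Om$ provides the room needed for this.
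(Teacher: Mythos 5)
Your argument is correct and is essentially the standard proof of this result; the paper itself does not prove the theorem but defers to \cite{CPdiN} (Theorem A), whose argument follows the same route you describe — a potential $u$ with $\nabla u=E$ on the ball, integration by parts against the divergence-free $B$ after subtracting the spherical mean (legitimate since $\int_{\dv B_r(x_0)}\langle B,\nu\rangle\,d\sigma=0$), and the critical, scale-invariant Sobolev--Poincar\'e inequality on $\dv B_r(x_0)$. Your exponents check out ($p=2(n-1)/n$ and $p^{*}=2(n-1)/(n-2)$ are H\"older conjugate and critical on the $(n-1)$-sphere, so no power of $r$ survives), and the only remaining points are routine: the integration by parts for merely $L^{2}$ divergence-free $B$ should be justified by mollification, and the inequality is obtained (and is only meaningful) for a.e.\ $r\in(0,R)$, which is how the boundary integral in the statement must be read.
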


Theorem \ref{thm:IsoIneq} is proven in \cite{CPdiN}, Theorem A. Note that in the case when $F\in W^{1,2}(\Om,\Lambda^{(0,2)})$ and $E=dF=\nabla u$ and $B=\delta F=A(x)\nabla u(x)$, and the inequality takes the form
\begin{align*}
\int_{B_r(x_0)}\langle dF(x),\delta F(x)\rangle dx\leq c_n\bigg(\int_{\dv B_r(x_0)} (\vert dF(x)\vert^2+\vert \delta F(x)\vert^2)^{(n-1)/n}d\sigma(x)\bigg)^{n/(n-1)}. 
\end{align*}

We also have the following Theorem 2.4 from \cite{CPdiN}. 
\begin{Thm}
\label{thm:DivCurl}
Let $B,E\in L^2(\Om,\R^n)$ satisfy 
\begin{align*}
\text{div } B=0,\,\,\,\, \text{curl } E=0
\end{align*}
in $\Om$ in the sense of distributions. Assume in addition that the div-curl couple $(B,E)$ satisfy the inequality 
\begin{align*}
\vert B(x)\vert^2+\vert E(x)\vert^2\leq \mathcal{K}\langle B(x),E(x)\rangle \text{ for a.e. $x\in \Om$},
\end{align*}
for some $\mathcal{K}\geq 2$. Then there exists a constant $\gamma_n\geq 0$ depending only on dimension such that for every $x_0\in \Om$ and every $0<r<R$ such that $B_R(x_0)\Subset \Om$ 
\begin{align}\label{ineq:Jac}
\int_{B_r(x_0)}\langle B(x),E(x)\rangle dx\leq \bigg(\frac{r}{R}\bigg)^{\frac{1}{\gamma_n\mathcal{K}}}\int_{B_R(x_0)}\langle B(x),E(x)\rangle dx
\end{align}

\end{Thm}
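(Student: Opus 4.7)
The plan is a classical Morrey-type hole-filling argument, turning the isoperimetric inequality of Theorem \ref{thm:IsoIneq} together with the distortion inequality into a first order differential inequality for the monotone function
\[
\phi(r):=\int_{B_r(x_0)}\langle B(x),E(x)\rangle\,dx,
\]
and then integrating. Note $\phi$ is well-defined and monotone non-decreasing because the distortion inequality forces $\langle B,E\rangle\geq \mathcal{K}^{-1}(|B|^2+|E|^2)\geq 0$ a.e.\ in $\Om$. After fixing $x_0$ and choosing $R_0$ with $B_{R_0}(x_0)\Subset\Om$, I work for a.e.\ $r\in(0,R_0)$, for which the slicing/coarea formula gives
\[
\phi'(r)=\int_{\dv B_r(x_0)}\langle B(y),E(y)\rangle\,d\sigma(y).
\]

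Next I combine Theorem \ref{thm:IsoIneq} with the distortion inequality on the boundary sphere. From $|B|^2+|E|^2\leq \mathcal{K}\langle B,E\rangle$ a.e.,
\[
\bigl(|B|^2+|E|^2\bigr)^{(n-1)/n}\leq \mathcal{K}^{(n-1)/n}\langle B,E\rangle^{(n-1)/n}\quad\text{a.e.\ on }\dv B_r(x_0),
\]
hence Theorem \ref{thm:IsoIneq} yields
\[
\phi(r)\leq c_n\mathcal{K}\left(\int_{\dv B_r(x_0)}\langle B(y),E(y)\rangle^{(n-1)/n}\,d\sigma(y)\right)^{n/(n-1)}.
\]
Now apply H\"older's inequality on $\dv B_r(x_0)$ with conjugate exponents $n/(n-1)$ and $n$:
\[
\int_{\dv B_r(x_0)}\langle B,E\rangle^{(n-1)/n}\,d\sigma\leq \left(\int_{\dv B_r(x_0)}\langle B,E\rangle\,d\sigma\right)^{(n-1)/n}\sigma(\dv B_r(x_0))^{1/n}.
\]
Since $\sigma(\dv B_r(x_0))=\sigma_{n-1}r^{n-1}$, raising to the power $n/(n-1)$ gives
\[
\phi(r)\leq c_n\mathcal{K}\,\sigma_{n-1}^{1/(n-1)}\,r\,\phi'(r).
\]
Set $\gamma_n:=c_n\sigma_{n-1}^{1/(n-1)}$, so that
\[
\phi(r)\leq \gamma_n\mathcal{K}\,r\,\phi'(r)\quad\text{for a.e.\ }r\in(0,R_0).
\]

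Finally I integrate this differential inequality. Either $\phi\equiv 0$ on $[r,R]$ (in which case the conclusion is trivial) or $\phi$ is strictly positive on some interval ending at $R$. Dividing and recognizing $(\log\phi)'\geq \frac{1}{\gamma_n\mathcal{K}}(\log r)'$ a.e., Lebesgue's fundamental theorem of calculus for absolutely continuous functions (which $\phi$ is, as an indefinite integral) yields
\[
\log\phi(R)-\log\phi(r)\geq \frac{1}{\gamma_n\mathcal{K}}\bigl(\log R-\log r\bigr),
\]
and exponentiation gives exactly
\[
\int_{B_r(x_0)}\langle B,E\rangle\,dx=\phi(r)\leq \left(\frac{r}{R}\right)^{1/(\gamma_n\mathcal{K})}\phi(R)=\left(\frac{r}{R}\right)^{1/(\gamma_n\mathcal{K})}\int_{B_R(x_0)}\langle B,E\rangle\,dx.
\]

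The genuinely substantive step is the combination isoperimetric inequality plus distortion plus H\"older that trades the $L^{(n-1)/n}$ boundary integral for a factor of $r$ times the $L^1$ boundary integral; the rest is bookkeeping. A minor technical subtlety is justifying the use of the coarea formula and the a.e.\ differentiability of $\phi$, but this is standard once one observes $\langle B,E\rangle\in L^1(\Om)$ with $\langle B,E\rangle\geq 0$, and that the set of radii $r$ for which the boundary integral equals $\phi'(r)$ has full measure.
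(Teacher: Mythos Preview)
Your argument is correct. The paper does not supply its own proof of this statement; it quotes the result as Theorem~2.4 of Carozza--Passarelli di Napoli~\cite{CPdiN}. Your derivation is precisely the intended one in that reference: use the isoperimetric inequality (their Theorem~A, here Theorem~\ref{thm:IsoIneq}) on the sphere $\partial B_r(x_0)$, feed in the distortion inequality to replace $(|B|^2+|E|^2)^{(n-1)/n}$ by $\mathcal{K}^{(n-1)/n}\langle B,E\rangle^{(n-1)/n}$, apply H\"older to recover a factor of $r$ times the $L^1$ boundary integral, and then integrate the resulting differential inequality $\phi(r)\leq \gamma_n\mathcal{K}\,r\,\phi'(r)$ for the monotone energy $\phi(r)=\int_{B_r}\langle B,E\rangle$. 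The handling of the degenerate case $\phi\equiv 0$ and the a.e.\ identification $\phi'(r)=\int_{\partial B_r}\langle B,E\rangle\,d\sigma$ via the coarea formula are fine as stated, since $\langle B,E\rangle\geq 0$ lies in $L^1(\Omega)$.
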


{\sc Erik Duse}, Department of Mathematics and Statistics, KTH,  Stockholm, Sweden \texttt{duse@kth.se}

\end{document}